

\documentclass[a4paper,11pt,reqno]{amsart}

\usepackage[utf8]{inputenc}

\usepackage{etex}

\usepackage{xcolor}

\definecolor{verydarkblue}{rgb}{0,0,0.5}

\usepackage[
breaklinks,
colorlinks,
citecolor=verydarkblue,
linkcolor=verydarkblue,
urlcolor=verydarkblue,
pagebackref=true,
hyperindex
]{hyperref}

\backrefenglish

\usepackage{fancyhdr}

\usepackage[
hscale=0.7,
vscale=0.75,
headheight=13pt,
marginpar=2cm,
centering,
]{geometry}

\usepackage{amsmath}
\usepackage{amsthm}
\usepackage{amssymb}
\usepackage{mathtools}
\usepackage{mathdots}
\usepackage{framed}
\usepackage[capitalize]{cleveref}
\usepackage{array}
\usepackage[alphabetic]{amsrefs}
\usepackage[all,cmtip]{xy}

\theoremstyle{plain}

\newtheorem{introtheorem}{Theorem}

\crefname{introtheorem}{Theorem}{Theorems}


\newtheorem{theorem}{Theorem}[section]
\newtheorem{proposition}[theorem]{Proposition}
\newtheorem{lemma}[theorem]{Lemma}
\newtheorem{corollary}[theorem]{Corollary}

\newtheorem{question}[theorem]{Question}

\theoremstyle{definition}

\newtheorem{definition}[theorem]{Definition}

\theoremstyle{remark}

\newtheorem{remark}[theorem]{Remark}
\newtheorem{example}[theorem]{Example}

\numberwithin{figure}{section}

\numberwithin{equation}{section}

\IfFileExists{./article-style.tex}{


\pagestyle{fancy}
\fancyhead{}
\fancyfoot{}
\fancyhead[LE,RO]{\small \thepage}
\fancyhead[RE]{\small \nouppercase{\rightmark}}
\fancyhead[LO]{\small \nouppercase{\leftmark}}

\setcounter{tocdepth}{1}

\makeatletter


\global\BR@BackrefAlttrue

\renewcommand*{\backrefalt}[4]{%
    \tiny%
    (%
    \ifcase #1 not cited%
          \or cit.~on~p.~#2%
          \else cit.~on~pp.~#2%
    \fi%
    )%
}

\def\print@backrefs#1{%
    \space\SentenceSpace%
    \begingroup%
        \expandafter\providecommand\csname brc@#1\endcsname{0}%
        \expandafter\providecommand\csname brcd@#1\endcsname{0}%
        \expandafter\backrefalt%
            \csname brc@#1\expandafter\endcsname%
            \csname brl@#1\expandafter\endcsname%
            \csname brcd@#1\expandafter\endcsname%
            \csname brld@#1\endcsname%
    \endgroup%
}

%

\def\maketitle{\par
  \@topnum\z@ 
  \@setcopyright
  \thispagestyle{empty}
  \ifx\@empty\shortauthors \let\shortauthors\shorttitle
  \else \andify\shortauthors
  \fi
  \@maketitle@hook
  \begingroup
  \@maketitle
  \toks@\@xp{\shortauthors}\@temptokena\@xp{\shorttitle}%
  \toks4{\def\\{ \ignorespaces}}
  \edef\@tempa{%
    \@nx\markboth{\the\toks4
      \@nx\MakeUppercase{\the\toks@}}{\the\@temptokena}}%
  \@tempa
  \endgroup
  \c@footnote\z@
    \renewcommand{\footnoterule}{%
      \kern -3pt
      \hrule width \textwidth height .5pt
      \kern 2pt
    }
  {
    \renewcommand\thefootnote{}
    \vspace{-2em}
    \footnote{
      \par\vspace{-1.2em}\noindent%
      \setlength{\parindent}{0pt}%
      \def\@footnotetext##1{\noindent{\footnotesize##1}\par}%
      \let\@makefnmark\relax  \let\@thefnmark\relax
      \ifx\@empty\@date\else \@footnotetext{\@setdate}\fi
      \ifx\@empty\@subjclass\else \@footnotetext{\@setsubjclass}\fi
      \ifx\@empty\@keywords\else \@footnotetext{\@setkeywords}\fi
      \ifx\@empty\thankses\else \@footnotetext{%
        \@setthanks}%
      \fi
    }
    \addtocounter{footnote}{-1}
  }
  \@cleartopmattertags
}

%

\def\@adminfootnotes{\@empty}

%

\def\@settitle{\begin{center}%
  \baselineskip14\p@\relax
    \bfseries
\Large
  \@title
  \end{center}%
}

%

\def\@setauthors{%
  \begingroup
  \def\thanks{\protect\thanks@warning}%
  \trivlist
  \centering\footnotesize \@topsep30\p@\relax
  \advance\@topsep by -\baselineskip
  \item\relax
  \author@andify\authors
  \def\\{\protect\linebreak}%
  \large{\authors}%
  \ifx\@empty\contribs
  \else
    ,\penalty-3 \space \@setcontribs
    \@closetoccontribs
  \fi
  \endtrivlist
  \endgroup
}

%

\def\@setaddresses{\par
  \nobreak \begingroup
\footnotesize
  \def\author##1{\end{minipage}\hskip 2em \begin{minipage}[t]{.5\textwidth minus
  1em}\raggedright%
    ~\\[2em]{\bf##1}\\[.5em]%
  }%
  \interlinepenalty\@M
  \def\address##1##2{\begingroup
    {\ignorespaces##2}\endgroup\\[.5em]}%
  \def\curraddr##1##2{\begingroup
    \@ifnotempty{##2}{\nobreak\indent\curraddrname
      \@ifnotempty{##1}{, \ignorespaces##1\unskip}\/:\space
      ##2\par}\endgroup}%
  \def\email##1##2{\begingroup
    \@ifnotempty{##2}{\nobreak\indent
      \@ifnotempty{##1}{, \ignorespaces##1\unskip}
      \ttfamily##2\par}\endgroup}%
  \def\urladdr##1##2{\begingroup
    \def~{\char`\~}%
    \@ifnotempty{##2}{\nobreak\indent\urladdrname
      \@ifnotempty{##1}{, \ignorespaces##1\unskip}\/:\space
      \ttfamily##2\par}\endgroup}%
  \setlength{\parindent}{0pt}%
  \vfill%
  {
  \hskip -2em%
  \begin{minipage}{0mm}
  \addresses
  \end{minipage}
  }
  \endgroup
}

%

\renewcommand{\author}[2][]{%
  \ifx\@empty\authors
    \gdef\authors{#2}%
    \g@addto@macro\addresses{\author{#2}}%
  \else
    \g@addto@macro\authors{\and#2}%
    \g@addto@macro\addresses{\author{#2}}%
  \fi
  \@ifnotempty{#1}{%
    \ifx\@empty\shortauthors
      \gdef\shortauthors{#1}%
    \else
      \g@addto@macro\shortauthors{\and#1}%
    \fi
  }%
}
\edef\author{\@nx\@dblarg
  \@xp\@nx\csname\string\author\endcsname}

%

\def\@secnumfont{\@empty}

%

\def\section{\@startsection{section}{1}%
  \z@{.7\linespacing\@plus\linespacing}{.5\linespacing}%
  {\large\bfseries\centering}}


\DefineAdditiveKey{bib}{secondauthor}{\name}
\DefineSimpleKey{bib}{archivePrefix}
\DefineSimpleKey{bib}{eprinttype}
\DefineSimpleKey{bib}{eprintclass}
\DefineSimpleKey{bib}{primaryClass}

\newcommand{\my@ifeq}[3]{%
    \edef\my@a{#1}%
    \edef\my@b{#2}%
    \ifx\my@a\my@b#3\fi%
}


\newcommand{\@doititle@doi}[1]{%
    \href%
        {https://doi.org/\csname bib'doi\endcsname}%
        {\textit{#1}}%
}

\newcommand{\@doititle@url}[1]{%
    \href%
        {\csname bib'url\endcsname}%
        {\textit{#1}}%
}

\newcommand{\@doititle@mr}[1]{{%
    \def\MR##1{##1}%
    \def\fld@elt##1{##1}%
    \href%
        {http://www.ams.org/mathscinet-getitem?mr=\csname bib'review\endcsname}%
        {\textit{#1}}%
}}

\newcommand{\my@eprint}[1]{%
    \IfEmptyBibField{eprinttype}{%
        \IfEmptyBibField{archivePrefix}{%
            \edef\my@etype{arXiv}%
        }{%
            \edef\my@etype{\csname bib'archivePrefix\endcsname}%
        }%
    }{%
        \edef\my@etype{\csname bib'eprinttype\endcsname}%
    }%
    \def\my@arxiv{%
        \href%
            {https://arxiv.org/abs/#1}%
            {\tt arXiv:#1}%
    }%
    \my@ifeq{\my@etype}{arxiv}{\my@arxiv}%
    \my@ifeq{\my@etype}{arXiv}{\my@arxiv}%
    \my@ifeq{\my@etype}{ArXiv}{\my@arxiv}%
    \def\my@hal{%
        \href%
            {https://hal.archives-ouvertes.fr/#1}%
            {\tt #1}%
    }%
    \my@ifeq{\my@etype}{hal}{\my@hal}%
    \my@ifeq{\my@etype}{HAL}{\my@hal}%
    \my@ifeq{\my@etype}{Hal}{\my@hal}%
}

\newcommand{\@doititle@eprint}[1]{%
    \IfEmptyBibField{eprinttype}{%
        \IfEmptyBibField{archivePrefix}{%
            \edef\my@etype{arXiv}%
        }{%
            \edef\my@etype{\csname bib'archivePrefix\endcsname}%
        }%
    }{%
        \edef\my@etype{\csname bib'eprinttype\endcsname}%
    }%
    \def\my@arxiv{%
        \href%
            {https://arxiv.org/abs/\csname bib'eprint\endcsname}%
            {\textit{#1}}%
    }%
    \my@ifeq{\my@etype}{arxiv}{\my@arxiv}%
    \my@ifeq{\my@etype}{arXiv}{\my@arxiv}%
    \my@ifeq{\my@etype}{ArXiv}{\my@arxiv}%
    \def\my@hal{%
        \href%
            {https://hal.archives-ouvertes.fr/\csname bib'eprint\endcsname}%
            {\textit{#1}}%
    }%
    \my@ifeq{\my@etype}{hal}{\my@hal}%
    \my@ifeq{\my@etype}{HAL}{\my@hal}%
    \my@ifeq{\my@etype}{Hal}{\my@hal}%
}

\newcommand{\@doititle}[1]{%
    \IfEmptyBibField{doi}{%
        \IfEmptyBibField{url}{%
            \IfEmptyBibField{review}{%
                \IfEmptyBibField{eprint}{%
                    \let\@tempa\textit
                }{%
                    \let\@tempa\@doititle@eprint
                }
            }{%
                \let\@tempa\@doititle@mr
            }%
        }{%
            \let\@tempa\@doititle@url
        }%
    }{%
        \let\@tempa\@doititle@doi
    }%
    \@tempa{#1}%
}

\newcommand{\PrintSecondAuthors}[1]{%
    \ifx\previous@primary\current@primary
        \@empty
    \else
        \PrintNames{, with }{}{#1}%
    \fi
}

\BibSpec{article}{%
    +{}  {\PrintAuthors}                {author}
    +{}  {\PrintSecondAuthors}          {secondauthor}
    +{,} { \@doititle}                  {title}
    +{.} { }                            {part}
    +{.} { \@doititle}                  {subtitle}
    +{,} { \PrintContributions}         {contribution}
    +{.} { \PrintPartials}              {partial}
    +{,} { }                            {journal}
    +{}  { \textbf}                     {volume}
    +{}  { \PrintDatePV}                {date}
    +{,} { \issuetext}                  {number}
    +{,} { \eprintpages}                {pages}
    +{,} { }                            {status}
    +{,} { available at \my@eprint}     {eprint}
    +{}  { \PrintTranslation}           {translation}
    +{;} { \PrintReprint}               {reprint}
    +{.} { }                            {note}
    +{.} {}                             {transition}
}

\BibSpec{book}{%
    +{}  {\PrintPrimary}                {transition}
    +{,} { \@doititle}                  {title}
    +{.} { }                            {part}
    +{.} { \@doititle}                  {subtitle}
    +{,} { \PrintEdition}               {edition}
    +{}  { \PrintEditorsB}              {editor}
    +{,} { \PrintTranslatorsC}          {translator}
    +{,} { \PrintContributions}         {contribution}
    +{,} { }                            {series}
    +{,} { \voltext}                    {volume}
    +{,} { }                            {publisher}
    +{,} { }                            {organization}
    +{,} { }                            {address}
    +{,} { \PrintDateB}                 {date}
    +{,} { }                            {status}
    +{}  { \parenthesize}               {language}
    +{}  { \PrintTranslation}           {translation}
    +{;} { \PrintReprint}               {reprint}
    +{.} { }                            {note}
    +{.} {}                             {transition}
}

\BibSpec{collection.article}{%
    +{}  {\PrintAuthors}                {author}
    +{,} { \@doititle}                  {title}
    +{.} { }                            {part}
    +{.} { \@doititle}                  {subtitle}
    +{,} { \PrintContributions}         {contribution}
    +{,} { \PrintConference}            {conference}
    +{}  {\PrintBook}                   {book}
    +{,} { }                            {booktitle}
    +{,} { \PrintDateB}                 {date}
    +{,} { pp.~}                        {pages}
    +{,} { }                            {status}
    +{,} { available at \eprint}        {eprint}
    +{}  { \parenthesize}               {language}
    +{;} { \PrintReprint}               {reprint}
    +{.} { }                            {note}
    +{.} {}                             {transition}
}

\BibSpec{webpage}{%
    +{}  {\PrintAuthors}                {author}
    +{,} { \@doititle}                  {title}
    +{.} { \@doititle}                  {subtitle}
    +{}  { \PrintDate}                  {date}
    +{,} { \url}                        {url}
    +{.} { Accessed \PrintDateField}    {accessdate}
    +{.} { }                            {note}
    +{.} {}                             {transition}
}

\makeatother
}{}

\usepackage{marginnote}

\def\N{{\mathbb N}}
\def\Z{{\mathbb Z}}
\def\Q{{\mathbb Q}}

\def\A{{\mathbb A}}

\def\cA{\mathcal{A}}

\def\cO{\mathcal{O}}

\def\cX{\mathcal{X}}

\def\I{\mathcal{I}}

\def\O{\mathcal{O}}

\def\fm{\mathfrak{m}}
\def\fn{\mathfrak{n}}
\def\fp{\mathfrak{p}}
\def\fq{\mathfrak{q}}

\def\a{\alpha}
\def\b{\beta}
\def\g{\gamma}

\def\f{\varphi}

\def\e{\eta}

\def\k{k}
\def\n{\nu}
\def\m{\mu}

\def\D{\Delta}

\def\Om{\Omega}
\def\vp{\varphi}

\def\.{\cdot}
\let\circum\^
\def\^{\widehat}
\def\o{\circ}
\def\ov{\overline}

\def\inj{\hookrightarrow}

\def\({\left(}
\def\){\right)}

\def\liminv{\underleftarrow{\lim}}

\def\*{{}^*}

\renewcommand{\and}{ \ \ \text{ and } \ \ }

\def\red{\mathrm{red}}
\def\an{\mathrm{an}}
\def\sm{\mathrm{sm}}

\def\Jac{\mathrm{Jac}}

\DeclareMathOperator{\codim} {codim}

\DeclareMathOperator{\im} {Im}

\DeclareMathOperator{\GL}  {GL}

\DeclareMathOperator{\Gr} {Gr}

\DeclareMathOperator{\Spec} {Spec}
\DeclareMathOperator{\Spf} {Spf}

\DeclareMathOperator{\Sing} {Sing}

\DeclareMathOperator{\Val} {Val}

\DeclareMathOperator{\ord} {ord}

\DeclareMathOperator{\Sym} {Sym}
\DeclareMathOperator{\Supp} {Supp}

\DeclareMathOperator{\id} {id}
\DeclareMathOperator{\Fitt} {Fitt}

\DeclareMathOperator{\Hom} {Hom}

\DeclareMathOperator{\gr} {gr}

\DeclareMathOperator{\Tor} {Tor}

\DeclareMathOperator{\height} {ht}

\DeclareMathOperator{\Quot} {Quot}

\DeclareMathOperator{\Nil}{Nil}
\DeclareMathOperator{\MJ} {MJ}
\DeclareMathOperator{\DivVal} {DivVal}
\DeclareMathOperator{\RZ} {RZ}
\DeclareMathOperator{\HS} {HS}

\DeclareMathOperator{\charK}{char}

\def\embdim{\mathrm{edim}}
\def\embcodim{\mathrm{ecodim}}

\def\sep{{\mathrm{sep}}}

\DeclareMathOperator{\trdeg}{tr.deg}

\def\isom{\simeq}

\DeclareMathOperator{\sepdeg}{sep.deg}

\def\cotimes{\hat\otimes}


\newcommand{\LS}[1]{#1(\hskip-1pt(t)\hskip-1pt)}
\newcommand{\LSS}[2]{#1(\hskip-1pt(#2)\hskip-1pt)}


\usepackage[]{soul}

\begin{document}



\title{On arc fibers of morphisms of schemes}

\author{Christopher Chiu}

\address[C.\ Chiu]{%
Department of Mathematics and Computer Science\\
Eindhoven University of Technology\\
De Groene Loper 5\\
5612 AZ Eindhoven (Netherlands)%
}

\email{c.h.chiu@tue.nl}

\author{Tommaso de Fernex}

\address[T.\ de Fernex]{%
Department of Mathematics\\
University of Utah\\
155 South 1400 East\\
Salt Lake City, UT 84112 (USA)%
}

\email{defernex@math.utah.edu}

\author{Roi Docampo}

\address[R.\ Docampo]{%
Department of Mathematics\\
University of Oklahoma\\
601 Elm Avenue, Room 423\\
Norman, OK 73019 (USA)%
}

\email{roi@ou.edu}

\subjclass[2020]{%
Primary {\scriptsize 14E18};
Secondary {\scriptsize 14B25}.}
\keywords{Arc space, generically finite morphism, embedding dimension, stable point.}

\thanks{%
The research of the first author was partially supported by the NWO Vici grant 639.033.514.
The research of the second author was partially supported by NSF Grant 
DMS-2001254.
The research of the third author was partially supported by a grant from the
Simons Foundation (638459,~RD)%
}

\begin{abstract}
Given a morphism $f \colon X \to Y$ of schemes over a field, 
we prove several finiteness results about the fibers of 
the induced map on arc spaces $f_\infty \colon X_\infty \to Y_\infty$.
Assuming that $f$ is quasi-finite and $X$ is separated and quasi-compact, 
our theorem states that $f_\infty$ has topologically finite fibers of bounded cardinality
and its restriction to $X_\infty \setminus R_\infty$, 
where $R$ is the ramification locus of $f$, has scheme-theoretically finite reduced fibers.
We also provide an effective bound on the cardinality of the fibers of $f_\infty$
when $f$ is a finite morphism of varieties over an algebraically closed field, 
describe the ramification locus of $f_\infty$, 
and prove a general criterion for $f_\infty$ to be a morphism of finite type. 
We apply these results to further explore the local structure of arc spaces. 
One application is that the local ring at a stable point of the arc space of a variety
has finitely generated maximal ideal and topologically Noetherian
spectrum, something that should be contrasted with the fact that these rings are not Noetherian in general;
a lower-bound to the dimension of these rings is also obtained.  
Another application gives a semicontinuity property for 
the embedding dimension and embedding codimension of arc spaces which
extends to this setting a theorem of Lech on Noetherian local rings
and translates into a semicontinuity property for Mather log discrepancies. 
Other applications are discussed in the paper.
\end{abstract}

\maketitle

\section{Introduction}

\subsection{Results on arc fibers}

For every morphism $f \colon X \to Y$ of schemes over a field $k$, 
there is a diagram
\[
\xymatrix{
X_\infty \ar[r]^{f_\infty} \ar[d] & Y_\infty \ar[d] \\
X \ar[r]^f & Y
}
\]
where $X_\infty$ and $Y_\infty$ are the respective arc spaces over $k$. 
We use the term \emph{arc fibers} to refer to the (scheme-theoretic) fibers of $f_\infty$. 
%

One of the first properties one learns about arc spaces is that
if $f$ is \'etale then the diagram is Cartesian and $f_\infty$ is \'etale.
What other properties of $f$ lift to $f_\infty$? 

This question is the starting point of the paper. 
Our answer takes a particularly clean form when we restrict the attention to quasi-finite morphisms.

\begin{introtheorem}[\cref{t:finite-fiber}]
\label{t:intro:finite-fiber}
Let $f \colon  X\to Y$ be a quasi-finite morphism of schemes over a perfect field $k$, 
and assume that $X$ is separated and quasi-compact.
Let $R := \Supp \Om_{X/Y}$ denote the ramification locus of $f$. 
Then the induced morphism $f_\infty\colon X_\infty \to Y_\infty$ satisfies the following properties:
\begin{enumerate}
\item 
\label{i1:intro:finite-fiber}
$f_\infty$ has topologically finite fibers of bounded cardinality. 
\item
\label{i2:intro:finite-fiber}
The restriction of $f_\infty$ 
to $X_\infty \setminus R_\infty$ has finite reduced fibers.
\end{enumerate}
\end{introtheorem}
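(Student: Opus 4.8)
The plan is to reduce, by standard operations on arc spaces, to a statement about sections of a finite algebra over a complete discrete valuation ring, and then to analyze those sections directly. Fix $y\in Y_\infty$, put $K:=\kappa(y)$ and let $\gamma_y\colon\Spec K[[t]]\to Y$ be the corresponding arc. Since the arc functor carries fibre products to fibre products, base change along $\gamma_y$ identifies $f_\infty^{-1}(y)$, as a $K$-scheme, with the fibre over the point of $(\Spec K[[t]])_\infty$ given by the identity arc of the arc map of $X\times_Y\Spec K[[t]]\to\Spec K[[t]]$; thus we may assume $Y=\Spec K[[t]]$. Now $X$ is still separated and quasi-compact; cover it by finitely many affine opens and apply Zariski's main theorem on each to factor $X\to\Spec K[[t]]$ locally through a finite morphism. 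Because the arc space of an open immersion is an open immersion and an open subscheme of a topologically finite (resp.\ finite reduced) $\kappa(y)$-scheme is again such, with cardinality only dropping, we reduce to $X=\Spec B'$ with $B'$ a finite $K[[t]]$-algebra; the ramification locus transforms compatibly since $R$ is cut out by $\Om$. Then $f_\infty^{-1}(y)$ represents the functor $C\mapsto\Hom_{K[[t]]}(B',C[[t]])$ of $K[[t]]$-algebra homomorphisms on $K$-algebras $C$, and, taking $N$ to be a common bound for the number of module generators of the structure algebras in the finitely many charts above, $\dim_{K((t))}(B'\otimes_{K[[t]]}K((t)))\le N$ uniformly in $y$.

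\emph{Part (1).} Since $t$ is a nonzerodivisor in $C[[t]]$, every such homomorphism annihilates the $t$-power torsion of $B'$, hence factors through the finite \emph{free} quotient $\overline{B'}:=B'/B'_{\tor}$, a one-dimensional Noetherian ring of rank $\dim_{K((t))}B'[t^{-1}]\le N$ with finitely many minimal primes. For $C$ a field, a section acquires a well-defined generic point in $\Spec\overline{B'}[t^{-1}]$, namely a minimal prime $\mathfrak q$ of $\overline{B'}$, and sending a point of $f_\infty^{-1}(y)$ to this $\mathfrak q$ defines a map to a set of size $\le N$. I would then show that the locus with fixed generic point $\mathfrak q$ is empty unless $\mathrm{Frac}(\overline{B'}/\mathfrak q)/K((t))$ is unramified — a uniformizer would have to map to a uniformizer of $C[[t]]$, forcing ramification index one — and, in that case, is a single topological point: by Cohen's theorem the normalization of $\overline{B'}/\mathfrak q$ is a power series ring $K'[[t]]$ over a finite extension $K'/K$, and a $K[[t]]$-algebra map $K'[[t]]\to C[[t]]$ fixing $t$ is pinned down, up to nilpotents, by the image of $K'$. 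Hence $f_\infty^{-1}(y)$ is topologically finite of cardinality $\le N$, uniformly in $y$.

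\emph{Part (2).} An arc $\delta\in X_\infty\setminus R_\infty$ is one not factoring through $R$, equivalently one for which $\delta^*\Om_{X/Y}$ is a finitely generated torsion — hence finite length — module over $\kappa(\delta)[[t]]$. By the deformation theory of morphisms, the tangent space at $\delta$ of the fibre $f_\infty^{-1}(f_\infty(\delta))$, relative to $\kappa(\delta)$, is canonically $\Hom_{\kappa(\delta)[[t]]}(\delta^*\Om_{X/Y},\kappa(\delta)[[t]])$, since a first-order deformation of the arc $\delta$ keeping its composite with $f$ fixed is an $\mathcal O_Y$-derivation of $\mathcal O_X$ into $\kappa(\delta)[[t]]$ along $\delta$. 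This vanishes, there being no nonzero homomorphism from a finite length module to a torsion-free one, and the same holds after every field extension; by the conormal sequence this forces both $\mathfrak m_\delta/\mathfrak m_\delta^2=0$ and the residue extension $\kappa(\delta)/\kappa(y)$ to be separable at each such $\delta$. Combining this with the topological finiteness of part (1), and with the observation that along an unramified branch the equations cutting out a stratum of the model have Jacobian invertible after inverting $t$ — so Newton iteration exhibits the fibre there as cut out transversally, in particular scheme-theoretically finite — one concludes that the fibre over $y$ of $f_\infty|_{X_\infty\setminus R_\infty}$ is finite and reduced over $\kappa(y)$.

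\emph{The main difficulty} is the non-Noetherian nature of arc spaces: a priori a fibre of $f_\infty$ is an unwieldy scheme, and Nakayama, generic flatness and spreading-out arguments are unavailable. The real work is to establish genuine \emph{topological} finiteness in (1), controlling every point rather than only the geometric ones — which is precisely what the analysis of the torsion and the minimal primes of $B'$ is meant to supply — and to upgrade the vanishing of tangent spaces in (2) to \emph{scheme-theoretic} reducedness (equivalently, to pass from $\mathfrak m_\delta/\mathfrak m_\delta^2=0$ to $\mathfrak m_\delta=0$), which rests on the transversality coming from the finite length of $\delta^*\Om_{X/Y}$ away from $R$. An alternative route to (1) is to work with the jet schemes $X_n\to Y_n$: each fibre $(X_n)_{y_n}$ is of finite type, the images of the transition maps stabilize by Noetherianity, and a Greenberg-type approximation identifies $f_\infty^{-1}(y)$ with the inverse limit of these stable loci, which one then bounds by $N$.
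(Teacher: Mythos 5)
Your route to part (1) is genuinely different from the paper's (which bounds the fiber by counting embeddings $k_{\a(\e)}\hookrightarrow \LS{\ov{k_\b}}$, using separatedness plus \cref{l:same-alg-closure}, and then invokes a uniform bound on $\sepdeg f^{-1}(y)$), and the branch-by-branch analysis of the Greenberg scheme of a finite $K[[t]]$-algebra is a viable and even potentially sharper counting scheme. But as written it has a real gap in the uniformity claim: the finite module structure $B'$ only appears after base change to $K[[t]]$ and the $y$-dependent application of Zariski's main theorem, so "a common bound for the number of module generators of the structure algebras in the finitely many charts" is not defined over $Y$ (the charts of $X$ over $Y$ are merely quasi-finite, not finite). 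To make the bound uniform in $y\in Y_\infty$ you need an input such as the boundedness of the number of (geometric) points in the fibers of a quasi-finite morphism with quasi-compact source — exactly the stacks-project result the paper cites — to control the number of minimal primes of $\ov{B'}$ independently of $y$; this is fixable, but it is the missing step. Also, "unramified'' should be "$e=1$'' in your branch criterion, and the "single topological point per branch'' claim needs an argument that the scheme of $t$-fixing maps $K'[[t]]\to C[[t]]$ has one point even when $K'/K$ is inseparable (note $K=k_\b$ need not be perfect even though $k$ is).

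Part (2) is where the genuine gaps lie. First, "finite reduced fibers'' requires the residue extensions $k_\delta/k_\b$ at the relevant points to be \emph{finite}; your proposal never establishes this, and in the paper it is a substantive theorem (\cref{t:residue-field-extension}, proved via the auxiliary arc $\vp_\a(s,t)=\a(s+t)$ and valuation theory, together with the cotangent-map result of \cref{t:differentials-cotangent-map}, which is where perfectness of $k$ enters). Second, the tangent-space argument does not deliver what you claim: vanishing of $\Hom_{k_\delta[[t]]}(\delta^*\Om_{X/Y},k_\delta[[t]])$ gives vanishing of the module of $k_\b$-derivations of the fiber's local ring into $k_\delta$, but the map from that module to $\Hom(\fm_\delta/\fm_\delta^2,k_\delta)$ in the (dualized) conormal sequence need not be surjective, so you cannot conclude $\fm_\delta/\fm_\delta^2=0$ this way; and even granting $\fm_\delta/\fm_\delta^2=0$, Nakayama is unavailable in this non-Noetherian local ring, so reducedness does not follow — as you yourself note. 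The actual content of the paper's proof (\cref{t:unramified-reduced-arc-fiber}) is precisely the step you compress into "Newton iteration exhibits the fibre as cut out transversally'': after reducing to finitely many equations with Jacobian minor invertible in $\LS{k_\a}$, one shows rigidity of lifts over every test ring with nilpotent maximal ideal by a Taylor expansion, multiplication by the adjugate of the Jacobian, and a matrix lemma (\cref{l:matrix-nilpotent}) to invert $t^d u(t)\id_n+\widetilde N(t)$ in $\LS{A}$. Without spelling out that rigidity argument (and the finiteness of $k_\delta/k_\b$), part (2) is not proved.
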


If we further restrict the attention to finite morphisms of varieties and 
assume that the ground field is algebraically closed, then we obtain
an effective bound on the cardinality of the arc fibers
in terms of the separable degree of $f$. 

\begin{introtheorem}[\cref{t:finite-fiber-deg}]
\label{t:intro:finite-fiber-deg}
Let $f \colon X \to Y$ be a finite surjective morphism between varieties over an algebraically closed field $k$, 
and assume that $Y$ is normal. Then $|f^{-1}_\infty(\beta)| \leq \sepdeg f$ for every $\b \in Y_\infty$.
\end{introtheorem}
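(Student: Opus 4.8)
The plan is to reduce the bound to the generic fiber of $f$ over the image arc, using the fact that $f$ is finite and $Y$ is normal. Fix $\b \in Y_\infty$ and let $\gamma \in f_\infty^{-1}(\b)$; we want to bound the number of such $\gamma$. The key point is that an arc $\gamma$ lifting $\b$ is determined, via a specialization/reduction argument, by where it sends the generic point of $\Spec k[[t]]$, i.e.\ by the induced $K$-point of $X$ where $K = k((t))$ (or a finite extension, if $\b$ is not the generic arc of its own field of definition). So the first step is: if $\b$ has image in $Y$ not contained in the non-normal or branch locus, then the fiber of $f_\infty$ over $\b$ injects into the fiber of $f$ over the scheme-theoretic point of $Y$ determined by the generic point of $\b$, and this fiber, being the spectrum of a finite separable-or-not algebra, has at most $\sepdeg f$ points after base change to a field. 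The separable degree bounds the number of distinct field-valued points regardless of inseparability.

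Second, I need to handle arbitrary $\b$, including arcs whose image meets the branch locus $R$ or the (automatically empty, since $Y$ normal and varieties) locus where $f$ fails to be flat — here normality of $Y$ plus finiteness of $f$ gives that $f$ is flat in codimension one and that $f_*\cO_X$ is a reflexive, hence locally free in codimension one, sheaf, but more importantly the generic point of any arc $\b$ that is not constant maps to a point of $Y$ of positive-dimensional closure, so one reduces to understanding lifts over the function field of that subvariety. The mechanism is: pull back $f$ along $\Spec k[[t]] \to Y$ determined by $\b$; this gives a finite morphism $X_\b \to \Spec k[[t]]$, and arcs of $X$ over $\b$ are exactly sections of $X_\b \to \Spec k[[t]]$. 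Since $k[[t]]$ is a DVR and $X_\b$ is finite over it, $X_\b$ is a finite disjoint union of spectra of finite $k[[t]]$-algebras that are themselves orders in products of finite extensions of $k((t))$; each section factors through one of these, and a section is determined by its image at the generic point, giving at most $[\text{total ring of fractions of } X_\b : k((t))]_{\sep} \le \sepdeg f$ sections. That the separable degree does not grow under this base change is the crucial input: generically $f$ has degree $\deg f$ and separable degree $\sepdeg f$, and pulling back to a DVR can only split or merge components and pass to subrings, never increasing the separable degree of the total fraction ring over $k((t))$.

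The main obstacle I expect is the case where $\b$ is a \emph{constant} arc, or more generally where the generic point of $\b$ in $Y$ lands on the branch locus, so that the naive count of geometric points of the scheme-theoretic fiber of $f$ can exceed $\sepdeg f$ (a ramified fiber of a degree-$d$ map can still have up to $d$ points) — yet we are claiming the sharper bound $\sepdeg f$. The resolution must use that we are counting \emph{arcs} over $\b$, not points of $X$: two distinct preimages $x_1, x_2 \in X$ of the center of $\b$ may fail to support \emph{any} arc lifting $\b$, or the lifts through them may coincide after accounting for the DVR structure. Concretely, one argues that a section of $X_\b \to \Spec k[[t]]$ hitting $x_i$ is determined by a homomorphism from the semilocal ring of $X_\b$ to $k[[t]]$, and since $k[[t]]$ is a domain such a homomorphism factors through exactly one minimal prime of $X_\b$, hence through one factor of the total fraction ring; the number of such factors, weighted by their separable residue behaviour over $k((t))$, is at most $\sepdeg f$. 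I would isolate this as a lemma about finite algebras over a complete DVR: if $A$ is a finite $R$-algebra with $R$ a DVR with fraction field $K$, then $\#\{R\text{-algebra maps } A \to R\} \le \sepdeg(\Tot(A)/K)$, and I expect the proof of \cref{t:intro:finite-fiber-deg} to consist mostly of reducing to this lemma together with checking, via normality of $Y$, that $\sepdeg(\Tot(A)/K) \le \sepdeg f$ for $A = \cO_{X_\b}$.
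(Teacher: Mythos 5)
Your first step is essentially sound and runs parallel to the paper's \cref{t:sep-deg-fin-fibers}: once one knows the relevant separable degree of the generic fiber of $X_\b := X\times_Y \Spec k_\b[[t]]$, the number of lifts of $\b$ is bounded by it (the paper counts embeddings of the residue fields $k_x$, $x\in f^{-1}(\b(\e))$, into $\LS{\ov{k_\b}}$ and uses separatedness of $X$; your lemma on $R$-algebra maps $A\to R$ over a complete DVR is a reformulation of the same count). Two caveats there: the points of $f_\infty^{-1}(\b)$ are not just $k_\b[[t]]$-sections of $X_\b$, so you must first know that the residue field extensions $k_\a/k_\b$ are algebraic (this is \cref{l:same-alg-closure}, proved via \cref{l:alg-ext-laurent}) and then count $\ov{k_\b}[[t]]$-valued sections; and $\cO_{X_\b}$ need not be an order in its total fraction ring since the base change of a non-flat finite morphism can have $t$-torsion, though this is harmless because sections kill torsion.

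The genuine gap is the step you yourself label as the crucial input, namely $\sepdeg(\Tot(\cO_{X_\b})/k_\b((t)))\le \sepdeg f$. Since separable degree of a finite scheme over a field is insensitive to base field extension, this is equivalent to $\sepdeg f^{-1}(y)\le \sepdeg f$ for an \emph{arbitrary} scheme point $y\in Y$, and that inequality is the real content of the theorem, not a formality about base change. Your justification (``pulling back to a DVR can only split or merge components and pass to subrings, never increasing the separable degree of the total fraction ring'') is valid only when $\b(\e)$ is the generic point of $Y$, in which case the generic fiber of $X_\b$ genuinely is a base change of $k(X)/k(Y)$; that easy case is \cref{c:finite-fiber-fat} and needs no normality. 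For a constant arc centered at a branch point, the generic fiber of $X_\b$ is $f^{-1}(y)\times_{k_y}k_\b((t))$ and bears no direct relation to $k(X)/k(Y)$; its separable degree is $\sepdeg f^{-1}(y)$, and bounding this by $\sepdeg f$ is exactly where normality of $Y$ must enter. Your alternative guess for this case --- that some preimages of $\b(0)$ carry no lift of $\b$, or that lifts through distinct points coincide --- is not what happens and would not produce the bound. What is true, and what the paper proves as \cref{p:bound-fiber-degree}, is that the geometric point count of \emph{every} fiber of $f$, closed or not, is at most $\sepdeg f$: this uses the classical bound $|f^{-1}(y)|\le \deg f$ at closed points of a normal $Y$ (\cref{t:shafarevich}), the factorization of \cref{l:scheme-separable-cl} through the integral closure of $Y$ in the separable closure of $k(Y)$ inside $k(X)$ (a finite cover of degree $\sepdeg f$ followed by a universal homeomorphism) to sharpen $\deg f$ to $\sepdeg f$, and a specialization argument (\cref{l:finding-enough-points} and \cref{p:bound-fiber-degree}) producing a single closed point of $\ov{\{y\}}$ whose fiber contains at least $\sum_i [k_{x_i}:k_y]_\sep$ distinct closed points. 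None of this machinery appears in your proposal, so the argument is incomplete precisely at the step on which everything hinges.
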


One should compare these results to the basic fact that if $f$ is proper and birational 
with exceptional locus $E$ then $f_\infty$ induces a bijection from $X_\infty \setminus E_\infty$ to $Y_\infty \setminus (f(E))_\infty$, a property that follows directly from the valuative criterion of properness. 

\cref{t:intro:finite-fiber} is obtained from more precise results holding 
under weaker assumptions on $f$, which can be useful in applications (see  
\cref{t:finite-fiber-1,t:finite-fiber-2}). The proof broadly involves three different techniques. First, the topological conclusions drawn in \cref{t:intro:finite-fiber} are derived from an argument based on classical ramification theory (see in particular \cref{t:residue-field-extension} which is used to prove finiteness of the residue field extension on the level of arc spaces for quasi-finite morphisms). Second, to establish the separability in positive characteristic we rely on a technical result on the cotangent map of $f_\infty$ at $\a$ (see \cref{t:differentials-cotangent-map}) which uses the structure of the sheaf of differentials on $X_\infty$ and generalizes previous results from \cites{EM09,dFD20,CdFD}.
Finally, in order to prove reducedness of arc fibers in part \eqref{i2:intro:finite-fiber} of \cref{t:intro:finite-fiber}, a separate argument involving deformation theory is needed (see \cref{t:unramified-reduced-arc-fiber}).

In general $f_\infty$ can fail to be locally of finite type
at points where the arc fibers are of finite type. 
This already occurs in the simplest possible example 
where $f \colon \A^1_k \to \A^1_k$ is the double cover given by $y=x^2$
(see \cref{ex:not-quasi-finite}).
Such pathology of $f_\infty$ is related to another interesting phenomenon
occurring in this example,
namely, the fact that the sheaf of relative differentials $\Om_{X_\infty/Y_\infty}$
has trivial fibers but nontrivial stalks at arcs stemming from
the ramification locus of $f$ (see \cref{ex:not-formally-unramified}).

The next result describes the ramification locus of $f_\infty$
and tells us when and where exactly $f_\infty$ fails to be locally of finite type.
The theorem also shows that the conclusions drawn in \cref{t:intro:finite-fiber}
are optimal.

\begin{introtheorem}[\cref{t:locally-finite-type}]
\label{t:intro:locally-finite-type}
Let $f \colon X \to Y$ be a morphism of finite type between schemes
over a perfect field $k$, and let $f_\infty \colon X_\infty \to Y_\infty$ be the induced morphism of arc spaces. 
For any $\a \in X_\infty$, the following are equivalent:
\begin{enumerate}
\item
$f_\infty$ is unramified at $\a$;
\item
$f_\infty$ is quasi-finite at $\a$;
\item
$f_\infty$ is locally of finite type at $\a$;
\item
$f$ is unramified at $\a(0)$.
\end{enumerate}
Moreover, the fiber of $f_\infty$ through $\a$ is locally of finite type at $\a$ if and only if $f$
is unramified at $\a(\e)$.
\end{introtheorem}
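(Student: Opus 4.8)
The plan is to phrase everything in terms of the arc $\alpha\colon\Spec\kappa[[t]]\to X$ with $\kappa=\kappa(\alpha)$, the point $\beta:=f_\infty(\alpha)$ with residue field $L:=\kappa(\beta)$, and the scheme-theoretic fiber $F:=f_\infty^{-1}(\beta)=X_\infty\times_{Y_\infty}\Spec L$, so that ``$F$ is locally of finite type at $\alpha$'' means that $\O_{F,\alpha}$ is essentially of finite type over $L$. Since this property is local on $X$ near $\alpha(0)$ and on $Y$ near $\beta(0)$, I would first localize so that $X$ and $Y$ are affine; the residue field extension $L\subseteq\kappa$ is finite by \cref{t:residue-field-extension}, which keeps the bookkeeping manageable. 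The first real step is the dictionary
\[
f\text{ unramified at }\alpha(\e)
\ \iff\
\alpha(\e)\notin R
\ \iff\
\alpha\notin R_\infty
\ \iff\
\alpha^*\Om_{X/Y}\text{ is a torsion }\kappa[[t]]\text{-module},
\]
where $R:=\Supp\Om_{X/Y}$: the second equivalence holds because $\Spec\kappa[[t]]$ is reduced and irreducible with generic point mapping to $\alpha(\e)$, and the third follows from Nakayama's lemma, faithfully flat base change along $\kappa(\alpha(\e))\hookrightarrow\LS\kappa$, and the classification of finitely generated modules over the discrete valuation ring $\kappa[[t]]$. Note $\alpha(0)\in\overline{\{\alpha(\e)\}}$, so unramifiedness at $\alpha(0)$ is strictly stronger, and the main part of the theorem is the special case in which the whole arc lies in the unramified locus.

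For ``$F$ locally of finite type at $\alpha$ $\Rightarrow$ $f$ unramified at $\alpha(\e)$'' I would argue through the sheaf of differentials. If $\O_{F,\alpha}$ is essentially of finite type over $L$, then $\Om_{F/L}$ is coherent near $\alpha$, so $\Om_{F/L}\otimes_{\O_F}\kappa(\alpha)$ is finite-dimensional over $\kappa(\alpha)$; by base change of K\"ahler differentials this space equals $\Om_{X_\infty/Y_\infty}\otimes_{\O_{X_\infty}}\kappa(\alpha)$. By \cref{t:differentials-cotangent-map}, which computes this fiber in terms of $\alpha^*\Om_{X/Y}$, a free $\kappa[[t]]$-summand of $\alpha^*\Om_{X/Y}$ would force it to contain a $\kappa(\alpha)$-subspace of countably infinite dimension; hence $\alpha^*\Om_{X/Y}$ must be torsion, i.e.\ $f$ is unramified at $\alpha(\e)$. (This is consistent with the introduction's remark that $\Om_{X_\infty/Y_\infty}$ has trivial fibers but possibly nontrivial stalks along arcs stemming from the ramification locus.)

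The converse is the heart of the statement and the step I expect to be the main obstacle. Assume $\alpha^*\Om_{X/Y}$ is annihilated by $t^{N}$, i.e.\ $\alpha\notin R_\infty$. Using the jet schemes $X_\infty=\varprojlim_e X_e$, $Y_\infty=\varprojlim_e Y_e$, set $F_e:=X_e\times_{Y_e}\Spec L$ (base change along $\beta_e$); since jets of a morphism of finite type are of finite type, each $F_e$ is of finite type over $L$, and $F=\varprojlim_e F_e$, $\O_{F,\alpha}=\varinjlim_e\O_{F_e,\alpha_e}$. It then suffices to show that for $e_0\gg0$ the map $\O_{F_{e_0},\alpha_{e_0}}\to\O_{F,\alpha}$ is surjective — equivalently, that $F\to F_{e_0}$ is a closed immersion in a neighbourhood of $\alpha$ — for then $F$ is of finite type over $L$ near $\alpha$. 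To prove this, present $X$ locally as a closed subscheme of $\A^{N}_Y$ defined by $h_1,\dots,h_c$; the equations of $F_e$ are the Hasse--Schmidt derivatives $h_\ell^{[m]}=0$ for $m\le e$, with the $Y$-variables specialised along $\beta$. The coefficient with which the level-$m$ jet coordinates occur in $h_\ell^{[m+N]}$ is, evaluated at $\alpha$, the order-$N$ coefficient of the Jacobian of $f$ along $\alpha$; since that Jacobian has cokernel $\alpha^*\Om_{X/Y}$, killed by $t^{N}$, its elementary divisors over $\kappa(\alpha)[[t]]$ divide $t^{N}$, and a Newton-type iteration uses this to eliminate, inside $\O_{F,\alpha}$, every jet coordinate of level $>e_0$ (for $e_0$ comparable to $N$) in terms of those of level $\le e_0$. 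Alternatively, the same conclusion should follow from the refinements \cref{t:finite-fiber-1,t:finite-fiber-2}, valid on $X_\infty\setminus R_\infty$, combined with the reducedness input \cref{t:unramified-reduced-arc-fiber}: the fiber is then finite and reduced at $\alpha$, so $\O_{F,\alpha}$ is a finite field extension of $L$.

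The difficulty in this last step is twofold. The equations $h_\ell^{[m+N]}=0$ are only quasi-linear in the top jet coordinates, so the elimination must be run as a successive-approximation argument whose termination has to be controlled uniformly by $N$; and one is forced to work with the relative jet schemes $F_e$ rather than with jet schemes over $k$, because $f$ need not be quasi-finite near $\alpha(0)$ — so \cref{t:finite-fiber} does not apply verbatim — and $Y$ need not be of finite type over $k$. This is also why the preliminary localization and the finiteness of $L\subseteq\kappa$ from \cref{t:residue-field-extension} are worth arranging at the outset.
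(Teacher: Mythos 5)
Your proposal proves only the final (``moreover'') clause of the theorem and leaves the four-way equivalence essentially untouched. Both arguments you give concern the fiber $F=f_\infty^{-1}(\b)$: the implication ``$F$ locally of finite type at $\a$ $\Rightarrow$ $f$ unramified at $\a(\e)$'' via infinite-dimensionality of $\Om_{X_\infty/Y_\infty}\otimes k_\a$ (this is in substance the paper's argument, though the relevant input is the formula $\Om_{X_\infty/Y_\infty}\simeq\Om_{X/Y}\otimes P_\infty$ and part \eqref{i1:MotimesPinfty} of \cref{l:MotimesPinfty} rather than \cref{t:differentials-cotangent-map}), and its converse, for which your ``alternative'' route through \cref{t:finite-fiber-1} and \cref{t:unramified-reduced-arc-fiber} is exactly what the paper does, making the jet-elimination/Newton iteration unnecessary. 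What is missing are the implications (4)$\Rightarrow$(1) and (3)$\Rightarrow$(4), which involve unramifiedness of $f$ at the \emph{special} point $\a(0)$ and are the actual content of the statement. For (4)$\Rightarrow$(1) one needs an argument such as the paper's: locally near $\a(0)$ an unramified morphism factors as a closed immersion followed by an \'etale morphism, and arc spaces commute with \'etale base change, so $f_\infty$ is unramified at $\a$; nothing in your text substitutes for this. For (3)$\Rightarrow$(4) the fiber-of-differentials argument you rely on cannot work: in the critical case where $f$ is unramified at $\a(\e)$ but ramified at $\a(0)$ (e.g.\ $y=x^2$ with $\a$ the generic arc of contact order $1$ with the origin, \cref{ex:not-formally-unramified,ex:not-quasi-finite}), the fiber $\Om_{X_\infty/Y_\infty}\otimes k_\a$ vanishes and the arc fiber is even finite and reduced, yet $f_\infty$ is \emph{not} locally of finite type at $\a$. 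Detecting this requires the stalk $\Om_{X_\infty/Y_\infty,\a}$, and the key missing idea is part \eqref{i2:MotimesPinfty} of \cref{l:MotimesPinfty}: $(M\otimes P_\infty)_\a\ne 0$ whenever $M_{\a(0)}\ne 0$, proved by deforming $\a$ to the $k_\a[[s]]$-arc $\a(s+t)$, over which the presentation matrix has entries $(s+t)^{a_i}$ that are no longer killed by the tensor product; combined with the vanishing (or infinite-dimensionality) of the fiber and Nakayama, this rules out local finite type. This stalk computation, together with the \'etale-local factorization for (4)$\Rightarrow$(1), is the core of the proof and is absent from your proposal.

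A secondary inaccuracy: the extension $k_\a/k_\b$ is not finite in general, and \cref{t:residue-field-extension} gives finiteness only when $k_{\a(\e)}/k_{\b(\e)}$ is finite separable (see \cref{ex:counterex-res-field-finite}); this is harmless where you actually invoke it (there $f$ is unramified at $\a(\e)$), but it cannot be assumed at the outset for arbitrary $\a$.
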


This theorem implies in particular that $f$ is unramified if and only if all fibers of $f_\infty$
are of finite type (see \cref{t:unramified-eq-finite-type}).

The results on arc fibers discussed above provide new tools 
to study arc spaces using projections to infinite dimensional affine spaces.
Looking at such projections is not new: for instance, it is 
the approach followed in \cite{Dri02}, where the Weierstrass preparation theorem 
is applied to eliminate variables after passing to completion;
for later applications of this approach, see also, e.g., 
\cite{BNS16,BS17,Ngo17,Reg18,MR18,Bou20}. 
What is new here is that we have gained more control on these projections
before restricting to formal neighborhoods. 

Several applications about the structure of arc spaces
are collected in \cref{s:semicontinuity-embdim,s:stable-points,s:semicont,s:div-val}. 
We review some of them here.

\subsection{Local rings at stable points}

Our first application concerns stable points of arc spaces, which are defined when $X$ is a variety. 
The notion of stable point traces back to \cite{DL99}, and their properties 
have been studied in \cites{ELM04,Reg06,dFEI08,Reg09,MR18,dFD20,Reg21,BMCS22}.
According to one of the many equivalent definitions, 
a point $\a \in X_\infty$ is \emph{stable} if it is the 
generic point of a constructible set of $X_\infty$
and is not contained in the arc space of the singular locus of $X$. 
Stable points are related to divisorial valuations on the variety, 
and among them an important class consists of the 
\emph{maximal divisorial arcs}, which 
provide the link between the geometry of $X_\infty$ and the birational geometry of $X$. 

A crucial result about stable points is the curve selection lemma
established in \cites{Reg06,Reg09}, which
provides the main tool to study the Nash problem \cite{Nas95} --
see for instance \cites{LJR12,FdBPP12,dFD16,BLM22} where the curve selection lemma is
used to solve several cases of the Nash problem. 
Reguera's proof of the curve selection lemma relies on showing
that the completed local ring $\^{\O_{X_\infty,\a}}$ at a stable point 
is Noetherian. This, in turn, was proved by showing that the maximal ideal of the reduced local ring 
$\O_{(X_\infty)_\red,\a}$ is finitely generated, and 
the question whether $\O_{(X_\infty)_\red,\a}$ may actually be Noetherian was raised in \cite{Reg09}.
Note, by contrast, that the local ring $\O_{X_\infty,\a}$ (before reduction and completion)
is not Noetherian in general; explicit examples are computed in \cref{eg:node,eg:cusp}, c.f.\ \cite[Example 3.16]{Reg09}.

Here we establish new finiteness properties of local rings at a stable points. 
In the following statement, we denote by $\dim(A)$ the Krull dimension
of a local ring $A$ and by $\embdim(A)$ its embedding dimension. 

\begin{introtheorem}[\cref{t:fg-top-noetherian}]
\label{t:intro:fg-top-noetherian}
Let $X$ be a variety over a perfect field $k$ and $\a \in X_\infty$ a stable point.
\begin{enumerate}
\item
\label{i1:intro:fg-top-noetherian}
The maximal ideal of the local ring $\cO_{X_\infty,\a}$ is finitely generated.
\item
\label{i2:intro:fg-top-noetherian}
The scheme $\Spec \cO_{X_\infty,\a}$ is topologically Noetherian and 
\[
\dim (\cO_{X_\infty,\a}) \leq \embdim (\cO_{X_\infty,\a}).
\]
\end{enumerate}
\end{introtheorem}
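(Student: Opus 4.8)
The plan is to deduce the statement from the finiteness theorems on arc fibres by comparing $X_\infty$ near $\a$ with the arc space of an affine space. First I would reduce to the case $X$ affine and set $d := \dim X$. Since $\a$ is stable it is not contained in $\Sing X$, and in fact its generic point $\a(\e)$ lies in $X\setminus\Sing X$; after fixing a closed embedding $X\hookrightarrow\A^N$, a sufficiently general linear projection restricts to a finite surjection $f\colon X\to\A^d$ that is unramified — indeed étale — at $\a(\e)$, and I fix such an $f$. Put $\b := f_\infty(\a)$. The first substantive point is that $\b$ is again a stable point of $(\A^d)_\infty$ and that $B := \cO_{(\A^d)_\infty,\b}$ is Noetherian, indeed regular: smoothness of $\A^d$ makes the truncations $(\A^d)_{m+1}\to(\A^d)_m$ into Zariski-locally trivial $\A^d$-bundles, so $B$ is a filtered colimit of the regular local rings $\cO_{(\A^d)_m,\b_m}$ along local homomorphisms each of which is a localization of a polynomial extension at the generic point of its one-point fibre; since $\dim$ and $\embdim$ of these rings stabilize, $B$ is regular of that common finite dimension. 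Stability of $\b$ I would get by transporting the cylinder defining $\a$ through $f_\infty$ and invoking that $f_\infty$ has topologically finite fibres (\cref{t:intro:finite-fiber}).

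Next I descend to $X$, writing $A := \cO_{X_\infty,\a}$, $\fm_A$ for its maximal ideal, and $\fm_B\subset B$ for the maximal ideal of $B$. By \cref{t:intro:finite-fiber} the fibre $f_\infty^{-1}(\b)$ is a finite set, and by the last assertion of \cref{t:intro:locally-finite-type} it is locally of finite type at $\a$, since $f$ is unramified at $\a(\e)$; hence the fibre ring $A/\fm_B A\cong\cO_{f_\infty^{-1}(\b),\a}$ is a zero-dimensional, essentially-of-finite-type $\kappa(\b)$-algebra, that is, Artinian. Therefore $\fm_A/\fm_B A$, being the maximal ideal of an Artinian local ring, is finitely generated, and since $\fm_B$ is finitely generated ($B$ being Noetherian) so is $\fm_A = \fm_B A + (\text{finitely many lifts})$. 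This proves \eqref{i1:intro:fg-top-noetherian}, and shows in addition that $\sqrt{\fm_B A} = \fm_A$.

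For \eqref{i2:intro:fg-top-noetherian} I would pass to the $\fm_A$-adic completion $\widehat A$. By \eqref{i1:intro:fg-top-noetherian} the graded ring $\gr_{\fm_A}(A)$ is a quotient of a polynomial ring in finitely many variables over $\kappa(\a)$, hence Noetherian, so $\widehat A$ is a Noetherian complete local ring with $\embdim\widehat A = \embdim A$. One checks that $\bigcap_n\fm_A^n$ lies in the nilradical of $A$ — equivalently that $A_\red$ is $\fm$-adically separated, which is part of the basic theory of stable points — so that $\widehat A = \widehat{A_\red}$ and the completion map becomes faithfully flat after killing nilpotents; hence $\Spec\widehat A\to\Spec A$ is surjective, and $\Spec A$, being a continuous image of the Noetherian space $\Spec\widehat A$, is topologically Noetherian. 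Finally $\dim A = \dim A_\red = \dim\widehat A\le\embdim\widehat A = \embdim A$, which is the asserted inequality.

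The hard part will be the first step: proving that the projected point $\b$ is stable and, with it, that $\fm_B$ is finitely generated — in other words, the smooth case of the theorem — together with the separatedness input used in the completion argument. An alternative route to \eqref{i1:intro:fg-top-noetherian} and to the inequality, bypassing the projection, is to realize $A$ directly as a filtered colimit $\varinjlim_{m\gg0}\cO_{X_m,\eta_m}$ of Noetherian local rings whose truncation maps are, away from $(\Sing X)_\infty$, flat with zero-dimensional closed fibre: finite generation of $\fm_A$ then comes from the stabilization of $\fm_{\cO_{X_m,\eta_m}}$, as in Reguera's analysis, and $\dim A\le\embdim A$ from the same inequality at each finite level. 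In either approach the genuinely delicate point is to control how the ramification of $f$ — equivalently, the singularities of $X$ — enters the truncation maps and the thickened arc fibres.
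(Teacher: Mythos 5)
Your route to part \eqref{i1:intro:fg-top-noetherian} is sound and close in spirit to the paper's: project to $\A^d$ by a general linear map \'etale at $\a(\e)$, use that $\cO_{Y_\infty,\b}$ is regular with finitely generated maximal ideal, and control the arc fibre at $\a$. (The paper gets the stronger statement $\fm_\a=\fm_\b\cdot\cO_{X_\infty,\a}$ from reducedness of the fibre, \cref{t:unramified-reduced-arc-fiber}, while you settle for an Artinian fibre ring via the last assertion of \cref{t:locally-finite-type}; both suffice.) One caveat in your first step: stability of $\b$ cannot be obtained by ``transporting the cylinder through $f_\infty$ and invoking topologically finite fibres''. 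Finiteness of fibres gives no Chevalley-type constructibility of images, and $f_\infty$ is typically \emph{not} locally of finite type at $\a$ (this is exactly \cref{t:locally-finite-type} when $f$ is ramified at $\a(0)$), so the image of a constructible set need not be constructible. The paper instead deduces stability of $\b$ from the characterization of stable points by finite embedding dimension (\cref{t:stable-points-finite-embdim}) together with the cotangent-map estimate (\cref{c:embdim-smooth-unramified}, i.e.\ \cref{c:reguera-4-8}).

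The genuine gap is in part \eqref{i2:intro:fg-top-noetherian}. Your argument needs two unproved inputs: that $\bigcap_n\fm_\a^n\subseteq\Nil(\cO_{X_\infty,\a})$ (the known result, cited in \cref{p:not-reduced-not-noetherian}, is the \emph{opposite} inclusion), and, more seriously, that $(\cO_{X_\infty,\a})_\red\to\widehat{(\cO_{X_\infty,\a})_\red}$ is faithfully flat. Flatness of completion is a Noetherian phenomenon; a local ring with finitely generated maximal ideal need not even be $\fm$-adically separated (rank-two valuation rings with principal maximal ideal give reduced counterexamples), and whether $\cO_{(X_\infty)_\red,\a}$ is Noetherian is precisely Reguera's open \cref{q:Noetherian}, which this paper does not resolve. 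Without faithful flatness you get neither surjectivity of $\Spec\widehat{\cO_{X_\infty,\a}}\to\Spec\cO_{X_\infty,\a}$ nor the chain-lifting needed for $\dim\le\embdim$, so the completion route collapses. The paper avoids completions entirely: by \cref{c:cotangent-map-isom} the cotangent map of $f_\infty$ at $\a$ is an isomorphism, so $\embdim(\cO_{X_\infty,\a})=\embdim(\cO_{Y_\infty,\b})=\dim(\cO_{Y_\infty,\b})$; by \cref{t:finite-fiber} the induced map $\Spec\cO_{X_\infty,\a}\to\Spec\cO_{Y_\infty,\b}$ has topologically finite fibres; and the purely topological \cref{l:top-noetherian} then yields topological Noetherianity and $\dim(\cO_{X_\infty,\a})\le\dim(\cO_{Y_\infty,\b})$. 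You should replace your completion argument by this (or an equivalent) fibre-theoretic one.
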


If $\a = \a_v$ is the maximal divisorial arc associated to a divisorial valuation $v$ on $X$, 
then the embedding dimension of the local ring is computed by
\[
\embdim(\cO_{X_\infty,\a_v}) = \^a_v(X)
\]
where the right-hand-side is the \emph{Mather log discrepancy} of $v$
defined in \cite{dFEI08}.
This formula follows from the results of \cites{Reg18,MR18} in characteristic zero, 
and was proved in full generality in \cite{dFD20}.

There is no general formula computing the Krull dimension of $\cO_{X_\infty,\a_v}$
and the only exact formula we are aware of is for the
dimension of the completion $\widehat{\cO_{X_\infty,\a_v}}$ in the case
 $X$ is a toric variety \cites{Reg21,BMCS22}. 
In general, it is a theorem of \cite{MR18} that 
if the ground field has characteristic zero then
\[
\dim (\^{\cO_{X_\infty,\a_v}}) \geq a_{v}^{\MJ}(X)
\]
where the right-hand-side is the \emph{Mather--Jacobian log discrepancy} of $v$, 
an invariant of singularities studied in \cites{Ish13,dFD14,EI15}. 
A different proof of this formula was later given in \cite{CdFD}.

Here we remove the characteristic zero assumption in this last result of Mourtada and Reguera
and extend the above formulas to all stable points.

\begin{introtheorem}[\cref{t:stable=divisorial+bounds}]
\label{t:intro:stable=divisorial+bounds}
Let $X$ be a variety over a perfect field $k$ and $\a \in X_\infty$ a stable point. 
Assume that $\a(0)$ is not the generic point of $X$. 
\begin{enumerate}
\item
The valuation $v = v_\a$ defined by $\a$ is divisorial.
\item
If $\a_{v} \in X_\infty$ is the maximal divisorial arc associated to $v$
and $c = \codim(\a,\a_v)$, then
\[
\embdim(\O_{X_\infty,\a}) = c + \embdim(\O_{X_\infty,\a_v}) = c + \^a_{v}(X)
\]
and
\[
\dim (\^{\cO_{X_\infty,\a}}) \geq c + a_{v}^{\MJ}(X).
\]
Moreover, $\codim(\a,\a_v)$ is birationally invariant, in the sense that
for every proper birational morphism $X' \to X$, if
$\a'$ and $\a_v'$ are the lifts of $\a$ and $\a_v$ to $X'_\infty$ then
$\codim(\a',\a_v') = \codim(\a,\a_v)$. 
\end{enumerate}
\end{introtheorem}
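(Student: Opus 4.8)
The plan is to distill from the stable point $\a$ a single divisorial valuation $v$, to pass to a model of $X$ on which $v$ is realized by a prime divisor lying in the smooth locus, and then to transfer the already known statements for the maximal divisorial arc $\a_v$ back to $\a$ by means of the arc‑fiber finiteness results of the earlier sections, which here play the role that resolution of singularities plays in characteristic zero. At the outset I would reduce to the case that $\a$ dominates $X$: otherwise $\a$ factors through the closed subvariety $Z=\overline{\{\a(\e)\}}$, which is not contained in the singular locus of $X$ (else $\a\in Z_\infty\subseteq(X_\sing)_\infty$, contradicting stability), and replacing $X$ by $Z$ changes neither $\O_{X_\infty,\a}$ nor any of the invariants in the statement, the hypothesis on $\a(0)$ then saying exactly that $v_\a$ is nontrivial.

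For part~(1), recall that since $\a$ is stable its closure $\overline{\{\a\}}\subseteq X_\infty$ is an irreducible cylinder $\pi_n^{-1}(W)$ not contained in $(X_\sing)_\infty$, and $v_\a$ is the order function $\ord_{\overline{\{\a\}}}$ attached to this cylinder. The claim is the arc‑space form of the identification in \cite{ELM04} of order functions of cylinders with divisorial valuations, and the only new point is to dispense with resolution. I would exhibit a normal model $h\colon X'\to X$ that is smooth along a prime divisor $E$ with $v_\a=q\cdot\ord_E$ — such a model exists over a perfect field, where regularity and smoothness coincide pointwise, so a normalized blow‑up of a suitable valuation ideal followed by one further blow‑up suffices — using the finiteness of the cylinder $\overline{\{\a\}}$ to control the valuation ideals $\{f:v_\a(f)\ge m\}$, and the Abhyankar inequality, together with the positive transcendence degree coming from the finite‑type base $W$, to see that the center of $v_\a$ is a divisor.

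For the embedding‑dimension formula in part~(2), lift $\a$ and $\a_v$ to arcs $\a',\a_v'$ on such a model $X'$ (possible since both are dominant, hence lie in no proper cylinder of $X$). Over the smooth locus of $X'$ the arc space is locally a product of affine spaces and the local rings at stable points are explicit regular local rings, from which one reads off directly that $\embdim(\O_{X'_\infty,\a'})=c'+\embdim(\O_{X'_\infty,\a_v'})$ with $c'=\codim(\a',\a_v')$, the $c'$ extra parameters being exactly those cut out in passing from the generic point $\a_v'$ of the maximal divisorial set of $v$ on $X'$ to its specialization $\a'$. To descend to $X$ I would compare the cotangent spaces of $X_\infty$ and $X'_\infty$ along $h_\infty$ using \cref{t:differentials-cotangent-map}: the discrepancy is governed by $\ord_v$ of the Jacobian ideal of $h$, which depends only on $v$, hence is the same at $\a$ and at $\a_v$ and cancels from the difference, giving $\embdim(\O_{X_\infty,\a})=c+\embdim(\O_{X_\infty,\a_v})$ and $c'=c$; combined with the known equality $\embdim(\O_{X_\infty,\a_v})=\^a_v(X)$ of \cite{dFD20}, this is the assertion. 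Running the same comparison for an arbitrary proper birational morphism — using the bijectivity of the induced map on arc spaces away from exceptional loci recorded in the introduction — yields the birational invariance of $\codim(\a,\a_v)$.

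For the dimension bound, the upper‑bound half $\dim(\^{\O_{X_\infty,\a}})\le\embdim(\^{\O_{X_\infty,\a}})=\embdim(\O_{X_\infty,\a})$ is part~(2) of \cref{t:fg-top-noetherian}, and the bootstrap $\dim\O_{X_\infty,\a}=c+\dim\O_{X_\infty,\a_v}$ is obtained at finite level, since for $m\gg0$ the truncations $X_m$ are well behaved near $\a_m$ and $\codim(\a_m,(\a_v)_m)$ stabilizes to $c$; so everything reduces to the new inequality $\dim(\^{\O_{X_\infty,\a_v}})\ge a_v^{\MJ}(X)$ in arbitrary characteristic. For this I would again use the model $h\colon X'\to X$: the fiber of $h_\infty$ over the dominant arc $\a_v$ is a single point by the bijectivity away from exceptional loci, and combining this with properness of $h$, \cref{t:residue-field-extension} and the arc‑fiber finiteness of \cref{t:finite-fiber} forces the induced map $\^{\O_{X_\infty,\a_v}}\to\^{\O_{X'_\infty,\a_v'}}$ on completed local rings to be finite, hence dimension‑preserving; and on the model, which is smooth along $E$, the ring $\O_{X'_\infty,\a_v'}$ is regular with dimension equal to the log discrepancy $a_v(X')$, which exceeds $a_v^{\MJ}(X)$ by the chain‑rule comparison $\ord_v(\mathfrak{j}_X)\ge\ord_v(\mathrm{Jac}(h))$ between the Jacobian ideal of $X$ and that of $h$. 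I expect this last step to be the main obstacle: the existing proofs (\cite{MR18}, \cite{CdFD}) rely on a full resolution of singularities, which is unavailable here, so one must work with a model that is merely smooth along $E$ and control its discrepancy with the singular $X$ — and it is precisely the finiteness of arc fibers, not any resolution, that makes the comparison of completed local rings dimension‑preserving, with the positive‑characteristic subtleties (inseparable residue‑field extensions at arcs over the ramification locus) absorbed via \cref{t:residue-field-extension} and \cref{t:differentials-cotangent-map} and the standing perfectness of $k$.
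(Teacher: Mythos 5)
Your outline for the embedding--dimension formula starts along the same lines as the paper (pass to a model $X'\to X$ on which $v=q\ord_E$, use the Jacobian/cotangent comparison of \cref{t:differentials-cotangent-map}, i.e.\ the equality case of \cref{c:embdim-smooth-unramified}), but two essential steps are missing. First, part (1) is asserted rather than proved: producing a normal model on which $v_\a=q\ord_E$ for a prime divisor $E$ \emph{is} the statement that $v_\a$ is divisorial, and the Abhyankar inequality only bounds $\trdeg_k k_{v_\a}$ from above by $\dim X-1$; what must be shown is the lower bound, i.e.\ that stability forces the residue field of $v_\a$ to have maximal transcendence degree, and your sketch (``normalized blow-up of a suitable valuation ideal'', ``positive transcendence degree coming from the base $W$'') supplies no mechanism for this, which is exactly the point where the characteristic-zero proofs use resolution. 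The paper instead projects to $Y=\A^d_k$ by a general linear (generically \'etale) projection, notes that $\b=f_\infty(\a)$ is stable, and proves $v_\b$ divisorial by induction on $\embdim(\O_{Y_\infty,\b})$, which strictly drops when one blows up the smooth center of $\b(0)$ (by \cref{c:embdim-smooth-unramified}); divisoriality of $v_\a$ then follows from $k_{v_\b}\subset k_{v_\a}$. Second, in part (2) your Jacobian cancellation only yields $\embdim(\O_{X_\infty,\a})=\codim(\a',\a'_v)+\embdim(\O_{X_\infty,\a_v})$, with the codimension computed on $X'_\infty$; the identification $\codim(\a',\a'_v)=\codim(\a,\a_v)$ does not follow from the bijectivity of $h_\infty$ away from the exceptional locus, because that bijection is not a homeomorphism: injectivity gives $\codim(\a',\a'_v)\le\codim(\a,\a_v)$, but a chain of specializations in $\Spec\O_{X_\infty,\a}$ between $\a_v$ and $\a$ need not lift to a chain upstairs. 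The reverse inequality is precisely the semicontinuity theorem \cref{t:semicont} (which gives $\embdim(\O_{X_\infty,\a})\ge c+\embdim(\O_{X_\infty,\a_v})$), and your argument neither invokes it nor replaces it; without it both the displayed equality and the birational invariance of $\codim(\a,\a_v)$ are unproved.

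The argument you propose for $\dim(\^{\cO_{X_\infty,\a}})\ge c+a_v^{\MJ}(X)$ would fail. The morphism $h$ is proper birational, not quasi-finite, so \cref{t:finite-fiber} does not apply, and the map $\^{\cO_{X_\infty,\a_v}}\to\^{\cO_{X'_\infty,\a'_v}}$ is not finite or dimension-preserving: already for $X=\A^2_k$ and $v=\ord_E$ the exceptional divisor of one point blow-up, $\dim\^{\cO_{X_\infty,\a_v}}=2$ while $\dim\cO_{X'_\infty,\a'_v}=q=1$. Likewise the ``chain-rule'' inequality $\ord_v(\Jac_X)\ge\ord_v(\Jac_h)$ is false whenever $X$ is smooth and $h$ is not an isomorphism (left side $0$, right side positive), and even if granted your chain would output $q\ge a_v^{\MJ}(X)$, which is the wrong comparison; the reduction $\dim\^{\cO_{X_\infty,\a}}\ge c+\dim\^{\cO_{X_\infty,\a_v}}$ ``at finite level'' is also unsubstantiated for these non-Noetherian rings. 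The paper avoids all of this: it combines the already-proved equality $\embdim(\O_{X_\infty,\a})=c+\^a_v(X)$ with the embedding-codimension bound $\embcodim(\O_{X_\infty,\a})\le\ord_\a(\Jac_X)=q\,\ord_E(\Jac_X)$ from \cref{t:embcodim-bound} and the comparison between $\dim(\^{\cO})$, $\embdim$ and $\embcodim$ from \cite{CdFD}, exactly as in \cite[Theorem~11.1]{CdFD}. Finally, your preliminary reduction replacing $X$ by $\overline{\{\a(\e)\}}$ is both unnecessary (a stable point with nontrivial valuation is a dominant arc) and incorrect as stated, since $\O_{X_\infty,\a}$, $\^a_v$ and $a_v^{\MJ}$ all change when $X$ is replaced by a proper subvariety.
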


A key ingredient in the proof of \cref{t:intro:stable=divisorial+bounds} is the semicontinuity
property of embedding dimension, which is discussed next.

\subsection{Semicontinuity properties}

In our recent papers \cites{dFD20,CdFD}, we proposed to look at two 
invariants of local rings as tools to study the local structure of arc spaces:
the \emph{embedding dimension} $\embdim(\O_{X_\infty,\a})$ of the local ring
at a point $\a \in X_\infty$ (which we already encountered in this introduction), 
and its \emph{embedding codimension} $\embcodim(\O_{X_\infty,\a})$. 

It was proved in \cite{dFD20} that
the embedding dimension of $\O_{X_\infty,\a}$ measures the \emph{jet codimension} of $\a$, and 
its finiteness characterizes {stable points}. 
Since in general the local ring $\O_{X_\infty,\a}$ is not Noetherian, 
the notion of embedding codimension is more subtle in this context compared to the 
more familiar Noetherian setting. It was recently studied in \cite{CdFD}, 
where it is shown that its finiteness characterizes arcs that are not entirely 
contained in the singular locus of $X$. 

These invariants were studied for Noetherian local rings in \cite{Lec64}, where
it was proved that they satisfy a certain semicontinuity property. 
To the best of our knowledge, the 
question whether Lech's result extends beyond the Noetherian setting remains open.

The next result extends Lech's semicontinuity theorem to local rings of arc spaces
of schemes of finite type, a setting that provides
many interesting examples of non-Noetherian rings. 
This is particularly relevant if one wants to consider these invariants
as measures of singularities of arc spaces,
a point of view that was adopted in \cite{CdFD}.

\begin{introtheorem}[\cref{t:semicont}]
\label{t:intro:semicont}
Let $X$ be a scheme locally of finite type over a perfect field $k$. 
Let $\a,\a' \in X_\infty$ be two points with $\a'$ specializing to $\a$, 
and let $c = \codim(\a,\a')$.
\begin{enumerate}
\item
\label{i1:intro:semicont}
We have
\[
\embdim(\O_{X_\infty,\a}) \ge c + \embdim(\O_{X_\infty,\a'}).
\]
\item
\label{i2:intro:semicont}
If $\embdim(\O_{X_\infty,\a}) < \infty$
and $\dim(\gr(\O_{X_\infty,\a})) \le c + \dim(\gr(\O_{X_\infty,\a'}))$, 
then 
\[
\embcodim(\O_{X_\infty,\a}) \ge \embcodim(\O_{X_\infty,\a'}).
\]
\end{enumerate}
\end{introtheorem}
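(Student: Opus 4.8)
The plan is to deduce the embedding‑codimension bound from the embedding‑dimension bound by a purely formal manipulation, and to prove the embedding‑dimension bound by descending to the jet schemes $X_n$, where it becomes an elementary dimension count. Write $A:=\cO_{X_\infty,\a}$, let $\fp\subset A$ be the prime corresponding to $\a'$ (so $\cO_{X_\infty,\a'}=A_\fp$ and $c=\codim(\a,\a')=\dim(A/\fp)$), and for $\beta\in X_\infty$ let $\beta_n\in X_n$ denote the image of $\beta$ under the truncation $\psi_n\colon X_\infty\to X_n$, recalling that $\cO_{X_\infty,\beta}=\varinjlim_n\cO_{X_n,\beta_n}$.

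For the first inequality I would first dispose of the case $\embdim(A)=\infty$, where there is nothing to prove, and assume $\embdim(A)<\infty$. The main input is the jet‑codimension description of embedding dimension from \cite{dFD20}: for every $\beta$,
\[
\embdim(\cO_{X_\infty,\beta})=\sup_n\dim\cO_{X_n,\beta_n},
\]
and the sequence $n\mapsto\dim\cO_{X_n,\beta_n}$ is non‑decreasing, hence eventually equal to its supremum whenever that supremum is finite. Applied to $\a$ and $\a'$ this gives integers $a_n:=\dim\cO_{X_n,\a_n}$ and $d_n:=\dim\cO_{X_n,\a'_n}$ with $a_n=\embdim(A)$ and $d_n=\embdim(A_\fp)$ for $n\gg 0$; in particular $\embdim(A_\fp)<\infty$. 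For the codimension I would note that $\fp\cap\cO_{X_n,\a_n}=\fp_{\a'_n}$, so that $A/\fp=\varinjlim_n\cO_{\overline{\{\a'_n\}},\a_n}$; since the dimension of a filtered colimit of rings is at most the supremum of the dimensions, $c\le\sup_n c_n\le\embdim(A)<\infty$ where $c_n:=\dim\cO_{\overline{\{\a'_n\}},\a_n}$, and conversely a chain of primes of length $c$ in $A/\fp$ pulls back, for $n\gg0$, to a chain of the same length in $\cO_{\overline{\{\a'_n\}},\a_n}$, so $c_n\ge c$ for $n\gg0$. Finally, $X_n$ is a Noetherian scheme, so concatenating a chain of primes of $\cO_{X_n,\a_n}$ passing through $\fp_{\a'_n}$ yields
\[
a_n=\dim\cO_{X_n,\a_n}\ \ge\ \dim\bigl(\cO_{X_n,\a_n}/\fp_{\a'_n}\bigr)+\height\,\fp_{\a'_n}\ =\ c_n+d_n .
\]
Choosing $n$ large enough that $a_n=\embdim(A)$, $d_n=\embdim(A_\fp)$ and $c_n\ge c$ hold simultaneously then gives $\embdim(A)\ge c+\embdim(A_\fp)$.

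The second inequality is then automatic. Since $\embdim(A)<\infty$ one has $\embcodim(A)=\embdim(A)-\dim\gr(A)$, and the first inequality forces $\embdim(A_\fp)\le\embdim(A)-c<\infty$, whence also $\embcodim(A_\fp)=\embdim(A_\fp)-\dim\gr(A_\fp)$. Combining the first inequality with the hypothesis $\dim\gr(A)\le c+\dim\gr(A_\fp)$,
\[
\embcodim(A)=\embdim(A)-\dim\gr(A)\ \ge\ \bigl(c+\embdim(A_\fp)\bigr)-\bigl(c+\dim\gr(A_\fp)\bigr)=\embcodim(A_\fp).
\]

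I expect the only genuine difficulty to be the first input: one needs the jet‑codimension description of $\embdim(\cO_{X_\infty,\beta})$ together with the monotonicity of jet codimensions under truncation in the stated generality — $X$ merely locally of finite type over a perfect field (hence possibly non‑reduced and reducible) and $\a$ not assumed stable — which should be available from \cite{dFD20}, perhaps after a routine adaptation. Everything else is bookkeeping with filtered colimits and the elementary bound $\dim R\ge\dim(R/\fq)+\height\fq$ for a prime $\fq$ of a Noetherian local ring $R$. It is worth noting that Lech's theorem \cite{Lec64} is not invoked anywhere here: the passage through jet schemes takes its place, so the statement extends Lech's semicontinuity theorem but by a different route.
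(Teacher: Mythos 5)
Your proposal is correct in outline, and it is precisely the alternative route that the authors themselves flag in the remark following \cref{t:semicont}: when $X$ is a variety, the theorem "can alternatively be proved by using the interpretation of embedding dimension as jet-codimension from \cite{dFD20}". The paper's own proof of part (1) never leaves the arc space: it uses \cref{r:stable-points-finite-embdim} to put $\a(\e)$ in the smooth locus, picks a general linear projection $f\colon X\to \A^d_k$ whose cotangent maps at both $\a$ and $\a'$ are isomorphisms (\cref{c:cotangent-map-isom}, \cref{r:cotangent-map-isom}), invokes part (1) of \cref{t:finite-fiber-1} to see that $\Spec\O_{X_\infty,\a}\to\Spec\O_{Y_\infty,\b}$ (with $\b=f_\infty(\a)$) has topologically finite fibers, so chains of primes push forward without collapsing and $\codim(\b,\b')\ge c$, and then concludes inside $\O_{Y_\infty,\b}$, a localization of a polynomial ring, hence regular and catenary. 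You instead descend to the Noetherian jet schemes via $\embdim(\O_{X_\infty,\a})=\sup_n\dim\O_{X_n,\a_n}$ and do the chain count there; the colimit bookkeeping, the contraction of a fixed finite chain for $n\gg 0$, and the concatenation bound $\dim R\ge\dim(R/\fq)+\height\fq$ (which in fact needs no Noetherian hypothesis) are all fine, and your part (2) is deduced exactly as in the paper, from the identity $\embdim=\dim\gr+\embcodim$ for local rings of finite embedding dimension --- note this identity is not purely formal in the non-Noetherian setting and is \cite[Proposition~6.2]{CdFD}, which should be cited rather than asserted. What each approach buys: yours trades the paper's new arc-fiber machinery for the truncation-map results of \cite{dFD20}, while the paper's argument showcases general projections and its finiteness theorems, which is part of its stated purpose.

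The one point you must pin down is the input you yourself flag: you need, for $X$ (locally) of finite type over a perfect field --- possibly reducible and non-reduced --- and for arbitrary, not necessarily stable, points of $X_\infty$, that the embedding dimension equals the limit of the non-decreasing sequence $n\mapsto\dim\O_{X_n,\a_n}$, with jet codimension expressed through these heights in the jet schemes, since that is where your chain argument takes place; the paper's remark only asserts the jet-codimension route for varieties, so verify that the statement and the monotonicity lemma in \cite{dFD20} cover this generality (the reduction from locally of finite type to an affine chart is immediate, since $\a'(0)$ generizes $\a(0)$). Also, the sentence deducing "$d_n=\embdim(\O_{X_\infty,\a'})$ for $n\gg0$; in particular $\embdim(\O_{X_\infty,\a'})<\infty$" has the logic backwards as written: first note $d_n\le a_n$ because $\a'_n$ generizes $\a_n$, hence $\embdim(\O_{X_\infty,\a'})\le\embdim(\O_{X_\infty,\a})<\infty$, and only then does the bounded non-decreasing integer sequence $d_n$ attain its supremum. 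These are repairs of presentation, not of substance.
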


We recover from \cref{t:intro:semicont} the fact that the Mather log discrepancy
is upper-semicontinuous with respect to the order 
among divisorial valuations on a variety $X$ determined by specialization in $X_\infty$
of the associated maximal divisorial arcs (see \cref{t:semicont-^A-arc-top}), 
a property that can also be deduced from the results of \cite{dFEI08,dFD20}.
Such order among divisorial valuations is at the heart of the Nash problem \cite{Nas95,FdBPP12}
and its generalized formulation \cite{ELM04,Ish08,FdBPPPP17,BdlBdLFdBP22}, 
and for this reason it has been referred to as the \emph{Nash order} \cite{BLM22}; 
it is furthermore implicit in the applications of arc spaces to 
problems of lower-semicontinuity for minimal log discrepancies \cite{EMY03,EM04}
and minimal Mather--Jacobian log discrepancies 
\cite{Ish13,dFD14}. The fact that Mather log discrepancies are upper-semicontinuous
with respect to this order is in line with Ishii's lower-bound on 
minimal Mather log discrepancies proved in \cite{Ish13}. 

It is unclear what should replace the condition put in part \eqref{i2:intro:semicont}
of \cref{t:intro:semicont} if $\embdim(\O_{X_\infty,\a}) = \infty$ (i.e., when $\a$ is not stable), 
and it is natural to wonder about the behavior of the embedding codimension in this range.
As a partial result in this direction, we prove that the embedding codimension defines a function 
on the $k$-rational points of the open set $X_\infty^\o := X_\infty \setminus (\Sing X)_\infty$ 
that is locally constructible and is lower-semicontinuous 
on each stratum of a natural stratification of this space (see \cref{t:embcodim-constr} for details).
The proof uses results from \cite{GK00,Dri02,CdFD}.

\subsection{Closing remarks}
The arc space $X_\infty$ of a scheme $X$ captures, in a way 
that is not yet fully understood, certain features of the singularities of $X$.
Over the years, the geometry of arc spaces
has been investigated from many different points of view, especially when $X$ is a variety, and several discoveries 
have shown that $X_\infty$, while in general
an infinite dimensional object that is far from being Noetherian or 
of finite type, still manifests subtle finiteness properties. Notable examples are:
Greenberg's theorem implying the constructibility of the sets of liftable jets \cite{Gre66};
the theorems of Nash and Kolchin on the topology of $X_\infty$ and the families
of arcs through the singularities of $X$ \cite{Nas95,Kol73};
Drinfeld--Grinberg--Kazhdan's theorem 
on the structure of the formal neighborhoods of $X_\infty$ at its nondegenerate rational points \cite{GK00,Dri02};
and the aforementioned result of Reguera on the formal neighborhoods of
$X_\infty$ at its stable points \cite{Reg06,Reg09}.
The results of this paper provide new manifestations of finiteness in arc spaces.

\subsection*{Acknowledgements} 

We thank David Bourqui, Devlin Mallory and Johannes Nicaise for useful comments. We also want to thank the referee for many valuable comments and especially for bringing to our attention \cite[Remark 3.5]{MR18}, which allowed us to strengthen \cref{t:intro:stable=divisorial+bounds}.

\section{Preliminaries on arc spaces}

Let $X$ be a scheme over a field $k$. The \emph{arc space} $X_\infty$
of $X$ is the scheme over $k$ representing the functor of points given, for any
$k$-algebra $R$, by $R \mapsto \liminv_m X_m(R)$ where
$X_m(R)=\Hom_k(\Spec R[t]/(t^{m+1}),X)$ is the functor of points of the $m$-th
jet scheme of $X$ over $k$. By \cite[Remark~4.6]{Bh16}, the functor $X_\infty(R)$ is
naturally isomorphic to $\Hom_k(\Spec R[[t]],X)$. 
If $X = \Spec A$ where $A$ is a $k$-algebra, then $X_\infty = \Spec A_\infty$
where $A_\infty := \HS^\infty_{A/k}$ is the algebra of 
(Hasse--Schmidt) higher
differentials of $A$ over $k$. 
For a comprehensive introduction to the subject we refer the reader to \cite{Voj07,EM09a}. 

When we want to specify the ground field in the notation, 
we write $(X/k)_\infty$ for the arc space of $X$ over $k$. 
Given a point $x \in X$ on a scheme over a field $k$, if the ground field is
clear from the context then 
we will denote by $x_\infty$ the arc space of $\Spec k_x$ over $k$. 
For a field extension $L/K$ we denote by $(L/K)_\infty$ 
the arc space of $\Spec L$ regarded as a scheme over $K$. 

A point $\a \in X_\infty$ is called an \emph{arc} on $X$ and corresponds to a
morphism $\Spec k_\a[[t]] \to X$ where $k_\a$ is the residue field of $\a$. 
Conversely, any $k$-morphism $\a \colon \Spec K[[t]] \to X$, where $K/k$ is a field extension, 
will be regarded as a $K$-valued point of $X_\infty$ and will be called
a \emph{$K$-valued arc} (or simply an \emph{arc}) on $X$.

Given an arc $\a \colon \Spec K[[t]] \to X$, we consider the morphisms
$\a_0 \colon \Spec K \to X$ and $\a_\e \colon \Spec \LS{K} \to X$
induced by restriction, and let $\a(0), \a(\e) \in X$ be their images.
We call $\a(0)$ the \emph{special point} of $\a$ and $\a(\e)$ the \emph{generic point} of $\a$. 

Any arc $\a \colon \Spec K[[t]] \to X$ defines a 
semi-valuation $\ord_\a$ on the local ring $\O_{X,\a(0)}$, and hence
on $\O_X(U)$ for any open set $U \subset X$ containing $\a(0)$, 
by setting $\ord_\a(g) := \ord_t(\a^\sharp(g))$. 
Note that if $X$ is a variety and $\a(\e)$ is the generic point of $X$, then 
$\ord_\a$ extends to a $\Z$-valued valuation of the function field of $X$
which will be denoted by $v_\a$. 
We say that an arc $\a \in X_\infty$ has \emph{order of contact $q$} with a closed 
subscheme $Z \subset X$ if $\ord_\a(\I_Z) = q$ where $\I_Z \subset \O_X$ is the ideal sheaf of $Z$; 
if the set of such arcs is irreducible, then we we refer to its generic point as the 
\emph{generic arc with order of contact $q$} with $Z$.

%
%

\section{Finiteness of residue field extensions}

\label{s:residue-field-ext-finite}

The purpose of this section is to establish a finiteness result at the level of 
residue fields in \cref{t:residue-field-extension}. Loosely speaking, it states that given a morphism of schemes $f \colon X \to Y$
and an arc $\a \in X_\infty$, certain finiteness properties of the residue field extension of 
the generic point of the arc $\a(\e)$ over its image in $Y$ lift to analogous properties
of the residue field extension of $\a$ over its image in $Y_\infty$. 
The theorem lays the foundations for results about finiteness of arc fibers that are
proved in the following sections.

\begin{theorem}
\label{t:residue-field-extension}
Let $f \colon X \to Y$ be a morphism of schemes over a perfect field $k$. Let $\a \in X_\infty$ and $\b = f_\infty(\a)$, and write $x = \a(\eta)$, $y = \b(\eta)$. 
\begin{enumerate}
\item 
\label{i1:residue-field-extension}
If $k_x/k_y$ is algebraic, then so is $k_\a/k_\b$.
\item 
\label{i2:residue-field-extension}
If $k_x/k_y$ is finite separable, then so is $k_\a/k_\b$.
\end{enumerate}
\end{theorem}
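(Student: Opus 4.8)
I would first reduce to the affine case $X=\Spec A$, $Y=\Spec B$ with $B\to A$, and then—replacing $A$ and $B$ by their quotients by the primes $\mathfrak p_x$ and $\mathfrak p_y:=\mathfrak p_x\cap B$ cutting out $x$ and $y$—to the situation where $A,B$ are integral domains, $B\subseteq A$, $k_x=\operatorname{Frac}A$, $k_y=\operatorname{Frac}B$, and the arc $\alpha$ is a $k$‑algebra injection $\sigma_A\colon A\hookrightarrow k_\alpha[[t]]$ restricting to $\sigma_B:=\sigma_A|_B\colon B\hookrightarrow k_\beta[[t]]$; here one uses that an arc factors through the scheme‑theoretic image of its generic point and that residue fields are preserved under closed immersions of arc spaces. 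The key bookkeeping fact, from the universal property of $X_\infty$ via Hasse–Schmidt differentials, is that $k_\alpha$ is generated over $k$ by the coefficients of the series $\sigma_A(a)$, $a\in A$, and $k_\beta$ by those of $\sigma_B(b)$, $b\in B$; moreover $\sigma_B$ extends to an embedding $k_y\hookrightarrow\LS{k_\beta}\subseteq\LS{k_\alpha}$, compatibly with $k_x\hookrightarrow\LS{k_\alpha}$.

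\textbf{Part (1).} Given $a\in A$, algebraicity of $k_x/k_y$ gives a nontrivial relation $\sum_{i=0}^{d}q_i a^i=0$ with $q_i\in B$, $q_d\neq0$; applying $\sigma_A$ and using $\sigma_B(q_d)\neq0$ shows $\phi_a:=\sigma_A(a)\in\LS{k_\alpha}$ is algebraic over $\LS{k_\beta}$. Writing $\phi_a=\sum_n c_{a,n}t^n$, I would show by induction on $n$ that $c_{a,n}$ is algebraic over $k_\beta$: granting this for smaller indices and setting $K_{n-1}:=k_\beta(c_{a,0},\dots,c_{a,n-1})$, the element $t^{-n}\bigl(\phi_a-\sum_{j<n}c_{a,j}t^j\bigr)$ lies in the valuation ring of the $t$‑adic valuation on the finite extension $\LS{K_{n-1}}(\phi_a)/\LS{K_{n-1}}$ and has residue $c_{a,n}$, hence $c_{a,n}$ lies in the (finite, so algebraic over $K_{n-1}$) residue field of that extension. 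Thus every coefficient, and so $k_\alpha$, is algebraic over $k_\beta$.

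\textbf{Part (2), finiteness and the non‑degenerate case.} Now $k_x/k_y$ is finite separable. By the primitive element theorem (clearing denominators) I take $\theta\in A$ with $k_x=k_y(\theta)$ and $\theta$ separable over $k_y$; since every $a\in A$ is then a $k_y$‑polynomial in $\theta$ and $\sigma_B(k_y)\subseteq\LS{k_\beta}$, we get $k_\alpha=k_\beta(\text{coefficients of }\phi)$ with $\phi:=\sigma_A(\theta)$, so it suffices to treat $\phi$. The minimal polynomial $m_\theta$ of $\theta$ over $k_y$ is monic with nonzero discriminant, and as $\sigma_B$ is a field embedding, $\widetilde m:=\sigma_B(m_\theta)\in\LS{k_\beta}[Y]$ is monic, separable, and satisfied by $\phi$. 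Differentiating $\widetilde m(\phi)=0$ in $t$ and using $\widetilde m'(\phi)\neq0$ shows that all Hasse derivatives $D^{(n)}_t\phi$ lie in $E:=\LS{k_\beta}(\phi)$; evaluating at $t=0$ puts every coefficient of $\phi$ in the residue field $k_E$ of $E$, and since $t$ stays a uniformizer of $E$ we get $[k_\alpha:k_\beta]\le[k_E:k_\beta]=[E:\LS{k_\beta}]\le[k_x:k_y]$, which is finiteness. For separability, after replacing $B$ by the valuation ring $\mathcal O_v$ of $v:=\ord_t\circ\sigma_B$ on $k_y$ (this does not change $k_\beta$) and choosing $\theta$ inside the integral closure of $\mathcal O_v$ in $k_x$ (module‑finite, since $\mathcal O_v$ is normal Noetherian and $k_x/k_y$ is separable), we may assume $\widetilde m\in k_\beta[[t]][Y]$ is monic. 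Reducing $\widetilde m(\phi)=0$ coefficientwise in $t$ gives, for each $N\ge1$, an identity $\overline{\widetilde m}{}'(c_0)\,c_N=-S_N(c_0,\dots,c_{N-1})$ with $\overline{\widetilde m}:=\widetilde m\bmod t\in k_\beta[Y]$, $S_N$ a polynomial over $k_\beta$, $\phi=\sum_N c_Nt^N$, and $\overline{\widetilde m}(c_0)=0$. If $\overline{\widetilde m}{}'(c_0)\neq0$ then $c_0$ is a simple, hence separable, root of $\overline{\widetilde m}$, the recursion yields $c_N\in k_\beta(c_0)$ for all $N$, and $k_\alpha=k_\beta(c_0)$ is finite separable over $k_\beta$.

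\textbf{The main obstacle.} The hard case is the degenerate one, $\overline{\widetilde m}{}'(c_0)=0$, where $c_0$ is a multiple root of $\overline{\widetilde m}$ and could a priori be inseparable over $k_\beta$. One cannot argue abstractly, since a finite separable extension of $\LS{k_\beta}$ can have an inseparable residue extension (Artin–Schreier‑type examples); the point that must be exploited is that $\phi$ is a \emph{genuine arc}, i.e.\ a power series coming from a scheme morphism, not an arbitrary element of a valued extension of $\LS{k_\beta}$. Concretely I expect one pushes the coefficientwise analysis of $\widetilde m(\phi)=0$ to higher order (equivalently, runs a Newton–polygon/Hensel‑type argument for $\phi$ over the completion of $(k_y,v)$), and uses the rigidity forced ultimately by injectivity of $\sigma_B$: this either produces a contradiction or shows that the putative inseparable part of $c_0$ already lies in $k_\beta$—being recovered as a \emph{leading coefficient} of some $\sigma_B(b)$, not merely in the residue field of $v$—so that $c_0$, and then every $c_N$, is separable over $k_\beta$ after all. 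Making this last step uniform is the technical heart of the proof.
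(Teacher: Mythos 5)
Your reductions, the proof of part~(1), and the finiteness half of part~(2) are essentially correct, and they take a genuinely different route from the paper: there, part~(1) is \cref{l:same-alg-closure} (via \cref{l:alg-ext-laurent} and a factorization of the arc through $\Spec \widetilde{k_\b}[[t]]$), and finiteness is \cref{p:separable-field-extension}, proved by applying the \'etale base change $x_\infty = y_\infty\times_y x$ to the deformed arc $\vp_\a(s,t)=\a(s+t)$ and then invoking finiteness of residue extensions for finite extensions of valued fields. Your coefficientwise induction for (1) and your Hasse-derivative argument giving $k_\a\subset k_E$ and $[k_\a:k_\b]\le[E:\LS{k_\b}]\le[k_x:k_y]$ are a workable elementary substitute (only the inequality $[k_E:k_\b]\le[E:\LS{k_\b}]$ is needed, not the equality you assert, though equality does hold since $\LS{k_\b}$ is complete). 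Your ``non-degenerate'' case also recovers separability when $c_0$ is a simple root of $\ov{\widetilde m}$.

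The problem is that the degenerate case $\ov{\widetilde m}{}'(c_0)=0$ is not a残 technical remainder: it is exactly the content of the separability assertion (it occurs whenever $f$ is ramified at $\a(0)$, e.g.\ for $y=x^2$ and the arc of contact order $1$ with the origin), and your treatment of it is an unproven expectation, as you yourself say. Nothing in ``injectivity of $\sigma_B$'' or in pushing the coefficientwise recursion/Newton polygon to higher order rules out the Artin--Schreier-type phenomenon you correctly identify: since $k_\b$ need not be perfect even when $k$ is, a finite separable extension of $\LS{k_\b}$ really can have inseparable residue extension, and $\phi$ being a root of $\widetilde m$ inside $k_\a[[t]]$ is exactly the abstract situation in which this can happen. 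What excludes it in the paper is global input from the arc over the \emph{perfect} field $k$: in \cref{t:differentials-cotangent-map}\,(1) one uses the description of $\Om_{X_\infty/k}\otimes k_\a$ from \cite{dFD20} to show the map $\Phi_\a$ is surjective (because $\Om_{X/Y}\otimes P_\a=0$ when $f$ is unramified at $\a(\e)$), and the exactness of $0\to\fm_\a/\fm_\a^2\to\Om_{X_\infty/k}\otimes k_\a\to\Om_{k_\a/k}\to0$ — which requires $k$ perfect, so that $k_\a/k$ is separable — to conclude $\Om_{k_\a/k_\b}=0$; combined with finiteness this gives separability (an alternative route, via formal unramifiedness of $\cO_{Y_\infty,\b}\to\cO_{X_\infty,\a}$, is sketched in \cref{l:unramified-form-unramified} and likewise needs $k$ perfect). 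Your sketch never uses perfectness of $k$, which the paper flags as precisely what part~(2) needs, so the gap is not only unfilled but your proposed way of filling it is unlikely to close as stated; some substitute for the differential-theoretic argument (vanishing of $\Om_{k_\a/k_\b}$, or surjectivity of the cotangent map together with separability of $k_\a/k$) is required.
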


We note that assertion \eqref{i1:residue-field-extension} holds over any field (see \cref{l:same-alg-closure});
the assumption that $k$ be perfect is needed to prove \eqref{i2:residue-field-extension}. 
The following example shows that in \eqref{i2:residue-field-extension} 
of \cref{t:residue-field-extension} the extension $k_\a/k_\b$ can fail to be finite 
if $k_x/k_y$ is finite but not separable.

\begin{example}
\label{ex:counterex-res-field-finite}
Let $k$ be a field of characteristic $p > 0$. Let $X = Y = \A^1_k = \Spec k[x]$ and $f \colon X \to Y$
be the $k$-morphism given by $x \mapsto x^p$.
Consider the generic arc $\a$ of $X$, defined by $x \mapsto \sum_{n\geq 0} x_n t^{n}$ where $x_n \in k_\a$
is the $n$-th higher derivative of $x$. 
Its image $\b = f_\infty(\a)$ is defined by
$x \mapsto \sum_{n\geq 0} x_n^p t^{np}$.
Thus the extension of residue fields $k_\a/k_\b$ is given by
$k(x_n^p \mid n\in \N) \subset k(x_n \mid n \in \N)$, 
which is not finitely generated.
\end{example}

The proof of \cref{t:residue-field-extension}, which will be finished in the next section, proceeds as follows: we will first prove assertion \eqref{i1:residue-field-extension} in \cref{l:same-alg-closure}; we will then prove the finiteness part in assertion \eqref{i2:residue-field-extension} in \cref{p:separable-field-extension}; finally, the separability part in \eqref{i2:residue-field-extension} will follow from \cref{t:differentials-cotangent-map}, whose proof depends on \cref{p:separable-field-extension} but also requires results from \cite{dFD20} and in particular the base field $k$ to be perfect.
We start with a lemma. 

\begin{lemma}
\label{l:alg-ext-laurent}
Let $K/L$ be a field extension. Let $\widetilde L$ be the relative algebraic closure of $L$ in $K$, 
and similarly write $\widetilde {\LS{L}}$ for the relative algebraic closure of $\LS{L}$ inside $\LS{K}$. Then
\begin{equation}
\label{eq:alg-ext-laurent}
\bigcup_{L'} \LS{L'}
\subset 
\widetilde {\LS{L}}
\subset
\LS{\widetilde L}
\end{equation}
where the union is taken over all intermediate field extensions $L \subset L' \subset K$ that are finite over $L$.
In particular, if $K/L$ is finitely generated then $\widetilde {\LS{L}} = \LS{\widetilde L}$.
\end{lemma}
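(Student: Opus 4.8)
The plan is to prove the two inclusions in \eqref{eq:alg-ext-laurent} separately; the first is formal and the second carries the substance. For the left-hand inclusion: given any intermediate field $L\subset L'\subset K$ with $[L':L]<\infty$, an $L$-basis of $L'$ remains an $\LS L$-basis of $\LS{L'}$, because $L'[[t]]=L[[t]]\otimes_L L'$ is free over $L[[t]]$ and one then inverts $t$. Hence $\LS{L'}/\LS L$ is finite, in particular algebraic, and since $\LS{L'}\subset\LS K$ this gives $\LS{L'}\subset\widetilde{\LS L}$; taking the union over all such $L'$ yields the first inclusion.

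For the right-hand inclusion, let $\xi\in\widetilde{\LS L}$. After multiplying by a suitable power of $t$ (which preserves algebraicity over $\LS L$), we may assume $\xi=\sum_{n\ge 0}a_nt^n\in K[[t]]$ with $a_n\in K$, and it suffices to show $a_n\in\widetilde L$ for every $n$. The heart of the argument is an elementary \emph{base case}: if $F\subset K$ is a subfield and $\zeta\in K[[t]]$ is algebraic over $\LS F$, then $\zeta(0)\in K$ is algebraic over $F$. To prove this, pick $0\ne P(Z)\in\LS F[Z]$ with $P(\zeta)=0$, rescale by a power of $t$ so that $P\in F[[t]][Z]$ has at least one coefficient with nonzero constant term, and reduce modulo $t$: the reduction $\bar P\in F[Z]$ is nonzero, satisfies $\bar P(\zeta(0))=0$, and cannot be a nonzero constant, so $\zeta(0)$ is algebraic over $F$.

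Granting the base case, I run an induction on $n$. Suppose $a_0,\dots,a_{m-1}\in\widetilde L$ and set $L_m:=L(a_0,\dots,a_{m-1})\subset K$, a finite extension of $L$. Then $\xi_m:=t^{-m}\bigl(\xi-\sum_{i<m}a_it^i\bigr)\in K[[t]]$ satisfies $\xi_m(0)=a_m$, and it is algebraic over $\LS{L_m}$ because $\xi$ is algebraic over $\LS L\subset\LS{L_m}$ and $\sum_{i<m}a_it^i\in L_m[[t]]\subset\LS{L_m}$. The base case then gives $a_m$ algebraic over $L_m$, hence over $L$, so $a_m\in\widetilde L$. Therefore $\xi\in\widetilde L[[t]][t^{-1}]=\LS{\widetilde L}$, proving $\widetilde{\LS L}\subset\LS{\widetilde L}$. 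For the final assertion, if $K/L$ is finitely generated then $\widetilde L/L$ is a finitely generated algebraic extension, hence finite, so $\widetilde L$ occurs among the $L'$ in the union; then $\LS{\widetilde L}$ is squeezed between $\bigcup_{L'}\LS{L'}$ and $\LS{\widetilde L}$, forcing equality throughout, in particular $\widetilde{\LS L}=\LS{\widetilde L}$.

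The only step demanding genuine care is the base-case reduction: after normalizing $P$ one must check that $\bar P$ is a nonzero, nonconstant polynomial over $F$ and that its value at $\zeta(0)$ is the constant term of $P(\zeta)=0$. Everything else is bookkeeping, the main point being that the hypothesis is algebraicity over $\LS L=L[[t]][t^{-1}]$, so clearing denominators costs only a single power of $t$ and keeps us inside $F[[t]][Z]$.
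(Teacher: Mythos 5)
Your proof is correct and takes essentially the same approach as the paper: the first inclusion via finiteness of $\LS{L'}/\LS{L}$ for $L'/L$ finite, and the second by extracting algebraicity of the lowest-order coefficient from a polynomial relation over $\LS{L}$ and iterating, with the same handling of the finitely generated case. Your packaging of the key step as a base-case lemma (normalize the annihilating polynomial into $F[[t]][Z]$ and reduce mod $t$), together with enlarging the ground field to $L(a_0,\dots,a_{m-1})$ and dividing by $t^m$ at each stage, is only a cosmetic variant of the paper's argument, which looks at the lowest-order term of $P(f)$ and then strips off $a_{n_0}t^{n_0}$ while keeping the field $L$.
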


\begin{proof}
Regarding the first inclusion in \eqref{eq:alg-ext-laurent}, 
it suffices to observe that for any extension $L(\xi)/L$ where
$\xi \in K$ algebraic over $L$, we have $\LS{L(\xi)} = \LS{L}(\xi)$, which is a finite extension of $\LS{L}$.

To prove the second inclusion in \eqref{eq:alg-ext-laurent},  
let $f = a_{n_0} t^{n_0} + a_{n_1} t^{n_1} + \cdots$ be an element of $\LS{K}$
that is algebraic over $\LS{L}$, where $a_{n_i} \ne 0$ and $n_i < n_{i+1}$. We
want to show that $a_{n_i} \in \widetilde L$ for all $i$. Let $P(X) \in \LS{L}[X]$  be a
monic polynomial such that $P(f) = 0$. Consider the $t$-expansion $P(f) =
\sum_{m \geq m_0} P_m t^m$ with $P_{m_0} \ne 0$. Then $P_{m_0}$ is a polynomial
in $a_{n_0}$ with coefficients in $L$, showing that $a_{n_0} \in \widetilde L$. This
implies that $f_1 = f - a_{n_0}t^{n_0}$ is also algebraic over $\LS{L}$, and
the same argument shows that $a_{n_1} \in \widetilde L$. Recursively, we see that
$a_{n_i} \in \widetilde L$ for all $i$, as required.

Finally, if $K$ is any finitely generated extension of $L$, then so is any intermediate
field $K \supset L' \supset L$. In particular $\widetilde L/L$ is finite, and so the
equality $\widetilde {\LS{L}} = \LS{\widetilde L}$ follows from \eqref{eq:alg-ext-laurent}. 
\end{proof}

\begin{remark}
If $K/L$ is not finitely generated, then the second inclusion in \eqref{eq:alg-ext-laurent}
is strict in general. 
To see this, let $L = \Q$ and consider an infinite chain of algebraic extensions $K_n \subset K_{n+1}$ where
$K_0 = L$ and 
for each $n$ there exists $x_n \in K_n$ whose minimal polynomial over $K_{n-1}$ has degree $n$. 
Let then $K = \bigcup K_n$. We consider the series $f = \sum_{n\geq 0} x_n t^n \in \LS{\widetilde L}$. 
Assume that $f$ is algebraic over $\LS{L}$. 
Let $P(X) \in \LS{L}[X]$ be the minimal polynomial of $f$ and let $d$ be its degree. 
Taking the $t$-adic expansion of $P(f)$ we get
\[
P(f) = \sum_{i \geq 0} P_i(x_1,\ldots,x_i) t^i = 0,
\]
where $P_i \in L[X_1,\ldots,X_i]$ is of degree $d$ again. 
Choose $i$ such that $P_i(X_1,\ldots,X_i)$ has a nonzero term involving some $X_m$ with $m > d$. Let $n$ be the largest such $m$. Then $P_i(x_1,\ldots,x_i) = 0$ gives a contradiction to the assumption that the minimal polynomial of $x_n$ over $K_{n -1}$ has degree $n > d$. 

\end{remark}

\begin{remark}
The first inclusion in \eqref{eq:alg-ext-laurent} is also strict in general, at least
when $L$ is a field of positive characteristic.
For an example, let $L = k(x_i^p \mid i \in \N) \subset K = k(x_i \mid i \in I)$, where $k$ is a field of characteristic $p > 0$, and consider the element $f = \sum_{i \geq 0 } x_i t^i$, whose coefficients generate $K$ and thus do not belong to a subfield that is finite over $L$. We do not know 
any characteristic zero example
where the first inclusion in \eqref{eq:alg-ext-laurent} is strict.
\end{remark}

\begin{proposition}    
\label{l:same-alg-closure}
Let $f \colon X \to Y$ be a morphism of schemes over a field $k$. 
Let $\a \in X_\infty$ and $\b = f_\infty(\a)$, and write $x = \a(\eta)$, $y = \b(\eta)$. 
If $k_x/k_y$ is algebraic, then so is $k_\a/k_\b$, and $\a$ is a closed point of the fiber $f_\infty^{-1}(\b)$.
\end{proposition}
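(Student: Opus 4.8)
The plan is to reduce to the affine case, read off the residue field $k_\a$ from the Taylor coefficients of the arc, and then invoke \cref{l:alg-ext-laurent} to pass from algebraicity of those coefficient series over $\LS{k_\b}$ to algebraicity of the individual coefficients over $k_\b$. Concretely, since $\a\colon \Spec k_\a[[t]]\to X$ has local source whose closed point maps to $\a(0)$, it factors through any open subscheme of $X$ containing $\a(0)$; choosing an affine open $V=\Spec B\subseteq Y$ around $\b(0)=f(\a(0))$ and an affine open $U=\Spec A\subseteq f^{-1}(V)$ around $\a(0)$, and using that $U_\infty\hookrightarrow X_\infty$, $V_\infty\hookrightarrow Y_\infty$ are open immersions compatible with $f_\infty$, one checks that $x=\a(\eta)\in U$, $y=\b(\eta)\in V$, and that none of the residue fields $k_\a,k_\b,k_x,k_y$ change. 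So one may assume $X=\Spec A$ and $Y=\Spec B$, and write $\a^\sharp\colon A\to k_\a[[t]]\subseteq \LS{k_\a}$ and $\b^\sharp=\a^\sharp\circ f^\sharp\colon B\to k_\b[[t]]\subseteq\LS{k_\b}\subseteq\LS{k_\a}$.

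Next I would record two elementary facts about this presentation, both immediate from $A_\infty=\HS^\infty_{A/k}$ and its universal property: (a) the prime $\ker(\a^\sharp\colon A\to\LS{k_\a})$ is exactly $x$, so $A$ maps into $k_x\subseteq\LS{k_\a}$ and $k_x$ is the subfield of $\LS{k_\a}$ generated over $k$ by the series $\a^\sharp(a)$, $a\in A$ (and likewise $k_y\subseteq\LS{k_\b}$ is generated over $k$ by the $\b^\sharp(b)$, $b\in B$); and (b) writing $\a^\sharp(a)=\sum_{n\ge 0}D_n(a)\,t^n$ with $D_n(a)\in k_\a$, the residue field $k_\a$ is generated over $k$, hence over $k_\b$, by all the coefficients $D_n(a)$. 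Granting these, the heart of the argument is short: each $\a^\sharp(a)$ lies in $k_x$, which by hypothesis is algebraic over $k_y\subseteq\LS{k_\b}$, hence $\a^\sharp(a)$ is algebraic over $\LS{k_\b}$; applying the second inclusion of \cref{l:alg-ext-laurent} with $L=k_\b$ inside $K=k_\a$ shows that the relative algebraic closure of $\LS{k_\b}$ in $\LS{k_\a}$ is contained in $\LS{\widetilde{k_\b}}$, where $\widetilde{k_\b}$ is the relative algebraic closure of $k_\b$ in $k_\a$, so all coefficients $D_n(a)$ are algebraic over $k_\b$. By (b), $k_\a$ is generated over $k_\b$ by such coefficients, hence $k_\a/k_\b$ is algebraic.

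For the final assertion, the fiber $f_\infty^{-1}(\b)$ is a scheme over $k_\b$ in which $\a$ has residue field $k_\a$, and any point of a scheme over a field whose residue field is algebraic over that field is closed: its reduced closure is an integral scheme of dimension zero, because a domain algebraic over a subfield is itself a field. I do not expect a genuine obstacle here — the only points needing care are the bookkeeping in the reduction to the affine setting and the two standard presentation facts (a) and (b); the single substantive input is \cref{l:alg-ext-laurent}, which is precisely what converts the hypothesis on $k_x/k_y$, seen through the containment $\a^\sharp(a)\in k_x$, into the algebraicity of the coefficients. In particular no assumption on $k$ is used.
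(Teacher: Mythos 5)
Your proposal is correct and follows essentially the same route as the paper: both embed $k_x$ into $\LS{k_\a}$ via the generic point of the arc, use the hypothesis to see that this image is algebraic over $\LS{k_\b}$, and then invoke the second inclusion of \cref{l:alg-ext-laurent} to conclude that the relevant coefficients lie in the relative algebraic closure $\widetilde{k_\b}$ of $k_\b$ in $k_\a$. The only differences are presentational: the paper packages the final step as a factorization of the arc through $\Spec \widetilde{k_\b}[[t]]$ (yielding $k_\a=\widetilde{k_\b}$ directly) rather than through the explicit generation of $k_\a$ by Hasse--Schmidt coefficients, and it cites the Stacks Project for the closedness of $\a$ in the fiber, for which your short direct argument is also fine.
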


\begin{proof}
With the same notation as in \cref{l:alg-ext-laurent}, let $\widetilde {k_\b}$ be the relative algebraic closure of $k_\b$ in $k_\a$, so that $\LS{\widetilde {k_\b}}$ contains the relative algebraic closure of $\LS{\k_\b}$ in $\LS{\k_\a}$. We have the solid arrows in the following diagram:
\[\xymatrix@R=20pt{
k_x \ar^-{\a_\e^\sharp}[r]
\ar@{-->}^{\vp}[rd]
& *+[r]{\LS{k_\a}}
\\
& *+[r]{\LS{\widetilde {k_\b}}} \ar[u]
\\
k_y \ar^-{\b_\e^\sharp}[r] \ar^{f^\sharp}[uu]
& *+[r]{\LS{\k_\b}} \ar[u]
}\]
By hypothesis $f^\sharp$ gives an algebraic extension, and we see that the image $\a_\e^\sharp(k_x)$ is algebraic over $\LS{\k_\b}$, hence contained in $\LS{\widetilde {k_\b}}$. In other words, the dashed arrow $\vp$
making the diagram commutative exists.
Since $\a_\e^\sharp$ is induced by an arc $\a \colon \Spec k_\a[[t]] \to X$, we see that $\vp$ is of the form $\vp=\g_\e^\sharp$ for some arc $\g \colon \Spec \widetilde {k_\b}[[t]] \to X$. At the level of arc spaces we get a factorization $\Spec k_\a \to \Spec \widetilde {k_\b} \to X_\infty$, showing that $k_\a = \widetilde {k_\b}$.

The last assertion in the proposition is a standard fact about morphisms of schemes, 
see \cite[\href{https://stacks.math.columbia.edu/tag/01TE}{Tag 01TE}]{stacks-project}.
\end{proof}

\begin{proposition}
\label{p:separable-field-extension}
Let $f \colon X \to Y$ be a morphism of schemes over a field $k$. 
Let $\a \in X_\infty$ and $\b = f_\infty(\a)$, and write $x = \a(\eta)$, $y = \b(\eta)$. 
If $k_x/k_y$ is finite separable, then $k_\a/k_\b$ is finite.
\end{proposition}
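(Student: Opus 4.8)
The plan is to reduce the statement to a question about the relative algebraic closure $\widetilde{k_\b}$ of $k_\b$ inside $k_\a$ and then show $k_\a$ is generated over $k_\b$ by finitely many elements. Since $k_x/k_y$ is finite separable, \cref{l:same-alg-closure} already gives that $k_\a/k_\b$ is algebraic and that $\a$ is a closed point of $f_\infty^{-1}(\b)$; what remains is finiteness. The key structural input is \cref{l:alg-ext-laurent}: applied to the extension $k_x/k_y$ (which is finitely generated, being finite), it yields $\widetilde{\LS{k_y}} = \LS{\widetilde{k_y}} = \LS{k_y}$ inside $\LS{k_x}$, i.e.\ $k_x$ itself is already relatively algebraically closed over $k_y$ in a way compatible with passing to Laurent series. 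More precisely, the point is that $\a_\e^\sharp(k_x) \subset \LS{k_\a}$ is a finite separable extension of its image $\b_\e^\sharp(k_y) \subset \LS{k_\b}$, sitting inside the Laurent series field $\LS{k_\b}$.

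First I would set $K = k_\a$, $L = k_\b$, and consider the field $\LS{L}(\xi_1,\dots,\xi_r) \subset \LS{K}$ obtained by adjoining to $\LS{L}$ the images under $\a_\e^\sharp$ of a finite set of generators of $k_x$ over $k_y$; by finiteness and separability of $k_x/k_y$ this is a finite separable extension of $\LS{L}$, hence of the form $\LS{L}[\theta]$ for a single primitive element $\theta$, and it is complete for the $t$-adic valuation (being finite over the complete field $\LS{L}$), so it equals $\LSS{M}{u}$ for some finite extension $M/L$ and a uniformizer $u$. The crucial observation is that this subfield of $\LS{K}$ contains $\a_\e^\sharp(k_x) = \a_\e^\sharp(\O_{X,x})[1/t]$-type data, so the arc $\a \colon \Spec K[[t]] \to X$ actually factors set-theoretically through $\Spec M[[t]]$ at the level of the structure map — more carefully, the map $\a_\e^\sharp \colon k_x \to \LS{K}$ lands inside $\LSS{M}{u} \subset \LS{K}$, and because $\a$ is an arc (not just a map on generic points) its comorphism $\a^\sharp \colon \O_{X,\a(0)} \to K[[t]]$ has image inside $M[[u']]$ for a suitable $u'$, giving a factorization of $\a$ through an $M$-valued arc $\g$. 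Then at the level of arc spaces we get $\Spec K \to \Spec M \to X_\infty$ over $\Spec L \hookrightarrow Y_\infty$, forcing $K = k_\a$ to be a quotient of $M$, hence finite over $L = k_\b$.

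The main obstacle I expect is the transition from "the map on function fields of generic points factors through a finite extension" to "the arc itself factors through a finite extension" — i.e.\ controlling $\a^\sharp$ on the local ring $\O_{X,\a(0)}$ rather than just on $k_x = \O_{X,x}$. The subtlety is that $\a(0)$ may be a non-closed specialization of $x = \a(\e)$, and one must check that the finitely many generators of $k_x/k_y$ can be chosen (after clearing denominators in $t$) to have $t$-expansions with coefficients in a common finite extension $M$ of $k_\b$, and that $M[[t]]$ then absorbs the image of the whole local ring. This is where the argument in \cref{l:same-alg-closure} — namely that $\vp = \g_\e^\sharp$ for an honest arc $\g$ because $\a_\e^\sharp$ comes from an arc — gets reused: once we know $\a_\e^\sharp(k_x) \subset \LS{M}$ with $M/k_\b$ finite, the same reasoning produces an $M$-valued arc $\g$ with $\a = \g \circ (\Spec K[[t]] \to \Spec M[[t]])$, and hence $k_\a$ is a subquotient of $M$, which is finite over $k_\b$. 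The separability of $k_x/k_y$ is used precisely to guarantee the primitive-element reduction and to ensure $M/k_\b$ is finite (not merely finitely generated as in \cref{ex:counterex-res-field-finite}).
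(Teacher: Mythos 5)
Your overall strategy is genuinely different from the paper's (which deforms the arc via $\vp_\a(s,t)=\a(s+t)$, uses that $x_\infty = y_\infty\times_y x$ is finite \'etale, and then invokes the classical finiteness of residue field extensions for valuation rings under finite extensions of valued fields), but as written it has a gap at its central step. The crucial claim is that $F:=\LS{k_\b}\bigl(\a_\e^\sharp(k_x)\bigr)\subset\LS{k_\a}$, a finite separable extension of $\LS{k_\b}$, is contained in $\LS{M}$ for some finite extension $M/k_\b$ inside $k_\a$ -- equivalently, that the $t$-coefficients of the images of elements of $\O_{X,\a(0)}$ all lie in a common finite extension of $k_\b$. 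Your justification is that $F$ is complete, hence ``equals $\LSS{M}{u}$ for some finite extension $M/L$''. That only produces an abstract isomorphism (via a Cohen coefficient field), and the coefficient field need not consist of constant series inside $\LS{k_\a}$; the abstract structure says nothing about where the coefficients of elements of $F$ live. Indeed, in the Frobenius situation of \cref{ex:counterex-res-field-finite} the field $F=\LS{k_\b}\bigl(\sum_n x_nt^n\bigr)$ is likewise complete with finite residue extension $k_\b(x_0)/k_\b$, yet its elements do not have coefficients in any finite extension of $k_\b$ and the conclusion of the proposition fails. So the argument as stated proves too much: the point where separability must enter is exactly the containment $F\subseteq\LS{M}$, and this is also the content of the possibly strict first inclusion in \cref{l:alg-ext-laurent}, whose status the paper explicitly flags as delicate.

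The gap is fillable, but it requires an actual argument, not just completeness: for instance, write $F=\LS{k_\b}[\theta]$ with $\theta$ a simple root of its (separable) minimal polynomial $P$, set $N=\ord_t P'(\theta)$, let $M_0$ be generated over $k_\b$ by the finitely many coefficients of a truncation of $\theta$ past order $2N$ (these are algebraic over $k_\b$ by the second inclusion in \cref{l:alg-ext-laurent}, so $M_0/k_\b$ is finite), and then use Hensel--Newton approximation in the complete field $\LSS{M_0}{t}$ together with uniqueness of the root close to the truncation to conclude $\theta\in\LSS{M_0}{t}$. With that containment in hand, your final step is fine and parallels the end of the proof of \cref{l:same-alg-closure}: the image of $\O_{X,\a(0)}$ lands in $M_0[[t]]$, the arc factors through an $M_0$-valued arc with the same underlying point, and $k_\a\subseteq M_0$ is finite over $k_\b$. (A minor side remark: your opening application of \cref{l:alg-ext-laurent} to $k_x/k_y$ is not the relevant one -- the pair that matters is $k_\b\subset k_\a$ -- but this does not affect the substance.) The paper's proof sidesteps all of this coefficient bookkeeping by working with the valuation ring $\O_v=k_{\a'}\cap k_\a[[s]]$ attached to the deformed arc, which is why it never needs a Hensel-type statement.
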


\begin{proof}
After replacing $X$ and $Y$ by the closures of $x$ and $y$, we can assume that $X$ and $Y$ are integral schemes with generic points $x$ and $y$. We identify $x$ and $y$ with the schemes $\Spec k_x$ and $\Spec k_y$, and consider their arc spaces $x_\infty$ and $y_\infty$ over $k$. Since the extension $k_x/k_y$ is finite separable, the map $x \to y$ is finite \'etale and we have that $x_\infty = y_\infty \times_y x$. In particular, the map $x_\infty \to y_\infty$ is also finite \'etale.

Consider the map $\vp_\a \colon \Spec \k_\a[[s,t]] \to X$
given by $\vp_\a(s,t) := \a(s+t)$.
Thinking of $\vp_\a$ as an arc $\Spec \k_\a[[s]] \to X_\infty$ on the arc space of $X$, 
we consider its generic point $\a' = \vp_\a(\eta) \in X_\infty$. Its residue field comes equipped with an inclusion $k_{\a'} \subset \LSS{\k_\a}{s}$. Moreover, $\vp_\a$ gives a local ring map $\O_{X_\infty,\a} \to k_\a[[s]]$ which induces an isomorphism on residue fields, and whose image is contained in $k_{\a'}$. Therefore the ring $\O_v = k_{\a'} \cap k_\a[[s]]$ is a valuation ring of $k_{\a'}$ with residue field $\O_v/\fm_v = k_\a$, with the valuation $v$ being the restriction of $\ord_s$ to $k_{\a'}$. Notice that $\a'(0) = \a'(\eta) = x$, so in fact $\a' \in x_\infty$.

We construct in a similar way a point $\b' \in y_\infty$ such that $f_\infty(\a') = \b'$, and we get an extension $k_{\a'}/k_{\b'}$ compatible with the extension $\LSS{\k_\a}{s}/\LSS{\k_\b}{s}$. Moreover, $k_{\b'}$ comes equipped with a valuation ring $\O_w = k_{\b'} \cap k_\b[[s]]$ with residue field $\O_w/\fm_w = k_\b$, and $\O_w$ is the restriction of $\O_v$ to $k_{\b'}$.

Since $\a' \in x_\infty$, $\b' \in y_\infty$, $f_\infty(\a') = \b'$, and the map $x_\infty \to y_\infty$ is \'etale, we see that the extension $k_{\a'}/k_{\b'}$ is finite separable by \cite[\href{https://stacks.math.columbia.edu/tag/02GL}{Tag 02GL}]{stacks-project}.

To conclude, notice that $k_{\a'}/k_{\b'}$ is finite and that $\O_v$ is an extension of $\O_w$, so the finiteness of the extension of residue fields $k_\a/k_\b$ follows from \cite[\href{https://stacks.math.columbia.edu/tag/0ASH}{Tag 0ASH}]{stacks-project} or \cite[\protect{Chapter~VI, Section~6, Corollary~2}]{ZS75}.
\end{proof}

%
%

\section{Cotangent maps at the level of arc spaces}

\label{s:cotangent-map}

The main result of this section aims to describe the cotangent map of
$f_\infty$ at $\a$ when
$f$ is either smooth or unramified at $\a(\e)$. 
We will use this result to complete the proof of \cref{t:residue-field-extension}.
The result generalizes \cite[Theorem~1.2]{EM09}, \cite[Theorem 9.2]{dFD20} and \cite[Theorem 8.1]{CdFD}.
A specific extension of the latter is given in \cref{c:cotangent-map-isom}, which will be used to provide an analogous extension of \cite[Theorem 8.5]{CdFD} in \cref{t:embcodim-bound}.

Before stating the main result, we recall the definition of unramified morphism, 
in which we follow \cite[\href{https://stacks.math.columbia.edu/tag/02G4}{Tag 02G4}]{stacks-project}. 

\begin{definition}
\label{d:unramified}
A morphism of schemes $f \colon X \to Y$ is \emph{unramified} at a point $x \in X$ 
if $f$ is locally of finite type at $x$ and $\Om_{X/Y,x} = 0$. 
The morphism is \emph{unramified} if it is unramified at every point. 
\end{definition}

\begin{remark}
The above definition is weaker than the one given in \cite[(17.3.1)]{ega-iv-pt4}, 
where an unramified morphism is assumed to be locally of finite presentation.
In particular, any closed immersion is unramified in our sense.
\end{remark}

\begin{theorem}
\label{t:differentials-cotangent-map}
Let $f \colon X \to Y$ be a morphism of schemes over a perfect field $k$. Let $\a \in X_\infty$ and $\b = f_\infty(\a)$, and consider the cotangent map
\[
T^*_\a f_\infty \colon \fm_\b/\fm_\b^2 \otimes_{k_\b} k_\a \to \fm_\a/\fm_\a^2.
\]
\begin{enumerate}
\item 
\label{i2:differentials-cotangent-map}
If $f$ is unramified at $\a(\eta)$, then the residue field extension $k_\a/k_\b$ is finite separable and $T^*_\a f_\infty$ is surjective.
\item 
\label{i1:differentials-cotangent-map}
If $X$ and $Y$ are locally of finite type over $k$ and $f$ is smooth at $\a(\eta)$, then
\[
\dim (\ker T^*_\a f_\infty) \leq \ord_\a (\Fitt^r \Om_{X/Y}),
\]
where $r = \dim(\Om_{X/Y} \otimes k_{\a(\e)})$, and equality holds if $X$ is smooth at $\a(0)$
and $f$ is \'etale at $\a(\e)$. 
\end{enumerate}
\end{theorem}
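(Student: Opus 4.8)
The plan is to run everything through the description of the structure of the sheaf of differentials on arc spaces from \cite{dFD20}. Recall that $\Om_{X_\infty/k}$ has a presentation in the coordinates $x_i^{(n)}$ whose relations are the Hasse--Schmidt prolongations of the $dg$ for $g$ in a set of local equations of $X$; pulling these relations back along an arc $\g$ on $X$ expresses $\Om_{X_\infty/k}\otimes_{\O_{X_\infty}}k_\g$ — and, since $k$ is perfect, so that $k_\g/k$ is separable and $\fm_\g/\fm_\g^2=\ker\big(\Om_{X_\infty/k}\otimes k_\g\to\Om_{k_\g/k}\big)$, also the cotangent space $\fm_\g/\fm_\g^2$ — in terms of the pulled-back Jacobian, that is, in terms of the $k_\g[[t]]$-module $\g^*\Om_{X/k}$. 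This construction is functorial in $X$ and right exact in the module; in particular it reduces the study of $T^*_\a f_\infty$, up to the flat base change along $k_\b[[t]]\to k_\a[[t]]$, to the study of what it does to the pulled-back cotangent map $\a^*(df)\colon\a^*f^*\Om_{Y/k}\to\a^*\Om_{X/k}$, whose cokernel is $\a^*\Om_{X/Y}$ and whose kernel is a quotient of $\Tor_1^{\O_X}(\Om_{X/Y},k_\a[[t]])$. The two features I would isolate are: $\Om_{X_\infty/Y_\infty}\otimes k_\a$ depends only on $\a^*\Om_{X/Y}$ and vanishes as soon as that module is $t$-power-torsion; and on free $k_\a[[t]]$-modules the construction and its first derived functor are acyclic, while on a torsion module $k_\a[[t]]/(t^e)$ the first derived functor has $k_\a$-dimension $e$.

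For part \eqref{i2:differentials-cotangent-map}, write $x=\a(\e)$. Since $f$ is unramified at $x$, the fibre of $f$ over $\b(\e)=f(x)$ is unramified over $k_{\b(\e)}$ at $x$, so $k_x/k_{\b(\e)}$ is finite separable, and \cref{p:separable-field-extension} yields that $k_\a/k_\b$ is finite. Moreover $\Om_{X/Y,x}=0$, and since $\LS{k_\a}$ is an $\O_{X,x}$-algebra this forces $\a^*\Om_{X/Y}$ to be $t$-power-torsion; hence $\Om_{X_\infty/Y_\infty}\otimes k_\a=0$. In particular $\Om_{k_\a/k_\b}$, being a quotient of $\Om_{X_\infty/Y_\infty}\otimes k_\a$, vanishes, so the finite extension $k_\a/k_\b$ is separable. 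Knowing this, the conormal sequence for the local ring of the fibre $f_\infty^{-1}(\b)$ at $\a$ is left exact, so $\coker(T^*_\a f_\infty)$ embeds into $\Om_{X_\infty/Y_\infty}\otimes k_\a=0$; thus $T^*_\a f_\infty$ is surjective.

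For part \eqref{i1:differentials-cotangent-map}, $X$ and $Y$ are locally of finite type and $f$ is smooth at $x=\a(\e)$, so $\Om_{X/Y}$ is locally free of rank $r$ near $x$ and $\a^*\Om_{X/Y}\cong k_\a[[t]]^{\oplus r}\oplus T$ with $T$ torsion of finite length; since Fitting ideals commute with pullback and $\Fitt^r\big(k_\a[[t]]^{\oplus r}\oplus T\big)=(t^{\ell(T)})$, we get $\ell(T)=\ord_\a(\Fitt^r\Om_{X/Y})$. Because $\Om_{X/Y}$ is locally free near $x$, the module $\ker(\a^*df)$ is $t$-power-torsion and so invisible to the construction; together with acyclicity of the free summand $k_\a[[t]]^{\oplus r}$, chasing the short exact sequences $0\to\ker(\a^*df)\to\a^*f^*\Om_{Y/k}\to\im(\a^*df)\to0$ and $0\to\im(\a^*df)\to\a^*\Om_{X/k}\to\a^*\Om_{X/Y}\to0$ through the construction and its first derived functor shows that $\ker(T^*_\a f_\infty)$ is a subquotient of the first derived functor applied to $T$, whence $\dim\ker(T^*_\a f_\infty)\le\ell(T)=\ord_\a(\Fitt^r\Om_{X/Y})$. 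If in addition $f$ is étale at $x$, so that $r=0$ and $\a^*\Om_{X/Y}=T$, and $X$ is smooth at $\a(0)$, so that $\a^*\Om_{X/k}$ is free, the remaining slack disappears and the same chase identifies $\ker(T^*_\a f_\infty)$ with the first derived functor applied to $T$, of dimension exactly $\ell(T)=\ord_\a(\Fitt^0\Om_{X/Y})$. Together with \cref{l:same-alg-closure} and \cref{p:separable-field-extension}, part \eqref{i2:differentials-cotangent-map} completes the proof of \cref{t:residue-field-extension}.

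I expect the real work to be in making this reduction precise: extracting from \cite{dFD20} the exact shape of the differential calculus on $X_\infty$, packaging it as a right-exact, colimit-preserving functor of $\a^*\Om_{X/k}$ compatible with $f$, and establishing the homological facts it must obey — that it annihilates $t$-power-torsion modules, that it is acyclic on free $k_\a[[t]]$-modules, and that its first derived functor has dimension $e$ on $k_\a[[t]]/(t^e)$ — together with the compatibility with the base change $k_\b[[t]]\to k_\a[[t]]$ before $k_\a/k_\b$ is known to be separable. A secondary point, which arises in part \eqref{i2:differentials-cotangent-map} because there $X$ and $Y$ are not assumed of finite type, is that $\a^*\Om_{X/Y}$ may fail to be finitely generated over $k_\a[[t]]$; writing it as a filtered colimit of its finitely generated torsion submodules takes care of this.
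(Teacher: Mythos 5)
Your proposal follows essentially the same route as the paper's proof: identify $\fm_\g/\fm_\g^2$ as the kernel of $\Om_{X_\infty/k}\otimes k_\g \to \Om_{k_\g/k}$ using perfectness of $k$, invoke the formula $\Om_{X_\infty/k}\otimes k_\a \cong (\Om_{X/k}\otimes k_\a[[t]])\otimes_{k_\a[[t]]} P_\a$ from \cite{dFD20}, and run the homological chase through the functor $-\otimes_{k_\a[[t]]}P_\a$ — your ``construction'' is exactly this functor, and the facts you list (it kills $t$-power torsion, is acyclic on free modules, and its first derived functor on $k_\a[[t]]/(t^e)$ has dimension $e$) are precisely the properties of $P_\a$ the paper uses. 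Your part \eqref{i2:differentials-cotangent-map} is sound: the torsion argument needs no finite generation, as you note, and finishing via the left-exact conormal sequence of the fiber ring (legitimate once $k_\a/k_\b$ is known to be finite separable) is only a cosmetic variant of the paper's snake-lemma step. The inequality in part \eqref{i1:differentials-cotangent-map} is the paper's chase in slightly different packaging.

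The one real gap is the equality case of part \eqref{i1:differentials-cotangent-map}. There are two sources of slack: (i) within the chase, the kernel of the induced map $\Phi_\a\colon \Om_{Y/k}\otimes P_\a \to \Om_{X/k}\otimes P_\a$ is a priori only a quotient of $\Tor_1^{k_\a[[t]]}(T,P_\a)\cong T$; and (ii) a priori one only has $\ker T^*_\a f_\infty \subseteq \ker \Phi_\a$, since $T^*_\a f_\infty$ is the restriction of $\Phi_\a$ to the cotangent subspaces. Smoothness of $X$ at $\a(0)$ (freeness of $\a^*\Om_{X/k}$, hence vanishing of its $\Tor_1$ against $P_\a$) removes (i), as you say. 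But ``the remaining slack disappears'' never addresses (ii): one still needs $\ker\Phi_\a \subseteq \fm_\b/\fm_\b^2\otimes_{k_\b}k_\a$, which follows once the map $\Theta_\a\colon \Om_{k_\b/k}\otimes_{k_\b}k_\a \to \Om_{k_\a/k}$ is injective. This is where \'etaleness at $\a(\e)$ is used a second time: by your own part \eqref{i2:differentials-cotangent-map} it makes $k_\a/k_\b$ finite separable, and for separable extensions $\Theta_\a$ is injective (\cite[Theorem~26.6]{Mat89}), which is exactly how the paper upgrades $\dim\ker T^*_\a f_\infty \le \dim\ker\Phi_\a$ to an equality. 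All the ingredients are present in your write-up, but as stated the equality is asserted rather than proved; add this one step.
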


We now have all the pieces in place to complete the proof of the main theorem from the previous section. 

\begin{proof}[Proof of \cref{t:residue-field-extension}]
As we already mentioned, part \eqref{i1:residue-field-extension} follows by \cref{l:same-alg-closure}
and the finiteness of $\k_\a/\k_\b$ stated in \eqref{i2:residue-field-extension} is proved in
\cref{p:separable-field-extension}. As for the separability of $\k_\a/\k_\b$, 
we reduce as in the proof of \cref{p:separable-field-extension} to the 
case where $X$ is an integral scheme and $x$ is its generic point. 
Then by \cite[\href{https://stacks.math.columbia.edu/tag/090W}{Tag 090W}]{stacks-project} the assumption that $\k_x/\k_y$
is finite separable implies that $f$ is unramified at $x$,
hence separability follows from \eqref{i2:differentials-cotangent-map} of \cref{t:differentials-cotangent-map}.
\end{proof}

As we mentioned in \cref{s:residue-field-ext-finite}, part \eqref{i2:differentials-cotangent-map} of \cref{t:differentials-cotangent-map} requires \cref{p:separable-field-extension} for the finiteness of the extension $k_\a/k_\b$. The other main ingredient is the description of the sheaf of differential of the arc space established in \cite{dFD20}. Let us review the results there needed for the proof of \cref{t:differentials-cotangent-map}. For $\a \in X_\infty$ we are interested in the fiber $\Om_{X_\infty/k} \otimes k_\a$.
Clearly we may assume $X = \Spec A$ is affine. Set $B_\a = k_\a[[t]]$ and $P_\a = \LS{k_\a}/t \cdot k_\a[[t]]$. Then by \cite[Remark 5.4]{dFD20} we have
\[
\Om_{X_\infty/k} \otimes k_\a \isom (\Om_{X/k} \otimes B_\a) \otimes_{B_\a} P_\a,
\]
where $B_\a$ is an $A$-module via the universal arc $X_\infty \^\times \Spf k[[t]] \to X$ given by the ring map $A \to A_\infty[[t]]$. As an intermediate step we will compute $\Om_{X/k} \otimes B_\a$ first. 
A key ingredient in the computation is the structure theorem for modules over principal ideal domains, 
which implies that for any finitely generated $A$-module $M$ we have
$M \otimes B_\a \isom F_\a \oplus T_\a$
where $F_\a$ is free of rank $d = \dim_{k_{\a(\e)}} (M \otimes k_{\a(\e)})$
and $T_\a$ is torsion, of dimension over $k_\a$ equal to $\ord_\a (\Fitt^d M)$ 
where $\Fitt^d M$ denotes the $d$-th Fitting ideal of $M$. 
We refer the reader to \cite[Section~6]{dFD20} for more details.

\begin{proof}[Proof of \cref{t:differentials-cotangent-map}]
We may assume that $X$ and $Y$ are affine. Consider the following diagram over $k_\a$ where rows and columns are exact:
\begin{equation}
\label{eq:diff-diagram}
\xymatrix{ 
          0 \ar[r] & \fm_\b/\fm_\b^2 \otimes k_\a \ar[r] \ar[d]^-{T^*_\a f_\infty} & \Om_{Y_\infty/k} \otimes k_\a \ar[r] \ar[d]^-{\Phi_\a} & \Om_{k_\b/k} \otimes k_\a \ar[r] \ar[d]^-{\Theta_\a} & 0\\
          0 \ar[r] & \fm_\a/\fm_\a^2 \ar[r] & \Om_{X_\infty/k} \otimes k_\a \ar[r] \ar[d] & \Om_{k_\a/k} \ar[r] \ar[d] & 0\\
          & & \Om_{X_\infty/Y_\infty} \otimes k_\a \ar[r] \ar[d] & \Om_{k_\a/k_\b} \ar[d] \ar[r] & 0\\
          & & 0 & 0 &}
\end{equation}

Note that exactness of the middle row follows from the assumption that $k$ is perfect.
By naturality of the formula in \cite[Theorem 5.3]{dFD20} we can compute the map $\Phi_\a$ from the exact sequence
\begin{equation}
\label{eq:first-fundamental}
\xymatrix@C=15pt{ \Om_{Y/k} \otimes \cO_X \ar[r]^-{\f} & \Om_{X/k} \ar[r] & \Om_{X/Y} \ar[r] & 0}       
\end{equation}
by tensoring first with $B_\a = k_\a[[t]]$ and then with $P_\a = \LS{k_\a}/t \cdot k_\a[[t]]$.
Tensoring with $B_\a$ yields
\begin{equation}
\label{eq:diff-diagram-B}
\xymatrix@C=15pt{\Om_{Y/k} \otimes B_\a \ar[r]^-{\f_\a} & \Om_{X/k} \otimes B_\a \ar[r] & \Om_{X/Y} \otimes B_\a  \ar[r] & 0,}
\end{equation}
and tensoring with $P_\a$ yields
\begin{equation}
\label{eq:diff-diagram-P}
\xymatrix@C=15pt{\Om_{Y/k} \otimes P_\a \ar[r]^-{\Phi_\a} & \Om_{X/k} \otimes P_\a \ar[r] & \Om_{X/Y} \otimes P_\a  \ar[r] & 0,}
\end{equation}
which agrees with the middle column in the diagram \eqref{eq:diff-diagram}.

To prove assertion \eqref{i2:differentials-cotangent-map}, assume that $f$ is unramified at $\a(\e)$. We claim that it is sufficient to prove that the map $\Phi_\a$ in \eqref{eq:diff-diagram} is surjective. Indeed, observe that then $\Theta_\a$ is surjective as well and hence $\Om_{k_\a/k_\b} = 0$. Since the field extension $k_{\a(\e)}/k_{\b(\e)}$ is finite separable, it follows by \cref{p:separable-field-extension} that the extension $k_\a/k_\b$ is finite. Using \cite[\href{https://stacks.math.columbia.edu/tag/090W}{Tag 090W}]{stacks-project} we see that
$k_\a/\k_\b$ is separable, hence $\Theta_\a$
is injective by \cite[Theorem~26.6]{Mat89}.
We conclude by the snake lemma that $T^*_\a f_\infty$ is surjective.

In order to show that $\Phi_\a$ is surjective, first note that 
the assumption that $f$ is unramified at $\a(\e)$ implies that
$\Om_{X/Y} \otimes B_\a$ is torsion, hence $\Om_{X/Y} \otimes P_\a = 0$. 
Then the surjectivity of $\Phi_\a$ follows by \eqref{eq:diff-diagram-P}.

We now address \eqref{i1:differentials-cotangent-map}, and
henceforth assume that $X$ and $Y$ are of finite type and 
$f$ is smooth at $\a(\eta)$. By \eqref{eq:diff-diagram}, 
$\dim \ker T_\a^*f_\infty \le \dim \ker \Phi_\a$, hence it suffices to prove that 
$\dim \ker \Phi_\a\leq \ord_\a (\Fitt^e \Om_{X/Y})$. 
The key point it to show that even though \eqref{eq:diff-diagram-B} in general does not extend
to an short exact sequence to the left
(since we are not assuming $f$ to be smooth at $\a(0)$), we can still extend \eqref{eq:diff-diagram-P}
on the left to an exact sequence 
\begin{equation}
\label{eq:diff-diagram-P-Tor1}
\xymatrix@C=15pt{\Tor_1^A(\Om_{X/Y},P_\a) \ar[r] 
& \Om_{Y/k} \otimes P_\a \ar[r]^-{\Phi_\a} & \Om_{X/k} \otimes P_\a \ar[r] & \Om_{X/Y} \otimes P_\a  \ar[r] & 0,}
\end{equation}
which will assist us in controlling the kernel of $\Phi_\a$.

To see this, write
$\Om_{Y/k} \otimes B_\a = F_Y \oplus T_Y$,  $\Om_{X/k} \otimes B_\a = F_X \oplus T_X$ and $\Om_{X/Y} \otimes B_\a = F_{X/Y} \oplus T_{X/Y}$, where $F_Y$, $F_X$, $F_{X/Y}$ are free and $T_Y$, $T_X$, $T_{X/Y}$ are torsion modules. 
Since $f$ is smooth at $\a(\e)$, pulling back \eqref{eq:first-fundamental} 
along the generic point of the arc $\a$ yields a short exact sequence -- that is, the first map
becomes injective. This implies that the restriction of $\f_\a$ to the free part $F_Y$ is injective, hence
$\ker\f_\a$ is contained in the torsion part $T_Y$. 
Setting for short $\ov T_Y = T_Y/\ker\f_\a$, we obtain a short exact sequence
\[
\xymatrix@C=15pt{0 \ar[r] & F_Y \oplus \ov T_Y \ar[r]^-{\ov{\f}_\a} & F_X \oplus T_X \ar[r] & F_{X/Y} \oplus T_{X/Y} \ar[r] & 0.}
\]
Note that by definition of $P_\a$ we have an exact sequence
\begin{equation}
\label{eq:definition-twisting}
\xymatrix@C=15pt{0 \ar[r] & B_\a \ar[r]^-{\cdot t} & \LS{k_\a} \ar[r] & P_\a \ar[r] & 0. }
\end{equation}
Thus $P_\a$ is a divisible $B_\a$-module and tensoring with $P_\a$ kills torsion. 
Therefore we get a long exact sequence
\begin{equation}
\label{eq:long-exact-seq}
\xymatrix@C=15pt{\Tor^{B_\a}_1(T_{X/Y},P_\a) \ar[r] 
&F_Y \otimes P_\a \ar[r] & F_X \otimes P_\a \ar[r] & F_{X/Y} \otimes P_\a \ar[r] & 0.}
\end{equation}
This sequence agrees with \eqref{eq:diff-diagram-P-Tor1}, and shows that
$\dim \ker\Phi_\a \leq \dim \Tor^{B_\a}_1(T_{X/Y},P_\a)$. From the sequence \eqref{eq:definition-twisting} we see that $\Tor^{B_\a}_1(T_{X/Y},P_\a) \isom T_{X/Y}$, which is of dimension $\ord_\a (\Fitt^r \Om_{X/Y})$. 
This proves the first assertion of \eqref{i1:differentials-cotangent-map}. 

For the second assertion of \eqref{i1:differentials-cotangent-map}, 
first note that as we are now assuming that $f$ is \'etale at $\a(\e)$ 
the map $\Theta_\a$ in \eqref{eq:diff-diagram} is injective, hence
$\dim \ker T_\a^*f_\infty = \dim \ker \Phi_\a$. To conclude, note that
if $X$ is smooth at $\a(0)$ then $T_X = 0$. This implies that the first map in \eqref{eq:long-exact-seq} is injective, 
hence we have $\dim \ker\Phi_\a = \dim \Tor^{B_\a}_1(T_{X/Y},P_\a)$ in this case. 
\end{proof}

\begin{remark}
\label{r:differentials-cotangent-map}
In the setting of \eqref{i1:differentials-cotangent-map} of \cref{t:differentials-cotangent-map}, 
by writing more terms of the long exact sequence \eqref{eq:long-exact-seq}
one can also see that the following bound holds:
\[
\dim \ker T^*_\a f_\infty \leq \ord_\a (\Fitt^r \Om_{X/Y}) - \ord_\a (\Fitt^d \Om_{X/k}) + 
\ord_\b (\Fitt^e \Om_{Y/k})
\]
where $d = \dim (\Om_{X/k} \otimes k_{\a(\e)})$ and
$e = \dim (\Om_{Y/k} \otimes k_{\b(\e)})$.
\end{remark}

Given a finite morphism of varieties $f \colon X \to Y$,  
\cref{t:differentials-cotangent-map} says that for any $\a$ such that $f$ is \'etale at $\a(\eta)$ the cotangent map $T^*_\a f_\infty$ is surjective with finite-dimensional kernel. By contrast, the next example shows that if $f$ is only unramified at $\a(\eta)$ then the kernel of $T^*_\a f_\infty$ can be infinite-dimensional.

\begin{example}
Consider the nodal curve singularity $Y$ given by $y^2 - x^2 (x+1) = 0$ in $\A^2_k$. Let $f \colon X \to Y$ be the normalization, which is given in coordinates by
$u \mapsto (u^2 - 2u, (u - 1)(u^2 - 2u))$.
Note that $f$ is unramified. Let $\b \in Y_\infty(k)$ be the constant arc centered at the node
and $\a \in X_\infty(k)$ a constant arc on $X$ mapping to $\b$. 
To compute the cotangent map we proceed as in the proof of \cref{t:differentials-cotangent-map}. 
Adopting the same notation as in the proof of the theorem, 
the map $\Phi_{\a}$ in \eqref{eq:diff-diagram} is given by
\[
\xymatrix@C=15pt{(\Om_{Y/k} \otimes k) \otimes_k P_{\a} \ar[r] & (\Om_{X/k} \otimes k) \otimes_k P_{\a}.}
\]
Here $\Om_{Y/k} \otimes k$ is a free module of rank $2$, generated by $dx$, $dy$,
and $\Om_{X/k} \otimes k$ is free of rank $1$ generated by $du$, with
$\Phi_{\a}$
determined by $dx \mapsto 2 du$, $dy \mapsto -2du$.
In particular, the kernel of $\Phi_{\a}$ is infinite-dimensional over $k$. As the map $\Theta_{\a}$ in \eqref{eq:diff-diagram} is clearly an isomorphism
in this example, we see that $\ker T^*_{\a} f_\infty \isom \ker \Phi_{\a}$.
\end{example}

\cref{t:differentials-cotangent-map} 
allows us to remove the assumption on the characteristic in
\cite[Theorem 8.1]{CdFD}, which in the case 
of positive characteristics was restricted to $k$-rational arcs $\a \in X_\infty(k)$. 
Here we deal explicitly only with the case where $X$ is smooth at $\a(\eta)$, 
but the general case can be obtained similarly.

\begin{corollary}
\label{c:cotangent-map-isom}
Let $X \subset \A^n_k$ be an affine scheme over a perfect field $k$. 
Let $\a \in X_\infty$ and assume that $X$ is smooth at $\a(\eta)$. Then there exists 
a morphism $f \colon X \to \A^d_k$ that is \'etale at $\a(\eta)$ and such that the cotangent map
\[
T^*_\a f_\infty \colon \fm_\b/\fm_\b^2 \otimes_{k_\b} k_\a \to \fm_\a/\fm_\a^2,
\]
where $\b = f_\infty(\a)$, is an isomorphism.
\end{corollary}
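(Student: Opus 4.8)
The plan is to produce a morphism $f \colon X \to \A^d_k$ which is \'etale at $\a(\eta)$ and for which, with $\b = f_\infty(\a)$, all the Fitting-ideal error terms in part \eqref{i1:differentials-cotangent-map} of \cref{t:differentials-cotangent-map} vanish along $\a$. Since $X$ is smooth at $\a(\eta)$ of some dimension $d$, after shrinking we may assume $X$ is smooth of pure dimension $d$ near $\a(\eta)$; choosing a general linear projection $\A^n_k \to \A^d_k$ and restricting to $X$ gives, by the usual Noether-normalization-type argument, a morphism $f \colon X \to \A^d_k$ that is \'etale at the (not necessarily closed) point $\a(\eta)$ — this is where we use that $X$ is smooth at $\a(\eta)$, so that $df$ is an isomorphism of cotangent spaces generically. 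In particular $f$ is smooth (indeed \'etale) at $\a(\eta)$, so \cref{t:differentials-cotangent-map}\eqref{i1:differentials-cotangent-map} applies and gives $\dim\ker T^*_\a f_\infty = \ord_\a(\Fitt^r \Om_{X/Y})$ with $r = \dim(\Om_{X/Y}\otimes k_{\a(\eta)}) = 0$ (since $f$ is \'etale at $\a(\eta)$, $\Om_{X/Y}$ vanishes there), while $T^*_\a f_\infty$ is automatically surjective by part \eqref{i2:differentials-cotangent-map}, as $f$ is unramified at $\a(\eta)$.

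It remains to arrange $\ord_\a(\Fitt^0 \Om_{X/\A^d}) = 0$, i.e.\ that $\Fitt^0 \Om_{X/\A^d}$ does not vanish at $\a(0)$; equivalently, that $f$ is unramified (hence \'etale) not merely at $\a(\eta)$ but at $\a(0)$. So the real task is to choose the linear projection generically enough that it is \'etale along the entire image of the arc $\a$, or at least at the special point $\a(0)$. Here I would use the "spreading out" philosophy: the locus where $f$ fails to be \'etale is the support of $\Om_{X/\A^d}$, a closed subset of $X$ disjoint from $\a(\eta)$; since $\a(\eta)$ specializes to $\a(0)$, we cannot in general kill it at $\a(0)$ for a fixed $f$, but by varying the projection we can. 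Concretely, embed $X \subset \A^n_k$ and consider, for a linear change of coordinates $g \in \GL_n$, the projection $p_g$ onto the first $d$ coordinates; the set of $g$ for which $p_g|_X$ is étale at a given point $x \in X^{\mathrm{sm}}$ is open and dense in $\GL_n$ (it is the condition that a certain $d\times d$ minor of the Jacobian be nonzero, a nonempty open condition by smoothness of dimension $d$ at $x$). Apply this with $x = \a(0)$ if $\a(0) \in X^{\mathrm{sm}}$, but in general $\a(0)$ may be a singular point of $X$, which is the main obstacle.

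To handle that obstacle, I would reduce to the smooth case by first replacing $X$ with a suitable open neighborhood argument is not available; instead, the point is that $\ord_\a$ only depends on $\a$ as an arc, and one only needs the $d$-th Fitting ideal $\Fitt^d\Om_{X/k}$ to be nonvanishing along $\a$ as well, which by \cref{t:differentials-cotangent-map} and the structure theory recalled from \cite[Section~6]{dFD20} is exactly the condition that makes $T_X = 0$ and upgrades the inequality to an equality $\dim\ker T^*_\a f_\infty = \dim\Tor^{B_\a}_1(T_{X/Y},P_\a)$. Since $X \subset \A^n$ is smooth at $\a(\eta)$ but possibly singular at $\a(0)$, the clean way is: choose $f$ so that additionally the composite $X \hookrightarrow \A^n \to \A^d$ realizes $\Om_{X/\A^d}\otimes B_\a$ with zero torsion, i.e.\ $\ord_\a(\Fitt^0\Om_{X/\A^d}) = 0$; but $\Fitt^0 \Om_{X/\A^d}$ is generated by the $d\times d$ minors of a presentation matrix, and $\ord_\a$ of this ideal is the minimum of $\ord_\a$ of these minors, which one can force to be $0$ by a generic choice of the linear forms defining $f$ — this is a Bertini-type genericity statement over the (possibly non-closed, non-smooth) arc $\a$, proved by passing to $\ord_\a$-adic leading terms and noting that the leading-term Jacobian of $X$ along $\a$ has rank $d$ because $X$ is smooth at $\a(\eta)$ (so the generic fiber is smooth of dimension $d$). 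Granting this, all correction terms in \cref{r:differentials-cotangent-map} vanish and $T^*_\a f_\infty$ is an isomorphism. The hard part, as indicated, is making the genericity argument work when $\a(0)\in\Sing X$; I expect one handles it exactly as the "general linear projection" step in \cite[Theorem 8.1]{CdFD}, now legitimized in positive characteristic by \cref{t:differentials-cotangent-map}, so that this corollary is a formal consequence.
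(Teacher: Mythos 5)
There is a genuine gap, and it sits exactly where you flag it. Your strategy is to choose the linear projection $f\colon X \to \A^d_k$ so that $\ord_\a(\Fitt^0 \Om_{X/\A^d_k}) = 0$, i.e.\ so that $f$ is unramified at $\a(0)$, and then conclude via \cref{t:differentials-cotangent-map} and \cref{r:differentials-cotangent-map} because "all correction terms vanish." But this is impossible precisely in the interesting cases: with a presentation of $X$ by $f_1,\dots,f_m$, the ideal $\Fitt^0\Om_{X/\A^d_k}$ is generated by the $(n-d)\times(n-d)$ minors of the Jacobian involving only the projected-out variables, hence $\Fitt^0\Om_{X/\A^d_k} \subseteq \Fitt^d\Om_{X/k}$ and therefore $\ord_\a(\Fitt^0\Om_{X/\A^d_k}) \ge \ord_\a(\Fitt^d\Om_{X/k}) = \ord_\a(\Jac_X) > 0$ whenever $\a(0) \in \Sing X$ (equivalently, an unramified map to the smooth $\A^d_k$ would force $X$ to be smooth at $\a(0)$). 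No Bertini-type genericity over the arc can rescue this, and the same objection applies to your fallback of asking $\Fitt^d\Om_{X/k}$ to be nonvanishing along $\a$ (which is the condition $T_X=0$, again equivalent to smoothness of $X$ at $\a(0)$). So the argument as written only proves the corollary when $\a(0)$ lies in the smooth locus.

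The paper's proof avoids this by aiming for cancellation rather than vanishing: among the $(n-d)\times(n-d)$ minors of the Jacobian, pick one $\Delta$ achieving the minimum order $\ord_\a(\Fitt^d\Om_{X/k})$, and project onto the $d$ coordinates not involved in $\Delta$. This forces the \emph{equality} $\ord_\a(\Fitt^0\Om_{X/\A^d_k}) = \ord_\a(\Fitt^d\Om_{X/k})$ (both possibly positive) and, since this quantity is finite ($X$ being smooth at $\a(\e)$), $f$ is unramified, hence \'etale, at $\a(\e)$. Then the refined bound of \cref{r:differentials-cotangent-map} gives $\dim\ker T^*_\a f_\infty \le \ord_\a(\Fitt^0\Om_{X/\A^d_k}) - \ord_\a(\Fitt^d\Om_{X/k}) + \ord_\b(\Fitt^d\Om_{\A^d_k/k}) = 0$, the last term vanishing because $\A^d_k$ is smooth, while surjectivity comes from part \eqref{i2:differentials-cotangent-map} of \cref{t:differentials-cotangent-map}. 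In short, you invoked the right remark but used it with the wrong target condition; the missing idea is that the two Fitting-order terms are meant to cancel each other, not to vanish separately (this is also the content of \cref{r:cotangent-map-isom} about general projections).
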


\begin{proof}
Let $d = \dim_{\a(\eta)} X$, and let $f_1,\ldots,f_m \in k[x_1,\ldots,x_{n}]$ be generators of the ideal of $X$. 
Then $\Fitt^d \Om_{X/k}$ is generated by the $(n-d)\times (n-d)$-minors of the Jacobian matrix $J$ associated to 
$(f_1,\dots,f_m)$. After reordering, we may assume that $\ord_\a \Jac_X = \ord_\a \Delta$ where 
$\Delta$ is a minor of $J$ involving only $\frac{\partial f_i}{\partial x_j}$ for $j > d$. Consider now the morphism $f \colon X \to \A^d_k = \Spec k[x_1,\dots,x_d]$.
By construction, we have that 
\[
\ord_\a (\Fitt^d \Om_{X/k}) = \ord_\a (\Fitt^0 \Om_{X/\A^d_k}).
\] 
Furthermore, we have $\dim \Om_{X/k} \otimes k_{\a(\e)} = d$, hence
the above number is finite. 
In particular, this means that $f$ is unramified at $\a(\eta)$. 
Since $Y$ is smooth, it follows that $f$ is \'etale at $\a(\eta)$ (see for example \cite[(18.10.1)]{ega-iv-pt4}). 
The assertion then follows from \cref{t:differentials-cotangent-map}
and \cref{r:differentials-cotangent-map}.
\end{proof}

\begin{remark}
\label{r:cotangent-map-isom}
In the situation of \cref{c:cotangent-map-isom}, any morphism $f$ induced by a general linear projection $\A^n_k \to \A^d_k$ will do. Indeed, the condition $\ord_\a (\Fitt^0 \Om_{X/\A^d_k}) \geq \ord_\a (\Fitt^d \Om_{X/k})$ defines an open subset of $\Gr(d,n)$, and the argument in the proof 
of \cref{c:cotangent-map-isom} applies for any such choice of projection.
\end{remark}

\begin{remark}
The surjectivity of the cotangent map $T^*_\a f_\infty$ stated in part \eqref{i2:differentials-cotangent-map} of \cref{t:differentials-cotangent-map} was obtained in the proof of \cite[Proposition 4.5(iii)]{Reg09} in the special case where $\a$ is a stable point and $\charK k = 0$.
Note that in \cite[Proposition 4.5(iii)]{Reg09} it is also stated that if $\O_{X_\infty,\a}$ is Noetherian
then $f_\infty$ is unramified at $\a$, but the definition of unramified morphism adopted there is weaker than the standard one as it omits the requirement that the morphism is locally of finite type at $\a$
(cf.\ \cite[\href{https://stacks.math.columbia.edu/tag/02FM}{Tag 02FM}]{stacks-project}).
We will see later, in \cref{ex:not-formally-unramified}, that this last property can actually fail, 
hence $f_\infty$ can fail to be unramified at $\a$, even assuming that $\O_{X_\infty,\a}$ is Noetherian.
\end{remark}

Recall that if $f \colon X \to Y$ is a morphism of varieties over $k$, then $f$ is unramified at a point $x \in X$ if and only if the cotangent map $T^*_x f$ is surjective and the residue field extension $k_x/k_y$, where $y = f(x)$, is finite separable, see \cite[\href{https://stacks.math.columbia.edu/tag/02FM}{Tag 02FM}]{stacks-project}. 
At first glance, \cref{t:differentials-cotangent-map} 
seem to suggest that in the setting of part \eqref{i2:differentials-cotangent-map} of the theorem
the morphism $f_\infty$ is unramified at $\a$ (or at least formally unramified), 
but this is not true in general.
In fact, as we will discuss in \cref{s:finite-type}, in general $\Om_{X_\infty/Y_\infty}$ 
does not vanish at $\a$ if $f$ is ramified at $\a(0)$, 
even when $f$ is unramified at $\a(\e)$. It turns out however that
the completion of $\Om_{X_\infty/Y_\infty}$ with respect to the $\fm_\a$-adic topology on $\cO_{X_\infty,\a}$ does. Equivalently, the morphism $f_\infty$ is formally unramified at $\a$ in a weaker sense, which holds more generally whenever $f$ is formally unramified at $\a(\eta)$. 
To finish this section, we give a proof of this fact in \cref{l:unramified-form-unramified} and sketch how this can be used to obtain an alternative proof for \eqref{i2:differentials-cotangent-map} of \cref{t:differentials-cotangent-map} without making use of the results of \cite{dFD20}.

To start, let us recall the necessary definition from \cite{ega-iv-pt1}. 

\begin{definition}
    We say that a continuous map $R\to S$ between topological rings is \emph{formally unramified} if, for all discrete rings $A$ and ideals $J\subset A$ with $J^2 = 0$ fitting into a diagram with continuous maps
\begin{equation}
    \label{eq:formally-unramified-topological}
    \xymatrix{S \ar[r] \ar@{-->}[dr] & A/J \\
          R \ar[r] \ar[u] & A, \ar[u]}
\end{equation}
there exists at most one continuous $S \to A$ making the diagram commute.
\end{definition}

Equivalently, by \cite[(20.7.4)]{ega-iv-pt1} any continuous map $R \to S$ is formally unramified if and only if $\^\Om_{S/R} = 0$, where $\^\Om_{S/R}$ denotes the separated completion of $\Om_{S/R}$.

If $R$ and $S$ are rings endowed with the discrete topology, this agrees with the usual definition of formally unramified ring maps. However, applying it to the case of a local map $\varphi \colon (R,\fm) \to (S,\fn)$ with each side regarded with the adic topology translates to restricting to only those diagrams \eqref{eq:formally-unramified-topological} where $\varphi \colon R \to A$ satisfies $\varphi(\fm^n) = 0$ for some $n > 0$; and similarly for the map $S \to A/J$.

\begin{proposition}
\label{l:unramified-form-unramified}
Let $\a \in X_\infty$ and $f \colon  X\to Y$ be a morphism of schemes over a perfect field $k$ 
that is formally unramified at $\a(\eta)$. Then the induced map of local rings
\[
f_{\infty,\a}^\sharp \colon \cO_{Y_\infty,\b} \to \cO_{X_\infty,\a},
\]
where $\b = f_\infty(\a)$, is formally unramified as a map of topological rings.
\end{proposition}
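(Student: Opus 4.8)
The plan is to verify the lifting criterion from the definition above directly, translating the topological lifting problem for the local rings of the arc spaces into an ordinary square‑zero deformation problem for arcs on $X$, where the hypothesis that $f$ is formally unramified at $x:=\a(\eta)$ can be brought to bear; in particular this avoids the differential‑sheaf computations of \cite{dFD20}. (By \cite[(20.7.4)]{ega-iv-pt1} it would be equivalent to prove $\^\Om_{\cO_{X_\infty,\a}/\cO_{Y_\infty,\b}}=0$, but the deformation‑theoretic argument seems cleaner here.) After reducing to $X=\Spec B$, $Y=\Spec C$ affine, suppose we are given a diagram \eqref{eq:formally-unramified-topological} with $R=\cO_{Y_\infty,\b}$, $S=\cO_{X_\infty,\a}$, a discrete ring $A$, a square‑zero ideal $J\subset A$, and two continuous lifts $g_1,g_2\colon S\to A$; the goal is $g_1=g_2$. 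Continuity lets us fix $n$ with $\fm_\a^n$ killed by $g_1$, $g_2$ and by the given map $S\to A/J$. Since $S=(B_\infty)_{\fp_\a}$ and the composite $G_j\colon B_\infty\to S\to A$ sends $B_\infty\setminus\fp_\a$ into $A^\times$, the map $g_j$ is the unique factorization of $G_j$ through $S$, so it suffices to show $G_1=G_2$.

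Using the identification $X_\infty(A)=\Hom_k(\Spec A[[t]],X)$, regard $G_1,G_2$ as $A$‑valued arcs $\psi_1,\psi_2\colon B\to A[[t]]$. By naturality $f\circ\psi_1=f\circ\psi_2=\chi$ for the fixed arc $\chi\colon C\to A[[t]]$ induced by $R\to A$, so $\psi_1,\psi_2$ are two liftings of a common reduction $\bar\psi\colon B\to(A/J)[[t]]$ in the category of $C$‑algebras, where compatibility with $A\to A/J$ gives $\psi_1\equiv\psi_2\pmod{J[[t]]}$. Since $J^2=0$ gives $(J[[t]])^2=0$, standard deformation theory identifies $\psi_1-\psi_2$ with an element of $\Hom_{(A/J)[[t]]}(\bar\psi^*\Om_{X/Y},\,J[[t]])$, where $\bar\psi^*\Om_{X/Y}=\Om_{B/C}\otimes_B(A/J)[[t]]$. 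This group vanishes: multiplication by $t$ is injective on $J[[t]]$, so $J[[t]]$ admits no nonzero map from a module that is $t$‑power torsion, while $\bar\psi^*\Om_{X/Y}$ \emph{is} $t$‑power torsion because inverting $t$ makes the structure map $B\to(A/J)[[t]][t^{-1}]$ factor through $\cO_{X,x}$ and $\Om_{X/Y,x}=0$ by hypothesis. Hence $\psi_1=\psi_2$, so $G_1=G_2$ and $g_1=g_2$.

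The step I expect to require the most care --- the only nontrivial point --- is the factorization of $B\to(A/J)[[t]][t^{-1}]$ through $\cO_{X,x}$. The arc $\bar\psi$ factors through the arc $B\to(\cO_{X_\infty,\a}/\fm_\a^n)[[t]]$ classifying the canonical morphism $\Spec(\cO_{X_\infty,\a}/\fm_\a^n)\to X_\infty$. Because $\fm_\a/\fm_\a^n$ is nilpotent, the ring $(\cO_{X_\infty,\a}/\fm_\a^n)[[t]][t^{-1}]$ is local with nilpotent maximal ideal and residue field $\LS{k_\a}$, and composing this arc with the quotient by the nilradical recovers $\a\colon B\to k_\a[[t]]$, whose generic point is $x$. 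Therefore any $b\in B\setminus\fp_x$ has nonzero image in $\LS{k_\a}$, hence is a unit in $(\cO_{X_\infty,\a}/\fm_\a^n)[[t]][t^{-1}]$; so that ring map, and a fortiori $\bar\psi$ after inverting $t$, factors through $\cO_{X,x}$, whence $\bar\psi^*\Om_{X/Y}\otimes_{(A/J)[[t]]}(A/J)[[t]][t^{-1}]=\Om_{X/Y,x}\otimes_{\cO_{X,x}}(A/J)[[t]][t^{-1}]=0$.

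Finally, I would record the alternative route alluded to in the surrounding text, which uses \cite{dFD20} instead of deformation theory: by the computation in the proof of \cref{t:differentials-cotangent-map}, the fiber $\Om_{X_\infty/Y_\infty}\otimes k_\a$ equals $(\Om_{X/Y}\otimes_B k_\a[[t]])\otimes_{k_\a[[t]]}P_\a$ with $P_\a=\LS{k_\a}/t\,k_\a[[t]]$, and this vanishes since $\Om_{X/Y}\otimes_B k_\a[[t]]$ is $t$‑power torsion (inverting $t$ again yields $\Om_{X/Y,x}\otimes\LS{k_\a}=0$) and $P_\a$ is divisible; as $M:=\Om_{\cO_{X_\infty,\a}/\cO_{Y_\infty,\b}}$ is thereby killed by $\fm_\a$, we get $\fm_\a^nM=M$ for all $n$ and so the separated completion $\^\Om_{\cO_{X_\infty,\a}/\cO_{Y_\infty,\b}}$ vanishes. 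Either way $f_{\infty,\a}^\sharp$ is formally unramified as a map of topological rings.
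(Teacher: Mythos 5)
Your proof is correct, and while it shares the same skeleton as the paper's argument (pass to $A$-valued arcs, invert $t$, and exploit the hypothesis at $x=\a(\eta)$), the mechanics are genuinely different. The paper first shows that any \emph{continuous} $\g\colon\cO_{X_\infty,\a}\to A$ with $A$ discrete sends every element of $\cO_{X,\a(0)}$ outside $\ker\a^\sharp$ to a unit of $\LS{A}$ (continuity makes the low-order higher derivatives nilpotent, and then the unit criterion of \cite[Proposition~1.3.1]{KV04} applies), so each continuous lift induces a map $\cO_{X,\a(\eta)}\to\LS{A}$; uniqueness is then obtained by applying the lifting characterization of formal unramifiedness of $\cO_{Y,\b(\eta)}\to\cO_{X,\a(\eta)}$ wholesale to the square-zero extension $\LS{A}\to\LS{A/J}$, and the injectivity of $A[[t]]\to\LS{A}$ finishes. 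You instead stay at the power-series level: the difference of two lifts is a $C$-derivation $B\to J[[t]]$, classified by $\Hom_{(A/J)[[t]]}(\bar\psi^*\Om_{B/C},J[[t]])$, and you kill this group by showing that $\bar\psi^*\Om_{B/C}$ is $t$-power torsion while $J[[t]]$ is $t$-torsion-free; the key factorization of $B\to\LS{A/J}$ through $\cO_{X,x}$ is obtained through the universal arc modulo $\fm_\a^n$ together with the observation that $\LS{\cO_{X_\infty,\a}/\fm_\a^n}$ is local with nilpotent maximal ideal and residue field $\LS{k_\a}$ -- essentially the same unit computation as in the paper, but packaged so as to avoid \cite{KV04}. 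A side benefit of your arrangement is that the factorization is only needed for the reduction mod $J$, whose continuity is part of the data, so you in fact get uniqueness among \emph{all} lifts $S\to A$, not only the continuous ones; what the paper's route buys is brevity, since it never unfolds the lifting criterion into an explicit derivation computation. One small slip in your closing remark on the alternative via \cite{dFD20}: from $\Om_{X_\infty/Y_\infty}\otimes k_\a=0$ one concludes $M=\fm_\a M$ (not that $M$ is ``killed by $\fm_\a$''), which upon iteration gives $M=\fm_\a^nM$ for all $n$ and hence the vanishing of the separated completion; this does not affect your main argument.
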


\begin{proof}
Let us first show that, for any $\a \in X_\infty$ and for any continuous map of the form $\g \colon \cO_{X_\infty,\a} \to A$ where $A$ is a discrete $k$-algebra, the corresponding $A$-valued arc on $X$ induces a diagram
\[
\xymatrix{\cO_{X,\a(0)} \ar[r]^-{\g} \ar[d] & A[[t]] \ar[d] \\
          \cO_{X,\a(\eta)} \ar[r] & \LS{A}.}
\]
Indeed, take an element $f \in \cO_{X,\a(0)}$ that is not contained in the kernel of $\a^\sharp\colon \cO_{X,\a(0)} \to k_\a [[t]]$. Then $\a^\sharp(f) = f_e t^e + t^{e+1} ( \ldots)$ with $f_e \in k_\a^\times$. Writing $f^{(i)}$ for the $i$-th higher derivative of $f$ in $\cO_{X_\infty,\a}$, 
this means that $f^{(j)} \in \fm_\a$ for $j< e$ and $f^{(e)} \in (\cO_{X_\infty,\a})^\times$. Thus $\g(f^{(e)}) \in A^\times$ and, since $\g$ is continuous,  $\g(f^{(j)})$ is nilpotent for $j < e$. This implies that $\g(f) \in \LS{A}^\times$, see \cite[Proposition 1.3.1]{KV04}.

Now, any diagram of the form \eqref{eq:formally-unramified-topological} for $R = \cO_{Y_\infty,\b}$, $S = \cO_{X_\infty,\a}$ induces a diagram
\[
\xymatrix{\cO_{X,\a(\eta)} \ar[r] \ar@{-->}[dr] & \LS{A/J} \\
          \cO_{Y,\b(\eta)} \ar[u] \ar[r] & \LS{A}. \ar[u]}
\]
Since $f$ is formally unramified at $\a(\eta)$, there exists at most one diagonal arrow $\cO_{X,\a(\eta)} \to \LS{A}$. But $A[[t]] \to \LS{A}$ is injective and therefore there exists at most one lift $\cO_{X_\infty,\a} \to A$.
\end{proof}

Let us finally sketch how one can derive \eqref{i2:differentials-cotangent-map} of \cref{t:differentials-cotangent-map} from \cref{l:unramified-form-unramified}. Setting $R = \cO_{Y_\infty,\b}$ and $S = \cO_{X_\infty,\a}$, by \cite[(20.7.17)]{ega-iv-pt1} we have a sequence
\[
\xymatrix{\^\Om_{R/k} \cotimes_{\^R} \^{S} \ar[r]^-\rho & \^\Om_{S/k} \ar[r]^-\sigma & \^\Om_{S/R} }.
\]
Of note is that this sequence is not exact at $\^\Om_{S/k}$; instead, we just have that $\im \rho$ is dense in $\ker \sigma$. From \cref{l:unramified-form-unramified} it follows that $\rho$ has dense image. Writing $K = \cO_{X_\infty,\a}/\fm_\a$ we see that $\^\Om_{R/k} \cotimes_{\^R} K$ is discrete and isomorphic to $\Om_{R/k} \otimes_R K$; and similarly for $\^\Om_{S/k} \cotimes_{\^S} K$. Hence $\rho \cotimes K$ is surjective and agrees with the map $\Om_{R/k} \otimes_R K \to \Om_{S/k} \otimes_S K$. We can thus proceed as in the proof of \eqref{i2:differentials-cotangent-map} of \cref{t:differentials-cotangent-map}, using \eqref{i2:residue-field-extension} of \cref{t:residue-field-extension} for the finiteness of the field extension $k_\a / k_\b$.

%
%

\section{Finiteness of arc fibers}

\label{s:finite-fibers}

In this section we look at morphisms of varieties that have finite fibers and
ask whether the same property holds for the induced map on the level of arc spaces. 
As the notion of having finite fibers is more subtle for morphisms which are not of finite type, we start by introducing the following definition.

\begin{definition}
We say that a scheme $X$ is \emph{topologically finite} if its underlying topological space 
is finite discrete; in this case, we denote the cardinality of the latter by $|X|$. If $f \colon X \to Y$ is a morphism of schemes we say that $f$ has \emph{topologically finite fibers} if for every $y \in Y$ the fiber $f^{-1}(y)$ is topologically finite.
\end{definition}

Let us compare this with the more familiar notion of a quasi-finite morphism of schemes, which we recall here.

\begin{definition}
A morphism of schemes $f \colon X \to Y$ is \emph{quasi-finite} at a point $x\in X$ 
if it is locally of finite type at $x$ and $x$ is isolated in its fiber.
The morphism is \emph{quasi-finite} if it is of finite type and is quasi-finite at every point.
\end{definition}

\begin{remark}
Clearly a morphism is quasi-finite if and only if it is of finite type and has topologically finite fibers. Moreover, for a morphism of finite type, it is equivalent to require the fibers to be finite set-theoretically, topologically, or scheme-theoretically (cf.\ \cite[\href{https://stacks.math.columbia.edu/tag/02NG}{Tag 02NG}]{stacks-project}). This is no longer true, however, if the morphism is not of finite type. For example, for any discrete valuation ring $R$ over a field $k$ the fiber of $\Spec R \to \Spec k$ is obviously finite but not discrete. For an example of a topologically finite fiber that is not scheme-theoretically finite, consider any zero-dimensional $k$-algebra $A$ that is not Noetherian -- e.g., take $A = \Spec k[x_i\mid i \in \N]/(x_i\mid i \in \N)^2$, which is a fat point in $\A^\N$.
\end{remark}

Let us summarize the main results of this section in the following two theorems.

\begin{theorem}
\label{t:finite-fiber-1}
Let $f \colon X \to Y$ be a morphism of schemes over a perfect field $k$, and assume that $X$ is separated. Let $\a \in X_\infty$ and $\b = f_\infty(\a)$, and write $x = \a(\e)$ and $y = \b(\e)$. 
\begin{enumerate}
\item 
\label{i1:finite-fiber-1}
If $f^{-1}(y)_\red$ is quasi-finite at $x$ over $k_y$, then $\a$ is an isolated point of $f^{-1}_\infty(\b)$.
\item 
\label{i2:finite-fiber-1}
If $f$ is unramified at $x$, then $f^{-1}_\infty(\b)$ is reduced at $\a$ and $k_\a/k_\b$ is finite separable. 
\end{enumerate}
\end{theorem}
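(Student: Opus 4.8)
For part \eqref{i1:finite-fiber-1}, since being an isolated point is local on $X$, I would first replace $X$ by an affine open neighbourhood of $\a(0)$ and $Y$ by an affine open containing $y$, and then replace $Y$ by $\overline{\{y\}}$ with its reduced structure and $X$ by $f^{-1}(\overline{\{y\}})_\red$, so that $y$ becomes the generic point of $Y$ and $\O_{Y,y}=k_y$. The hypothesis that $f^{-1}(y)_\red$ is quasi-finite at $x$ over $k_y$, together with $x$ being topologically isolated in $f^{-1}(y)$ and $X$ separated, lets Zariski's main theorem exhibit $f^{-1}(y)_\red$ near $x$ as the reduced point $\Spec k_x$, clopen in a finite $k_y$-scheme; in particular $k_x/k_y$ is finite. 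Then \cref{l:same-alg-closure} applies and gives that $k_\a/k_\b$ is algebraic and that $\a$ is a \emph{closed} point of the arc fiber $F:=f_\infty^{-1}(\b)$.

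The substantive step is to upgrade ``closed'' to ``isolated'': one must produce an open subset of $X_\infty$ containing $\a$ and meeting $F$ only in $\a$. The key observations are that every $\a'\in F$ satisfies $\a'(\e)\in f^{-1}(y)$ and that $\a'(\e)$ lies in every open of $X$ containing $\a'(0)$, so that, after excising the other points of $f^{-1}(y)$, it suffices to control the arcs $\a'\in F$ with $\a'(\e)=x$. This excision \emph{cannot} be carried out by shrinking $X$: already for a morphism like $\Spec\bigl(k[u,v]/(uv(u-v))\bigr)\to\A^1_k$ the special point $\a(0)$ may lie in the closure of the unwanted part of $f^{-1}(y)$. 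Instead it must be done with jet coordinates on $X_\infty$, and for this I would run a classical ramification-theory argument along $\a$ (in the spirit of \cref{t:residue-field-extension} and \cref{p:separable-field-extension}): writing $k_x=k_y(\theta)$ with minimal polynomial $p$ over $k_y$, an arc $\a'\in F$ with $\a'(\e)=x$ is encoded by a root in $\LS{k_{\a'}}$ of a perturbation of $p$, and over the complete ring $k_{\a'}[[t]]$ Hensel's lemma leaves only finitely many roots, pairwise separated by the non-vanishing of finitely many derivative coordinates (through $p'(\theta)\neq 0$ when $k_x/k_y$ is separable, through a higher ramification invariant otherwise). Such a coordinate is the regular function on $X_\infty$ that cuts out the desired neighbourhood of $\a$.

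For part \eqref{i2:finite-fiber-1}, the assertion that $k_\a/k_\b$ is finite separable is exactly the first assertion of part \eqref{i2:differentials-cotangent-map} of \cref{t:differentials-cotangent-map}, since $f$ is unramified at $x=\a(\e)$; this hypothesis also forces $f^{-1}(y)$ to be \'etale over $k_y$ at $x$, so part \eqref{i1:finite-fiber-1} applies and $\a$ is isolated in $F$. It therefore remains to see that $\O_{F,\a}=k_\a$ (equivalently, that $F$ is reduced at $\a$), and here I would argue by deformation theory (this is \cref{t:unramified-reduced-arc-fiber}): identify the local functor of $F$ at $\a$ with the functor of sections of the base change $X':=X\times_{Y,\b}\Spec k_\b[[t]]\to\Spec k_\b[[t]]$, note that over the generic point of $\Spec k_\b[[t]]$ this family is finite \'etale near $x$, so that a section near $\a$ is rigidly determined by its restriction there, and propagate this rigidity along square-zero thickenings of $\Spec k_\b[[t]]$ using formal unramifiedness of $f$ at $x$ (cf.\ \cref{l:unramified-form-unramified}), which forces the section to be unique to all orders, hence $\O_{F,\a}=k_\a$.

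The main obstacles are precisely the isolatedness step in \eqref{i1:finite-fiber-1} and the reducedness step in \eqref{i2:finite-fiber-1}. For the former, soft scheme theory yields only closedness of $\a$ in $F$, and since $\a(0)$ may genuinely approach the unwanted fiber components one is forced into the ramification analysis along the arc. For the latter, surjectivity of $T^*_\a f_\infty$ gives only $\fm_\a=\fm_\b\O_{X_\infty,\a}+\fm_\a^2$, and because $\O_{X_\infty,\a}$ is far from Noetherian no Nakayama-type argument promotes this to $\fm_\a=\fm_\b\O_{X_\infty,\a}$; moreover completing is of no help, since the resulting idempotent maximal ideal already forces $\widehat{\O_{F,\a}}=k_\a$ without $\O_{F,\a}$ being reduced, so a genuine deformation-theoretic input is unavoidable.
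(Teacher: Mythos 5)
Your plan for part \eqref{i2:finite-fiber-1} matches the paper's architecture (separability from part \eqref{i2:differentials-cotangent-map} of \cref{t:differentials-cotangent-map}, reducedness from \cref{t:unramified-reduced-arc-fiber}), but both your treatment of part \eqref{i1:finite-fiber-1} and your sketch of the reducedness step have real gaps. For \eqref{i1:finite-fiber-1}, you assert that "soft scheme theory yields only closedness" and that one must exhibit an explicit open set by a Hensel/ramification analysis in jet coordinates; but that analysis is exactly the hard content of your plan and is never carried out: the excision of arcs $\a'$ with $\a'(\e)\neq x$ is declared to require jet coordinates without producing any, the "derivative coordinate" that is supposed to cut out the neighbourhood is never exhibited, and the inseparable case is deferred to an unspecified "higher ramification invariant". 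Moreover you overlooked the soft argument the paper actually uses: if $\a'\in f_\infty^{-1}(\b)$ generizes $\a$, then $\a'(\e)$ specializes to $x=\a(\e)$ (vanishing of all higher derivatives at $\a'$ passes to the specialization $\a$), and $\a'(\e)\in f^{-1}(y)$; since $x$ is isolated in $f^{-1}(y)$ this forces $\a'(\e)=x$, and then \cref{l:same-alg-closure}, applied to $\a'$, makes $\a'$ closed in the fiber, whence $\a'=\a$. Thus the "unwanted components" of $f^{-1}(y)$ are excised through the generic point of the arc, with no shrinking of $X$ and no explicit coordinate construction; this two-line argument is the paper's entire proof of \eqref{i1:finite-fiber-1}.

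For the reducedness in \eqref{i2:finite-fiber-1}, your rigidity mechanism is the right idea but, as sketched, does not reach the conclusion. Propagating uniqueness of the section "along square-zero thickenings \dots to all orders" only treats test rings with nilpotent maximal ideal, whereas the local ring of the fiber at $\a$ is non-Noetherian and its maximal ideal is merely a nil ideal, so no induction over square-zero extensions applies to it; and \cref{l:unramified-form-unramified} cannot be invoked here, since it concerns \emph{continuous} maps to discrete rings, i.e.\ maps killing a power of $\fm_\a$, and the quotient $\cO_{X_\infty,\a}\to\cO_{X_\infty,\a}/\fm_\b\cO_{X_\infty,\a}$ is not known to be of that form (that is essentially what one is trying to prove; the paper offers that lemma only as an alternative route to the cotangent statement). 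The paper's proof of \cref{t:unramified-reduced-arc-fiber} instead works uniformly over the category of local $\ov{k_\b}$-algebras $(A,\fm)$ with $\fm=\Nil(A)$, passes to the geometric fiber so that residue fields are trivialized (and reducedness can be descended), reduces to $n$ equations in $n$ unknowns whose Jacobian minor is nonzero along $\a$ generically, and proves uniqueness of the deformation directly by Taylor expansion, multiplication by the adjugate matrix, \cref{l:matrix-nilpotent}, and injectivity of $A[[t]]\to\LS{A}$; the passage back from the geometric fiber to reducedness over $k_\b$, which you gloss over, is also part of that argument. Your closing remarks (no Nakayama-type shortcut, completion useless) are apt, but as written neither the isolation step nor the reducedness step is actually proved.
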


\begin{theorem}
\label{t:finite-fiber-2}
Let $f \colon X \to Y$ be a morphism of schemes over a perfect field $k$, and assume that $X$ is separated.
Let $\b\in Y_\infty$ and denote $y = \b(\e)$.
\begin{enumerate}
\item 
\label{i1:finite-fiber-2}
If $f^{-1}(y)_\red$ is finite over $k_y$, then $f_\infty^{-1}(\b)$ is topologically finite.
\item 
\label{i2:finite-fiber-2}
If $f$ is quasi-compact and unramified at each point of $f^{-1}(y)$, then $f_\infty^{-1}(\b)$ is finite over $k_\b$ and reduced. 
\end{enumerate}
\end{theorem}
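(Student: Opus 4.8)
The plan is to deduce both parts from the pointwise results already obtained in \cref{t:finite-fiber-1}, together with the finiteness of residue field extensions from \cref{t:residue-field-extension} and a glueing argument using separatedness and quasi-compactness. For part \eqref{i1:finite-fiber-2}, I would first observe that the fiber $f_\infty^{-1}(\b)$ sits over the fiber $f^{-1}(y)$ via the map $\a \mapsto \a(\e)$: more precisely, any $\a \in f_\infty^{-1}(\b)$ has generic point $x = \a(\e)$ lying in $f^{-1}(y)$, and the residue field extension $k_x/k_y$ is algebraic since $f^{-1}(y)_\red$ is finite over $k_y$. By part \eqref{i1:residue-field-extension} of \cref{t:residue-field-extension}, $k_\a/k_\b$ is then algebraic, so by \cref{l:same-alg-closure} each such $\a$ is a \emph{closed} point of $f_\infty^{-1}(\b)$. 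Combined with part \eqref{i1:finite-fiber-1} of \cref{t:finite-fiber-1} — which applies since $f^{-1}(y)_\red$ being finite over $k_y$ forces it to be quasi-finite at every point $x$ in it — each $\a$ is also \emph{isolated} in $f_\infty^{-1}(\b)$. Thus $f_\infty^{-1}(\b)$ is a discrete space; the remaining point is to bound its cardinality (i.e.\ rule out infinitely many isolated closed points), and here I would stratify $f^{-1}(y)_\red = \{x_1,\dots,x_r\}$ into its finitely many points and show that the set of arcs $\a$ with fixed generic point $\a(\e) = x_i$ mapping to $\b$ is itself finite. Fixing $i$ and replacing $X$ by the closure of $x_i$ with reduced structure, this reduces to the case where $X$ is integral with generic point $x$ and $k_x/k_y$ finite; then $x_\infty$ maps to $y_\infty$ with finite residue field extension over $k_\b$ at the relevant point, and one extracts a finite bound (separability is not needed here, only that the extension $k_\a/k_\b$ is algebraic and that $\a$ is determined by bounded-degree data over $\b$).

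For part \eqref{i2:finite-fiber-2}, the hypotheses are stronger: $f$ is unramified at every point of $f^{-1}(y)$ and $f$ is quasi-compact. Quasi-finiteness of $f$ at points of $f^{-1}(y)$ (which follows from unramifiedness, as an unramified morphism is locally of finite type with $\Om = 0$ hence has discrete fibers) together with quasi-compactness of $f$ shows that $f^{-1}(y)$ is a finite set of points $\{x_1,\dots,x_r\}$, each with $k_{x_i}/k_y$ finite separable (this is part of unramifiedness, see \cite[\href{https://stacks.math.columbia.edu/tag/02FM}{Tag 02FM}]{stacks-project}). By part \eqref{i2:finite-fiber-1} of \cref{t:finite-fiber-1}, $f_\infty^{-1}(\b)$ is reduced at each of its points $\a$ and $k_\a/k_\b$ is finite separable. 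It remains to upgrade "reduced at each point and set-theoretically controlled" to "scheme-theoretically finite over $k_\b$". For this I would argue as follows: $f_\infty^{-1}(\b)$ is a reduced scheme all of whose points are closed and isolated (closedness from \cref{l:same-alg-closure}, isolatedness from part \eqref{i1:finite-fiber-1} of \cref{t:finite-fiber-1} applied using that $f^{-1}(y)_\red$ is finite hence quasi-finite at each $x_i$); it is therefore a disjoint union of spectra of fields, and each such field is $k_\a$, a finite separable extension of $k_\b$. To see the union is finite — the crux of the argument — I would again reduce, via the finitely many generic points $x_i$ and separatedness of $X$, to the integral case $X = \overline{\{x_i\}}$ with $k_{x_i}/k_y$ finite separable, where \cref{p:separable-field-extension} (or rather the étale base change $x_\infty = y_\infty \times_y x$ used in its proof) gives that $x_\infty \to y_\infty$ is finite étale, so has a finite fiber over $\b$. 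Patching over $i = 1,\dots,r$ yields that $f_\infty^{-1}(\b)$ is a finite disjoint union of spectra of finite separable extensions of $k_\b$, i.e.\ finite and reduced over $k_\b$, as claimed.

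The main obstacle, in both parts, is the passage from the \emph{pointwise} statements (each $\a$ is isolated, closed, with finite residue field extension) to the \emph{global} finiteness of $f_\infty^{-1}(\b)$: a priori an arc fiber could be an infinite discrete set of closed points. The key device to overcome this is the reduction to the integral case over each of the finitely many generic points $x_i \in f^{-1}(y)$, where one has genuine control — in part \eqref{i2:finite-fiber-2} via the étale base-change identity $x_\infty = y_\infty \times_y x$ from the proof of \cref{p:separable-field-extension}, and in part \eqref{i1:finite-fiber-2} via a more hands-on degree bound on the possible arcs lifting $\b$ through a finite (possibly inseparable) extension. Separatedness of $X$ is used to ensure these closures behave well (in particular that $f_\infty^{-1}(\b) \to \coprod_i (x_i)_\infty$ is injective, reflecting the fact that an arc is determined by its generic point plus its position in $X_\infty$), and quasi-compactness in part \eqref{i2:finite-fiber-2} is what makes $f^{-1}(y)$ itself finite rather than merely discrete.
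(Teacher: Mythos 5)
The skeleton of your argument (reduce to the finitely many points $x_i \in f^{-1}(y)$, use \cref{l:same-alg-closure} and \cref{t:finite-fiber-1} to get that each point of $f_\infty^{-1}(\b)$ is closed, isolated and, in part \eqref{i2:finite-fiber-2}, reduced with $k_\a/k_\b$ finite separable, and then assemble the fiber as a disjoint union of spectra of fields) is sound and matches the paper's reduction. But the step you yourself identify as the crux -- that there are only \emph{finitely many} arcs $\a$ in the fiber with a fixed generic point $\a(\e)=x$ -- is not actually established. In part \eqref{i1:finite-fiber-2} it is simply asserted (``one extracts a finite bound''), and the parenthetical claim that only algebraicity of $k_\a/k_\b$ is needed is misleading: algebraicity gives closedness of each point but cannot by itself rule out infinitely many closed points. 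In part \eqref{i2:finite-fiber-2} the proposed mechanism fails: the identity $x_\infty = y_\infty \times_y x$ does make $x_\infty \to y_\infty$ finite \'etale, but $\b$ is a point of $Y_\infty$, not of $y_\infty$ (its special point $\b(0)$ is in general a proper specialization of $y$), and likewise the arcs $\a \in f_\infty^{-1}(\b)$ with $\a(\e)=x$ are not points of $x_\infty$, since $\a(0)$ need not equal $x$. In the paper this \'etale base change is used in \cref{p:separable-field-extension} only on the auxiliary arcs $\a',\b'$ obtained from the deformation $\a(s+t)$, and only to control residue fields; it does not count points of $f_\infty^{-1}(\b)$.

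The paper's actual counting argument is \cref{t:sep-deg-fin-fibers}: fix an algebraic closure $\ov{k_\b}$; by \cref{l:same-alg-closure} every point of the fiber with $\a(\e)=x$ is the image of a $\ov{k_\b}$-valued point; since $X$ is separated, each commutative diagram of fields $k_y \to k_x \to \LS{\ov{k_\b}}$ compatible with $\ov\b$ gives rise to \emph{at most one} $\ov{k_\b}$-valued arc on $X$, so points of the fiber over $x$ are counted by such embeddings; and \cref{l:sep-deg} bounds the number of embeddings of $k_x$ into $\LS{\ov{k_\b}}$ over $k_y$ by $[k_x:k_y]_\sep$ (this is where inseparability is handled, via uniqueness of embeddings of a purely inseparable extension, not by ignoring it). Summing over the finitely many $x_i$ gives topological finiteness with the bound $\sepdeg f^{-1}(y)$; once that is in place, your concluding assembly of part \eqref{i2:finite-fiber-2} coincides with the paper's. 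So the gap is precisely the passage from pointwise isolation to a global cardinality bound, and it requires this embedding-counting argument (or an equivalent), not the finite \'etale fiber of $x_\infty \to y_\infty$.
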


\begin{proof}[Proof of \cref{t:finite-fiber-1,t:finite-fiber-2}]
First note that assertion \eqref{i1:finite-fiber-1} in \cref{t:finite-fiber-1}
follows from \cref{l:same-alg-closure}. 
The latter clearly implies that $\a$ is a closed point of $f_\infty^{-1}(\b)$.
To see that this point is in fact isolated, 
assume that it is a specialization of some other point $\a'$ in $f_\infty^{-1}(\b)$.
The point $x' = \a'(\e)$ specializes to $x$ and $f(x') = y$, hence $x' = x$ 
since $x$ is an isolated point of $f^{-1}(y)$. 
Then \cref{l:same-alg-closure}, applied this time to $\a'$, implies that $\a'$ is a closed
point of $f_\infty^{-1}(\b)$ and hence $\a' = \a$. 

The proof of 
the other assertions in these theorems proceeds as follows: we prove 
\eqref{i1:finite-fiber-2} of \cref{t:finite-fiber-2} in \cref{t:sep-deg-fin-fibers}, and   
\eqref{i2:finite-fiber-1} of \cref{t:finite-fiber-1} in \cref{t:unramified-reduced-arc-fiber}. 
To conclude, we just observe that if $f$ is quasi-compact and unramified at each point of $f^{-1}(y)$, then
$f^{-1}(y)$ is finite over $k_y$ and hence
\eqref{i2:finite-fiber-2} of \cref{t:finite-fiber-2} follows directly from the other assertions.
\end{proof}

Recall that for any algebraic field extension $K/L$ the separable closure $L_\sep$ 
of $L$ in $K$ is an intermediate extension $K/L_\sep/L$ such that $L_\sep/L$ is separable algebraic and $K/L_\sep$ is purely inseparable. The separable degree of a finite extension $K/L$ is then given by
\[
[K : L]_\sep := [L_\sep : L].
\]
We want to introduce a similar notion for schemes that are finite over a field.

\begin{definition}
\label{d:sep-deg}
Let $X$ be a finite scheme over a field $k$.
The \emph{separable degree} of $X$ is defined as 
\[
\sepdeg X := \sum_{x\in X} [k_x : k]_\sep.
\]
\end{definition}

\begin{remark}
\label{l:sep-deg-vs-deg}
For any finite $k$-scheme $X = \Spec A$ we have
\[
|X| \leq \sepdeg X \leq \deg X,
\]
where the degree of $X$ over $k$ is given by $\deg X := \dim_k A$. 
\end{remark}


\begin{lemma}
\label{l:sep-deg}
Given a finite field extension $K/L$ of separable degree $d$ and an arbitrary extension $K'/L$, there are at most $d$ embeddings of $K$ in $K'$ over $L$.
\end{lemma}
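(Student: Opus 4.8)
The plan is to split $K/L$ into its separable and purely inseparable layers and count the embeddings one layer at a time. Write $L_\sep$ for the separable closure of $L$ in $K$, so that $L_\sep/L$ is finite separable with $[L_\sep:L] = d$ by the definition of separable degree recalled above, while $K/L_\sep$ is purely inseparable. The first observation is that every $L$-embedding $\sigma \colon K \inj K'$ restricts to an $L$-embedding $\sigma|_{L_\sep}\colon L_\sep \inj K'$; hence it suffices to bound (a) the number of $L$-embeddings $L_\sep \inj K'$ and (b) for each such embedding, the number of ways to extend it to $K$.

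For (a) I would use that a finite separable extension is simple: write $L_\sep = L(\beta)$ with $\beta$ having minimal polynomial $p \in L[X]$ of degree $d$. An $L$-embedding $L_\sep \inj K'$ is determined by the image of $\beta$, which must be a root of $p$ lying in $K'$, and $p$ has at most $\deg p = d$ roots in $K'$; so there are at most $d$ embeddings in (a). For (b), fix an $L$-embedding $\tau \colon L_\sep \inj K'$ and suppose $\sigma\colon K\inj K'$ extends it. If $\charK L = 0$ then $K = L_\sep$ and there is nothing to check, so assume $\charK L = p > 0$. For any $\alpha \in K$ there is $n \ge 0$ with $\alpha^{p^n}\in L_\sep$, whence $\sigma(\alpha)^{p^n} = \tau(\alpha^{p^n})$ is already prescribed by $\tau$; since the $p^n$-th power map is injective on the field $K'$, this forces $\sigma(\alpha)$ to be the unique $p^n$-th root in $K'$ of $\tau(\alpha^{p^n})$, so $\tau$ has at most one extension to $K$. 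Multiplying the two bounds gives at most $d\cdot 1 = d$ embeddings of $K$ in $K'$ over $L$.

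The proof presents no genuine obstacle; the one point worth watching is that $K'$ is an arbitrary extension, so one must not invoke roots living in an algebraic closure. This causes no trouble: in (a) one only counts roots of $p$ that already lie in $K'$, and in (b) one only uses injectivity of the $p^n$-th power map, which holds in any field of characteristic $p$ regardless of $K'$.
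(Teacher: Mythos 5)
Your proposal is correct and follows essentially the same route as the paper's proof: pass to the separable closure $L_\sep$, bound the embeddings of $L_\sep$ by the primitive element theorem (image of a primitive element must be a root of its degree-$d$ minimal polynomial lying in $K'$), and show each such embedding extends in at most one way across the purely inseparable layer. The only cosmetic difference is in the last step, where the paper notes that each $w \in K$ has a minimal polynomial over $L_\sep$ with a single distinct root, while you phrase the same fact via $\alpha^{p^n} \in L_\sep$ and injectivity of the $p^n$-th power map on $K'$.
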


\begin{proof}
Consider the relative separable closure $L \subset L_\sep \subset K$. By the primitive element theorem, $L_\sep = L[z] = L[Z]/(P(Z))$, where $z \in L_\sep$ is algebraic with minimal polynomial $P(Z)$ over $L$. Then an embedding $L_\sep \hookrightarrow K'$ over $L$ is determined by the image of $z$, which must be a root of $P(Z)$ in $K'$. 
Since $\deg(P(Z)) = [K : L_\sep] = d$, there are at most $d$ such roots.

We will conclude if we show that given a fixed embedding $L_\sep \subset K'$, there is at most one embedding $K \hookrightarrow K'$ extending $L_\sep \subset K'$. For this, notice that, since $K/L_\sep$ is purely inseparable, every element $w \in K$ has a minimal polynomial over $L_\sep$ that has only one distinct root. This means that the image of $w$ in $K'$, if it exists, is unique.
\end{proof}


\begin{proposition}
\label{t:sep-deg-fin-fibers}
Let $f \colon X \to Y$ be a morphism of schemes over a field $k$, and assume that 
$X$ is separated. Let $\b \in Y_\infty$ and let $y = \b(\e) \in Y$. If $f^{-1}(y)_\red$ is finite over $k_y$, then $f_\infty^{-1}(\b)$ is topologically finite of cardinality at most $\sepdeg f^{-1}(y)$.
\end{proposition}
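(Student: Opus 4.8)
The plan is to bound the number of points of $f_\infty^{-1}(\b)$ by counting the $\Om$-valued points lying over $\b$ for a suitable algebraically closed field $\Om$, and to deduce discreteness of the fiber from \cref{l:same-alg-closure}. As a first step I would record the elementary input: since $f^{-1}(y)_\red$ is finite over $k_y$ it is the spectrum of an Artinian reduced $k_y$-algebra, so $f^{-1}(y) = \{x_1,\dots,x_m\}$ is a finite set and each residue field $k_{x_i}$ is a finite extension of $k_y$; writing $d_i := [k_{x_i}:k_y]_\sep$ we have $\sepdeg f^{-1}(y) = \sum_i d_i$. Moreover, for any point $\a$ of $f_\infty^{-1}(\b)$ the point $x=\a(\e)$ lies in $f^{-1}(y)$, so $k_x/k_y$ is finite and \cref{l:same-alg-closure} applies: the extension $k_\a/k_\b$ is algebraic and $\a$ is a closed point of $f_\infty^{-1}(\b)$. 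Hence, once we show that $f_\infty^{-1}(\b)$ has at most $\sum_i d_i$ points, it is automatically a finite $T_1$-space, thus finite discrete, and topologically finite of the asserted cardinality.

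Next I would reduce the count of points to a count of $\Om$-valued arcs. Fix an algebraic closure $\Om$ of $k_\b$ together with the inclusion $k_\b\subset\Om$, and let $\b_\Om\colon\Spec\Om[[t]]\to Y$ be the base change of $\b$ along $k_\b\to\Om$; note $\b_\Om(\e)=y$. Since every residue field $k_\a$ arising in the fiber is algebraic over $k_\b$, the inclusion $k_\b\subset\Om$ extends to an embedding $k_\a\hookrightarrow\Om$, which produces an $\Om$-valued point of $X_\infty$ whose image is $\a$ and which maps under $f_\infty$ to $\b_\Om$. Therefore the map sending an $\Om$-valued point of $X_\infty$ over $\b_\Om$ to its image point in $f_\infty^{-1}(\b)$ is surjective, and it suffices to bound the number of such $\Om$-valued points. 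By the identification $X_\infty(\Om)=\Hom_k(\Spec\Om[[t]],X)$ this amounts to bounding the set of $k$-morphisms $\g\colon\Spec\Om[[t]]\to X$ with $f\circ\g=\b_\Om$, which I would partition according to the image $\g(\e)\in f^{-1}(y)$ of the generic point of $\Spec\Om[[t]]$.

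For a fixed $i$, consider those $\g$ with $\g(\e)=x_i$. The generic fiber $\g_\e\colon\Spec\LS\Om\to X$ is precisely the datum of the point $x_i$ together with a field embedding $\theta_\g\colon k_{x_i}\hookrightarrow\LS\Om$, and the condition $f\circ\g=\b_\Om$ forces $\theta_\g$ to restrict, along $k_y\hookrightarrow k_{x_i}$, to the fixed embedding $\theta_0\colon k_y\hookrightarrow\LS\Om$ coming from $\b_{\Om,\e}$, which is independent of $\g$. By \cref{l:sep-deg}, applied to the finite extension $k_{x_i}/k_y$ of separable degree $d_i$ and the extension $\LS\Om/k_y$, there are at most $d_i$ embeddings $k_{x_i}\hookrightarrow\LS\Om$ extending $\theta_0$. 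Finally I would recover $\g$ from $\g_\e$: the ring $\Om[[t]]$ is a valuation ring with fraction field $\LS\Om$, and $X$ is separated over $k$, so the valuative criterion of separatedness guarantees that a $k$-morphism $\Spec\LS\Om\to X$ has at most one extension to $\Spec\Om[[t]]$. Thus $\g\mapsto\theta_\g$ is injective on the set of $\g$ with $\g(\e)=x_i$; this set has at most $d_i$ elements, and summing over $i$ shows that there are at most $\sum_i d_i=\sepdeg f^{-1}(y)$ morphisms $\g$ over $\b_\Om$, hence at most that many points in $f_\infty^{-1}(\b)$.

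The step I expect to carry the most weight—and the only one using that $X$ is separated—is the last one, reconstructing an $\Om$-valued arc from its restriction to the generic point of $\Spec\Om[[t]]$; everything else is bookkeeping on top of \cref{l:same-alg-closure} and \cref{l:sep-deg}. The points that require a little care are the translation between $\Om$-valued arcs over $\b_\Om$ and field embeddings $k_{x_i}\hookrightarrow\LS\Om$ over $k_y$, and the observation that passing from the points of $f_\infty^{-1}(\b)$ to its $\Om$-valued points can only increase the count (which in turn relies on the algebraicity of $k_\a/k_\b$ from \cref{l:same-alg-closure}).
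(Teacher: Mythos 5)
Your proposal is correct and follows essentially the same route as the paper's proof: both reduce the count to $\ov{k_\b}$-valued arcs over the base-changed arc, partition by the point of $f^{-1}(y)$ hit by the generic point, bound the embeddings $k_{x_i}\hookrightarrow\LS{\ov{k_\b}}$ over $k_y$ via \cref{l:sep-deg}, use separatedness (the valuative criterion, which the paper leaves implicit) to see that each such embedding yields at most one arc, and conclude discreteness from \cref{l:same-alg-closure}. No gaps.
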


\begin{proof}
Write $f^{-1}(y)_\red = \Spec (K_1 \times \cdots \times K_n)$ where each $K_i/k_y$ is a finite field extension and set $d_i = [K_i : k_y]_\sep$, so that $\sepdeg f^{-1}(y) = \sum_{i=1}^n d_i$. Let $x \in f^{-1}(y)$ with $k_{x} = K_i$. Fix an algebraic closure $\ov{k_\b}$ of $k_\b$ and let $\a \in f^{-1}_\infty(\b)$ with $\a(\eta) = x$. By \cref{l:same-alg-closure} there exists an embedding $\tau_\a: k_\a \to \ov{k_\b}$. Note that the composition $\Spec \ov{k_\b} \to \Spec k_\a \to X_\infty$ defines the same point independently of the choice of embedding $\tau_\a$. Thus any diagram of the form
\begin{equation}
\label{eq:geom-fiber}
\xymatrix{X_\infty \ar[d]_-{f_\infty} & \\
          Y_\infty & \Spec \ov{k_\b} \ar[l]^-{\ov{\b}} \ar[lu]}
\end{equation}
determines a point in the fiber $f_\infty^{-1}(\b)$, and conversely every point $\a$ in $f_\infty^{-1}(\b)$
with $\a(\e) = x$ arises in this way.  
The diagram \eqref{eq:geom-fiber} yields a diagram of field extensions
\begin{equation}
\label{eq:geom-fiber-fields}
\xymatrix{k_x \ar[rd] & \\
          k_y \ar[r]_-{\ov{\b}^\sharp_\eta} \ar[u]^-{f_x^\sharp} & \LS{\ov{k_\b}}.}
\end{equation}  
As $X$ is separated, any such diagram gives rise to at most one $\ov{k_\b}$-valued arc on $X$, which in turn gives an $\a \in f^{-1}_\infty(\b)$. Thus it suffices to show that there exist at most $d_i$ intermediate extensions of the form \eqref{eq:geom-fiber-fields}. 

By hypothesis the extension $f_x^\sharp$ is finite with separable degree $d_i$, and by \cref{l:sep-deg} we see that there are at most $d_i$ possible embeddings $k_x \to \LS{\ov{k_\b}}$ fitting into \eqref{eq:geom-fiber-fields}. This shows that there are at most $d_i$ points $\a \in X_\infty$ such that $f_\infty(\a) = \b$ and $\a(\e) = x$. Ranging among all possible $x \in f^{-1}(y)$ (i.e., among the $K_i$) we see that the fiber $f_\infty^{-1}(\b)$ is finite with cardinality at most $d$.

To conclude, notice that $f_\infty^{-1}(\b)$ is a finite topological space in which, by \cref{l:same-alg-closure}, all points are closed. This means that $f_\infty^{-1}(\b)$ is discrete.
\end{proof}


\begin{proposition}
\label{t:unramified-reduced-arc-fiber}
Let $f \colon X \to Y$ be a morphism of schemes over a perfect field $k$. 
Let $\a \in X_\infty$ and $\b = f_\infty(\a)$, 
and assume that $f$ is unramified at $\a(\eta)$. Then $\a$ is reduced in $f_\infty^{-1}(\b)$
and $k_\a/k_\b$ is finite separable.
\end{proposition}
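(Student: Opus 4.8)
The finiteness of $k_\a/k_\b$ is the content of \cref{p:separable-field-extension}, and its separability follows, exactly as in the proof of \cref{t:residue-field-extension}, from part \eqref{i2:differentials-cotangent-map} of \cref{t:differentials-cotangent-map} after reducing to the case in which $X$ is integral with generic point $x=\a(\eta)$. So the real content of the proposition is that the fiber $F:=f_\infty^{-1}(\b)$ is reduced at $\a$; since $\a$ is a closed point of $F$ by \cref{l:same-alg-closure}, this is a statement about the local ring $A:=\cO_{F,\a}$, and we may assume $X=\Spec B$ and $Y=\Spec C$ are affine.

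The plan is to control $F$ near $\a$ by the deformation theory of arcs relative to $f$. Writing $D=B\otimes_C k_\b[[t]]$, the scheme $F$, as a scheme over $k_\b$, represents the functor sending a $k_\b$-algebra $R$ to $\Hom_{k_\b[[t]]}(D,R[[t]])$, with $\a$ corresponding to a homomorphism $\gamma_0\colon D\to k_\a[[t]]$ whose restriction to $B$ is $\a^\sharp$. The decisive point is that, because $f$ is unramified at $x=\a(\eta)$, the module $\Om_{X/Y}$ vanishes at $x$; hence its pullback $\gamma_0^*\Om_{X/Y}=\Om_{B/C}\otimes_{B,\a^\sharp}k_\a[[t]]$ is a $t$-power-torsion $k_\a[[t]]$-module — each element is killed by $t^{\ord_\a(g)}$ for a suitable $g\in B\smallsetminus\fp_x$ annihilating it in $\Om_{B/C}$ — and the analogous pullback along the universal arc of $F$ over $A$ reduces modulo $\fm_A$ to this torsion module. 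Now the groups of first-order deformations and infinitesimal automorphisms of $\gamma_0$, and more generally the deformation- and obstruction-theoretic groups governing lifts of the universal $R$-arc near $\a$ along a square-zero extension with kernel $J$, are built from $\Hom_{k_\a[[t]]}\!\big(\gamma_0^*\Om_{X/Y},\,N\big)$ with $N$ a $t$-torsion-free $k_\a[[t]]$-module (a module of power series over $k_\a$, such as $J[[t]]$). Since $k_\a[[t]]$ is a domain, a homomorphism from a torsion module to a torsion-free one is zero, so all these groups vanish; in particular $\fm_A/\fm_A^2=0$, and since $k_\a/k_\b$ is finite separable (so $\Om_{k_\a/k_\b}=0$ and the conormal sequences behave as in the proof of \cref{t:differentials-cotangent-map}), one gets $\widehat A=k_\a$.

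It remains to upgrade this formal rigidity to genuine reducedness of $A$, which in general is not Noetherian; this is where a separate argument is needed, since $\fm_A/\fm_A^2=0$ alone does not force $\fm_A=0$. I would argue by contradiction: a nonzero $\ep\in A$ with $\ep^2=0$ spreads out to a square-zero section over an affine open $W\ni\a$ of $F$, giving a proper square-zero thickening $W':=V(\ep)\hookrightarrow W$; restricting the universal arc from $W$ to $W'$ and invoking the moduli description of $F$, the inclusion $W\hookrightarrow F$ is one lift of the resulting $W'$-family of arcs on $X$ lying over $\b$ to a $W$-family, and any other lift is a nontrivial deformation of the universal arc along $W'\hookrightarrow W$, hence an element of a group of the form $\Hom_{\cO_{W'}[[t]]}\!\big(u^*\Om_{X/Y},\,(\ep)\otimes_{\cO_{W'}}\cO_{W'}[[t]]\big)$; the target is $t$-torsion-free while $u^*\Om_{X/Y}$ is torsion along $t$ (again by unramifiedness at $\a(\eta)$), so this group vanishes, $W\hookrightarrow F$ is the unique such lift, and unwinding this forces the thickening to split — that is, $\ep=0$, a contradiction. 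An alternative route to the same upgrade is to present $F$ near $\a$ as the cofiltered limit $\varprojlim_m(X_m\times_{Y_m}\Spec k_\b)$ of the finite-type jet fibers $F_m:=X_m\times_{Y_m}\Spec k_\b$, so that $A=\varinjlim_m\cO_{F_m,\a_m}$ (with $\a_m$ the image of $\a$) is a filtered colimit of Noetherian local rings, and then track how the nilpotents of $\cO_{F_m,\a_m}$ are annihilated in the colimit. In either formulation, this passage from the cheap infinitesimal computation to reducedness of the non-Noetherian ring $A$ is the main obstacle; the finiteness and separability of $k_\a/k_\b$ are used throughout to keep the coefficient modules $t$-torsion-free after the étale base change $k_\a[[t]]/k_\b[[t]]$ and to kill $\Om_{k_\a/k_\b}$ in the conormal sequences.
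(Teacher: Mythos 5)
The residue-field statements are fine (they are exactly \cref{t:residue-field-extension}/\cref{t:differentials-cotangent-map}\eqref{i2:differentials-cotangent-map}), and your first-order computation showing $\fm_A/\fm_A^2=0$ is correct, but as you yourself note it is not the issue; the problem lies in your "upgrade" to reducedness, and there are two concrete gaps there. First, the claim that $u^*\Om_{X/Y}$ is $t$-power torsion over an arbitrary affine open $W\ni\a$ of the fiber "again by unramifiedness at $\a(\eta)$" is unjustified and in general false: other points $\g\in W$ may have $\g(\e)$ in the ramification locus of $f$, and then the fiber of $\Om_{X/Y}\otimes\cO_{W}[[t]]$ at $\g$ survives after inverting $t$. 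To control this you must first prove that $\a$ is isolated in $f_\infty^{-1}(\b)$ (this is where \cref{l:same-alg-closure} enters in the paper) and then work over the one-point open $W=\Spec\cO_{F,\a}$, where $\fm_A=\Nil(A)$; even there the pullback is not literally $t$-power torsion, because the annihilators $u^\sharp(g)$ are only of the form $t^e\cdot(\text{unit})+(\text{low-order terms with nilpotent coefficients})$. The actual rigidity mechanism is that such elements (and matrices) become invertible in $\LS{A}$, combined with the injectivity of $M[[t]]\to M(\!(t)\!)$ — this is precisely the content of \cref{l:matrix-nilpotent} and the invertibility criterion from \cite{KV04} used in the paper, and it is missing from your argument.

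Second, the closing step "uniqueness of the lift forces the thickening to split, that is, $\ep=0$" is not an argument. What your (corrected) deformation computation would give is that the tautological lift $W\hookrightarrow F$ is the \emph{only} lift of the restricted $W'$-family; this rules out other lifts but produces neither a retraction $W\to W'$ nor the vanishing of $(\ep)$, so no contradiction materializes — if anything the argument is circular, since to contradict $\ep\neq 0$ you would need to exhibit a second lift, which your vanishing statement forbids. The paper closes the loop differently: it proves at once (not just to first order) that for every local test ring $A$ with $\fm_A=\Nil(A)$ and residue field $\ov{k_\b}$ there is at most one $A$-point of the geometric fiber over $\ov\a$, by Taylor-expanding the $n$ equations in $n$ unknowns (after a reduction to a square Jacobian) and multiplying by the adjugate to invoke \cref{l:matrix-nilpotent}; it then applies this uniqueness with the test ring being the local ring of the geometric fiber itself, comparing the identity with the map factoring through the residue field, and descends reducedness along $k_\b\to\ov{k_\b}$. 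If you want to stay over $k_\b$ instead of passing to the geometric fiber, you additionally need a coefficient-field section $k_\a\to A$ (which exists because $k_\a/k_\b$ is finite separable and $\fm_A$ is a nil ideal), and you never invoke anything of this kind. Your alternative route via the jet fibers $F_m$ is only flagged, not carried out. So the overall strategy is in the right spirit, but the two decisive ingredients — the Laurent-series invertibility over the isolated-point local ring, and a mechanism converting uniqueness of test-ring points into vanishing of the nilradical — are not supplied.
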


\begin{proof}

Write $x = \a(\eta)$, and $y = \b(\eta)$. The fact that $f$ is unramified at $\a(\eta)$ implies that the extension of residue fields $k_x/k_y$ is finite separable, 
hence $k_\a/k_\b$ is finite separable by \cref{t:residue-field-extension}.
By the same argument used in the proof of assertion \eqref{i1:finite-fiber-1} in \cref{t:finite-fiber-1}, 
we see by \cref{l:same-alg-closure} that $\a$ is an isolated point in its fiber. Thus we may pass to suitable affine open sets and assume that $X = \Spec S$ and $Y = \Spec R$. As $f$ is locally of finite type at $\a(\eta)$, we can write
\[
S \isom R[x_1,\ldots,x_n]/(f_i \mid i\in I).
\]

We first show that $n \leq |I|$. 
Writing
\[
\Om_{S/R} = \langle dx_1,\ldots,dx_n \rangle/ \langle \sum_j \frac{\partial f_i}{\partial x_j} dx_j \mid i\in I \rangle,
\]
we have a free presentation of $\Om_{S/R}$ of the form
\[
\xymatrix@C=15pt{S^I \ar[r]^-\tau & S^n \ar[r] & \Om_{S/R} \ar[r] & 0.}
\]
Since $f$ is unramified at $\a(\eta)$ we have that $\Om_{S/R} \otimes_S \LS{k_\a} = 0$.
Hence $\tau \otimes \id_{\LS{k_\a}}$ is a surjection, and thus $n \leq |I|$.

The next step is to reduce to the case where $|I|=n$.
Since $\Om_{S/R} \otimes \LS{k_\a} = 0$, there exist $i_1,\ldots,i_n \in I$ such that the image of 
$\det \left( \frac{\partial f_{i_e}}{\partial x_j}\right)_{e,j\leq n}$ in $\LS{k_\a}$ is invertible. 
The factorization
\[
\xymatrix@C=15pt{ R \ar[r] &  R[x_1,\dots,x_n]/(f_{i_1},\ldots,f_{i_n}) \ar[r] & S }
\]
corresponds to a factorization $X \to X' \to Y$ where $X\to X'$ is
a closed immersion and the induced map $f' \colon X' \to Y$ is unramified at $\a'(\eta)$, where $\a'$ is the image of $\a$ in $X'_\infty$. By \cref{l:same-alg-closure}, $\a'$ is an isolated point of its fiber. Then it clearly suffices to prove that $\a'$ is reduced in its fiber, hence we may assume without loss of 
generality that $|I|=n$. 

Let $\ov{k_\b}$ be an algebraic closure of $\k_\b$. We look at the fiber $F = X_\infty \times_{Y_\infty} k_\b$ and
the geometric fiber $\ov{F} = X_\infty \times_{Y_\infty} \Spec \ov{k_\b}$ over $\b$. We aim to show that $\ov{F}$ is reduced at every point $\ov{\a}$ lying over $\a$. By \cite[\href{https://stacks.math.columbia.edu/tag/035W}{Tag 035W}]{stacks-project}, this will imply that the scheme $\ov{F}_\a = \Spec \cO_{X_\infty,\a} \otimes \ov{k_\b}$ is reduced, thus $F$ is geometrically reduced at $\a$ and hence in particular reduced at $\a$. 

First note that the residue field of each $\ov{\a}$ is just $\ov{k_\b}$ and thus, by \cite[\href{https://stacks.math.columbia.edu/tag/01TE}{Tag 01TE}]{stacks-project} $\ov{\a}$ is isolated in $\ov{F}_\a$. In particular, the local ring $\cO_{\ov{F},\ov{\a}}$ is of dimension $0$. Let $\cA$ be the category whose objects are local $\ov{k_\b}$-algebras $(A,\fm)$ with $\fm = \Nil(A)$ and $A/\fm = \ov{k_\b}$ and whose morphisms are local ring maps. The geometric fiber over $\b$ at $\ov{\a}$ is determined by its $A$-points, which in turn correspond to diagrams of the form
\[
\xymatrix{\Spec A[[t]] \ar[d] \ar[r]^-{\widetilde {\a}} & X \ar[d]^-f \\
          \Spec \ov{k_\b}[[t]] \ar[r]_-{\ov{\b}} & Y,}
\]
where $A \in \cA$ and precomposing $\widetilde{\a}$ with $\Spec \ov{k_\b}[[t]] \to \Spec A[[t]]$ gives $\ov{\a}$. In order to prove that $\ov{F}$ is reduced at $\ov{\a}$ it suffices to show that for any $A \in \cA$ there exists at most one morphism $\widetilde \a$ making the above diagram commute; as $\ov{k_\b}$ is clearly the initial object in $\cA$ we are done. Note that for every $A$ there exists such a morphism $\widetilde a$ given by $\widetilde \a = \ov{\a} \circ \iota$, where $\iota \colon  \Spec A[[t]] \to \Spec \ov{k_\b}[[t]]$ is the natural inclusion.

Write $f = (f_1,\ldots,f_n)$ and $x = (x_1,\ldots,x_n)$. Then $\ov{\a}$ corresponds to
\[
\ov{\a}(t) = (\ov{\a}_1(t),\dots,\ov{\a}_n(t)) \in \ov{k_\b}[[t]]^n,
\]
satisfying $f(\ov{\a}(t)) = 0$. 
Since $f$ is unramified at $\a(\eta)$, we have that $\det M(\ov{\a}(t)) \neq 0$, where $M = \left(\frac{\partial f_i}{\partial x_j}\right)_{i,j \leq n}$. Write $\det M(\ov{\a}(t)) = t^d u(t)$ with $d\geq 0$ and $u(t) \in \ov{k_\b}[[t]]$ invertible. 
Let $(A,\fm) \in \cA$, then an $A$-point $\widetilde \a$ of the fiber at $\ov{\a}$ is given by
\[
\widetilde \a(t) = \ov{\a}(t) + v(t) \in A[[t]]^n, \: v(t) \in \fm[[t]]^n,
\]
satisfying $f(\widetilde \a(t)) = 0$. Taking Taylor expansion we get
\[
0 = \underbrace{f(\ov{\a})}_{=0} + M(\ov{\a}(t)) \cdot v(t) + N(t) \cdot v(t),
\]
where $N(t)$ has its coefficients in $\fm[[t]]$. Multiplying with the adjugate matrix of $M(\ov{\a}(t))$ we have
\[
0 = (t^d u(t) \id_n + \widetilde N(t)) \cdot v(t).
\]
By \cref{l:matrix-nilpotent} we see that $(t^d u(t) \id_n + \widetilde N(t))$ is invertible considered as a matrix with coefficients in $\LS{A}$. As $A[[t]] \to \LS{A}$ is injective we have $v(t) = 0$. This finishes the proof.
\end{proof}

\begin{lemma}
\label{l:matrix-nilpotent}
Let $(A,\fm)$ be a $k$-algebra with $A/\fm = k$ and $\fm = \Nil(A)$. Consider a matrix $M \in M_{n\times n}(A[[t]])$ of the form $M = t^d u(t) \id_n + N$ where $u(t) \in A[[t]]$ is invertible and $N \in M_{n\times n}(\fm[[t]])$. Then there exists $M' \in M_{n\times n}(A[[t]])$ such that $M' \cdot M = t^e U(t)$ for some $U(t) \in \GL_n(A[[t]])$.
\end{lemma}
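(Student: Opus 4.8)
The plan is to reduce the assertion to the single statement that $M$ is invertible as a matrix over the Laurent series ring $\LS{A}$. Granting this, one writes $M^{-1} \in M_{n\times n}(\LS{A})$ and uses that $\LS{A} = \bigcup_{e \ge 0} t^{-e} A[[t]]$: for $e$ large enough the matrix $t^{e}M^{-1}$ has all entries in $A[[t]]$, and then $M' := t^{e}M^{-1}$ together with $U := \id_n \in \GL_n(A[[t]])$ satisfies $M' \cdot M = t^{e}\id_n = t^{e}U$. So the whole content is the invertibility of $M$ over $\LS{A}$.

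The one delicate point is that $\fm = \Nil(A)$ does \emph{not} make $\fm$ a nilpotent ideal, since $A$ need not be Noetherian; hence a matrix with entries in $\fm[[t]]$ is in general not nilpotent over $A[[t]]$, and $M$ cannot be treated as a unit plus a nilpotent perturbation. To circumvent this I would split $N = N_{0} + N_{1}$, where $N_{1}$ collects, in each entry, the terms of $t$-degree $\ge d$ and $N_{0}$ the terms of degree $< d$. Then $N_{1} \in t^{d}\,M_{n\times n}(A[[t]])$, so $t^{d}u\,\id_n + N_{1} = t^{d}\,V$ with $V := u\,\id_n + t^{-d}N_{1} \in M_{n\times n}(A[[t]])$; reducing $V$ modulo $t$ gives $u(0)\,\id_n$ plus a matrix over $\fm$, whose determinant is $\equiv u(0)^{n} \pmod{\fm}$, hence a unit of the local ring $A$, so $V \in \GL_n(A[[t]])$ because $A[[t]]$ is $t$-adically complete. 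Thus $M = t^{d}V + N_{0} = V\,(t^{d}\id_n + V^{-1}N_{0})$. On the other hand $N_{0}$ has only finitely many coefficients (at most $n^{2}d$ of them), all lying in $\fm = \Nil(A)$, so the ideal $I$ they generate is a finitely generated ideal of nilpotents, hence nilpotent: $I^{\nu} = 0$ for some $\nu$.

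To conclude, put $W := t^{-d}V^{-1}N_{0} \in M_{n\times n}(\LS{A})$. The entries of $V^{-1}N_{0}$ are power series with coefficients in $I$, so the entries of $W^{j}$ lie in $t^{-dj}I^{j}[[t]]$; in particular $W^{\nu} = 0$, so $W$ is nilpotent over $\LS{A}$ and $\id_n + W$ is invertible there, with inverse $\sum_{j=0}^{\nu-1}(-W)^{j}$. Hence $M = t^{d}V(\id_n + W)$ is invertible over $\LS{A}$, as required; keeping track of pole orders shows that one may take $e = d\nu$, so that $M' \cdot M = t^{d\nu}\id_n$. The main obstacle is precisely this middle step: recognising that the non-Noetherian behaviour of $A$ must be localised in the degree-$<d$ part of $N$ — which has finitely many coefficients, hence generates a nilpotent ideal — while the higher-order tail of $N$ is absorbed, harmlessly, into the unit matrix $V$.
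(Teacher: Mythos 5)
Your proposal is correct and follows essentially the same route as the paper: split $N$ into its low-degree polynomial part (finitely many coefficients in $\fm=\Nil(A)$, hence a nilpotent perturbation) and a high-order tail absorbed into an invertible factor $t^d V$, then invert over $\LS{A}$ and clear poles. The only cosmetic differences are that the paper certifies nilpotency of the polynomial part via Cayley--Hamilton rather than via the nilpotence of the finitely generated ideal $I$, and states the conclusion in the form $M'\cdot M=(t^d U')^e$ instead of $t^{d\nu}\id_n$.
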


\begin{proof}
We write $N = N_1 + N_2$ where each entry of $N_1$ has order $\geq d+1$ and each entry of $N_2$ is a polynomial of degree at most $d$. In particular, each entry of $N_2$ is nilpotent and by the Cayley--Hamilton theorem $N_2$ is nilpotent itself. Write 
$t^d u(t) \id_n + N_1 = t^d U'(t)$ where $U'(t)$ is invertible modulo $\fm$, 
hence invertible. Then there exists $M' \in M_{n\times n}(A[[t]])$ such that $M' \cdot M = (t^d U')^e$.
\end{proof}

In the following examples we demonstrate the statement of
\cref{t:unramified-reduced-arc-fiber} by explicitly showing nontrivial nilpotent
elements in the local ring $\O_{X_\infty,\a}$ 
vanishing along the fiber of $f_\infty$ at $\a$. As we will see later in \cref{p:not-reduced-not-noetherian}, this implies that $\O_{X_\infty,\a}$ is not Noetherian.

\begin{example}
\label{eg:node}
Let $X \subset \A^2_k$ be the node defined by the equation $h = xy = 0$,
and let $\a \in X_\infty$ be an arc of the form $\a = (-a(t),0)$ where  $a(t) = \sum_{m\ge 1} a_m t^m$.
We assume that $a_m \ne 0$ for all $m \ge 1$. 
As usual, we write $(\A^2_k)_\infty = \Spec k[x_n,y_n \mid n \ge 0]$
where $x_n$ and $y_n$ denote the $n$-th higher derivatives of $x$ and $y$.
Consider the element $x_0y_1$ and its image $g$ in the local ring $\O_{X_\infty,\a}$.
It is easy to check that $g^2 = 0$ (cf.\ \cite[Example~7]{Seb11}),
and we claim that $g \ne 0$;
\cite[Example~7]{Seb11} shows that the element $x_0y_1$ does not vanish 
on $X_\infty$, but it is not clear how to deduce from there that the element
remains nonzero after localization. 
In fact, we claim that all higher derivatives $g^{(i)}$ of $g$
are nonzero in $\O_{X_\infty,\a}$. Note that all these elements are nilpotent 
because they are the coefficients in the image of $g$ under the map
$\O_{X_\infty,\a} \to \O_{X_\infty,\a}[[t]]$ obtained by localizing 
at $\a$ the prolongation $\O_{X_\infty} \to \O_{X_\infty}[[t]]$ 
of the universal arc $\cO_{X} \to \O_{X_\infty}[[t]]$.

We prove that $g^{(i)} \ne 0$ for all $i \ge 0$ using a similar test ring as in 
\cite[Example~9.6]{dFD20}. Specifically, let 
$\Gamma = \mathbb Z \oplus \epsilon \mathbb Z$ with the lexicographic order, 
and let $R \subset \k_\a(r,s)$ be the rank 2 valuation ring with value group $\Gamma$
associated to the monomial valuation $v$ defined by $v(r) = \epsilon$
and $v(s) = 1$. Consider the map $k[x,y]/(xy) \to (R/(rs))[[t]]$ defined by 
\[
x \mapsto r - a(t), \quad
y \mapsto s + \frac s r\, a(t) + \frac s{r^2} \, a(t)^2 + \frac s{r^3}\, a(t)^3 + \dots.
\]
The induced map $\O_{X_\infty,\a} \to R/(rs)$ sends 
$g^{(i)} \mapsto sa_{i+1}$, something that can be easily checked by observing that 
$(x_0y_1)^{(i)} = h^{(i+1)} - x_{i+1}y_0$ 
(where $h^{(i+1)}$ is the $(i+1)$-th higher derivative of $h$),
hence $g^{(i)} = - x_{i+1}y_0$ in $\O_{X_\infty,\a}$. This shows that $g^{(i)} \ne 0$.

Now, let $f \colon X \to Y = \A^1_k$ be the projection given by $z = x - y$,
and let $\b = f_\infty(\a) \in Y_\infty$. Explicitly, $\b$ is given by $z = -a(t)$.
Using the equations $z_0 = h^{(1)} = 0$, which hold on $f_\infty^{-1}(\b)$, we see that 
$x_0(y_1 + x_1) = 0$ on the fiber, and since
$y_1 + x_1$ does not vanish at $\a$, this implies that $x_0 = 0$
in $S = \O_{X_\infty,\a}/\fm_\b\.\O_{X_\infty,\a}$. 
As $g^{(i)} = - x_{i+1}y_0$ in $\O_{X_\infty,\a}$ and $y_0 = x_0$ on $f_\infty^{-1}(\b)$, we conclude that
the image of $g^{(i)}$ in $S$ is zero. 
\end{example}

\begin{example}
\label{eg:cusp}
Let $X$ be the cuspidal plane curve singularity given by $h = x^3-y^2 = 0$ and assume that $\charK k\neq 2,3$.
Let $\n \colon X' \to X$ be the normalization and $p \in X'$ the preimage of the cusp, 
and let $\a \in X_\infty$ be the image of the generic arc $\g$ on $X'$ with order of contact 1 with $p$. 
It is claimed in \cite[Remark 3.16]{Reg09} that the element 
$g \in \cO_{X_\infty,\a}$ defined by $2x_0y_1 - 3x_1y_0$, 
and all its higher derivatives $g^{(i)}$, are nontrivial nilpotent elements.
A proof that $2x_0y_1 - 3x_1y_0$ does not vanish globally on $X_\infty$ is given in 
\cite[Example~8]{Seb11}, but this does not imply directly the nonvanishing of
its image $g$ in the localization. 

We give a proof that $g \ne 0$ by reducing to a computation similar to the one carried out in
\cref{eg:node}. 
Consider the map $k[x,y] \to k[u,v]$ given by 
$x \mapsto u^2(v+1)$, $y \mapsto u^3(v+1)^2$.
This corresponds to an affine chart of the minimal embedded log resolution of the cusp
centered at the intersection of the proper transform of the cusp (which is
given by $v=0$ and can be identified with the normalization $X'$) 
with the exceptional divisor extracted by the third blow-up (given by $u=0$). 
Under this map, $y^2-x^3 \mapsto u^6(v+1)^3v$. In particular, if $V$
is the scheme defined in this chart by $u^6v = 0$, then we have a morphism $\m \colon V \to X$.
Note that $X' \subset V$. 
The arc $\g$ can be written as $\g = (-a(t),0)$ in the coordinates $(u,v)$, 
with $a(t) = \sum_{m\ge 1} a_mt^m$. We regard $\g$ as a point in $V_\infty$.
Let $R$ be the valuation ring defined as in \cref{eg:node}, and consider the map
$k[u,v]/(u^6v) \to (R/(r^6s))[[t]]$ given by
\[
u \mapsto r - a(t), \quad
v \mapsto s \Big(1 + \frac 1 r\, a(t) + \frac 1{r^2} \, a(t)^2 + \frac 1{r^3}\, a(t)^3 + \dots\Big)^6.
\]
Then
\[
2x_0y_1 - 3x_1y_0 \mapsto u_0^5(v_0+1)^2v_1 \mapsto 6 r^4(s+1)^2s a_1  
\]
which is nonzero in $R/(r^6s)$. 
This implies that the image of $g$ under the induced map
$\O_{X_\infty,\a} \to R/(r^6s)$
is nonzero, hence $g \ne 0$.  

Now, consider the morphism $f \colon X \to Y = \A^1_k$ induced by the
projection $(x,y) \mapsto x$.
We look at the vanishing of the elements $g^{(i)}$
on the fiber of $f_\infty$ at $\a$, which we expect by
\cref{t:unramified-reduced-arc-fiber}. 
We carry out the computation showing the vanishing of
the restriction of $g^{(i)}$ to the fiber at $\a$ for $i \leq 5$.
To that avail, let $h^{(i)}$ denote the $i$-th higher derivative of $h$, 
and write $\ov{h^{(i)}}$ for its image in the fiber ring
over the ideal $(x_0,x_1)$. Similarly, let $\ov{g^{(i)}}$ the image of $g^{(i)}$ in the
quotient ring $S = \cO_{X_\infty,\a}/\fm_\b \cdot \cO_{X_\infty,\a}$. By
computing $\ov{h^{(i)}}$ for $i\leq 6$, we see that $y_0 = y_1 = y_2 = 0$ in $S$.
Thus $\ov{g^{(i)}} = 0$ for $i\leq 4$. For $i = 5$ we have
\[
    \ov{g^{(5)}} = 2 x_2 y_4 - 3x_3 y_3.
\]
Note that $\ov{h^{(6)}} = x_2^3 - y_3^2$ and $\ov{h^{(7)}} = 3x_2^2x_3 -2y_3y_4$. Then we have
\[
    \ov{g^{(5)}} = 3 y_3^{-1} x_2^3 x_3 - 3x_3 y_3 = 0
\]
by the vanishing of $\ov{h^{(7)}}$ and $\ov{h^{(6)}}$, respectively.
In a similar fashion, one can use the vanishing of $\ov{h^{(8)}}$ to conclude that $\ov{g^{(6)}} = 0$ 
in $S$, and so on.
\end{example}

\begin{remark}
\label{r:arc-fibers-vs-greeenberg-schemes}
The results of this section can be equivalently regarded as results about Greenberg schemes.
On the one hand, every arc fiber is a Greenberg scheme, since given $f\colon X \to Y$ and $\b\in Y_\infty$ 
we have a canonical isomorphism of $f_\infty^{-1}(\b)$ with the Greenberg scheme of $X \times_Y \Spec k_\b[[t]]$
over $\Spec k_\b[[t]]$.
On the other hand, if $R$ is an equicharacteristic complete discrete valuation ring and $\cX$ is a scheme over $R$, then the Greenberg scheme of $\cX$ is isomorphic to $\cX_\infty \times_{(\Spec R)_\infty} \Spec k_\g$ where
the arc spaces are taken over a coefficient field $k$ of $R$ and $\g \in (\Spec R)_\infty$ is the arc 
given by a choice of uniformizer $t$ for $R$. 
\end{remark}

%
%

\section{Arc fibers of quasi-finite morphisms}

\label{s:bounds-quasifinite-morphisms}

Here we look more closely at the case of quasi-finite morphisms. 
Our first theorem shows that the arc fibers
of quasi-finite morphisms are bounded and
scheme-theoretically finite away from the arc space of the ramification locus. 
It can be viewed as a global analogue of the results of \cref{s:finite-fibers}.

\begin{theorem}
\label{t:finite-fiber}
Let $f \colon  X\to Y$ be a quasi-finite morphism of schemes over a perfect field $k$, 
and assume that $X$ is separated and quasi-compact.
Let $R := \Supp \Om_{X/Y}$ denote the ramification locus of $f$. 
Then the induced morphism $f_\infty\colon X_\infty \to Y_\infty$ satisfies the following properties:
\begin{enumerate}
\item 
\label{i1:finite-fiber}
$f_\infty$ has topologically finite fibers of bounded cardinality. 
\item
\label{i2:finite-fiber}
The restriction of $f_\infty$ 
to $X_\infty \setminus R_\infty$ has finite reduced fibers.
\end{enumerate}
\end{theorem}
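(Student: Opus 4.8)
We plan to derive both assertions from the local and semi-local statements of \cref{s:finite-fibers}, together with a short uniformity argument exploiting the quasi-compactness of $X$.

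First observe that, since $f$ is quasi-finite, it is in particular of finite type, hence quasi-compact, and every fiber $f^{-1}(y)$ is a zero-dimensional scheme of finite type over $k_y$, hence finite over $k_y$; so $f^{-1}(y)_\red$ is finite over $k_y$ as well. By \cref{t:sep-deg-fin-fibers}, for every $\b \in Y_\infty$ the fiber $f_\infty^{-1}(\b)$ is then topologically finite of cardinality at most $\sepdeg f^{-1}(\b(\e))$, so to prove part~\eqref{i1:finite-fiber} it suffices to bound $\sepdeg f^{-1}(y)$ uniformly over $y \in Y$. This is where the quasi-compactness of $X$ is used: the image $f(X)$ is quasi-compact, hence contained in a finite union $V_1 \cup \dots \cup V_m$ of affine open subschemes $V_i = \Spec A_i$ of $Y$, and it is enough to bound $\sepdeg f^{-1}(y)$ for $y$ ranging over a fixed $V_i$. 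The morphism $f^{-1}(V_i) \to V_i$ is quasi-finite and separated over the affine scheme $V_i$, which is quasi-compact and quasi-separated, so by Zariski's Main Theorem it factors as an open immersion $f^{-1}(V_i) \hookrightarrow \Spec B_i$ followed by a finite morphism $\Spec B_i \to V_i$. If $B_i$ is generated by $m_i$ elements as an $A_i$-module, then for $y \in V_i$ base change along $\Spec k_y \to V_i$ realizes $f^{-1}(y)$ as an open subscheme of the finite $k_y$-scheme $\Spec(B_i \otimes_{A_i} k_y)$, whose degree over $k_y$ is at most $m_i$; by \cref{l:sep-deg-vs-deg} this gives $\sepdeg f^{-1}(y) \le \deg f^{-1}(y) \le m_i$. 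Hence $\sepdeg f^{-1}(y) \le \max_i m_i$ for all $y \in Y$, and part~\eqref{i1:finite-fiber} follows.

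For part~\eqref{i2:finite-fiber}, the first step is to identify $X_\infty \setminus R_\infty$ with the set of arcs $\a$ whose generic point $\a(\e)$ avoids $R$. Indeed, $R = \Supp \Om_{X/Y}$ is closed (since $\Om_{X/Y}$ is of finite type), so $R_\infty$ is a closed subscheme of $X_\infty$, and an arc $\a$ factors through $R$ exactly when $\ord_\a(\I_R) = \infty$, which happens if and only if $\a(\e) \in R = V(\I_R)$. For $\a \notin R_\infty$ the morphism $f$ is therefore unramified at $\a(\e)$, so part~\eqref{i2:finite-fiber-1} of \cref{t:finite-fiber-1} tells us that the fiber $f_\infty^{-1}(\b)$ over $\b = f_\infty(\a)$ is reduced at $\a$ and that $k_\a/k_\b$ is finite separable; moreover $\a$ is a closed point of $f_\infty^{-1}(\b)$ by \cref{l:same-alg-closure}. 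Fixing $\b \in Y_\infty$ and setting $F_\b := f_\infty^{-1}(\b) \cap (X_\infty \setminus R_\infty)$, which is an open subscheme of $f_\infty^{-1}(\b)$ and hence topologically finite by part~\eqref{i1:finite-fiber}, we conclude that $F_\b$ is a reduced scheme whose finitely many points are all closed with residue field finite over $k_\b$. Thus $F_\b \isom \coprod_{\a \in F_\b} \Spec k_\a$ is finite and reduced over $k_\b$, which is precisely part~\eqref{i2:finite-fiber}.

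The main difficulty is the uniform cardinality bound in part~\eqref{i1:finite-fiber}: the pointwise and semi-local finiteness of arc fibers is already handed to us by \cref{s:finite-fibers}, but extracting a bound independent of $\b$ genuinely requires the quasi-compactness of $X$ — funneled through the covering of $f(X)$ by finitely many affines and the global, non-Noetherian form of Zariski's Main Theorem, since $Y$ itself need not be quasi-compact. A secondary point to handle with care is the identification of $X_\infty \setminus R_\infty$ with the locus of arcs avoiding $R$, which relies on $R$ being a genuine closed subscheme of $X$; this is where the finite-type hypothesis on $f$ is essential, ensuring that $\Om_{X/Y}$ has closed support.
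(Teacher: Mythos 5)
Your proposal is correct and takes essentially the same route as the paper: part (1) is reduced to a uniform bound on $\sepdeg f^{-1}(y)$ for quasi-finite $f$ with $X$ quasi-compact and then concluded via \cref{t:sep-deg-fin-fibers}, while part (2) is deduced from part \eqref{i2:finite-fiber-1} of \cref{t:finite-fiber-1}. The only deviation is that where the paper obtains the uniform bound by citing the Stacks project (Tag 03JA) together with \cref{l:sep-deg-vs-deg}, you re-derive it directly by covering $f(X)$ with finitely many affines and applying Zariski's Main Theorem (using the separatedness of $X$) to bound the fiber degrees, and you spell out explicitly how pointwise reducedness, topological finiteness, and finiteness of the residue extensions assemble into scheme-theoretically finite reduced fibers on $X_\infty \setminus R_\infty$ -- both of which are correct, self-contained substitutes for the paper's citations.
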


\begin{proof}
   By \cite[\href{https://stacks.math.columbia.edu/tag/03JA}{Tag 03JA}]{stacks-project} and \cref{l:sep-deg-vs-deg}, for any quasi-finite morphism $X \to Y$ with $X$ quasi-compact there exists $d \geq 0$ such that $\sepdeg f^{-1}(y) \leq d$ for all $y \in Y$. Then \eqref{i1:finite-fiber} follows from \cref{t:sep-deg-fin-fibers}. The second assertion follows directly from \eqref{i2:finite-fiber-1} of \cref{t:finite-fiber-1}.
\end{proof}

We now focus on the case of finite morphisms between varieties over an algebraically closed field. By \cref{t:finite-fiber} the cardinality of the arc fibers of such morphisms are bounded, and the goal here is to provide an effective bound in this special case.

Let us first recall the following well-known result which, for a finite morphism $f \colon X \to Y$ of varieties over an algebraically closed field, provides an explicit bound for the cardinality of closed fibers when $Y$ is normal.

\begin{theorem}[\protect{\cite[Chapter 2, Section 6.3, Theorem 2.28]{Sha}}]
\label{t:shafarevich}
Let $f \colon X \to Y$ be a finite surjective morphism of varieties over an algebraically closed field $k$, 
and assume that $Y$ is normal. Then $|f^{-1}(y)| \leq \deg f$ for every $y \in Y(k)$.
\end{theorem}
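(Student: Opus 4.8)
The plan is to reduce to an affine chart and then run the classical ``separating function'' argument, with normality of $Y$ entering at exactly one place. Since the assertion is local on $Y$, I would fix $y$, choose an affine open $\Spec B \subseteq Y$ containing it, and set $\Spec A := f^{-1}(\Spec B)$. Finiteness of $f$ makes $A$ a finite $B$-module, and since $X$ and $Y$ are varieties with $f$ surjective, $A$ and $B$ are domains with $B \subseteq A$ and $[k(X):k(Y)] = n := \deg f$, while $B$ is integrally closed in $k(Y)$ because $Y$ is normal. Writing $\fm \subseteq B$ for the maximal ideal of $y$ (so $B/\fm = k$ by the Nullstellensatz), the fiber $f^{-1}(y)$ consists of the finitely many maximal ideals $\fm_1,\dots,\fm_r$ of $A$ lying over $\fm$, each with residue field $k$; hence $|f^{-1}(y)| = r$, and it suffices to prove $r \le n$.

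For the bound I would choose a separating function. The $\fm_i$ being pairwise comaximal, the Chinese Remainder Theorem makes $A \to \prod_{i=1}^r A/\fm_i \cong k^r$ surjective, so there is $u \in A$ whose images $c_i := u \bmod \fm_i \in k$ are pairwise distinct. As $u$ is integral over $B$, let $F(T) = T^m + b_1 T^{m-1} + \dots + b_m \in k(Y)[T]$ be its minimal polynomial over $k(Y)$; since $k(Y)(u) \subseteq k(X)$ we have $m \le n$. The coefficients $b_i$ are, up to sign, the elementary symmetric functions of the roots of $F$, and each such root is again integral over $B$ (it satisfies every integral dependence of $u$ over $B$); thus $b_i \in k(Y)$ is integral over $B$, and normality forces $b_i \in B$, i.e.\ $F \in B[T]$. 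Reducing modulo $\fm$ gives a monic $\bar F \in k[T]$ of degree $m$, and applying the quotient maps $A \to A/\fm_i = k$ to the identity $F(u) = 0$ (valid in $A$) shows that each $c_i$ is a root of $\bar F$. Therefore $r \le \deg \bar F = m \le n = \deg f$.

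The step carrying the weight is $F \in B[T]$, the only point where normality is used, and it is genuinely indispensable: the naive estimate $|f^{-1}(y)| \le \dim_k(A/\fm A)$ is of no help, since the fiber scheme can have length strictly greater than $\deg f$ (already for $\A^2 \to \A^2/\{\pm 1\}$ over the vertex), so one really must separate the points of $f^{-1}(y)$ by a single function and then control its minimal polynomial; and the normality hypothesis cannot be dropped, as the normalization of a nodal curve is a finite degree-one morphism whose fiber over the node has two points, with the argument failing at precisely the step $F \in B[T]$. Everything else — the passage to an affine local model, the Nullstellensatz, and the Chinese Remainder Theorem — is routine.
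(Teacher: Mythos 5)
Your proof is correct. The paper gives no proof of this statement — it is quoted from Shafarevich — and your argument (separating the points of $f^{-1}(y)$ by a single regular function obtained from the Chinese Remainder Theorem, then using normality of $B$ to conclude that its minimal polynomial lies in $B[T]$ and reducing modulo the maximal ideal of $y$) is essentially the classical proof given in that reference, with the roles of normality and of the separating element identified correctly.
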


Our next theorem can be viewed as an analogue of \cref{t:shafarevich} for the induced map 
at the level of arc spaces $f_\infty \colon X_\infty \to Y_\infty$.
Notice that replacing the degree of $f$ with its separable degree provides a sharper 
bound on the cardinality of the fibers. 

\begin{theorem}
\label{t:finite-fiber-deg}
Let $f \colon X \to Y$ be a finite surjective morphism between varieties over an algebraically closed field $k$, 
and assume that $Y$ is normal. Then $|f^{-1}_\infty(\beta)| \leq \sepdeg f$ for every $\b \in Y_\infty$.
\end{theorem}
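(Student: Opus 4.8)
The plan is to reduce the statement of \cref{t:finite-fiber-deg} to the set-theoretic bound of \cref{t:shafarevich} applied to a generic point of the arc $\b$, combined with the separability bookkeeping already available from \cref{t:sep-deg-fin-fibers} and \cref{l:sep-deg}. First I would fix $\b \in Y_\infty$ and write $y = \b(\e) \in Y$. Since $f$ is finite and surjective and $Y$ is a variety, the fiber $f^{-1}(y)_\red$ is a finite $k_y$-scheme; by \cref{t:sep-deg-fin-fibers} the arc fiber $f_\infty^{-1}(\b)$ is topologically finite with
\[
|f_\infty^{-1}(\b)| \le \sepdeg f^{-1}(y).
\]
So the whole problem is to bound $\sepdeg f^{-1}(y)$ by $\sepdeg f$, where the right-hand side is the separable degree of $f$ in the usual sense, namely the separable degree of the field extension of function fields $k(X)/k(Y)$ (this being the generic fiber). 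The key point is a semicontinuity of separable degree along the scheme $Y$: for the generic point the separable degree of the fiber is exactly $\sepdeg f$, and it can only drop at special points.

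The main step, then, is: if $f \colon X \to Y$ is finite and surjective with $Y$ normal and $k$ algebraically closed, then $\sum_{x \in f^{-1}(y)} [k_x : k]_\sep \le \sepdeg f$ for every $y \in Y$. Over an algebraically closed field this says $|f^{-1}(y)| \le [k(X):k(Y)]_\sep$, since all residue fields at closed points are $k$ and $[k:k]_\sep = 1$ — but $y$ need not be a closed point. I would handle this by factoring the question through the separable closure: let $k(Y) \subset L \subset k(X)$ be the separable closure of $k(Y)$ in $k(X)$, so $[L:k(Y)] = \sepdeg f$ and $k(X)/L$ is purely inseparable. Normalizing $Y$ in $L$ gives a factorization $X \to Z \to Y$ with $Z \to Y$ finite of degree $\sepdeg f$ (and $Z$ normal since $Y$ is) and $X \to Z$ finite purely inseparable, hence a universal homeomorphism, so $|f^{-1}(y)| = |g^{-1}(y)|$ where $g \colon Z \to Y$. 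Now $g$ is separable of degree $\sepdeg f$, and for a finite separable morphism of normal varieties the fiber over any point $y$ has at most $\deg g$ points: this follows from \cref{t:shafarevich} applied at a closed point specializing $y$, or directly from the fact that $g$ is étale over a dense open and $\mathcal{O}_{Z,z}$ injects into a product of $\deg g$ copies of a separable closure of the fraction field of $\mathcal{O}_{Y,y}$. Actually the cleanest route is to invoke \cref{t:shafarevich} directly for $g$ at a closed point $y_0 \in \overline{\{y\}}$ and then note that the number of points of $g^{-1}(y)$ is at most the number of branches of $Z$ over $y_0$, which is at most $|g^{-1}(y_0)| \le \deg g = \sepdeg f$; I would need to be slightly careful that specialization does not merge too few points, but normality of $Z$ makes the local ring at each point of $g^{-1}(y)$ a localization of a normal domain, and distinct points of $g^{-1}(y)$ lie over distinct or equal points of $g^{-1}(y_0)$ with the count of preimages of $y_0$ bounding everything.

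Putting the pieces together: $|f_\infty^{-1}(\b)| \le \sepdeg f^{-1}(y) \le \sum_{x \in f^{-1}(y)} [k_x:k]_\sep$, and over $k = \bar k$ the residue field extensions $k_x/k_y$ feed through \cref{l:sep-deg} so that $\sepdeg f^{-1}(y) \le |g^{-1}(y)| \cdot 1 \le \sepdeg f$ after the purely-inseparable reduction above; more precisely one tracks that each point $x$ over $y$ contributes $[k_x:k_y]_\sep$ embeddings into an algebraic closure and the total number of such embeddings is bounded by $[k(X):k(Y)]_\sep = \sepdeg f$ because $X \to Y$ factors as (separable of degree $\sepdeg f$) after (purely inseparable, contributing no separable embeddings). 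I expect the main obstacle to be exactly this bookkeeping at a non-closed point $y$: making rigorous that the separable degree of the fiber $f^{-1}(y)$ is $\le \sepdeg f$ for \emph{every} $y$, not just closed ones. The normality of $Y$ is what saves this — it guarantees the intermediate $Z$ is normal so that \cref{t:shafarevich} (which is stated for closed points) can be leveraged at a closed specialization of $y$ and then pulled back up along the generization, with no drop in the separable count in the purely inseparable part and no increase in the separable part under specialization.
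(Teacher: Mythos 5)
Your opening reduction coincides with the paper's: by \cref{t:sep-deg-fin-fibers} one has $|f_\infty^{-1}(\b)| \le \sepdeg f^{-1}(y)$ with $y=\b(\e)$, so the whole theorem rests on the inequality $\sepdeg f^{-1}(y) \le \sepdeg f$ for every, possibly non-closed, $y \in Y$, which is exactly the paper's \cref{p:bound-fiber-degree}. Your separable-closure factorization is essentially the paper's \cref{l:scheme-separable-cl}, but the morphism $X \to Z$ you assert need not exist when $X$ is not normal: the integral closure of $\O_Y$ in $L$ need not consist of regular functions on $X$ (already in characteristic zero $Z$ is then the normalization of $X$, and a $Y$-morphism $X \to Z$ would force $X$ to be normal). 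One must first replace $X$ by its normalization, which is harmless since the normalization is finite surjective and residue fields only extend under it; this is precisely the role of $\nu$ in \cref{l:scheme-separable-cl}.

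The genuine gap is in the key step, in two respects. First, the quantity to bound is $\sum_{x\in f^{-1}(y)}[k_x:k_y]_\sep$, not the cardinality of the fiber: your inequality $\sepdeg f^{-1}(y) \le |g^{-1}(y)|\cdot 1$ is false for non-closed $y$, since the residue fields of the points of $g^{-1}(y)$ can be nontrivial separable extensions of $k_y$. Second, the specialization argument you call the cleanest route --- that the points of $g^{-1}(y)$ inject into $g^{-1}(y_0)$ for a closed point $y_0\in\overline{\{y\}}$ --- fails for an arbitrary choice of $y_0$: for the normal double cover $Z=\Spec k[s,t,x]/(x^2-s^2-t^3)\to\A^2_k$ (with $\charK k\ne 2$), the fiber over the generic point of the curve $\{t=0\}$ has two points, and both specialize to the unique point of $Z$ lying over the origin. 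To repair this one must choose $y_0$ general in $\overline{\{y\}}$ and, to account for residue fields, show that for each $z\in g^{-1}(y)$ the restriction $\overline{\{z\}}\to\overline{\{y\}}$ has exactly $[k_z:k_y]_\sep$ points over such a general closed point, with these sets of points kept disjoint for distinct $z$; that is precisely the content of \cref{l:finding-enough-points} and of the proof of \cref{p:bound-fiber-degree}, and none of it is supplied in your sketch. Your closing assertion that the total number of embeddings of the $k_x$ into an algebraic closure of $k_y$ is bounded by $[k(X):k(Y)]_\sep$ is not an argument but a restatement of the inequality to be proved: the residue fields at the fiber do not map to $k(X)$, and relating them to the function-field extension is exactly what the Shafarevich-type argument, via the normality of $Y$, has to accomplish (an alternative would be a direct local-algebra argument with a well-chosen primitive element whose minimal polynomial has coefficients in the normal ring $\O_{Y,y}$, but that too must be carried out with separable degrees rather than point counts).
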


The main new ingredient in the proof of \cref{t:finite-fiber-deg} is to show that the bound established in \cref{t:shafarevich} works for all (possibly non-closed) fibers of $f$. In the case where $\charK k > 0$ we need the following auxiliary construction in order to replace degree with separable degree.

\begin{lemma}
\label{l:scheme-separable-cl}
Let $f \colon X \to Y$ be a finite surjective morphism between varieties over a field $k$ and assume that $Y$ is normal. Let $\n \colon X' \to X$ be the normalization of $X$. Then there exists a diagram of varieties
\[
\xymatrix{Y & Y' \ar[l]_-g\\
             X \ar[u]^-f & X' \ar[l]^-{\n} \ar[u]_-h}
\]
such that $g$ is finite surjective with $k(Y) \subset k(Y')$ separable and $\deg g = \sepdeg f$, and
$h$ is a universal homeomorphism.
\end{lemma}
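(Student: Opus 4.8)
The plan is to take $Y'$ to be the normalization of $Y$ in the relative separable closure of $k(Y)$ inside the function field of $X$, and to obtain the morphism $h$ essentially for free by identifying both $X'$ and $Y'$ with normalizations of $Y$ itself in nested subfields of $k(X)$. Concretely, set $K := k(X)$; since $\nu$ is birational, $k(X') = K$, and since $f$ is finite and dominant, $K/k(Y)$ is a finite extension. Let $L$ be the relative separable closure of $k(Y)$ in $K$, so that $k(Y) \subseteq L$ is finite separable, $K/L$ is purely inseparable, and $[L:k(Y)] = [K:k(Y)]_\sep = \sepdeg f$. Let $g \colon Y' \to Y$ be the normalization of $Y$ in $L$. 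Because $Y$ is a variety and $L/k(Y)$ is finite, $g$ is finite by the finiteness of integral closure for finitely generated domains over a field; it is surjective since it is closed, dominant, and $Y'$ is irreducible; $Y'$ is a normal variety with $k(Y') = L$; and $\deg g = [L:k(Y)] = \sepdeg f$. This yields everything claimed about $g$.

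Next I would produce $h$. Since $f\circ\nu \colon X' \to Y$ is finite and dominant and $X'$ is a normal variety with function field $K$, transitivity of integral closure identifies $\mathcal{O}_{X'}$, as a quasi-coherent $\mathcal{O}_Y$-algebra, with the integral closure of $\mathcal{O}_Y$ in $K$; likewise $\mathcal{O}_{Y'}$ is the integral closure of $\mathcal{O}_Y$ in $L$. As $L \subseteq K$, this gives an inclusion of $\mathcal{O}_Y$-algebras $\mathcal{O}_{Y'} \hookrightarrow \mathcal{O}_{X'}$, hence a morphism $h \colon X' \to Y'$ over $Y$, so that $g\circ h = f\circ\nu$ and the square commutes. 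Since $\mathcal{O}_{X'}$ is a finite $\mathcal{O}_Y$-module it is a finite $\mathcal{O}_{Y'}$-module as well, so $h$ is finite; and $h$ is surjective for the same reason as $g$ (it is closed, dominant, and $Y'$ is irreducible).

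It remains to show that $h$ is a universal homeomorphism. It is integral (being finite) and surjective, so it is enough to check that it is universally injective, for which it suffices that on each affine chart the corresponding ring extension $A \subseteq B$ satisfies $B \subseteq A^{1/p^\infty}$, where $p = \charK k$. This is where normality of $Y'$ enters: for a local section $x$ of $\mathcal{O}_{X'}$ over an open $U \subseteq Y'$, pure inseparability of $K/L$ gives $x^{p^n} \in L$ for some $n$, while $x^{p^n}$ is integral over $\mathcal{O}_{Y'}(U)$, hence lies in $\mathcal{O}_{Y'}(U)$ because that ring is integrally closed. From $\mathcal{O}_{X'} \subseteq \mathcal{O}_{Y'}^{1/p^\infty}$ the injectivity of $h$ and pure inseparability of its residue field extensions are immediate: if a prime $\mathfrak{q} \subseteq \mathcal{O}_{X'}$ contracts to $\mathfrak{p} \subseteq \mathcal{O}_{Y'}$, then $x \in \mathfrak{q}$ forces $x^{p^n} \in \mathfrak{p}$, pinning down $\mathfrak{q}$ uniquely. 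Hence $h$ is a universal homeomorphism, completing the construction.

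The step to be careful with — more a matter of bookkeeping than a genuine obstacle — is the identification of $X'$ and $Y'$ as normalizations of the \emph{same} base $Y$, in $K$ and in the intermediate field $L$ respectively, which makes the factorization $f\circ\nu = g\circ h$ automatic; the remaining verifications rest on standard facts (finiteness of normalization over a field, the criterion for a universal homeomorphism, and the behaviour of purely inseparable extensions over a normal base). In characteristic zero one has $L = K$, so $Y' = X'$ and $h = \id$, and the lemma has content only in positive characteristic.
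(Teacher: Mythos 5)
Your construction is exactly the paper's: take $Y'$ to be the normalization of $Y$ in the relative separable closure of $k(Y)$ inside $k(X)$, get $g$ finite surjective of degree $\sepdeg f$, and deduce that $h$ is a universal homeomorphism from the pure inseparability of $k(X)/k(Y')$ (the paper works affine-locally and cites \cite[Exercise 5.3.9]{QL02} for universal injectivity, where you spell out the $p$-power argument). The proposal is correct and matches the paper's proof in substance.
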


\begin{proof}
We will only prove the assertion in case $X = \Spec R$ and $Y = \Spec S$ are
affine; the general case follows much in the same way. Write $L = \Quot S$ and
$K = \Quot R$ and let $L_\sep$ be the separable closure of $L$ in $K$. Let $S'$
be the integral closure of $S$ inside $L_\sep$ and $Y' = \Spec S'$.
By \cite[\href{https://stacks.math.columbia.edu/tag/032L}{Tag
032L}]{stacks-project}, $g \colon Y' \to Y$ is a finite surjective morphism of
varieties and clearly $k(Y') = L_\sep$. Moreover, $h$ is universally injective
(see for example \cite[Exercise 5.3.9]{QL02}) and thus a universal
homeomorphism.
\end{proof}

We obtain the following improvement of \cref{t:shafarevich}. 

\begin{proposition}
\label{l:finding-enough-points}
Let $f \colon X \to Y$ be a finite surjective morphism of varieties over an algebraically closed field, and assume that $Y$ is normal.
\begin{enumerate}
\item 
\label{i1:finding-enough-points}
For every $y \in Y(k)$ we have $|f^{-1}(y)| \leq \sepdeg f$.
\item 
\label{i2:finding-enough-points}
There exist nonempty open sets $V \subset Y$ and $U \subset f^{-1}(V) \subset X$ such that for the restriction $f_U\colon U \to V$ we have $|f_U^{-1}(y)| = \sepdeg f$ for every $y \in V(k)$.
\end{enumerate}
\end{proposition}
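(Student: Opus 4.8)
Both parts will be deduced from the diagram produced in \cref{l:scheme-separable-cl}, together with \cref{t:shafarevich}. Write $d = \deg g = \sepdeg f$, and recall from \cref{l:scheme-separable-cl} that $g \colon Y' \to Y$ is finite surjective with $k(Y')/k(Y)$ finite separable, that $h \colon X' \to Y'$ is a universal homeomorphism, and that $\nu \colon X' \to X$ is the normalization; in particular $f \circ \nu = g \circ h$ and $\nu$ is finite surjective. We will also use that, since $h$ is a universal homeomorphism and $k$ is algebraically closed (hence perfect), $h$ restricts to a bijection $X'(k) \to Y'(k)$, and that over any $k$-point all three of $f^{-1}(y)$, $g^{-1}(y)$, $(g h)^{-1}(y)$ are finite $k$-schemes all of whose points are $k$-points.

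For part \eqref{i1:finding-enough-points}: fix $y \in Y(k)$. Since $\nu$ is finite surjective and $k$ is algebraically closed, every $x \in f^{-1}(y)$ has a $k$-point of $X'$ above it, so $\nu$ induces a surjection $(g h)^{-1}(y)(k) = (f\nu)^{-1}(y)(k) \twoheadrightarrow f^{-1}(y)(k)$, whence $|f^{-1}(y)| \le |(gh)^{-1}(y)|$. Because $h$ is a bijection on $k$-points we have $|(gh)^{-1}(y)| = |h^{-1}(g^{-1}(y))| = |g^{-1}(y)|$, and applying \cref{t:shafarevich} to $g$ — whose target $Y$ is normal — gives $|g^{-1}(y)| \le \deg g = d$. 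This proves \eqref{i1:finding-enough-points}.

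For part \eqref{i2:finding-enough-points}: since $k(Y')/k(Y)$ is separable, $g$ is generically étale, and as $g$ is finite the image of its non-étale locus is a proper closed subset of $Y$; hence there is a nonempty open $V_1 \subseteq Y$ over which $g$ is finite étale, necessarily of degree $d$. For $y \in V_1(k)$ this yields $|g^{-1}(y)| = d$, and therefore $|(gh)^{-1}(y)| = d$ as before. Now let $W \subseteq X$ be the normal locus of $X$, a nonempty open set over which $\nu$ is an isomorphism, and set
\[
V := V_1 \setminus f(X \setminus W), \qquad U := f^{-1}(V) \cap W .
\]
Since $f$ is finite and $\dim(X \setminus W) < \dim X = \dim Y$, the set $f(X \setminus W)$ is a proper closed subset of $Y$, so $V$ is a nonempty open subset of $Y$; and $U$ is a nonempty open subset of $X$ contained in $f^{-1}(V)$, because $X$ is irreducible and both $f^{-1}(V)$ and $W$ are dense open. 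For $y \in V(k)$ we have $f^{-1}(y) \subseteq W$ (as $y \notin f(X \setminus W)$) and trivially $f^{-1}(y) \subseteq f^{-1}(V)$, so $f_U^{-1}(y) = f^{-1}(y)$; moreover $\nu$ restricts to an isomorphism over $f^{-1}(y) \subseteq W$, so
\[
|f_U^{-1}(y)| = |f^{-1}(y)| = |\nu^{-1}(f^{-1}(y))| = |(gh)^{-1}(y)| = d = \sepdeg f ,
\]
as desired.

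\textbf{Main obstacle.} There is no single hard step; the substance is the bookkeeping around the three maps $g$, $h$, $\nu$. One needs $g$ étale over a suitable open set (so fibers over $k$-points have exactly $d$ elements), $h$ a bijection on $k$-points (this is where algebraic closedness of $k$ is used), and $\nu$ an isomorphism over the fibers in question (which forces shrinking $V$ by removing $f(X \setminus W)$). Arranging the open sets $V$ and $U$ so that all three properties hold simultaneously on every fiber over $V(k)$ is the only delicate point; the rest is standard.
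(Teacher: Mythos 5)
Your proof is correct and takes essentially the same route as the paper: part \eqref{i1:finding-enough-points} is the same chain $|f^{-1}(y)| \le |(gh)^{-1}(y)| = |g^{-1}(y)| \le \deg g = \sepdeg f$ via \cref{l:scheme-separable-cl} and \cref{t:shafarevich}, and part \eqref{i2:finding-enough-points} likewise rests on generic \'etaleness of $g$ transferred through the universal homeomorphism $h$ and the normalization $\nu$. The only difference is bookkeeping: where the paper says ``it suffices to prove the statement for $g$'' and intersects with the unramified locus on the source, you shrink the base by removing $g(\text{non-\'etale locus})$ and $f(X \setminus W)$, which makes that reduction explicit and ensures the entire fiber over each $y \in V(k)$ lies in the good locus.
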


\begin{proof}
We keep the notation of \cref{l:scheme-separable-cl}. 
For short, let $d = \sepdeg f$. To prove \eqref{i1:finding-enough-points}, by \cref{t:shafarevich} we have that $|g^{-1}(y)| \leq d$ for every $y \in Y'(k)$ and hence the same holds for the composition $gh$. Since the normalization $\n\colon X' \to X$ is finite surjective we get the assertion.

For \eqref{i2:finding-enough-points}, note that, since $\n$ is birational and $h$ is a homeomorphism it suffices to prove the statement for $g$. Hence we may assume that $k(Y) \subset k(X)$ is separable of degree $d$. Thus $f$ is generically unramified and there exists a nonempty open set $U'' \subset X$ such that $U'' \to Y$ is unramified. Moreover, since the rank of $f_*\cO_X$ is upper semicontinuous, we can find a nonempty open set $V \subset Y$ where $f_*\cO_X$ is of rank $d$. Set $U' = f^{-1}(V)$ and $U = U' \cap U''$. Now consider the fiber $f^{-1}(y) \cap U$ of $y\in Y(k)$ with respect to $U \to V$. As $U\to V$ is unramified the fiber decomposes into a product of copies of $k$, which implies that $|f^{-1}(y) \cap U| = d$.
\end{proof}

Now we can prove that the separable degree of every fiber of $f$ is bounded by the 
separable degree of the map, provided of course that $Y$ is normal.

\begin{corollary}
\label{p:bound-fiber-degree}
Let $f \colon X\to Y$ be a finite surjective morphism of varieties over an algebraically closed field $k$, 
and assume that $Y$ is normal. Then $\sepdeg f^{-1}(y) \leq \sepdeg f$ for every $y\in Y$.
\end{corollary}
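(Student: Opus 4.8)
The plan is to reduce the statement to a question about closed points and then apply \cref{t:shafarevich}. I would first invoke \cref{l:scheme-separable-cl} to produce the diagram relating $f$ to the normalization $\n \colon X' \to X$, a universal homeomorphism $h \colon X' \to Y'$, and a finite surjective morphism $g \colon Y' \to Y$ with $\deg g = \sepdeg f$, satisfying $f \circ \n = g \circ h$. The first claim to establish is that $\sepdeg f^{-1}(y) \le \sepdeg g^{-1}(y)$ for every $y \in Y$. Since $f\n = gh$ is finite, its fiber over $y$ is a finite $k_y$-scheme which can be decomposed either over the points of $f^{-1}(y)$ or over those of $g^{-1}(y)$. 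Using multiplicativity of separable degree in towers of finite field extensions and the fact that each $\n^{-1}(x)$ is nonempty (as $\n$ is finite surjective), the first decomposition gives
\[
\sepdeg (f\n)^{-1}(y) \;=\; \sum_{x \in f^{-1}(y)} [k_x : k_y]_\sep \cdot \sepdeg \n^{-1}(x) \;\ge\; \sepdeg f^{-1}(y),
\]
while the second, using that $h$ is a universal homeomorphism (so that each $h^{-1}(y')$ is a single point with purely inseparable residue field over $k_{y'}$), gives $\sepdeg (gh)^{-1}(y) = \sepdeg g^{-1}(y)$. It then remains to prove $\sepdeg g^{-1}(y) \le \deg g$ for all $y \in Y$.

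For this last inequality I would extend scalars to an algebraic closure $\ov{k_y}$ of the residue field $k_y$. Since $k$ is algebraically closed, $Y$ is geometrically normal and $Y'$ is geometrically integral, so $\ov Y := Y \times_k \ov{k_y}$ is a normal variety over $\ov{k_y}$, $\ov{Y'} := Y' \times_k \ov{k_y}$ is a variety over $\ov{k_y}$, and the base change $\ov g \colon \ov{Y'} \to \ov Y$ is finite surjective with $\deg \ov g = \deg g$. The inclusion $k_y \inj \ov{k_y}$ picks out a $\ov{k_y}$-rational, hence closed, point $\ov y \in \ov Y$ lying over $y$, and by compatibility of fibers with base change one has $\ov g^{-1}(\ov y) = g^{-1}(y) \times_{k_y} \ov{k_y}$, whence $|\ov g^{-1}(\ov y)| = \sepdeg g^{-1}(y)$. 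Applying \cref{t:shafarevich} to $\ov g$ at the closed point $\ov y$ yields $\sepdeg g^{-1}(y) = |\ov g^{-1}(\ov y)| \le \deg \ov g = \deg g = \sepdeg f$, which combined with the previous paragraph gives $\sepdeg f^{-1}(y) \le \sepdeg f$, as wanted.

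The steps that require care are mostly bookkeeping: multiplicativity of separable degree in towers, the behavior of $\n$ and $h$ on residue fields, and the preservation of normality and integrality under the separable base extension $\ov{k_y}/k$ --- this last point being where algebraic closedness of $k$ enters. The genuinely new idea, beyond \cref{t:shafarevich,l:scheme-separable-cl}, is simply that the fiber over a non-closed point $y$ becomes an honest closed fiber of $\ov g$ after extending scalars to $\ov{k_y}$; this is also the reason one first passes to $g$, since it is the ordinary degree, and not the separable degree, that is manifestly unchanged by that base change.
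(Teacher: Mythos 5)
Your argument is correct, but it follows a genuinely different route from the one in the paper. The paper's proof works entirely over the original field $k$: it restricts $f$ to the closures $Z_i=\overline{\{x_i\}}\to W=\overline{\{y\}}$ of the points of the fiber, invokes part~(2) of \cref{l:finding-enough-points} to get dense open sets where the \emph{closed} fibers of each restriction have exactly $d_i=[k_{x_i}:k_y]_\sep$ points, and then uses a constructibility/spreading-out argument (the sets $U_i'$ and $W'$) to find a single $k$-rational point of $W$ whose fiber under $f$ has at least $\sum_i d_i$ points, concluding by part~(1) of that proposition. You instead use \cref{l:scheme-separable-cl} directly (the paper only uses it inside the proof of \cref{l:finding-enough-points}) to reduce to the separable model $g$ with $\deg g=\sepdeg f$, and you handle the non-closed point $y$ by base change to $\overline{k_y}$, so that $y$ becomes a rational point of the base-changed normal variety and \cref{t:shafarevich} applies verbatim over the larger algebraically closed field. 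Your reduction $\sepdeg f^{-1}(y)\le\sepdeg g^{-1}(y)$ via multiplicativity of separable degree along $f\nu=gh$, surjectivity of $\nu$, and universal injectivity of $h$ is sound, and the base-change bookkeeping goes through: since $k$ is algebraically closed, $Y$ is geometrically normal and $Y,Y'$ are geometrically integral, the fiber of $\ov g$ over $\ov y$ is $g^{-1}(y)\times_{k_y}\overline{k_y}$, its number of points is $\sepdeg g^{-1}(y)$, and $\deg\ov g=\deg g$ (here the separability of $k(Y')/k(Y)$, or just geometric reducedness of $Y'$, guarantees the generic fiber of $\ov g$ stays a field of the same degree). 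What each approach buys: yours is shorter and avoids the generic-exact-count statement and the dense-open combinatorics, at the price of the base-change verifications (which is also where algebraic closedness of $k$ enters for you, rather than through the abundance of $k$-rational points as in the paper); the paper's argument never leaves $k$ and only ever counts closed points of the original varieties.
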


\begin{proof}
Write $f^{-1}(y) = \{x_1,\ldots,x_r\}$ and let $d_i = [k(x_i) : k(y)]_\sep$. Furthermore write $Z_i = \overline{\{x_i\}}$ and $W = \overline{\{y\}}$. For each $i$ the restriction $f_i \colon Z_i \to W$ of $f$ is finite of separable degree $d_i$. By part \eqref{i2:finding-enough-points} of \cref{l:finding-enough-points} there exist nonempty open sets $U_i \subset Z_i$ and $W_i \subset W$ such that for $f_i \colon U_i \to W_i$ and $y \in W_i(k)$ we have $|f_i^{-1}(y)| \leq d_i$. Since $f$ is finite, we have that the intersection $Z_i \cap Z_j$ is a proper closed subset of $Z_i$ for $j\neq i$. Therefore the set $U'_i = U_i \setminus \bigcup_{j\neq i} U_j$ is dense in $U_i$ and so is the image of the set $U' = \bigcup_i U'_i$ in $W$. The set $W' = f(U') \cap \bigcap_i W_i$ is nonempty and constructible and thus contains a $k$-rational point $y$. Therefore the fiber $f^{-1}(y)$ contains at least $\sum_i d_i$ points and hence, by the first assertion of \cref{l:finding-enough-points}, we have $\sum_i d_i \leq d$.
\end{proof}

\begin{proof}[Proof of \cref{t:finite-fiber-deg}]
It follows from \cref{p:bound-fiber-degree} and \cref{t:sep-deg-fin-fibers}.
\end{proof}

\begin{remark}
    As a related result we want to mention \cite[Lemma~4.2]{BPR18}, which states that 
if $f \colon X \to Y$ is a finite and radicial morphism of varieties over a perfect field $k$
of positive characteristic, then $f_\infty$ is integral and radicial. We thank Devlin Mallory for bringing this result to our attention. It is not known whether the analogous property in characteristic $0$ holds, see \cite[Remark~4.3]{BPR18}.
\end{remark}

Uniform bounds on the cardinality of the fibers of $f_\infty$ also hold for
more general morphisms $f$ if we restrict the attention to fibers over
arcs $\b$ whose image is dense in $Y$, that is, such that $\b(\e)$ is the generic point of $Y$.
Let us first recall the following definition. 

\begin{definition}
A morphism of varieties $f \colon X \to Y$ is said to be \emph{generically finite} if $f$ is 
dominant and quasi-finite at the generic point of $X$. In this case, in analogy to \cref{d:sep-deg}, 
we define the \emph{separable degree} of $f$ as
\[
\sepdeg f := [k(Y) : k(X)]_\sep.
\]
\end{definition}

\begin{proposition}
\label{c:finite-fiber-fat}
Let $f \colon X \to Y$ be a generically finite morphism of varieties
over a field $k$.
Then $|f_\infty^{-1}(\b)| \le \sepdeg f$ for every $\b \in Y_\infty$ such that
$\b(\e)$ is the generic point of $Y$. 
\end{proposition}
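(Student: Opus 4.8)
The plan is to deduce the statement from \cref{t:sep-deg-fin-fibers}. Set $y = \b(\e)$; by hypothesis $y$ is the generic point $\eta_Y$ of $Y$, so that $k_y = k(Y)$. The only thing to verify is that the scheme-theoretic fiber $f^{-1}(y) = X \times_Y \Spec k(Y)$ is finite over $k(Y)$ with reduction $\Spec k(X)$. Since $f$ is a morphism of varieties it is of finite type, so $f^{-1}(y)$ is of finite type over $k(Y)$; since $f$ is dominant we have $\eta_Y = f(\eta_X)$, so the generic point $\eta_X$ of $X$ lies in $f^{-1}(y)$; and since $X$ is irreducible with generic point $\eta_X$, every point of $f^{-1}(y)$ is a specialization of $\eta_X$, so $f^{-1}(y)$ is irreducible with generic point $\eta_X$ and function field $k(X)$. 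Finally, quasi-finiteness of $f$ at $\eta_X$ forces $k(X)/k(Y)$ to be finite, whence
\[
\dim f^{-1}(y) = \trdeg_{k(Y)} k(X) = 0 .
\]
A scheme of finite type and dimension $0$ over a field is finite, so $f^{-1}(y)$ is a finite $k(Y)$-scheme; being irreducible it is the spectrum of an Artinian local ring with residue field $k(X)$, hence $f^{-1}(y)_\red = \Spec k(X)$ and $\sepdeg f^{-1}(y) = [k(X):k(Y)]_\sep = \sepdeg f$.

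Granting this, the proof is immediate: $X$ is separated, being a variety, and $f^{-1}(y)_\red$ is finite over $k_y = k(Y)$, so \cref{t:sep-deg-fin-fibers} yields that $f_\infty^{-1}(\b)$ is topologically finite of cardinality at most $\sepdeg f^{-1}(y) = \sepdeg f$, which is the assertion.

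It may be illuminating to observe that this is really the proof of \cref{t:sep-deg-fin-fibers} in a degenerate case: for every $\a \in f_\infty^{-1}(\b)$ one has $f(\a(\e)) = \b(\e) = \eta_Y$, so $\a(\e) = \eta_X$ by the above; by \cref{l:same-alg-closure} each such $\a$ then arises from a $k(Y)$-embedding $k(X) \hookrightarrow \LS{\ov{k_\b}}$, separatedness of $X$ ensures that this embedding determines $\a$, and \cref{l:sep-deg} bounds the number of such embeddings by $[k(X):k(Y)]_\sep = \sepdeg f$. I do not foresee any genuine difficulty in this argument; the only step that deserves attention is the identification of the generic fiber $f^{-1}(\eta_Y)$ as a finite, irreducible $k(Y)$-scheme with residue field $k(X)$, which is formal from dominance and quasi-finiteness of $f$ at $\eta_X$ together with the irreducibility of $X$.
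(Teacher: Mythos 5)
Your proof is correct and follows the same route as the paper, which deduces the statement directly from \cref{t:sep-deg-fin-fibers}; you have merely spelled out the routine verification that the generic fiber $f^{-1}(\eta_Y)$ is irreducible with generic point $\eta_X$, that quasi-finiteness at $\eta_X$ makes $k(X)/k(Y)$ finite, and hence that $f^{-1}(\eta_Y)_\red = \Spec k(X)$ with $\sepdeg f^{-1}(\eta_Y) = \sepdeg f$.
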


\begin{proof}
It follows immediately by \cref{t:sep-deg-fin-fibers}. 
\end{proof}

\section{The ramification locus of $f_\infty$}

\label{s:finite-type}

In this section we aim to understand the ramification locus of $f_\infty$,                  
characterize where $f_\infty$ is locally of finite type, 
and determine under which conditions 
$f_\infty$ is globally a morphism of finite type.

We know by the base-change property of arc spaces along \'etale morphisms
that if $f$ is \'etale then so is $f_\infty$, and in particular $f_\infty$ is locally of finite type.
By contrast, even in the very simple case of a ramified double cover $f \colon \A^1_k \to \A^1_k$
the morphism $f_\infty$ presents interesting pathologies. 

\begin{example}
\label{ex:not-formally-unramified}
Let $k$ be a field of characteristic $\neq 2$ and $X = Y = \A^1_k$. Consider the morphism $f \colon X \to Y$ given by $y = x^2$ and let $\a$ be the generic arc on $X$ with order of contact $1$ with the origin. Writing 
$X_\infty = \Spec k[x_0,x_1,x_2,\dots]$ where $x_i$ is the $i$-th higher derivative of the coordinate $x$ on $X$, 
we see that $\a$ corresponds to the prime ideal $(x_0)$ of $k[x_0,x_1,x_2,\ldots]$.
The image $\b = f_\infty(\a)$ is the generic arc on $Y$ with order of contact $2$ with the origin; 
writing $Y_\infty = \Spec k[y_0,y_1,y_2,\dots]$, $\b$ corresponds to the prime $(y_0,y_1)$.
Note that the map $f_\infty^\sharp \colon \cO_{Y_\infty} \to \cO_{X_\infty}$ is given by the higher derivatives of $y = x^2$, that is,
\begin{equation}
\label{eq:not-formally-unramified}
y_0 = x_0^2,\quad y_1 = 2x_0x_1,\quad y_2 = x_1^2 + 2x_0x_2, \quad \ldots
\end{equation}
Using this presentation, we can compute the module of relative differentials $\Om_{X_\infty/Y_\infty}$ and get
\[
\Om_{X_\infty/Y_\infty} \isom \langle dx_i \mid i\in \Z_{\ge 0} \rangle / \langle x_0 dx_0, x_1 dx_0 + x_0 dx_1, \ldots \rangle.
\]
We want to compute the stalk of $\Om_{X_\infty/Y_\infty}$ over $\a$. 
Denote for short $S := \cO_{X_\infty,\a}$ and $M := \Om_{X_\infty/Y_\infty} \otimes S$, and define
\[
    M_n := \langle dx_i \mid i=0,\ldots,n \rangle / \langle \sum_{j=0}^i x_j dx_{i-j} \mid i=0,\ldots,n \rangle
\]
Note that $S$ is a discrete valuation ring with uniformizer $x_0$.
Since $x_j \in S$ is invertible for $j>0$, we have $M_n \isom \langle dx_n \rangle / \langle x_0^{n+1} dx_n \rangle$. Moreover, by the structure theorem for modules over principal ideal domains, $M$ is the colimit of the system
\[
    \xymatrix@C=30pt{M_0 \isom S/(x_0) \ar[r]^-{\cdot u_1 x_0 } & M_1 \isom S/(x_0^2) \ar[r]^-{\cdot u_2 x_0 } & \;\ldots\; \ar[r]^-{\cdot u_{n-1} x_0} & M_n \isom S/(x_0^{n+1}) \ar[r]^-{\cdot u_{n} x_0} & \;\ldots }
\]
where $u_j \in S$ are units. This shows that $dx_0 \neq 0$ in $M_n$ for all $n\in \N$ and so $dx_0 \neq 0$ in $M$.
Therefore $f_\infty$ is not unramified at $\a$ (and in fact it is not formally unramified 
in any neighborhood of $\a$ \cite[\href{https://stacks.math.columbia.edu/tag/00UO}{Tag 00UO}]{stacks-project}), 
even though \cref{t:differentials-cotangent-map} implies that $T_\a^*f_\infty$ is an isomorphism. 
\end{example}

In \cref{ex:not-formally-unramified}, the morphism $f_\infty$ 
actually fails to be locally of finite type at $\a$,
despite the fact that by \cref{t:finite-fiber} the 
fiber of $f_\infty$ through $\a$ and all nearby fibers are of finite type. 
This follows by observing that, if the morphism $f_\infty$ were locally 
of finite type at $\a$, then it would be unramified by 
\cite[\href{https://stacks.math.columbia.edu/tag/02FM}{Tag 02FM}]{stacks-project}, 
which we just saw is not the case. 
To better understand the failure of $f_\infty$ to be locally of finite type at $\a$, 
we revisit the example from this point of view. 

\begin{example}
\label{ex:not-quasi-finite}
Continuing with the same notation as in \cref{ex:not-formally-unramified}, 
let $R = \Supp \Om_{X/Y}$ be the ramification locus of $f$, which consists of the origin of $\A^1_k$. 
By looking at the equations \eqref{eq:not-formally-unramified} of $f_\infty$, 
it is clear that the only way to get a finitely generated extension out of this system is to localize at $x_0$, 
which corresponds to restricting $f_\infty$ to $(X \setminus R)_\infty$. Here we observe that, if $x_0$ is not inverted, its differential $dx_0$ is precisely the nonzero element
of $\Om_{X_\infty/Y_\infty}$ found in \cref{ex:not-formally-unramified}.
Note that the open set $(X \setminus R)_\infty$ is much smaller compared to $X_\infty \setminus R_\infty$;
for instance, it does not contain the point $\a \in X_\infty$ corresponding to the prime ideal $(x_0)$. 

Another way to see that $f_\infty$ is not locally of finite type at $\a$
relies on the following argument. 
It is a straightforward computation to check that both the residue field extension $k_\b \subset k_\a$ 
(where $\b = f_\infty(\a)$) and
the extension of function fields $k(Y_\infty) \subset k(X_\infty)$ are finite of degree $2$,
and it is clear that the fiber of $f$ over $\b$ consists only of $\a$ as a reduced point. 
If the local map $f_\infty^\sharp \colon \cO_{Y_\infty,\b} \to \cO_{X_\infty,\a}$ were essentially of finite type,  then it would follow by \cite[\href{https://stacks.math.columbia.edu/tag/052V}{Tag 052V}]{stacks-project} and the going-down theorem that the morphism $\Spec \cO_{X_\infty,\a} \to \Spec \cO_{Y_\infty,\b}$ is surjective. However, clearly the generic arc on $Y$ with order of contact 1 with the origin belongs to $\Spec \cO_{Y_\infty,\b}$ but not to the image of $\Spec \cO_{X_\infty,\a}$. 
\end{example}

The example discussed above suggests that given a morphism of finite type
$f \colon X \to Y$ with ramification locus $R$, 
the map on arc spaces $f_\infty \colon X_\infty \to Y_\infty$ can only be locally of finite type 
at points $\a \in (X \setminus R)_\infty$.
The next theorem establishes this fact in full generality, and provides
the precise link between the ramification locus of $f_\infty$ and that of $f$.

\begin{theorem}
\label{t:locally-finite-type}
Let $f \colon X \to Y$ be a morphism of finite type between schemes
over a perfect field $k$, and let $f_\infty \colon X_\infty \to Y_\infty$ be the induced morphism of arc spaces. 
For any $\a \in X_\infty$, the following are equivalent:
\begin{enumerate}
\item
\label{i1:locally-finite-type}
$f_\infty$ is unramified at $\a$;
\item
\label{i2:locally-finite-type}
$f_\infty$ is quasi-finite at $\a$;
\item
\label{i3:locally-finite-type}
$f_\infty$ is locally of finite type at $\a$;
\item
\label{i4:locally-finite-type}
$f$ is unramified at $\a(0)$.
\end{enumerate}
Moreover, the fiber of $f_\infty$ through $\a$ is locally of finite type at $\a$ if and only if $f$
is unramified at $\a(\e)$.
\end{theorem}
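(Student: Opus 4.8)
The plan is to prove the four-way equivalence through the cycle $(1)\Rightarrow(2)\Rightarrow(3)\Rightarrow(1)$ together with $(1)\Leftrightarrow(4)$, and then to treat the last sentence separately. Two of the links are formal. For $(2)\Rightarrow(3)$ there is nothing to do. For $(1)\Rightarrow(2)$: an unramified morphism is quasi-finite, since it is locally of finite type and its fibre over a point is a disjoint union of spectra of finite separable field extensions, so $\a$ is isolated in $f_\infty^{-1}(f_\infty(\a))$. For $(4)\Rightarrow(1)$: the unramified locus of $f$ is open in $X$ and contains $\a(0)$; as $X$ and $Y$ are of finite type over a field, hence locally Noetherian, $f$ restricted to a small enough neighbourhood $U$ of $\a(0)$ factors, Zariski-locally on the source, as a closed immersion into a scheme étale over $Y$. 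Applying $(-)_\infty$ turns closed immersions into closed immersions and étale morphisms into étale morphisms (the latter giving Cartesian squares), so by the conormal sequence $f_\infty|_{U_\infty}$ is locally of finite type with $\Om_{U_\infty/Y_\infty}=0$; since $\a\in U_\infty$, which is open in $X_\infty$, the morphism $f_\infty$ is unramified at $\a$.

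\textbf{The implication $(3)\Rightarrow(1)$ and the last sentence.} Write $\b=f_\infty(\a)$. The computation of $\Phi_\a$ in the proof of \cref{t:differentials-cotangent-map}, combined with the structure theorem over the principal ideal domain $B_\a=k_\a[[t]]$, yields
\[
\Om_{f_\infty^{-1}(\b)/k_\b}\otimes k_\a\;\isom\;\Om_{X_\infty/Y_\infty}\otimes k_\a\;\isom\;\Om_{X/Y}\otimes P_\a\;\isom\;P_\a^{\oplus r},
\]
where $r=\dim_{k_{\a(\e)}}(\Om_{X/Y}\otimes k_{\a(\e)})$ and $P_\a=\LS{k_\a}/t\,k_\a[[t]]$ is infinite-dimensional over $k_\a$; the first isomorphism is base change of differentials along $\Spec k_\b\to Y_\infty$, and the middle one is right exactness of the first fundamental sequence for $X_\infty/Y_\infty/k$. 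If $f_\infty^{-1}(\b)$ is locally of finite type at $\a$ over $k_\b$, then $\Om_{f_\infty^{-1}(\b)/k_\b}$ is finitely generated near $\a$, so the left side is finite-dimensional and $r=0$, i.e.\ $f$ is unramified at $\a(\e)$. Conversely, if $f$ is unramified at $\a(\e)$, then by \cref{t:finite-fiber-1} the point $\a$ is isolated and reduced in $f_\infty^{-1}(\b)$ with $k_\a/k_\b$ finite, so $\{\a\}$ is open in the fibre with local ring $k_\a$, and the fibre is (finite, hence) locally of finite type at $\a$; this is the last sentence of the theorem. Now assume $(3)$. Then $f_\infty^{-1}(\b)$ is locally of finite type at $\a$ over $k_\b$, so $f$ is unramified at $\a(\e)$, so $r=0$ and $\Om_{X_\infty/Y_\infty}\otimes k_\a=0$; since $\Om_{X_\infty/Y_\infty}$ is finitely generated near $\a$, Nakayama gives $\Om_{X_\infty/Y_\infty,\a}=0$ and $(1)$ follows.

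\textbf{The implication $(1)\Rightarrow(4)$, and the main difficulty.} As $(1)\Leftrightarrow(3)$ is now available, it suffices to show that if $f$ is \emph{ramified} at $\a(0)$ then $\Om_{X_\infty/Y_\infty,\a}\neq0$, so that $f_\infty$ is not unramified at $\a$. If $f$ is also ramified at $\a(\e)$ this is immediate, since then $r\geq1$ and $\Om_{X_\infty/Y_\infty}\otimes k_\a=P_\a^{\oplus r}\neq0$. The genuinely hard case---and the one I expect to be the main obstacle---is when $f$ is unramified at $\a(\e)$ but ramified at $\a(0)$, as in \cref{ex:not-formally-unramified}: there $\Om_{X_\infty/Y_\infty}\otimes k_\a=0$, the obstruction is invisible both on the fibre of $f_\infty$ and on the fibre of the cotangent sheaf, and one must exhibit a nonzero element of the stalk $\Om_{X_\infty/Y_\infty,\a}$ that vanishes modulo $\fm_\a$ --- which by Nakayama then also re-proves that $\Om_{X_\infty/Y_\infty}$ is not finitely generated at $\a$, hence that $f_\infty$ is not locally of finite type there. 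My plan for this is to localise the description of differentials of arc spaces from \cite{dFD20} to
\[
\Om_{X_\infty/Y_\infty,\a}\;\isom\;\bigl(\Om_{X/Y}\otimes_{\cO_X}\cO_{X_\infty,\a}[[t]]\bigr)\otimes_{\cO_{X_\infty,\a}[[t]]}\bigl(\LS{\cO_{X_\infty,\a}}/t\,\cO_{X_\infty,\a}[[t]]\bigr)
\]
and to detect a nonzero class coming from a minimal generator $dg$ of $\Om_{X/Y,\a(0)}$ by mapping $\cO_{X_\infty,\a}$ into a carefully chosen rank-two valuation test ring, in the style of the computations in \cref{ex:not-formally-unramified,eg:node,eg:cusp}; the heuristic is that the torsion of $\Om_{X/Y}\otimes_{\cO_X}\cO_{X_\infty,\a}[[t]]$ concentrated at the special point $\a(0)$ survives the division by $t$. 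An alternative, modelled on \cref{ex:not-quasi-finite}, is to produce a point of $\Spec\cO_{Y_\infty,\b}$ lying outside the image of $\Spec\cO_{X_\infty,\a}$ and conclude by a going-down argument. Either way, this last step is where the real weight of the proof lies.
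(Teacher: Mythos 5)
Your treatment of the easy links is sound and close to the paper's: $(1)\Rightarrow(2)\Rightarrow(3)$ are formal, $(4)\Rightarrow(1)$ via the local factorization of an unramified morphism through a closed immersion into an \'etale $Y$-scheme is exactly the paper's argument, and your derivation of the ``moreover'' statement together with $(3)\Rightarrow(1)$ (fibre locally of finite type forces $r=0$, hence $\Om_{X_\infty/Y_\infty}\otimes k_\a=0$, then Nakayama using finite generation supplied by $(3)$) is a correct, slightly rearranged version of what the paper does (the paper instead proves $(3)\Rightarrow(4)$ directly). However, there is a genuine gap at the step you yourself flag: showing that if $f$ is ramified at $\a(0)$ but unramified at $\a(\e)$, then $f_\infty$ is not unramified (equivalently, not locally of finite type) at $\a$, i.e.\ that the stalk $\Om_{X_\infty/Y_\infty,\a}$ is nonzero even though its fibre vanishes. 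You offer only a heuristic (``the torsion of $\Om_{X/Y}\otimes\cO_{X_\infty,\a}[[t]]$ concentrated at $\a(0)$ survives the division by $t$'') plus two undeveloped alternatives; note that this heuristic is precisely what fails at the level of the fibre, since $P_\a$ is $t$-divisible and kills torsion -- that is why the case is hard -- so some genuinely new input is needed to see the stalk.

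The paper supplies this input in \cref{l:MotimesPinfty}\eqref{i2:MotimesPinfty}: for a finitely generated $A$-module $M$, one has $(M\otimes P_\infty)_\a\ne 0$ whenever $M_{\a(0)}\ne 0$. The proof is a deformation trick: consider the $k_\a[[s]]$-valued arc $\f(s,t):=\a(s+t)$, viewed as a $k_\a[[s]]$-point of $X_\infty$ centered at $\a$; over $B_\f=B_\infty\otimes k_\a[[s]]$ the module $M\otimes B_\f$ is presented by a diagonal matrix with entries $(s+t)^{a_i}$, and $P_\f=P_\infty\otimes k_\a[[s]]$ is \emph{not} $(s+t)$-divisible, so the torsion is no longer killed and $(M\otimes P_\infty)\otimes k_\a[[s]]\ne 0$, whence the stalk at $\a$ is nonzero. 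Applied to $M=\Om_{X/Y}$ (after the identification $\Om_{X_\infty/Y_\infty}\simeq\Om_{X/Y}\otimes P_\infty$, which you essentially have), this settles your hard case in one stroke and also covers the case where $f$ is ramified at $\a(\e)$. Your proposed rank-two valuation test rings are tailored to the explicit equations of \cref{eg:node,eg:cusp} and would not obviously generalize, and the going-down alternative modeled on \cref{ex:not-quasi-finite} relies on special features of that example (explicit finite degree-two extensions) and on hypotheses needed to invoke going-down that you have not verified; as written, neither sketch closes the implication $(1)\Rightarrow(4)$, which is where the main content of the theorem lies.
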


We deduce the following criterion for $f_\infty$ to be a morphism of finite type.

\begin{corollary}
\label{t:unramified-eq-finite-type}
With the same assumptions as in \cref{t:locally-finite-type}, the following are equivalent:
\begin{enumerate}
\item 
\label{i1:unramified-eq-finite-type}
$f_\infty$ is unramified;
\item 
\label{i2:unramified-eq-finite-type}
$f_\infty$ is quasi-finite;
\item 
\label{i3:unramified-eq-finite-type}
$f_\infty$ is of finite type;
\item 
\label{i4:unramified-eq-finite-type}
every fiber of $f_\infty$ is of finite type;
\item 
\label{i5:unramified-eq-finite-type}
$f$ is unramified.
\end{enumerate}
\end{corollary}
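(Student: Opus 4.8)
The plan is to deduce the corollary from the pointwise statement of \cref{t:locally-finite-type}, together with one global observation: since $f$ is of finite type, the morphism $f_\infty$ and each of its fibers are quasi-compact. Granting this, the only difference between the pointwise conditions occurring in \cref{t:locally-finite-type} — ``locally of finite type'', ``unramified at a point'', ``quasi-finite at a point'' — and the global notions \eqref{i1:unramified-eq-finite-type}--\eqref{i4:unramified-eq-finite-type} is a quasi-compactness requirement that will now be automatic (and for \eqref{i1:unramified-eq-finite-type} there is no difference at all, unramifiedness being a pointwise property). So I would first establish the quasi-compactness input, then run a short cycle of implications among \eqref{i1:unramified-eq-finite-type}, \eqref{i2:unramified-eq-finite-type}, \eqref{i3:unramified-eq-finite-type}, \eqref{i5:unramified-eq-finite-type}, and finally treat \eqref{i4:unramified-eq-finite-type} separately.

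For the quasi-compactness, I would use that the jet truncation morphisms $X_{m+1}\to X_m$ and $Y_{m+1}\to Y_m$ are affine, so the projections $q\colon X_\infty\to X$ and $p\colon Y_\infty\to Y$ are affine, being inverse limits of affine morphisms; in particular $q$ is quasi-compact and $p$ is quasi-compact and quasi-separated. Since $f$ is of finite type it is quasi-compact, hence $f\circ q\colon X_\infty\to Y$ is quasi-compact; writing $f\circ q=p\circ f_\infty$ and using that $p$ is quasi-separated, the cancellation property for quasi-compact morphisms gives that $f_\infty$ is quasi-compact. Consequently, for every $\b\in Y_\infty$ the fiber $f_\infty^{-1}(\b)=X_\infty\times_{Y_\infty}\Spec k_\b$ is quasi-compact over $\Spec k_\b$, hence quasi-compact.

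With this in hand, I would argue $\eqref{i5:unramified-eq-finite-type}\Rightarrow\eqref{i1:unramified-eq-finite-type}\Rightarrow\eqref{i3:unramified-eq-finite-type}\Rightarrow\eqref{i2:unramified-eq-finite-type}\Rightarrow\eqref{i1:unramified-eq-finite-type}$, invoking \cref{t:locally-finite-type} at each arrow: if $f$ is unramified then it is unramified at $\a(0)$ for every $\a\in X_\infty$, so $f_\infty$ is unramified at every $\a$, i.e.\ unramified; an unramified $f_\infty$ is locally of finite type at every point, hence of finite type by the quasi-compactness just established; an $f_\infty$ of finite type is locally of finite type at every $\a$, hence quasi-finite at every $\a$ by the theorem, hence quasi-finite; and a quasi-finite $f_\infty$ is quasi-finite at every $\a$, hence unramified at every $\a$. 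To close the loop I would prove $\eqref{i1:unramified-eq-finite-type}\Rightarrow\eqref{i5:unramified-eq-finite-type}$ using constant arcs: for $x\in X$, the arc $\Spec k_x[[t]]\to\Spec k_x\to X$ defines a point $\a\in X_\infty$ with $\a(0)=\a(\e)=x$, so unramifiedness of $f_\infty$ at $\a$ forces $f$ to be unramified at $x$, and $x$ is arbitrary. For $\eqref{i4:unramified-eq-finite-type}\Leftrightarrow\eqref{i5:unramified-eq-finite-type}$ I would invoke the final clause of \cref{t:locally-finite-type} — the fiber of $f_\infty$ through $\a$ is locally of finite type at $\a$ if and only if $f$ is unramified at $\a(\e)$ — running it over all $\a\in X_\infty$ in one direction, over the constant arcs at the points of $X$ in the other, and using quasi-compactness of fibers to pass between ``locally of finite type'' and ``of finite type''.

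I expect no essential obstacle: \cref{t:locally-finite-type} carries all of the content, and what remains is the bookkeeping of globalizing its pointwise conclusions. The one place where genuine (if routine) care is needed is the quasi-compactness of $f_\infty$ and of its fibers, and this is exactly where the standing hypothesis that $f$ be of finite type is used; without it the equivalences with \eqref{i3:unramified-eq-finite-type} and \eqref{i4:unramified-eq-finite-type} could not be expected to hold in the stated ``finite type'' form.
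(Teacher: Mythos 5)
Your proof is correct and follows essentially the same strategy as the paper: first establish that $f_\infty$ (and hence each fiber) is quasi-compact — the paper does this via the base-change isomorphism $(V\times_Y X)_\infty \isom V_\infty\times_{Y_\infty}X_\infty$ over affine opens $V\subset Y$ together with affineness of $Z_\infty\to Z$, whereas you use affineness of the projections $X_\infty\to X$, $Y_\infty\to Y$ and the cancellation property for quasi-compact morphisms, both routine — and then globalize the pointwise equivalences of \cref{t:locally-finite-type}. The remaining difference is only bookkeeping: the paper runs \eqref{i1:unramified-eq-finite-type}$\Rightarrow$\eqref{i2:unramified-eq-finite-type}$\Rightarrow$\eqref{i3:unramified-eq-finite-type} from the definitions, gets \eqref{i3:unramified-eq-finite-type}$\Rightarrow$\eqref{i4:unramified-eq-finite-type} by base change, and closes with \eqref{i4:unramified-eq-finite-type}$\Rightarrow$\eqref{i5:unramified-eq-finite-type}$\Rightarrow$\eqref{i1:unramified-eq-finite-type}, while you cycle through \eqref{i5:unramified-eq-finite-type}, \eqref{i1:unramified-eq-finite-type}, \eqref{i3:unramified-eq-finite-type}, \eqref{i2:unramified-eq-finite-type} and handle \eqref{i4:unramified-eq-finite-type}$\Leftrightarrow$\eqref{i5:unramified-eq-finite-type} separately via constant arcs, which is equally valid.
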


\begin{proof}
First note that as $f \colon X\to Y$ is quasi-compact, so is $f_\infty \colon X_\infty \to Y_\infty$. Indeed, there exists a cover by open affine sets $V$ of $Y$ such that the scheme-theoretic preimage $V \times_Y X$ is quasi-compact. By \cite[Proposition 2.3(2)]{NS10} we have $(V \times_Y X)_\infty \isom V_\infty \times_{Y_\infty} X_\infty$, and this is quasi-compact since $Z_\infty \to Z$ is affine for any $k$-scheme $Z$. As $V_\infty$ is an affine open subset of $Y_\infty$, we are done.

Therefore the implications \eqref{i1:unramified-eq-finite-type}$\Rightarrow$\eqref{i2:unramified-eq-finite-type}$\Rightarrow$\eqref{i3:unramified-eq-finite-type} follow from the definitions, and 
\eqref{i3:unramified-eq-finite-type}$\Rightarrow$\eqref{i4:unramified-eq-finite-type} 
from the fact that being of finite type is stable under base change. 
Finally, the implications \eqref{i4:unramified-eq-finite-type}$\Rightarrow$\eqref{i5:unramified-eq-finite-type}$\Rightarrow$\eqref{i1:unramified-eq-finite-type}
follow directly from \cref{t:locally-finite-type}.
\end{proof}

The proof of \cref{t:locally-finite-type} relies on the description 
of the sheaf of differentials of $X_\infty$ from \cite{dFD20}.  
Assuming for simplicity that $X = \Spec A$ is affine where $A$ is a $k$-algebra,
and writing $X_\infty = \Spec A_\infty$, the formula states that
\begin{equation}
\label{eq:dFD}
\Om_{X_\infty/k} \simeq \Om_{X/k} \otimes P_\infty
\end{equation}
where 
\begin{equation}
\label{eq:P_infty}
P_\infty := \frac{\LS{A_\infty}}{t\.A_\infty[[t]]}.
\end{equation}
We already used this formula in \cref{s:cotangent-map}. 
Adopting the same notation used there, for every arc $\a \in X_\infty$
we denote $P_\a := P_\infty \otimes k_\a$ and similarly if 
$B_\infty := A_\infty[[t]]$ then we set $B_\a := B_\infty \otimes \k_\a$.

\begin{lemma}
\label{l:MotimesPinfty}
With the above notation, let $M$ be a finitely generated $A$-module, 
and consider the $A_\infty$-module $M \otimes P_\infty$.
\begin{enumerate}
\item
\label{i1:MotimesPinfty}
We have $M\otimes P_\a \ne 0$ if and only if $M_{\a(\e)} \ne 0$, 
and if this occurs then $\dim_{k_\a}(M\otimes P_\a) = \infty$.
\item
\label{i2:MotimesPinfty}
We have $(M\otimes P_\infty)_\a \ne 0$ if and only if $M_{\a(0)} \ne 0$.
\end{enumerate}
\end{lemma}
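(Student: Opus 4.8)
\emph{Proof plan.} The plan is to work affine-locally and exploit the explicit model of $P_\infty$. Since everything in sight is compatible with restriction to an affine open, we may assume $X=\Spec A$, and we keep the notation $P_\infty=\LS{A_\infty}/t\,A_\infty[[t]]$, $B_\a=k_\a[[t]]$, $P_\a=P_\infty\otimes_{A_\infty}k_\a=\LS{k_\a}/t\,k_\a[[t]]$. The structural input is that $P_\infty=\bigoplus_{n\le 0}A_\infty t^n$ is \emph{free} over $A_\infty$; hence, writing $S:=\cO_{X_\infty,\a}$, we get $P_\infty\otimes_{A_\infty}S=\bigoplus_{n\le 0}St^n=:P^S=\LS{S}/t\,S[[t]]$, and therefore $(M\otimes P_\infty)_\a\cong M\otimes_A P^S$, where the $A$-module structure on $P^S$ factors through the localized universal arc $A\to S[[t]]$.

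For \eqref{i1:MotimesPinfty}, write $M\otimes P_\a=(M\otimes_A B_\a)\otimes_{B_\a}P_\a$. As $B_\a$ is a principal ideal domain, the structure theorem gives $M\otimes_A B_\a\cong B_\a^{\oplus d}\oplus T$ with $T$ torsion and $d=\dim_{k_{\a(\e)}}(M\otimes_A k_{\a(\e)})$; here one uses that $A\to\LS{k_\a}$ factors through $k_{\a(\e)}$, so the generic rank of $M\otimes_A B_\a$ over $B_\a$ equals the fibre dimension of $M$ at $\a(\e)$. The sequence $0\to B_\a\xrightarrow{\ \cdot t\ }\LS{k_\a}\to P_\a\to 0$ shows that $P_\a$ is divisible over $B_\a$, so $-\otimes_{B_\a}P_\a$ kills $T$, leaving $B_\a\otimes_{B_\a}P_\a=P_\a$, which is of infinite dimension over $k_\a$; hence $M\otimes P_\a\cong P_\a^{\oplus d}$. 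By Nakayama $M_{\a(\e)}\ne 0$ iff $d\ge 1$, and then $\dim_{k_\a}(M\otimes P_\a)=\infty$.

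For \eqref{i2:MotimesPinfty}, first suppose $M_{\a(0)}=0$. Then $\Ann_A(M)\not\subseteq\mathfrak p_{\a(0)}$, so there is $g\in\Ann_A(M)$ with $g\notin\mathfrak p_{\a(0)}$, so that the image $g^{(0)}$ of $g$ in $A_\infty$ becomes a unit in $S$; hence the image of $g$ under $A\to S[[t]]$ is $g^{(0)}+t(\cdots)\in S[[t]]^\times$, which acts invertibly on the $S[[t]]$-module $P^S$. As $g$ also annihilates $M$ (hence $M\otimes_A P^S$), we conclude $M\otimes_A P^S=0$. Conversely, suppose $M_{\a(0)}\ne 0$. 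If moreover $M_{\a(\e)}\ne 0$, then the fibre $(M\otimes P_\infty)_\a\otimes_S k_\a=M\otimes P_\a$ is nonzero by \eqref{i1:MotimesPinfty}, so the stalk is nonzero. This leaves the case $M_{\a(\e)}=0$, $M_{\a(0)}\ne 0$. Using the $A$-module isomorphism $\bigoplus_{n=-m}^0 St^n\cong S[[t]]/t^{m+1}$ (induced by the universal arc), one identifies $M\otimes_A P^S$ with the colimit of $N/tN\xrightarrow{\ \cdot t\ }N/t^2N\xrightarrow{\ \cdot t\ }\cdots$, where $N:=M\otimes_A S[[t]]$ is finitely generated over the local ring $S[[t]]$ with $t$ in its maximal ideal; Nakayama then shows this colimit is nonzero iff $N$ is not killed by any power of $t$, i.e.\ iff $M\otimes_A\LS{S}\ne 0$. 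Finally, $A\to\LS{S}$ carries every element outside $\mathfrak p_{\a(0)}$ to a unit, so it factors through $\cO_{X,\a(0)}$, whence $M\otimes_A\LS{S}=M_{\a(0)}\otimes_{\cO_{X,\a(0)}}\LS{S}$; by Nakayama applied to the finitely generated $\cO_{X,\a(0)}$-module $M_{\a(0)}$, it suffices to prove $\mathfrak p_{\a(0)}\LS{S}\ne\LS{S}$.

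The hard part will be precisely this last inequality — equivalently, that $t^N\notin\mathfrak p_{\a(0)}S[[t]]$ for all $N\ge 0$, with $S[[t]]$ carrying the universal-arc $A$-algebra structure — and here Nakayama is no longer available, since the module $M\otimes_A P^S$ has vanishing fibre at $\a$; the inequality must be witnessed by an explicit construction. The plan for this step is to argue as in the rank-$2$ valuation-ring computations of \cref{ex:not-formally-unramified,eg:node,eg:cusp}: every $g\in\mathfrak p_{\a(0)}$ has $\ord_\a(g)\ge 1$, and since $\a(0)\ne\a(\e)$ here the contact order $q:=\ord_\a(\mathfrak p_{\a(0)})$ is finite, so one can build a $k$-algebra map out of $S[[t]]$ into a suitable rank-$2$ valuation ring — combining the $t$-adic order with the divisorial valuation $v_\a$ of $\overline{\{\a(\e)\}}$, so that the image of $\mathfrak p_{\a(0)}$ lies in the ``second-order'' part while no $t^N$ does. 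Granting this, all the reductions above assemble into the two assertions of the lemma.
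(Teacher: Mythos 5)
Part \eqref{i1:MotimesPinfty} of your argument is essentially the paper's proof (structure theorem over $B_\a$ plus $t$-divisibility of $P_\a$), and your reductions in part \eqref{i2:MotimesPinfty} are sound: the identification $(M\otimes P_\infty)_\a\cong M\otimes_A P^S$ using freeness of $P_\infty$ over $A_\infty$, the colimit/Nakayama argument showing this vanishes iff $M\otimes_A\LS{S}=0$, the factorization of $A\to\LS{S}$ through $\O_{X,\a(0)}$, and the final reduction to $\frp_{\a(0)}\LS{S}\ne\LS{S}$, i.e.\ $t^N\notin\frp_{\a(0)}S[[t]]$ for all $N$, are all correct. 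But you stop exactly at the decisive point: that last inequality is the entire content of the nontrivial direction of \eqref{i2:MotimesPinfty}, and your ``plan'' for it is not a proof, nor does the sketch as stated work. First, $S=\O_{X_\infty,\a}$ is in general non-reduced (this is the point of \cref{eg:node,eg:cusp}), so there is no valuation on $S[[t]]$ to ``combine with the $t$-adic order''; what you actually need is to \emph{construct} a homomorphism from $S[[t]]$ to a suitable local domain, and that construction is the missing idea. Second, knowing only that $\ord_\a(g)\ge 1$ for $g\in\frp_{\a(0)}$ does not place the image of $g$ in the ``second-order part'' under an arbitrary local map $S\to R$: the image of $g$ is a unit times $t^{e}$ plus lower-degree terms with coefficients in $\fm_R$, whose value for a lexicographic rank-$2$ valuation is $(0,e)$, not $\ge(1,\ast)$. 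Also, $v_\a$ need not be divisorial here, so the phrase ``the divisorial valuation of $\overline{\{\a(\e)\}}$'' does not define the map you need.

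The paper closes precisely this gap with a short trick that your outline misses: consider $\f(s,t):=\a(s+t)$, a $k_\a[[s]]$-valued point of $X_\infty$ centered at $\a$, hence a local map $S\to k_\a[[s]]$. Under the induced map $A\to S[[t]]\to k_\a[[s,t]]$ every $g\in\frp_{\a(0)}$ lands in the principal ideal $(s+t)$ (this is where the Taylor relations among the higher derivatives $g^{(j)}$ are used, which a generic local map $S\to R$ does not see), while $t^N\notin(s+t)$ since $k_\a[[s,t]]/(s+t)\cong k_\a[[s]]$; this immediately gives $t^N\notin\frp_{\a(0)}S[[t]]$, and in the paper's formulation it is phrased as: $M\otimes B_\f$ is presented by a diagonal matrix with entries $(s+t)^{a_i}$, and $P_\f$ is not $(s+t)$-divisible, so the torsion survives and $(M\otimes P_\infty)_\a\ne0$. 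So your route is a legitimate alternative reduction, but without an actual construction of the witnessing homomorphism (for which the $\a(s+t)$ deformation is the natural and, as far as I can see, essentially forced choice), the proof is incomplete at its key step.
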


\begin{proof}
Assume $M_{\a(0)} \ne 0$, the lemma being trivial otherwise. 

By the structure theorem over principal ideal domains, 
the $B_\a$-module $M \otimes B_\a$ is presented by a diagonal matrix with entries $t^{a_i}$ along the diagonal, 
and the rank of the free part of $M \otimes B_\a$ (which is the same as the number of 
zero rows in the presentation matrix) is equal to the rank of $M$ at $\a(\e)$. 
As $P_\a$ is $t$-divisible, 
tensoring by it kills the torsion part of $M \otimes B_\a$, 
hence we have $M \otimes P_\a \ne 0$ if and only $M_{\a(\e)} \ne 0$. 
Note furthermore that if this occurs then $M \otimes P_\a$ contains a direct summand isomorphic to $P_\a$, 
which is infinite dimensional over $k_\a$. 
This proves part \eqref{i1:MotimesPinfty}.

To prove \eqref{i2:MotimesPinfty}, we consider the $k_\a[[s]]$-valued arc $\f$ on $X$ given by
$\f(s,t) := \a(s+t)$. 
We can regard $\f$ as an arc on $X_\infty$, an infinitesimal deformation of $\a$.
Let $B_\f := B_\infty \otimes k_\a[[s]]$ and $P_\f := P_\infty \otimes k_\a[[s]]$.
Note that $M \otimes B_\f$ is now presented by a diagonal matrix 
with entries $(s+t)^{a_i}$ along the diagonal,
and tensoring with $P_\f$ no longer kills the cokernel since this module
is not $(s+t)$-divisible. 
This shows that if $M_{\a(0)} \ne 0$ then $(M\otimes P_\infty)_\a \ne 0$.
\end{proof}

\begin{proof}[Proof of \cref{t:locally-finite-type}]
The implications \eqref{i1:locally-finite-type}$\Rightarrow$\eqref{i2:locally-finite-type}$\Rightarrow$\eqref{i3:locally-finite-type} hold by definition. 
To prove that \eqref{i4:locally-finite-type} implies \eqref{i1:locally-finite-type}, assume that $f$ is unramified at $\a(0)$. By \cite[\href{https://stacks.math.columbia.edu/tag/0395}{Tag 0395}]{stacks-project} there exist an open neighborhood $U \subset X$ of $\a(0)$ and a factorization $U \to Z \to Y$ where $U \to Y$ is a closed immersion and $Z \to Y$ is \'etale. In particular, we have that $Z_\infty \isom Z \times_Y Y_\infty$ and thus $U_\infty \to Y_\infty$ is the composition of a closed immersion and the base change of an \'etale morphism, hence is unramified. As being unramified is a local property, it follows that $f_\infty \colon X_\infty \to Y_\infty$ is unramified at $\a$.

It remains to prove that \eqref{i3:locally-finite-type} implies \eqref{i4:locally-finite-type}.
We reduce to the case where $X = \Spec A$ is affine, and write $X_\infty = \Spec A_\infty$. 
Let $P_\infty$ as defined in \eqref{eq:P_infty}.
The first step is to observe that
\[
\Om_{X_\infty/Y_\infty} \simeq \Om_{X/Y} \otimes P_\infty,
\]
which follows from the formula in \eqref{eq:dFD} by tensoring the exact sequence
\[
\xymatrix@C=15pt{ \Om_{Y/k} \otimes \cO_X \ar[r] & \Om_{X/k} \ar[r] & \Om_{X/Y} \ar[r] & 0}       
\]
with $P_\infty$ and using that this operation is left exact. 

Assume that $f$ is ramified at $\a(0)$. Then $\Om_{X/Y,\a(0)} \ne 0$, 
hence 
\begin{equation}
\label{eq:ramified}
\Om_{X_\infty/Y_\infty,\a} \ne 0
\end{equation}
by part \eqref{i2:MotimesPinfty} of \cref{l:MotimesPinfty}.
We distinguish two cases.
If $f$ is unramified at $\a(\e)$, then 
\[
\Om_{X_\infty/Y_\infty} \otimes k_\a = \Om_{X/Y} \otimes P_\a = 0
\]
by part \eqref{i1:MotimesPinfty} of \cref{l:MotimesPinfty},
and contrasting this with \eqref{eq:ramified} shows that $f_\infty$ cannot be locally 
of finite type at $\a$ by 
\cite[\href{https://stacks.math.columbia.edu/tag/02FM}{Tag 02FM}]{stacks-project}.
If $f_\infty$ is ramified at $\a(\e)$, then
$\Om_{X_\infty/Y_\infty} \otimes k_\a$ is infinite dimensional over $k_\a$
(again by part \eqref{i1:MotimesPinfty} of \cref{l:MotimesPinfty}), 
and this implies that $f_\infty$ is not locally of finite type at $\a$. 

Note that the last part of the argument also implies that if $f$ is ramified at $\a(\e)$ then
the fiber of $f_\infty$ over $\b = f_\infty(\a)$ is not locally of finite type at $\a$, 
since in this case we have that
\[
\Om_{f^{-1}_\infty(\b)/k_\b} \otimes k_\a \simeq \Om_{X_\infty/Y_\infty} \otimes k_\a
\]
which is infinite dimensional over $k_\a$.
On the other hand, if $f_\infty$ is unramified at $\a(\e)$ then
we already proved in \cref{t:finite-fiber} that $f^{-1}(\b)$ is locally of finite type at $\a$,
so this finishes the proof of the theorem. 
\end{proof}

\begin{remark}
The proof of \cref{t:locally-finite-type} shows that the support of $\Om_{X_\infty/Y_\infty}$
is equal to the inverse image of the support of $\Om_{X/Y}$ under the truncation map $X_\infty \to X$. 
By \cref{t:locally-finite-type}, this implies that $\Supp \Om_{X_\infty/Y_\infty}$ is exactly the ramification locus of $f_\infty$, meaning that if $f_\infty$ fails to be unramified at a point $\a$, 
it fails both because it is not locally of finite type at $\a$ and because 
$\Om_{X_\infty/Y_\infty,\a} \ne 0$. 
\end{remark}

%
%

\section{Finiteness properties of local rings at stable points}

\label{s:stable-points}

We now turn to applications. In this section, we begin by looking at local rings at stable points, whose definition is recalled next.
We refer to \cite[Section~10]{dFD20} for a discussion of the equivalence of the following definition
with the original one given in \cite{Reg09} and for generalities on constructible subsets
of arbitrary schemes (see also \cite[\href{https://stacks.math.columbia.edu/tag/005G}{Tag 005G}]{stacks-project}).

\begin{definition}
Let $X$ be a scheme of finite type over a field $k$.
A point $\a \in X_\infty$ is said to be \emph{constructible}
if it is the generic point of a constructible subset of $X_\infty$.
Assuming that $X$ is a variety and $k$ is a perfect field,  
we say that a point $\a \in X_\infty$ is \emph{stable}
if it is constructible and $\a(\e) \in X_\sm$.
\end{definition}

Let henceforth $X$ be a variety over a perfect field $k$.
A systematic study of the local rings at stable points started in \cite{Reg06,Reg09}, where it
was proved among other things that if $\charK k = 0$ and $\a \in X_\infty$ is a stable point, 
then the completion $\^{\O_{X_\infty,\a}}$ of the local ring of $\a$ is Noetherian
(specifically, this property follow from \cite[Corollary~4.6]{Reg06}
and \cite[Theorem~3.13]{Reg09}, see also \cite[Theorem~4.1]{Reg06c}). 
This result was used in \cite{Reg06} to establish a curve selection lemma in the space of arcs, 
a result that plays a major role in all known proofs of the Nash problem, see \cites{LJR12,FdBPP12,dFD16}.
As it was observed later, the condition on the characteristic can be dropped, and the 
result extends to all varieties over a perfect field. 
A different proof was given in \cite{dFD20}.

Reguera's proof that the completion $\^{\O_{X_\infty,\a}}$ 
of the local ring at a stable point is Noetherian relies on showing that the maximal ideal of $\O_{(X_\infty)_\red,\a}$ is finitely generated and that the completion of the reduced local ring agrees with that of $\O_{X_\infty,\a}$. 
A different proof of the result was later given in \cite[Corollary~10.13]{dFD20}.
These results led Reguera to raise the following question.

\begin{question}[{\cite[Question~4.8]{Reg09}}]
\label{q:Noetherian}
If $\a \in X_\infty$ is a stable point, 
is the reduced local ring $\O_{(X_\infty)_\red,\a}$ Noetherian?
\end{question}

It should be noted that restricting the question to the reduced local ring is a necessary step, 
since in general the local ring $\O_{X_\infty,\a}$ at a stable point can fail to be Noetherian. 
This was already observed in a concrete example in \cite[Example~3.16]{Reg09}
by looking at the nilpotent elements of the local ring when $X$ is a cuspidal curve 
and $\a$ is the generic arc giving the parameterization of the cusp; see also \cref{eg:node,eg:cusp}. 
In fact, it is a general fact that local rings of $X_\infty$
are never Noetherian if they are non-reduced. 
In the following proposition we restrict the attention to stable points
since local rings at non-stable points are clearly not Noetherian.

\begin{proposition}
\label{p:not-reduced-not-noetherian}
For any stable point $\a \in X_\infty$, if the local ring 
$\O_{X_\infty,\a}$ is not reduced then it is not Noetherian. 
\end{proposition}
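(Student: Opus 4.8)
The statement claims that for a stable point $\a \in X_\infty$, if $\O_{X_\infty,\a}$ is not reduced then it is not Noetherian. The natural strategy is to exhibit, starting from a single nonzero nilpotent element, infinitely many nilpotent elements whose ideals form a strictly ascending chain. The key structural input is that $\O_{X_\infty}$ carries the derivations $\de$ coming from the shift operator on arcs (the "higher derivative" operators $g \mapsto g^{(i)}$ appearing throughout \cref{eg:node,eg:cusp}), and these interact well with localization at $\a$: the prolongation of the universal arc gives a ring map $\O_{X_\infty,\a} \to \O_{X_\infty,\a}[[t]]$, and the coefficients $g^{(i)}$ of the image of a nilpotent element $g$ are again nilpotent (this is exactly the argument used in \cref{eg:node}). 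So first I would fix $0 \ne g \in \O_{X_\infty,\a}$ nilpotent, say $g^N = 0$, and pass to its higher derivatives $g^{(i)}$.

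**The chain.** The heart of the argument is to show the derivatives $g^{(i)}$ cannot all vanish for $i$ large, and more precisely that they generate a non-finitely-generated ideal (or produce a non-stabilizing chain). Here the stability hypothesis enters: since $\a$ is stable, $\a(\e) \in X_\sm$, so $\O_{X_\infty,\a}$ has, after reduction, good finiteness (its maximal ideal is finitely generated — though we do not yet know that in the logical order, we do know the generic point of $\a$ is smooth, which controls the shift operator). The clean way: consider the ideal $I = \Nil(\O_{X_\infty,\a})$ of nilpotents; I want to show $I$ is not finitely generated, which immediately gives non-Noetherianity. Suppose $I$ were finitely generated, say by nilpotents $g_1, \dots, g_r$ with a common nilpotency exponent; I would then argue that the derivation stabilizes the (finite-dimensional, over the residue field, in an appropriate graded sense) data, contradicting the fact that $g \ne 0$ produces arbitrarily high-order derivatives — the mechanism being that localizing at $\a$ and taking the associated graded with respect to $\fm_\a$, the element $g$ lives in some $\fm_\a^m \setminus \fm_\a^{m+1}$, while its derivatives $g^{(i)}$ are forced (by the structure of the shift on the smooth locus, cf. the colimit computation in \cref{ex:not-formally-unramified}) into lower and lower powers of $\fm_\a$, so infinitely many of them are nonzero and they cannot all lie in a finitely generated subideal of $I$.

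**Main obstacle.** The delicate point will be making rigorous the claim that the derivatives $g^{(i)}$ do not all vanish and that they escape any fixed finitely generated subideal of the nilradical; in the worked examples this was done by hand using explicit test rings (the rank-2 valuation rings $R/(rs)$ etc.), but for the general proposition one needs an intrinsic argument. The right tool is probably: choose a coefficient presentation near $\a$ using \cref{c:cotangent-map-isom} (an étale-at-$\a(\e)$ map $f \colon X \to \A^d_k$), so that $\O_{Y_\infty,\b} \to \O_{X_\infty,\a}$ is controlled and $\O_{Y_\infty,\b}$ is a localization of a polynomial ring (hence its structure is transparent); then transport $g$ and its derivatives and use that on $\A^d$ the shift operator acts freely enough to force infinitely many nonzero derivatives. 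Alternatively — and this may be cleaner — one shows directly that if $\Nil(\O_{X_\infty,\a})$ were finitely generated then $\O_{X_\infty,\a}$ would be Noetherian (using that $\O_{(X_\infty)_\red,\a}$ has finitely generated maximal ideal and, being a localization of a quotient of a reduced excellent-type ring built from the smooth locus, is in fact Noetherian), so it suffices to derive a contradiction from finite generation of the nilradical; and that contradiction comes from the derivation not preserving any finitely generated nil ideal, which is the phenomenon illustrated concretely in \cref{eg:node,eg:cusp}. I expect the proof in the paper to isolate a lemma saying "a nonzero nilpotent in $\O_{X_\infty,\a}$ generates, together with its higher derivatives, a non-finitely-generated ideal" and then conclude in one line.
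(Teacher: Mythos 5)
Your proposal has genuine gaps, and it misses the short argument that actually proves the statement. The paper's proof is two lines: by \cite[Theorem~3.13]{Reg09} in characteristic zero and \cite{BH21} in general, the nilradical of $\O_{X_\infty,\a}$ at a stable point is contained in $\bigcap_n \fm_\a^n$; hence a nonzero nilpotent makes the $\fm_\a$-adic topology non-separated, and Krull's intersection theorem then forbids $\O_{X_\infty,\a}$ from being Noetherian. No chain of ideals, no higher derivatives, and no finite-generation analysis of the nilradical is needed. The key input you are missing is precisely this containment of the nilradical in all powers of the maximal ideal, which is a known (cited) result rather than something to be re-derived from the shift operator.

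As for the route you sketch, the central step is unsubstantiated and the proposed mechanism does not work as stated. Showing that the higher derivatives $g^{(i)}$ of a nonzero nilpotent $g$ land in ``lower and lower powers of $\fm_\a$'' would not prevent them from lying in a finitely generated ideal -- a finitely generated ideal can contain elements of every order -- so the conclusion ``they cannot all lie in a finitely generated subideal of $\Nil(\O_{X_\infty,\a})$'' is a non sequitur; moreover you give no intrinsic argument that infinitely many $g^{(i)}$ are nonzero (in \cref{eg:node,eg:cusp} this required ad hoc test rings, and those examples are illustrations, not ingredients of the proof). Your alternative route is worse: it assumes that $\O_{(X_\infty)_\red,\a}$ is Noetherian, but that is exactly Reguera's open problem recalled in \cref{q:Noetherian}, so the argument would rest on an unproven (and explicitly open) assertion. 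Only the finite generation of the maximal ideal of the reduced (and, by \cref{t:fg-top-noetherian}, of the non-reduced) local ring is known, which is far from Noetherianity.
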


\begin{proof}
By \cite[Theorem~3.13]{Reg09} (when $\charK k = 0$) and \cite{BH21} (in general), 
the nilradical of $\cO_{X_\infty,\a}$ is contained in $\bigcap_n \fm_\a^n$.
If $\cO_{X_\infty,\a}$ is not reduced, then its
$\fm_\a$-adic topology is not separated and therefore, by Krull's intersection theorem, the ring cannot be Noetherian.
\end{proof}

Using the results of arc fibers from previous sections, we can prove that
if $\a$ is a stable point then $\Spec \cO_{X_\infty,\a}$ is Noetherian as a topological space
and the maximal ideal of $\O_{X_\infty,\a}$ is finitely generated. 
The first property can be viewed as an intermediate step toward a positive answer to \cref{q:Noetherian}. 
The second property is claimed in \cite[Theorem 4.1]{Reg06}, but the proof given there requires 
restricting to $(X_\infty)_\red$, so the result only applies to 
$\O_{(X_\infty)_\red,\a}$ (cf.\ \cite{Reg06c}). 
While it is quite possible that Reguera's proof could be adjusted so that one can
avoid having to restrict to the reduced subscheme, 
we propose here an alternative approach which provides a quick proof 
based on general linear projections and our previous results on jet fibers. 
This is quite different from Reguera's approach, 
which involves looking at the truncation maps to the jet schemes. 

For convenience, we introduce the following terminology. 

\begin{definition}
A scheme is said to be \emph{topologically Noetherian} if its underlying topological space is Noetherian.
\end{definition}

In the following, we denote by $\dim(A)$ the Krull dimension
of a local ring $A$ and by $\embdim(A)$ its embedding dimension. 

\begin{theorem}
\label{t:fg-top-noetherian}
Let $X$ be a variety over a perfect field $k$ and $\a \in X_\infty$ a stable point.
\begin{enumerate}
\item
\label{i1:fg-top-noetherian}
The maximal ideal of the local ring $\cO_{X_\infty,\a}$ is finitely generated.
\item
\label{i2:fg-top-noetherian}
The scheme $\Spec \cO_{X_\infty,\a}$ is topologically Noetherian and 
\[
\dim (\cO_{X_\infty,\a}) \leq \embdim (\cO_{X_\infty,\a}).
\]
\end{enumerate}
\end{theorem}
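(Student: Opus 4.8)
The plan is to use a general linear projection to reduce to the arc space of an affine space, whose coordinate ring is a polynomial ring, and then transfer the conclusions back along the projection. Since all the assertions are local at $\a$, I may replace $X$ by an affine open neighborhood of $\a(0)$ and assume $X\subseteq\A^n_k$ is affine. As $\a$ is stable, $\a(\eta)\in X_\sm$ and $\embdim(\cO_{X_\infty,\a})<\infty$ by \cite{dFD20}; set $e:=\dim_{\a(\eta)}X$. By \cref{c:cotangent-map-isom} and \cref{r:cotangent-map-isom}, a sufficiently general linear projection gives a morphism $f\colon X\to Y:=\A^e_k$ that is \'etale at $\a(\eta)$ and whose cotangent map $T^*_\a f_\infty\colon\fm_\b/\fm_\b^2\otimes_{k_\b}k_\a\to\fm_\a/\fm_\a^2$ (where $\b:=f_\infty(\a)$) is an isomorphism; taking the projection general enough we may also assume $f$ is quasi-finite, being the restriction to $X$ of a finite projection of the closure of $X$ in $\A^n_k$ onto $\A^e$. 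Note that $X$ is separated and quasi-compact.

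For \eqref{i1:fg-top-noetherian} I would first pass to $Y_\infty$. Since $T^*_\a f_\infty$ is an isomorphism and $\dim_{k_\a}(V\otimes_{k_\b}k_\a)=\dim_{k_\b}V$ for any $k_\b$-vector space $V$, we get $\embdim(\cO_{Y_\infty,\b})=\dim_{k_\b}\fm_\b/\fm_\b^2=\dim_{k_\a}\fm_\a/\fm_\a^2=\embdim(\cO_{X_\infty,\a})<\infty$, so $\b$ is a stable, hence constructible, point of $(\A^e)_\infty$ by the characterization of stable points in \cite{dFD20}. Writing $(\A^e)_\infty=\Spec R$ with $R=k[y_j^{(n)}]$ and $R_m=k[y_j^{(n)}\mid n\le m]$, a constructible subset of $\Spec R$ is cut out by finitely many elements, hence by elements of some $R_m$, so its generic point has the form $\fp_m R$ with $\fp_m\in\Spec R_m$; thus $\fp_\b=\fp_{\b,m}R$ where $\fp_{\b,m}:=\fp_\b\cap R_m$, and $\fm_\b=\fp_\b R_{\fp_\b}$ is generated by the finitely many generators of the maximal ideal of the Noetherian local ring $\cO_{Y_m,\b_m}=(R_m)_{\fp_{\b,m}}$. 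On the other hand $f$ is unramified at $\a(\eta)$, so by \cref{t:finite-fiber-1} together with \cref{l:same-alg-closure} the arc fiber $f_\infty^{-1}(\b)$ is reduced at $\a$ and $\a$ is an isolated closed point of it; hence $\cO_{X_\infty,\a}/\fm_\b\cO_{X_\infty,\a}$ is a zero-dimensional reduced local ring with residue field $k_\a$, so it equals $k_\a$, i.e.\ $\fm_\a=\fm_\b\cO_{X_\infty,\a}$. Therefore $\fm_\a$ is generated by the images of the generators of $\fm_\b$, proving \eqref{i1:fg-top-noetherian}.

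For \eqref{i2:fg-top-noetherian} I would first record the structure of $\cO_{Y_\infty,\b}$. With $A:=\cO_{Y_m,\b_m}=(R_m)_{\fp_{\b,m}}$ (maximal ideal $\fm_A=\fp_{\b,m}A$) and $\mathbf z$ the jet coordinates of level $>m$, one has $\cO_{Y_\infty,\b}=(A[\mathbf z])_{\fm_A A[\mathbf z]}$; since a polynomial in $\mathbf z$ over $A$ having a coefficient that is a unit of $A$ does not lie in $\fm_A A[\mathbf z]$, a short argument shows that the only primes of $A[\mathbf z]$ contained in $\fm_A A[\mathbf z]$ are the extensions $\fp'A[\mathbf z]$ of primes $\fp'\subseteq\fm_A$. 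Consequently $\Spec\cO_{Y_\infty,\b}\to\Spec A$ is a homeomorphism and $\fm_\b/\fm_\b^2=(\fm_A/\fm_A^2)\otimes_{A/\fm_A}k_\b$, so $\Spec\cO_{Y_\infty,\b}$ is topologically Noetherian (as $A$ is Noetherian) and $\dim\cO_{Y_\infty,\b}=\dim A\le\embdim A=\embdim\cO_{Y_\infty,\b}$. Finally I transfer along $\psi\colon\Spec\cO_{X_\infty,\a}\to\Spec\cO_{Y_\infty,\b}$: every fiber of $\psi$ is a subspace of an arc fiber of $f_\infty$, hence topologically finite by \cref{t:finite-fiber}, and since $\psi$ is a continuous map of spectral spaces with finite discrete fibers onto a topologically Noetherian space, $\Spec\cO_{X_\infty,\a}$ is topologically Noetherian; moreover $\psi$ maps any chain of specializations injectively (two comparable points cannot lie in one discrete fiber), so $\dim\cO_{X_\infty,\a}\le\dim\cO_{Y_\infty,\b}$. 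Combining with $\embdim\cO_{X_\infty,\a}=\embdim\cO_{Y_\infty,\b}$ gives $\dim\cO_{X_\infty,\a}\le\dim\cO_{Y_\infty,\b}\le\embdim\cO_{Y_\infty,\b}=\embdim\cO_{X_\infty,\a}$.

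The step I expect to be most delicate is verifying that $\b=f_\infty(\a)$ is a constructible point of $(\A^e)_\infty$: this is what makes $\cO_{(\A^e)_\infty,\b}$ transparent (its maximal ideal is "detected at a finite jet level"), and it is obtained here only indirectly, via the cotangent-map isomorphism of \cref{c:cotangent-map-isom} together with the characterization of stable points by finiteness of embedding dimension from \cite{dFD20}. The remaining points---the identity $\fm_\a=\fm_\b\cO_{X_\infty,\a}$ coming from reducedness of unramified arc fibers (\cref{t:finite-fiber-1}), and the descent of topological Noetherianity and of the dimension bound through the finite-fibered map $\psi$ using \cref{t:finite-fiber}---are comparatively routine.
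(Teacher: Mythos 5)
Your strategy is essentially the paper's: project by a general linear projection $f\colon X\to\A^d_k$, use \cref{c:cotangent-map-isom} to get $\embdim(\cO_{Y_\infty,\b})=\embdim(\cO_{X_\infty,\a})$, use isolatedness and reducedness of the arc fiber (\cref{l:same-alg-closure}, \cref{t:unramified-reduced-arc-fiber}) to get $\fm_\a=\fm_\b\cO_{X_\infty,\a}$, and descend topological Noetherianity and the dimension bound through the topologically finite map $\Spec\cO_{X_\infty,\a}\to\Spec\cO_{Y_\infty,\b}$ coming from \cref{t:finite-fiber}; that descent, which you assert as routine, is exactly the paper's \cref{l:top-noetherian} and does require an argument (the paper's proof uses that the target has finite dimension, which holds here since $\cO_{Y_\infty,\b}$ is a Noetherian local ring). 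Part (1) of your argument is correct and matches the paper.

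The genuine gap is in your structural description of $\cO_{Y_\infty,\b}$. It is false that the only primes of $A[\mathbf z]$ contained in $\fm_A A[\mathbf z]$ are the extended primes $\fp'A[\mathbf z]$ with $\fp'\subseteq\fm_A$: take $A=k[x,y]_{(x,y)}$ and a single variable $z$; then $\fq=(yz-x)$ is prime, every element of $\fq$ has all its coefficients in $(x,y)$, so $\fq\subseteq\fm_A A[z]$, yet $\fq\cap A=0$ (compare $z$-degrees), so $\fq$ is not extended. Hence $\Spec\cO_{Y_\infty,\b}\to\Spec A$ fails to be injective (both $(0)$ and $\fq$ lie over $(0)$) and is not a homeomorphism as soon as $\embdim(\cO_{Y_\infty,\b})\ge 2$, which is the typical case; so your justification of the topological Noetherianity of $\Spec\cO_{Y_\infty,\b}$ and of $\dim\cO_{Y_\infty,\b}=\dim A$ collapses. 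The conclusions themselves are true, and the correct reason is the one the paper uses: since $\fp_\b=\fp_{\b,m}R$ is extended from a finite level, every nonzero element of $k[\mathbf z]$ lies outside $\fp_\b$, so $\cO_{Y_\infty,\b}$ is the localization of the finitely generated polynomial ring $k(\mathbf z)[x_1,\dots,x_N]$ (the variables of level $\le m$ over the rational function field in the remaining ones) at the prime generated by $\fp_{\b,m}$; it is therefore a regular Noetherian local ring with $\dim=\embdim$ and a Noetherian spectrum. With this substitution (your equality $\fm_\b/\fm_\b^2=(\fm_A/\fm_A^2)\otimes_{A/\fm_A}k_\b$ is fine, needing only $\fm_\b=\fm_A\cO_{Y_\infty,\b}$), the remainder of your transfer argument goes through and coincides with the paper's proof.
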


\begin{proof}
We may assume $X \subset \A^n_k$ is affine. Let $d = \dim X$. 
Since $k$ is perfect the extension $k \subset k(X)$ is separable and hence there exists a transcendence basis $x_1,\ldots,x_d$ for $k(X)$ such that $k(x_1,\ldots,x_d) \subset k(X)$ is finite separable. We may assume that $x_1,\ldots,x_d \in k[X]$ and thus they define a generically finite morphism $f \colon X \to Y = \A^d_k$ that is generically \'etale by \cite[\href{https://stacks.math.columbia.edu/tag/090W}{Tag 090W}]{stacks-project}. 

Now let $\a \in X_\infty$ be a stable point and set 
$\b = f_\infty(\a)$, which is again stable (e.g., see \cref{c:reguera-4-8}). 
Since $Y$ is smooth, the local ring $\cO_{Y_\infty,\b}$ is 
the localization of a polynomial algebra over a regular ring at 
a prime of finite height, hence it is
regular and its maximal ideal $\fm_\b$ is finitely generated. Using \cref{t:unramified-reduced-arc-fiber} we see that $\fm_\a = \fm_\b \cdot \cO_{X_\infty,\a}$
and this proves \eqref{i1:fg-top-noetherian}.

We now address \eqref{i2:fg-top-noetherian}.
By \cref{c:cotangent-map-isom}, the cotangent map
\[
T^*_\a f_\infty \colon \fm_\b/\fm_\b^2 \otimes_{k_\b} k_\a \to \fm_\a/\fm_\a^2
\]
is an isomorphism, and hence 
\[
\embdim(\cO_{Y_\infty,\b}) = \embdim(\cO_{X_\infty,\a}).
\]
Furthermore, as we may assume that $f$ is quasi-finite,  
\cref{t:finite-fiber} implies that the induced map 
$\Spec \O_{X_\infty,\a} \to \Spec \O_{Y_\infty,\b}$ has topologically finite fibers. 

Since $\cO_{Y_\infty,\b}$ is a regular ring of dimension
$\dim (\cO_{Y_\infty,\b}) = \embdim(\cO_{Y_\infty,\b})$,
\cref{l:top-noetherian} implies that 
$\Spec \cO_{X_\infty,\a}$ is topologically Noetherian and 
\[
\dim(\cO_{X_\infty,\a}) \le \embdim(\cO_{Y_\infty,\b}).
\] 
The stated bound on the dimension follows by combining the last two displayed formulas. 
\end{proof}

\begin{lemma}
\label{l:top-noetherian}
Let $f \colon X \to Y$ be a morphism of schemes, and assume that $Y$ is topologically Noetherian.
If $f$ has topologically finite fibers, then $X$ is topologically Noetherian and $\dim X \leq \dim Y$.
\end{lemma}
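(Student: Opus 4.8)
The plan has two independent parts: the dimension bound, which is elementary, and topological Noetherianity, which is where the work lies.

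For $\dim X \le \dim Y$, I would start from a chain of irreducible closed subsets $X_0 \subsetneq \cdots \subsetneq X_m$ of $X$, with generic points $\xi_0,\dots,\xi_m$, so that each $\xi_{i+1}$ is a proper specialization of $\xi_i$, and push it forward by $f$. Continuity preserves specializations, so $f(\xi_0)\rightsquigarrow \cdots \rightsquigarrow f(\xi_m)$ in $Y$, and these points are pairwise distinct: if $f(\xi_i)=f(\xi_j)$ for $i<j$ then $\xi_j$ would be a nontrivial specialization of $\xi_i$ lying in the fiber $f^{-1}(f(\xi_i))$, which is impossible because that fiber is discrete. Hence $\overline{\{f(\xi_0)\}} \subsetneq \cdots \subsetneq \overline{\{f(\xi_m)\}}$ is a chain of length $m$ in $Y$, giving $m \le \dim Y$, and the bound follows by taking the supremum over all chains.

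For topological Noetherianity I would argue by Noetherian induction on the closed subsets of $Y$ (legitimate since $Y$ is topologically Noetherian), proving that $f^{-1}(Z)$ is topologically Noetherian for every closed $Z \subseteq Y$; the case $Z = Y$ is the assertion. After replacing $(X,Y)$ by $(f^{-1}(Z),Z)$ at each stage, it suffices to treat $Y$ knowing that $f^{-1}(Z')$ is topologically Noetherian for every proper closed $Z' \subsetneq Y$. If $Y$ is reducible, write it as the union of its finitely many irreducible components $Y_i$; then $X = \bigcup f^{-1}(Y_i)$ is a finite union of topologically Noetherian subspaces, hence topologically Noetherian. So we may assume $Y$ irreducible with generic point $\eta$ (the underlying space of a scheme being sober). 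Then $f^{-1}(\eta)$ is finite and discrete, and the task reduces to producing a topologically Noetherian open $V \subseteq X$ together with a nonempty open $O \subseteq Y$ such that $f^{-1}(O) \subseteq V$: granting this, $Y \setminus O$ is a proper closed subset, $f^{-1}(Y\setminus O)$ is topologically Noetherian by the inductive hypothesis, and $X = V \cup f^{-1}(Y\setminus O)$.

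The hard part is this last ``spreading-out'' step: converting the finiteness of the single fiber $f^{-1}(\eta)$ into control over the preimage of a whole open neighborhood of $\eta$. My approach would be to cover $f^{-1}(\eta)$ by finitely many affine opens of $X$, let $V$ be their union — a quasi-compact open whose own topological Noetherianity is, after an affine reduction, the base case — and then use that $X \setminus V$ is closed, hence quasi-compact because $X$ is, so that its image has closure a proper closed subset $Z' \subsetneq Y$ not containing $\eta$; one then takes $O = Y \setminus Z'$. This is precisely where quasi-compactness of $X$ is used; it is automatic in the intended applications, where $X$ is the spectrum of a local ring, and it is the step I expect to demand the most care.
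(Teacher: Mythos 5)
The dimension bound in your first paragraph is correct and is exactly the paper's argument: push the points of a chain forward and use discreteness of the fibers to keep the images distinct (only a small slip of direction: with an increasing chain $X_0\subsetneq\cdots\subsetneq X_m$, the generic point $\xi_i$ is a specialization of $\xi_{i+1}$, not the other way around — harmless).

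The Noetherianity half has a genuine gap at its core: the topological Noetherianity of $V$ is never proved, and it cannot be dispatched as ``after an affine reduction, the base case''. Your Noetherian induction runs over closed subsets of $Y$, and no base case of it asserts that affine opens of $X$ are topologically Noetherian; an arbitrary affine scheme is not a Noetherian space, and in the paper's application $X=\Spec\cO_{X_\infty,\a}$ is itself affine and is precisely the object whose Noetherianity is in question. So your scheme only reduces the statement to its quasi-compact/affine case and stops there; in fact, once you assume $X$ quasi-compact (an assumption the lemma does not make, as you acknowledge), you could take $V=X$ and $O=Y$, and the decomposition $X=V\cup f^{-1}(Y\setminus O)$ carries no content — all the work is hidden in the unproven claim that a finite union of affine opens, with finite discrete fibers over the Noetherian $Y$, is Noetherian. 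A secondary issue: ``$X\setminus V$ is quasi-compact, so the closure of its image is proper'' is not automatic — a quasi-compact subset of an irreducible space missing the generic point can still be dense (the set of closed points of $\A^1_k$ is quasi-compact in the induced topology); one must invoke the standard fact that for a quasi-compact morphism every point of the closure of the image is a specialization of a point of the image (available here since $Y$, being topologically Noetherian, is quasi-separated), so that density would force the generic point into the image. For comparison, the paper uses no induction at all: having $\dim X\le\dim Y$, it shows directly that every closed subset $Z\subset X$ has only finitely many irreducible components, by sending their generic points to $Y$, using finiteness of the fibers to obtain infinitely many distinct images and finite-dimensionality to extract an infinite antichain among their closures, contradicting Noetherianity of $Y$. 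Some argument of this kind — or an actual proof of your ``affine case'' — is what your proposal is missing.
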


\begin{proof}
Let us first show that $\dim X \leq \dim Y$. To that end, let $d = \dim X$ and let $x_0,\ldots,x_d \in X$ be distinct points with $x_i$ a specialization of $x_{i-1}$ for all $i=1,\ldots,d$. Then $f(x_i)$ is a specialization of $f(x_{i-1})$, and since each point $x_i$ is isolated in its fiber, the points $f(x_0),\ldots,f(x_d)$ are all distinct,  showing that $\dim Y \ge d$. 

To show that $X$ is topologically Noetherian, it remains to prove that each closed subset $Z \subset X$ has only finitely many irreducible components. Assume on the contrary that there exists a closed subset $Z\subset X$ 
with infinitely many irreducible components $Z_i$, $i\in I$. 
For each $i$ let $z_i \in Z_i$ denote the generic point. As $f$ has topologically finite fibers, there exists an infinite subset $I' \subset I$ such that the points $f(z_i)$, for $i\in I'$, are pairwise distinct. 
For every $i \in I'$, let $W_i = \overline{\{f(z_i)\}}$. As $\dim Y < \infty$, all possible chains between these sets $W_i$ have finite length and thus there exists an infinite subset $I'' \subset I'$ such that $W_i \not\subset W_j$ for every $i,j \in I''$ with $i\neq j$. Then $I''$ parameterizes
infinitely many irreducible components $W_i$ of $\overline{f(Z)}$,
which is impossible if $Y$ is topologically Noetherian.
\end{proof}

\begin{remark}
\cref{t:fg-top-noetherian} gives a new proof of the fact, 
originally due to \cite[Proposition~3.7(iv)]{Reg09},  that 
local rings at stable points have finite Krull dimension.
\end{remark}

%
%

\section{On embedding dimension and codimension of arc spaces}

\label{s:semicontinuity-embdim}

Given a variety $X$ over a perfect field $k$, 
the embedding dimension of the local ring of the arc space $X_\infty$
at a point $\a$ was used in \cite{dFD20} as a way of measuring the `codimension'
of $\a$ in $X_\infty$. For a precise statement see \cite[Theorems~C]{dFD20},
where the embedding dimension at $\a$ was shown to equal the jet codimension of $\a$,
which in turn is computed from the truncations of $\a$ in the jet schemes of $X$.
The following characterization of stable points 
in terms of the embedding dimension of their local rings
was also established there. 

\begin{theorem}[{\cite[Theorem E]{dFD20}}]
\label{t:stable-points-finite-embdim}
Let $X$ be a variety over a perfect field $k$. For any $\a \in X_\infty$, we have
$\embdim (\cO_{X_\infty,\a}) < \infty$ if and only if $\a$ is a stable point.
\end{theorem}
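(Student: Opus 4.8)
The plan is to establish the two implications separately. The ``if'' direction is short: if $\a$ is stable then, by Reguera's theorem on stable points (\cites{Reg06,Reg09} in characteristic zero, \cite{dFD20} in general), the completion $\^{\cO_{X_\infty,\a}}$ is Noetherian, hence $\embdim(\^{\cO_{X_\infty,\a}})<\infty$; since the cotangent space $\fm_\a/\fm_\a^2$ is a quotient of that of the completion, we obtain $\embdim(\cO_{X_\infty,\a})\le\embdim(\^{\cO_{X_\infty,\a}})<\infty$.

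For the ``only if'' direction, assume $\embdim(\cO_{X_\infty,\a})<\infty$; we must verify the two defining conditions of a stable point, namely $\a(\eta)\in X_\sm$ and that $\a$ is constructible. The first is forced: if $\a(\eta)$ lay in the closed set $\Sing X$ then $\a(0)$, being a specialization of $\a(\eta)$, would also lie in $\Sing X$, so $\a\in(\Sing X)_\infty$; but then $\embcodim(\cO_{X_\infty,\a})=\infty$ by \cite{CdFD}, hence $\embdim(\cO_{X_\infty,\a})\ge\embcodim(\cO_{X_\infty,\a})=\infty$, a contradiction. (If one wishes to avoid \cite{CdFD}, the same can be seen directly from the formula $\Om_{X_\infty/k}\simeq\Om_{X/k}\otimes_{\cO_X}P_\infty$ together with the structure-theorem analysis of \cite[Section~6]{dFD20}: the rank excess of $\Om_{X/k}$ at the singular point $\a(\eta)$ propagates to every truncation level and produces infinitely many linearly independent classes in $\fm_\a/\fm_\a^2$.)

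Granting $\a(\eta)\in X_\sm$, it remains to show that $\a$ is constructible. Reducing to the case $X\subseteq\A^n_k$ affine, choose by \cref{c:cotangent-map-isom} and \cref{r:cotangent-map-isom} a general linear projection $f\colon X\to Y:=\A^d_k$, with $d=\dim_{\a(\eta)}X$, that is \'etale at $\a(\eta)$; after shrinking $X$ we may take $f$ quasi-finite. Then $T^*_\a f_\infty$ is an isomorphism, so $\embdim(\cO_{Y_\infty,\b})=\embdim(\cO_{X_\infty,\a})<\infty$, where $\b=f_\infty(\a)$. Since $Y$ is smooth, the truncation maps $Y_{m+1}\to Y_m$ are Zariski-locally trivial $\A^d$-bundles, and from this one shows that finiteness of $\embdim(\cO_{Y_\infty,\b})$ forces $\b=\pi_m^{-1}(\pi_m(\b))$ for some $m$, with $\pi_m(\b)$ the generic point of an irreducible constructible subset of $Y_m$; thus $\b$ is a cylinder point, in particular constructible. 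Finally one descends constructibility to $\a$: as $f$ is quasi-finite the arc fiber $f_\infty^{-1}(\b)$ is topologically finite by \cref{t:finite-fiber}, and combining this with the truncation maps $X_m\to Y_m$ and the fact that $f$ is \'etale at $\a(\eta)$ one argues, in the spirit of Reguera's Proposition~4.8 (cf.\ \cref{c:reguera-4-8}), that $\a=\pi_m^{-1}(\pi_m(\a))$ for $m\gg0$ with $\pi_m(\a)$ generic in an irreducible constructible subset of $X_m$, so that $\a$ is constructible.

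The main obstacle is the last paragraph. The assertion for the smooth target --- that the cotangent space of $Y_\infty$ at a point which is not a cylinder is infinite-dimensional --- is really the combinatorial heart of the statement, while the descent of constructibility along $f$ needs care precisely because the finite-level fibers of $f_m$ may be positive-dimensional even though the arc fibers of $f_\infty$ are finite (this is the pathology of \cref{ex:not-quasi-finite}). An alternative and arguably cleaner route avoids the projection: one proves directly that $\embdim(\cO_{X_\infty,\a})$ equals the jet codimension of $\a$, read off from its truncations $\pi_m(\a)\in X_m$, as in \cite{dFD20}; granting that identification, ``$\embdim$ finite $\iff$ stable'' reduces to the standard stability lemma for cylinders in arc spaces (\cites{DL99,Reg06}), and the technical burden shifts entirely onto the jet-codimension formula, whose proof relies on the description of $\Om_{X_\infty/k}$ and the structure theorem over the principal ideal domains $k_\a[[t]]$.
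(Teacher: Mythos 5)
First, a point of comparison: this statement is not proved in the paper at all --- it is quoted verbatim from \cite[Theorem~E]{dFD20} --- so there is no internal argument to measure you against; what your sketch would have to reproduce is the proof in \cite{dFD20}, which identifies $\embdim(\cO_{X_\infty,\a})$ with the jet codimension of $\a$ (via the description of $\Om_{X_\infty/k}$ and the structure theorem over $k_\a[[t]]$) and then deduces the equivalence with stability. Your ``if'' direction and the smoothness-at-$\a(\e)$ half of the ``only if'' direction are fine as reductions to cited results, with one caution: the Noetherianity of $\^{\cO_{X_\infty,\a}}$ at stable points over a perfect field is proved in \cite[Corollary~10.13]{dFD20} precisely as a consequence of the finiteness of the embedding dimension, so to avoid circularity you must invoke Reguera's original argument \cite{Reg06,Reg09,Reg06c} and check it applies in positive characteristic; similarly one should verify that the finiteness criterion for the embedding codimension imported from \cite{CdFD} does not itself pass through Theorem~E.

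The genuine gap is the constructibility half of the ``only if'' direction, and you essentially acknowledge it. The assertion that for the smooth target $Y=\A^d_k$ finiteness of $\embdim(\cO_{Y_\infty,\b})$ forces $\b$ to be the generic point of a cylinder is exactly the combinatorial core of the theorem --- it amounts to the identification of the embedding dimension with the jet codimension together with the stabilization of the truncations $\pi_m(\b)$ --- and ``from this one shows'' is not an argument; nothing in the present paper supplies it. The descent from $\b$ to $\a$ is also only gestured at: \cref{c:reguera-4-8} cannot be used even ``in spirit'' as a template without care, since in this paper it is deduced from the very statement you are proving. (The descent can in fact be completed: if $\b$ is the generic point of a cylinder $C_Y$, then $C_X=f_\infty^{-1}(C_Y)$ is a cylinder containing $\a$, and any proper generization $\a'\rightsquigarrow\a$ inside $C_X$ would map to a point of $C_Y$ both specializing to and specialized by $\b$, hence to $\b$ itself, contradicting the discreteness of $f_\infty^{-1}(\b)$ from \cref{t:finite-fiber}; but one still needs a finiteness statement for the components of cylinders off $(\Sing X)_\infty$ to pass from ``$\a$ is a maximal point of $C_X$'' to ``$\a$ is the generic point of a constructible set''.) As you concede in your closing paragraph, once these steps are filled in, the technical burden sits exactly where \cite{dFD20} places it, so the proposal is a correct outline of that route rather than an independent proof.
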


\begin{remark}
\label{r:stable-points-finite-embdim}
More generally, if $X$ is any scheme of finite type over a perfect field $k$, 
the condition that the embedding dimension is finite characterizes 
constructible points $\a \in X_\infty$ whose generic points $\a(\e)$
are contained in the smooth locus of $X$ (see \cite[Theorem~10.8]{dFD20}). 
\end{remark}

An application of the results of \cref{s:cotangent-map} gives us the next theorem. 
One should compare the theorem and its corollary to \cite[Theorem~D]{dFD20}
and \cite[Propositions~4.1 and~4.5]{Reg09}.

\begin{theorem}
\label{c:embdim-smooth-unramified}
Let $f \colon X \to Y$ be a morphism of schemes over a perfect field $k$.
Let $\a \in X_\infty$ and $\b = f_\infty(\a)$.
\begin{enumerate}
\item
\label{i1:embdim-smooth-unramified}
If $f$ is unramified at $\a(\e)$, then 
\[
    \embdim(\O_{X_\infty,\a}) \le \embdim(\O_{Y_\infty,\b}).
\]
\item
\label{i2:embdim-smooth-unramified}
If $X$ and $Y$ are locally of finite type over $k$ and $f$ is smooth at $\a(\e)$, then 
\[
    \embdim(\O_{Y_\infty,\b})  \le \embdim(\O_{X_\infty,\a}) + \ord_\a (\Fitt^r \Om_{X/Y}),
\]
where $r = \dim(\Om_{X/Y} \otimes k_{\a(\e)})$, 
and equality holds if $X$ is smooth at $\a(0)$ and $f$ is \'etale at $\a(\e)$.
\end{enumerate}
\end{theorem}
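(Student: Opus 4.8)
The plan is to deduce both assertions from the analysis of the cotangent map $T^*_\a f_\infty \colon \fm_\b/\fm_\b^2 \otimes_{k_\b} k_\a \to \fm_\a/\fm_\a^2$ carried out in \cref{t:differentials-cotangent-map}, combined with the identity $\embdim(\cO_{X_\infty,\a}) = \dim_{k_\a}(\fm_\a/\fm_\a^2)$ and the analogous identity for $\b$. The basic linear-algebra fact I will use repeatedly is that for any linear map of vector spaces $\phi \colon V \to W$ one has $\dim W \le \dim V + \dim(\coker\phi)$ and, when $\phi$ is surjective, $\dim W = \dim V - \dim(\ker\phi)$; here $V = \fm_\b/\fm_\b^2 \otimes_{k_\b} k_\a$ has dimension $\embdim(\cO_{Y_\infty,\b})$ (note the base change to $k_\a$ does not change the dimension) and $W = \fm_\a/\fm_\a^2$ has dimension $\embdim(\cO_{X_\infty,\a})$.

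For part \eqref{i1:embdim-smooth-unramified}: if $f$ is unramified at $\a(\e)$, then part \eqref{i2:differentials-cotangent-map} of \cref{t:differentials-cotangent-map} tells us $T^*_\a f_\infty$ is surjective, so $\embdim(\cO_{X_\infty,\a}) = \dim W \le \dim V = \embdim(\cO_{Y_\infty,\b})$ immediately. For part \eqref{i2:embdim-smooth-unramified}: if $X$ and $Y$ are locally of finite type and $f$ is smooth at $\a(\e)$, then part \eqref{i1:differentials-cotangent-map} of \cref{t:differentials-cotangent-map} gives $\dim(\ker T^*_\a f_\infty) \le \ord_\a(\Fitt^r \Om_{X/Y})$. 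Since smoothness at $\a(\e)$ makes the map $\f$ in \eqref{eq:first-fundamental} injective after pulling back along the generic point of $\a$, the argument in the proof of \cref{t:differentials-cotangent-map} shows $T^*_\a f_\infty$ is surjective in this case as well (the relevant $\Theta_\a$-type map is injective, or more directly $\Om_{X/Y}\otimes P_\a$ being torsion-free-killed forces surjectivity of $\Phi_\a$, hence of $T^*_\a f_\infty$ via the snake lemma in \eqref{eq:diff-diagram}). Therefore $\embdim(\cO_{Y_\infty,\b}) = \dim V = \dim W + \dim(\ker T^*_\a f_\infty) = \embdim(\cO_{X_\infty,\a}) + \dim(\ker T^*_\a f_\infty) \le \embdim(\cO_{X_\infty,\a}) + \ord_\a(\Fitt^r \Om_{X/Y})$. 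When $X$ is smooth at $\a(0)$ and $f$ is étale at $\a(\e)$, the equality clause in part \eqref{i1:differentials-cotangent-map} of \cref{t:differentials-cotangent-map} gives $\dim(\ker T^*_\a f_\infty) = \ord_\a(\Fitt^r \Om_{X/Y})$ exactly, yielding the asserted equality.

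The main point to verify carefully — and the only step that is not a formal consequence — is the surjectivity of $T^*_\a f_\infty$ under the hypothesis "$f$ smooth at $\a(\e)$" in part \eqref{i2}, since \cref{t:differentials-cotangent-map}\eqref{i1:differentials-cotangent-map} as stated only bounds the kernel and does not explicitly assert surjectivity; I expect this to follow by rerunning the snake-lemma argument in diagram \eqref{eq:diff-diagram}, observing that smoothness of $f$ at $\a(\e)$ forces $\Om_{X/Y}\otimes P_\a$ to vanish on the free part and hence $\Phi_\a$ (and therefore $T^*_\a f_\infty$, using that $\Theta_\a$ is injective here) to be surjective. I would also double-check that the base change $-\otimes_{k_\b} k_\a$ preserves dimensions, which is immediate, and that the finiteness of $\ord_\a(\Fitt^r\Om_{X/Y})$ in the smooth case — needed for the bound to be meaningful — holds because $r = \dim(\Om_{X/Y}\otimes k_{\a(\e)})$ is exactly the generic rank, so the relevant Fitting ideal does not vanish at $\a(\e)$.
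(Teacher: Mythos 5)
Your overall route is the same as the paper's: both parts are deduced directly from \cref{t:differentials-cotangent-map} via the cotangent map, and your treatment of part \eqref{i1:embdim-smooth-unramified} is correct. The genuine problem is the surjectivity claim you single out as the key step for part \eqref{i2:embdim-smooth-unramified}. It is simply false that $T^*_\a f_\infty$ is surjective whenever $f$ is smooth at $\a(\e)$: your justification (``$\Om_{X/Y}\otimes P_\a$ being torsion-free-killed forces surjectivity of $\Phi_\a$'') only works when $\Om_{X/Y}\otimes B_\a$ is torsion, i.e.\ when the relative dimension at $\a(\e)$ is zero. If $f$ is smooth at $\a(\e)$ of relative dimension $r>0$, then $\Om_{X/Y}\otimes B_\a$ has a free summand of rank $r$ which survives tensoring with $P_\a$, so $\Om_{X/Y}\otimes P_\a\neq 0$ and $\Phi_\a$ is not surjective; concretely, for the projection $\A^2_k\to\A^1_k$ and $\a$ the constant arc at the origin, $T^*_\a f_\infty$ is injective but its image misses all the fiber-direction coordinates, so it is very far from surjective. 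Hence the middle equality in your chain $\embdim(\O_{Y_\infty,\b})=\embdim(\O_{X_\infty,\a})+\dim\ker T^*_\a f_\infty$ is unjustified (and false in general).

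The repair is easy and brings you back to what the paper intends. For the inequality in \eqref{i2:embdim-smooth-unramified} no surjectivity is needed: writing $V=\fm_\b/\fm_\b^2\otimes_{k_\b}k_\a$ and $W=\fm_\a/\fm_\a^2$, rank--nullity gives $\dim V=\dim\ker T^*_\a f_\infty+\dim\im T^*_\a f_\infty\le \dim\ker T^*_\a f_\infty+\dim W$ (valid also when the dimensions are infinite, and meaningful here since $\ord_\a(\Fitt^r\Om_{X/Y})<\infty$, as you correctly note), and combining this with the kernel bound of part \eqref{i1:differentials-cotangent-map} of \cref{t:differentials-cotangent-map} yields the asserted inequality. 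For the equality clause you do need surjectivity of $T^*_\a f_\infty$ in addition to $\dim\ker T^*_\a f_\infty=\ord_\a(\Fitt^r\Om_{X/Y})$, but there it is available for free: \'etale at $\a(\e)$ implies unramified at $\a(\e)$, so part \eqref{i2:differentials-cotangent-map} of \cref{t:differentials-cotangent-map} gives surjectivity, and then $\dim V=\dim W+\dim\ker T^*_\a f_\infty$ gives the equality. With these two adjustments your argument is complete and coincides with the paper's one-line deduction.
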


\begin{proof}
It follows directly from \cref{t:differentials-cotangent-map}. 
\end{proof}

\begin{remark}
Just like in \cref{r:differentials-cotangent-map}, 
in the setting of \eqref{i2:embdim-smooth-unramified} of \cref{c:embdim-smooth-unramified}
we also have the bound
\[
    \embdim(\O_{Y_\infty,\b})  \le \embdim(\O_{X_\infty,\a}) + \ord_\a(\Fitt^r \Om_{X/Y}) 
    - \ord_\a(\Fitt^d \Om_{X/k}) + \ord_\a(\Fitt^e \Om_{Y/k}).
\]
where $d = \dim (\Om_{X/k} \otimes k_{\a(\e)})$ and $e = \dim (\Om_{Y/k} \otimes k_{\b(\e)})$.
\end{remark}

As an immediate consequence of \cref{c:embdim-smooth-unramified}, 
we obtain the following property. 
Part \eqref{i1:reguera-4-8} was proved in \cite[Proposition~4.5(i)]{Reg09}
assuming $\charK k = 0$, and part \eqref{i2:reguera-4-8} answers \cite[Question 4.8]{Reg09}.

\begin{corollary}
\label{c:reguera-4-8}
Let $f \colon X \to Y$ be a dominant morphism of varieties over a perfect field $k$.
Let $\a \in X_\infty$ and $\b = f_\infty(\a) \in Y_\infty$. 
\begin{enumerate}
\item
\label{i1:reguera-4-8}
If $f$ is generically finite and separable,
then $\a$ is stable if and only if $\b$ is stable.
\item
\label{i2:reguera-4-8}
If $f$ is generically smooth and $\a$ is stable, then $\b$ is stable.
\end{enumerate}
\end{corollary}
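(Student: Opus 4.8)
The plan is to derive both assertions from two inputs: the characterization of stable points in \cref{t:stable-points-finite-embdim} (for a variety over $k$, a point of the arc space is stable if and only if its local ring has finite embedding dimension), and the comparison inequalities for embedding dimensions in \cref{c:embdim-smooth-unramified}. So the task reduces to comparing the finiteness of $\embdim(\O_{X_\infty,\a})$ with that of $\embdim(\O_{Y_\infty,\b})$.

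Before doing so I would record the (standard) fact that a stable point $\gamma$ of the arc space of an integral $k$-scheme $Z$ has $\gamma(\e)$ equal to the generic point $\xi_Z$ of $Z$. Otherwise $\gamma$ factors through a proper integral closed subscheme $V \subsetneq Z$, hence lies in the closed set $V_\infty \subseteq Z_\infty$; but then any subset $\pi_m^{-1}(C)$ (with $C$ constructible in $Z_m$) having $\gamma$ as its generic point is contained in $V_\infty$, which is impossible since the $m$-jet $\pi_m(\gamma)$ admits lifts to arcs not contained in $V$. Consequently, whenever $\a$ is stable we have $\a(\e) = \xi_X$, hence $\b(\e) = f(\xi_X) = \xi_Y$ as $f$ is dominant; and whenever $\b$ is stable we have $\b(\e) = \xi_Y$.

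For part \eqref{i2:reguera-4-8}: since $\a$ is stable we have $\a(\e) = \xi_X$, where $f$ is smooth by the generic smoothness hypothesis, so part \eqref{i2:embdim-smooth-unramified} of \cref{c:embdim-smooth-unramified} gives $\embdim(\O_{Y_\infty,\b}) \le \embdim(\O_{X_\infty,\a}) + \ord_\a(\Fitt^r \Om_{X/Y})$ with $r = \dim(\Om_{X/Y}\otimes k_{\a(\e)})$. The last term is finite: since $r$ is the minimal number of generators of the stalk $\Om_{X/Y,\a(\e)}$, the ideal $\Fitt^r\Om_{X/Y}$ is the unit ideal in $\O_{X,\a(\e)}$, hence does not vanish along $\a$ and $\ord_\a(\Fitt^r\Om_{X/Y}) < \infty$. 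Thus $\embdim(\O_{Y_\infty,\b}) < \infty$ and $\b$ is stable. Part \eqref{i1:reguera-4-8} follows the same pattern: generic finiteness and separability mean $k(X)/k(Y)$ is finite separable, so $f$ is \'etale at $\xi_X$. If $\a$ is stable, the argument just given (now with $r = 0$) shows $\b$ is stable. If instead $\b$ is stable, then $\b(\e) = \xi_Y$ and $\a(\e)$ lies in $f^{-1}(\xi_Y)$, which is the single point $\xi_X$: indeed $f$ generically finite between varieties forces $\dim X = \dim Y$, and any point of $X$ mapping to $\xi_Y$ has closure dominating $Y$, hence of dimension $\ge \dim Y = \dim X$, hence equal to $X$. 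So $\a(\e) = \xi_X$ and $f$ is unramified there, and part \eqref{i1:embdim-smooth-unramified} of \cref{c:embdim-smooth-unramified} gives $\embdim(\O_{X_\infty,\a}) \le \embdim(\O_{Y_\infty,\b}) < \infty$, so $\a$ is stable.

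The only non-formal point is the preliminary reduction to $\a(\e) = \xi_X$: one cannot instead pass to $\overline{\{\a(\e)\}}$, because that would move $\b$ into a proper closed subscheme of $Y_\infty$ and one would lose control of $\embdim(\O_{Y_\infty,\b})$; so it is essential to know a priori that stable points are generic in this sense. Everything else is a direct application of \cref{t:stable-points-finite-embdim} and \cref{c:embdim-smooth-unramified}, together with the elementary observations that the Fitting ideal of index equal to the number of local generators is trivial, and that the generic fiber of a generically finite morphism of varieties is supported at the generic point of the source.
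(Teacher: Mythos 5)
Your main reduction is exactly the paper's intended proof: identify stability with finiteness of the embedding dimension via \cref{t:stable-points-finite-embdim} and transfer that finiteness through the two inequalities of \cref{c:embdim-smooth-unramified}, after checking that the Fitting-ideal term has finite order along $\a$. You are also right to isolate the auxiliary fact that a stable point $\a$ satisfies $\a(\e)=\xi_X$ (and, via generic finiteness, that $\b$ stable forces $\a(\e)=\xi_X$ as well): without it the hypotheses ``unramified/smooth at $\a(\e)$'' of \cref{c:embdim-smooth-unramified} cannot be fed by the merely generic hypotheses on $f$. The paper's one-line proof leaves this ingredient implicit (it is used elsewhere too, e.g.\ when a stable point is said to define a valuation $v_\a$ of the function field in \cref{t:stable=divisorial+bounds}), so flagging it is the right instinct.

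The gap is in your justification of that ingredient. The assertion that ``the $m$-jet $\pi_m(\gamma)$ admits lifts to arcs not contained in $V$'' is the entire content of the claim, you give no argument for it, and your sketch never uses the hypothesis $\gamma(\e)\in X_\sm$; without that hypothesis the assertion is false. For instance, let $Z=\{x^2=zy^2\}\subset\A^3_k$ with $\charK k\neq 2$, let $V=\{x=y=0\}$ (which is $\Sing Z$), and let $\gamma$ be the arc $(0,0,t)$, so $\gamma(\e)=\xi_V$: any arc $(x(t),y(t),z(t))$ on $Z$ agreeing with $\gamma$ modulo $t^{2}$ has $\ord_t z=1$, and the identity $x^2=zy^2$ forces $2\ord_t x=1+2\ord_t y$ unless $y=0$; hence every lift of the $1$-jet of $\gamma$ satisfies $y=0$, then $x=0$, i.e.\ lies in $V$. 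So the whole cylinder over this jet is trapped in $V_\infty$, and an argument of the shape you give would ``prove'' a false statement. The smooth-locus condition is precisely what rules this out, and establishing the lifting under that condition is a genuine (though standard) step: it is the statement that an irreducible cylinder not contained in $(\Sing X)_\infty$ is not thin. To close the gap, either cite this (in characteristic zero it goes back to \cite{ELM04} and \cite{dFEI08}; over a perfect field it can be extracted from \cite{dFD20}, for example because the embedding dimension equals the jet codimension, and an arc lying in a proper closed subvariety $V$ has $\dim\overline{\pi_m(\a)}\le(m+1)\dim V$, hence infinite jet codimension), or prove the lifting directly with a Greenberg/Denef--Loeser-type fibration argument at a level $m$ large compared with $\ord_\a$ of an ideal cutting out $\Sing X$, which is finite exactly because $\a(\e)\in X_\sm$. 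With that step supplied, the rest of your argument is correct and coincides with the paper's.
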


\begin{proof}
It follows from \cref{t:stable-points-finite-embdim} and \cref{c:embdim-smooth-unramified}.
\end{proof}

\begin{remark}
More generally, we see by \cref{r:stable-points-finite-embdim} that 
if $f \colon X \to Y$ is a dominant morphism of schemes that is locally of finite type over a field $k$
and $\a \in X_\infty$ is a constructible point
such that both $X$ and $f$ are smooth at $\a(\e)$, then $\b = f_\infty(\a)$
is a constructible point of $Y_\infty$.
\end{remark}

In \cite{CdFD}, we look at the embedding codimension of the local rings of $X_\infty$ as a way
of measuring its singularities. 
Since we are dealing with non-Noetherian rings, where the notion of embedding codimension
was only introduced recently, we recall the definition from \cite{CdFD}. 

\begin{definition}
The \emph{embedding codimension} of a local ring $(A,\fm,k)$ is defined by
\[
\embcodim(A) := \height(\ker(\g))
\]
where $\g \colon \Sym_k(\fm/\fm^2) \to \gr(A) = \bigoplus_{n \ge 0} \fm^n/\fm^{n+1}$ is the natural homomorphism. 
\end{definition}

If $A$ is Noetherian, then $\embcodim(A) = \embdim(A) - \dim(A)$, which justifies the terminology, 
but the definition clearly needed to be modified beyond the Noetherian case
since the right-hand-side of this equation may be negative or even 
be a difference of infinities. 
The interpretation of the above definition is that as the embedding dimension is the
dimension of the tangent space, the embedding codimension is the codimension 
of the tangent cone in the tangent space.
For a more in-depth discussion, see \cite[Section 6]{CdFD}.

Just like the embedding dimension is used to characterize stable points, 
the embedding codimension characterizes arcs that are not fully contained in the singular locus. 
The following result is established in \cite{CdFD} under the 
assumption that either $\charK k =0$ or $\a \in X_\infty(k)$.\footnote{This assumption
was omitted in part~(2) of \cite[Theorem~B]{CdFD}; the result is correctly stated in \cite[Theorem~8.5]{CdFD}.}
 
\begin{theorem}
\label{t:embcodim-bound}
Let $X$ be a scheme of finite type over a perfect field $k$. For any $\a \in X_\infty$, 
we have $\embcodim(\O_{X_\infty,\a}) < \infty$ if and only if $X$ is smooth at $\a(\e)$.
Furthermore, if this occurs and $X^0 \subset X$ is the irreducible component containing $\a(\e)$, then
\[
\embcodim(\O_{X_\infty,\a}) \le \ord_\a(\Jac_{X^0})
\]
where $\Jac_{X^0}$ is the Jacobian ideal of $X^0$. 
\end{theorem}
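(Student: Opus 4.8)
The plan is to reduce the statement to the case of a smooth morphism to affine space by means of a general linear projection, and then to invoke the extension of \cite[Theorem~8.1]{CdFD} provided by \cref{c:cotangent-map-isom} together with the kernel estimate of \cref{t:differentials-cotangent-map}. First I would record the characterization of finiteness: $\embcodim(\O_{X_\infty,\a}) < \infty$ if and only if $X$ is smooth at $\a(\e)$. One direction is clear, since if $\a(\e) \in \Sing X$ then $\a$ lies in $(\Sing X)_\infty$ and the local ring has infinite embedding codimension by the results of \cite{CdFD}; for the other direction, smoothness at $\a(\e)$ is precisely the hypothesis needed below to produce the projection, so finiteness will follow from the bound itself once it is established. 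Thus the crux is the inequality $\embcodim(\O_{X_\infty,\a}) \le \ord_\a(\Jac_{X^0})$.

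To prove the bound, I would first replace $X$ by the irreducible component $X^0$ containing $\a(\e)$: this is harmless since $\a$ factors through $X^0$ and the local rings of $X_\infty$ and $X^0_\infty$ at $\a$ coincide after restricting to a suitable open set (the other components do not pass through $\a(\e)$, hence not through $\a(0)$ either, after shrinking). So we may assume $X \subset \A^n_k$ is an affine variety, smooth at $\a(\e)$, of dimension $d = \dim X$. Now apply \cref{c:cotangent-map-isom}: there is a morphism $f \colon X \to \A^d_k$ that is \'etale at $\a(\e)$ such that, with $\b = f_\infty(\a)$, the cotangent map $T^*_\a f_\infty \colon \fm_\b/\fm_\b^2 \otimes_{k_\b} k_\a \to \fm_\a/\fm_\a^2$ is an isomorphism. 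Since $Y = \A^d_k$ is smooth, $\cO_{Y_\infty,\b}$ is a regular local ring, so $\embcodim(\cO_{Y_\infty,\b}) = 0$ and $\gr(\cO_{Y_\infty,\b})$ is a polynomial ring. The remaining work is to transport the embedding codimension across $f_\infty^\sharp$: one wants to compare $\gr(\cO_{X_\infty,\a})$ with $\gr(\cO_{Y_\infty,\b})$ and bound the height of the kernel of $\Sym_{k_\a}(\fm_\a/\fm_\a^2) \to \gr(\cO_{X_\infty,\a})$ by the relative dimension of the associated-graded extension, which is controlled by $\dim \ker T^*_\a f_\infty$ — and this is exactly $\ord_\a(\Fitt^d \Om_{X/k}) = \ord_\a(\Jac_{X^0})$ by part \eqref{i1:differentials-cotangent-map} of \cref{t:differentials-cotangent-map} (using that $X$ is smooth at $\a(\e)$, so $f$ is \'etale there and the torsion $T_X$ from the structure theorem is supported appropriately, giving the equality rather than merely an inequality in the relevant Fitting order).

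The main obstacle, and the step that requires genuine care rather than bookkeeping, is the last one: passing from the isomorphism on cotangent spaces and the finite-dimensional kernel estimate to a bound on $\embcodim$, which is a statement about the \emph{graded} rings and not just their degree-one pieces. This is precisely the content of \cite[Theorem~8.5]{CdFD}, whose proof analyzes how $\gr(\cO_{X_\infty,\a})$ sits over $\gr(\cO_{Y_\infty,\b})$ using that $f_\infty$ has finite fibers over $\b$ (from \cref{t:finite-fiber}) so that the extension of graded rings is, after inverting suitable elements, finite — forcing the kernel ideal to have height at most the number of "extra" tangent directions, namely $\dim\ker T^*_\a f_\infty$. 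The point of the present write-up is only that, thanks to \cref{t:differentials-cotangent-map} and \cref{c:cotangent-map-isom} being available over any perfect field (not just in characteristic zero or for $k$-rational arcs), the argument of \cite[Theorem~8.5]{CdFD} goes through verbatim; I would therefore state the reduction, cite \cref{c:cotangent-map-isom}, \cref{t:differentials-cotangent-map}\eqref{i1:differentials-cotangent-map}, and \cref{t:finite-fiber}, and then invoke the proof of \cite[Theorem~8.5]{CdFD} to conclude, indicating explicitly which hypotheses there are now supplied by the results of this paper.
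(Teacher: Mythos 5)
Your proposal is correct and takes essentially the same route as the paper, whose entire proof is the citation you end with: the statement is proven exactly as in [CdFD, Theorem~8.5 and Corollary~8.8], with \cref{c:cotangent-map-isom} substituting [CdFD, Theorem~8.1] as the source of the generic projection $f\colon X \to \A^d_k$ that is \'etale at $\a(\e)$. One small caveat: your gloss on the internals of that cited argument is off — \cref{c:cotangent-map-isom} makes $T^*_\a f_\infty$ an \emph{isomorphism}, so $\dim\ker T^*_\a f_\infty = 0$ rather than $\ord_\a(\Jac_{X^0})$, and neither [CdFD] nor the paper invokes \cref{t:finite-fiber} here (the Jacobian order enters through [CdFD]'s own graded-ring analysis) — but since you ultimately defer to the proof of [CdFD, Theorem~8.5] verbatim, this does not affect the correctness of the proposal.
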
 

\begin{proof}
This is proven in the same way as \cite[Theorem~8.5 and Corollary~8.8]{CdFD}, with \cref{c:cotangent-map-isom} substituting \cite[Theorem~8.1]{CdFD}. 
\end{proof}

%
%

\section{Semicontinuity and constructibility properties}

\label{s:semicont}

Embedding dimension and embedding codimension of Noetherian local rings were studied 
by Lech in \cite{Lec64}. One of the main results proved there (see \cite[Theorem~3]{Lec64}) is that if 
$\fp$ is a prime ideal of a Noetherian local ring $(A,\fm)$, then  
\[
\embdim(A) \ge \dim(A/\fp) + \embdim(A_\fp).
\]
He deduces from this formula that if
$\dim(A) = \dim(A/\fp) + \dim(A_\fp)$, then 
\[
\embcodim(A) \ge \embcodim(A_\fp).
\]
Such results are important because they show that these invariants satisfy expected
semicontinuity properties. This becomes particularly relevant in the moment
we want to consider the embedding codimension as a measure of singularities. 
Lech's proof of these results uses in an essential way the assumption that the local ring is Noetherian, 
and does not extend beyond this case. 

Here we apply the results of the previous sections to prove that 
the embedding dimension and embedding codimension of arc spaces
satisfy similar semicontinuity properties. 
It may be worthwhile to note that the proof of the following theorem does not depend on \cite{Lec64}, 
but rather uses the theorems on arc fibers to reduce the problem 
to polynomial rings where embedding dimensions and embedding codimensions are 
easily computable.

\begin{theorem}
\label{t:semicont}
Let $X$ be a scheme locally of finite type over a perfect field $k$. 
Let $\a,\a' \in X_\infty$ be two points with $\a'$ specializing to $\a$, 
and let $c = \codim(\a,\a')$.
\begin{enumerate}
\item
\label{i1:semicont}
We have
\[
\embdim(\O_{X_\infty,\a}) \ge c + \embdim(\O_{X_\infty,\a'}).
\]
\item
\label{i2:semicont}
If $\embdim(\O_{X_\infty,\a}) < \infty$
and $\dim(\gr(\O_{X_\infty,\a})) \le c + \dim(\gr(\O_{X_\infty,\a'}))$, 
then 
\[
\embcodim(\O_{X_\infty,\a}) \ge \embcodim(\O_{X_\infty,\a'}).
\]
\end{enumerate}
\end{theorem}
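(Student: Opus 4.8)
The plan is to reduce both parts to the case of a \emph{smooth} target by projecting to an affine space, transport the embedding‑dimension statement through the cotangent‑map isomorphism of \cref{c:cotangent-map-isom}, and run the actual inequalities on the projection side, where the local rings at stable points are regular Noetherian. For part \eqref{i1:semicont} I would first dispose of the trivial case $\embdim(\O_{X_\infty,\a})=\infty$ and assume $\embdim(\O_{X_\infty,\a})<\infty$, so that by \cref{r:stable-points-finite-embdim} the point $\a$ is constructible and $\a(\e)\in X_\sm$. Since $\a'$ specializes to $\a$, the closure $\ov{\{\a'\}}$ lies in $Z_\infty$ with $Z=\ov{\{\a'(\e)\}}$, hence $\a(\e)\in Z$, i.e.\ $\a'(\e)$ generizes $\a(\e)$; the same observation applied to any $\gamma$ lying in a specialization chain between $\a'$ and $\a$ shows $\gamma(\e)$ generizes $\a(\e)$ as well. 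Thus $X$ is smooth at all such $\gamma(\e)$, along one irreducible component of dimension $d:=\dim_{\a(\e)}X$. After shrinking to an affine neighborhood of $\a(0)$ we may take $X\subset\A^n_k$, and by \cref{c:cotangent-map-isom} together with \cref{r:cotangent-map-isom} a sufficiently general linear projection $f\colon X\to\A^d_k=:Y$ is \'etale at both $\a(\e)$ and $\a'(\e)$ and makes the cotangent maps $T^*_\a f_\infty$ and $T^*_{\a'} f_\infty$ isomorphisms; writing $\b=f_\infty(\a)$, $\b'=f_\infty(\a')$, this gives $\embdim(\O_{X_\infty,\a})=\embdim(\O_{Y_\infty,\b})$ and $\embdim(\O_{X_\infty,\a'})=\embdim(\O_{Y_\infty,\b'})$. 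Since $Y$ is smooth and $\embdim(\O_{Y_\infty,\b})<\infty$, the point $\b$ is stable and $\O_{Y_\infty,\b}$ is a regular Noetherian local ring (as in the proof of \cref{t:fg-top-noetherian}), and hence so is its localization $\O_{Y_\infty,\b'}$ at the prime $\fp$ corresponding to $\b'$.

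The next step is the codimension comparison $\codim(\b,\b')\ge c$. Here I would fix a chain of proper specializations $\a'=\gamma_c,\gamma_{c-1},\dots,\gamma_0=\a$ realizing $c=\codim(\a,\a')$. By the previous paragraph each $\gamma_i(\e)$ is a generization of $\a(\e)$, at which $f$ is quasi‑finite; since the quasi‑finite locus of $f$ is open in $X$ and open loci are stable under generization, $f$ is quasi‑finite at every $\gamma_i(\e)$, so part \eqref{i1:finite-fiber-1} of \cref{t:finite-fiber-1} shows each $\gamma_i$ is isolated in its fiber $f_\infty^{-1}(f_\infty(\gamma_i))$. A point that is isolated in its fiber cannot be a proper specialization of another point of the same fiber, so the images $f_\infty(\gamma_0),\dots,f_\infty(\gamma_c)$ are pairwise distinct and form a chain of length $c$ from $\b$ to $\b'$, giving $\codim(\b,\b')\ge c$. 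Finally, in the regular Noetherian local ring $\O_{Y_\infty,\b}$ one has
\[
\embdim(\O_{Y_\infty,\b})=\dim(\O_{Y_\infty,\b})\ge\dim(\O_{Y_\infty,\b}/\fp)+\height(\fp)=\codim(\b,\b')+\embdim(\O_{Y_\infty,\b'}),
\]
and combining this with $\codim(\b,\b')\ge c$ and the two equalities of embedding dimensions yields $\embdim(\O_{X_\infty,\a})\ge c+\embdim(\O_{X_\infty,\a'})$.

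Part \eqref{i2:semicont} should then follow formally. Under the hypothesis $\embdim(\O_{X_\infty,\a})<\infty$, the argument above also gives $\embdim(\O_{X_\infty,\a'})<\infty$; and for any local ring $A$ with $\embdim(A)<\infty$ the comparison map $\g\colon\Sym_{A/\fm}(\fm/\fm^2)\to\gr(A)$ is a surjection onto a finitely generated algebra over the residue field, so that $\embcodim(A)=\height(\ker\g)=\embdim(A)-\dim(\gr(A))$ (cf.\ \cite{CdFD}). Subtracting the resulting expressions for $\O_{X_\infty,\a}$ and $\O_{X_\infty,\a'}$, the difference $\embcodim(\O_{X_\infty,\a})-\embcodim(\O_{X_\infty,\a'})$ equals $\bigl(\embdim(\O_{X_\infty,\a})-\embdim(\O_{X_\infty,\a'})\bigr)-\bigl(\dim\gr(\O_{X_\infty,\a})-\dim\gr(\O_{X_\infty,\a'})\bigr)$, which is $\ge c-c=0$ by part \eqref{i1:semicont} and the hypothesis on graded dimensions.

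The step I expect to be the main obstacle is the codimension comparison, precisely because of the phenomenon isolated in \cref{ex:not-quasi-finite}: the arc $\a$ and the intermediate points $\gamma_i$ need not lie over the quasi‑finite locus of $f$ inside $X$, so one cannot simply restrict $f$ to an open subscheme and invoke \cref{t:finite-fiber}. What makes the argument go through is that \cref{t:finite-fiber-1} requires quasi‑finiteness only at the generic point $\gamma_i(\e)$ of each arc, together with the fact that these generic points all generize $\a(\e)$, so quasi‑finiteness (and smoothness) propagates to them; a secondary technical point is that one fixed general projection can be arranged to work simultaneously at $\a$ and at $\a'$, using that the "good" projections form a dense open subset of the relevant Grassmannian for each of the two arcs.
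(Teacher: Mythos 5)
Your proposal is correct and follows essentially the same route as the paper: dispose of the infinite case, use \cref{r:stable-points-finite-embdim} plus \cref{c:cotangent-map-isom} and \cref{r:cotangent-map-isom} to transport embedding dimensions along a general linear projection to $\A^d_k$, compare $\codim(\b,\b')$ with $c$ via \cref{t:finite-fiber-1}\eqref{i1:finite-fiber-1} applied at the generic points of the arcs in $\Spec\O_{X_\infty,\a}$, conclude in the regular ring $\O_{Y_\infty,\b}$, and deduce \eqref{i2:semicont} from the identity $\embdim = \dim\gr + \embcodim$. The only (immaterial) differences are that you phrase the chain comparison through "isolated in its fiber" rather than the paper's "topologically finite fibers preserve strict chains of primes," and you invoke the general inequality $\dim A \ge \dim(A/\fp)+\height(\fp)$ where the paper uses catenarity of $\O_{Y_\infty,\b}$.
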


\begin{proof}
Clearly \eqref{i1:semicont} holds if $\embdim(\O_{X_\infty,\a}) = \infty$, so
we can assume that $\embdim(\O_{X_\infty,\a}) < \infty$.
\cref{r:stable-points-finite-embdim} implies that
$\a(\e)$ is contained in the smooth locus of $X$. 
Therefore there is an irreducible component $X_0$ and an open set $U \subset X_0$ such that 
$\a(\e) \in U$ and $X$ is smooth along $U$, which in particular implies that 
$U$ does not intersect any other irreducible component of $X$. 
Note that for every point $\g \in \Spec \O_{X_\infty,\a}$ 
we have $\g(\e) \in \Spec \O_{X,\a(\e)} = \Spec \O_{U,\a(\e)}$, hence $\g(\e) \in U$. 

We can assume without loss of generality that $X \subset \A^n_k$ is an affine scheme. 
Denoting $d = \dim X_0$, it follows by 
\cref{c:cotangent-map-isom} and \cref{r:cotangent-map-isom}
that if $f \colon X \to \A^d_k$ is a general linear projection  
then the induced maps on tangent spaces $T_\a f_\infty$ and $T_{\a'} f_\infty$ are both isomorphisms.
Setting $\b = f_\infty(\a)$ and $\b' = f_\infty(\a')$, this means that
\begin{equation}
\label{eq1:semicont}
\embdim(\O_{X_\infty,\a}) = \embdim(\O_{Y_\infty,\b}) 
\quad\text{and}\quad
\embdim(\O_{X_\infty,\a'}) = \embdim(\O_{Y_\infty,\b'}).
\end{equation}
Additionally, we may assume that $f$ induces a finite map $X_0 \to \A^d_k$. 
In particular, $f$ is quasi-finite at every point of $U$, 
hence at every point of the form $\g(\e) \in X$ where $\g \in \Spec \O_{X_\infty,\a}$.
We can therefore apply part \eqref{i1:finite-fiber-1} of \cref{t:finite-fiber-1}, 
which implies that $f_\infty$ restricts to a topologically finite map
$\Spec \O_{X_\infty,\a} \to \Spec \O_{Y_\infty,\b}$. 

Denoting $\fp_{\a'} \subset \O_{X_\infty,\a}$ the ideal of $\a'$, let 
\[
\fp_{\a'} = \fp_0 \subsetneq \fp_1 \subsetneq \dots \subsetneq \fp_l = \fm_\a \subset \O_{X_\infty,\a}
\]
be a chain of prime ideal of $\O_{X_\infty,\a}$ computing the height of $\fp_{\a'}$. 
Setting $\fq_i = \fp_i \cap \O_{Y_\infty,\b}$, we obtain as chain of prime ideals
\[
\fq_{\b'} = \fq_0 \subsetneq \fq_1 \subsetneq \dots \subsetneq \fq_l = \fm_\b \subset \O_{Y_\infty,\b},
\]
the inclusions remaining strict because
the map $\Spec \O_{X_\infty,\a} \to \Spec \O_{Y_\infty,\b}$ has topologically finite fibers. 
Here we are denoting by $\fq_{\b'}$ the ideal of $\b'$ in $\O_{Y_\infty,\b}$. 

Now, by \eqref{eq1:semicont} we have $\embdim(\O_{Y_\infty,\b}) < \infty$, hence
\cref{t:stable-points-finite-embdim} implies that $\b$ is a stable point of $Y_\infty$, which is the arc space of 
an affine space. It follows that $\O_{Y_\infty,\b}$ is isomorphic to the localization at the maximal ideal
of a finitely generated polynomial ring over a field. 
In particular, this ring is catenary, hence we have
\begin{equation}
\label{eq2:semicont}
\dim(\O_{Y_\infty,\b}/\fp_{\b'}) \ge l = \codim(\a,\a').
\end{equation}
Furthermore, for such a ring we have 
\[
\embdim(\O_{Y_\infty,\b}) = \codim(\b,\b') + \embdim(\O_{Y_\infty,\b'}),
\]
and \eqref{i1:semicont} follows by combining this formula with
\eqref{eq1:semicont} and \eqref{eq2:semicont}.

Turning now to \eqref{i2:semicont}, by \cite[Proposition~6.2]{CdFD} we have
\begin{equation}
\label{eq:embdim-formula}
\embdim(\O_{X_\infty,\a}) = \dim(\gr(\O_{X_\infty,\a})) + \embcodim(\O_{X_\infty,\a}).
\end{equation}
As we are assuming that $\embdim(\O_{X_\infty,\a}) < \infty$, we can 
solve for $\embcodim(\O_{X_\infty,\a})$. 
Similarly, \eqref{i1:semicont} implies that $\embdim(\O_{X_\infty,\a'}) < \infty$ as well,
hence the same argument can be repeated for $\O_{X_\infty,\a'}$
and \eqref{i2:semicont} follows from \eqref{i1:semicont}.
\end{proof}

\begin{remark}
When $X$ is a variety, \cref{t:semicont} can alternatively be proved by using the 
interpretation of embedding dimension as jet-codimension from \cite{dFD20}, which makes use of
the structure of the truncation maps to the jet schemes. 
Each proof gives a different perspective on semicontinuity and
provides a unique set of tools that can be use to further investigate this property.
\end{remark}

\begin{remark}
In view of the characterization of stable points given in \cref{t:stable-points-finite-embdim}, 
\cref{t:semicont} yields a new proof of the fact that 
every generalization of a stable point is itself a stable point, a property
first established in \cite[Proposition~3.7(vi)]{Reg09}.
\end{remark}

The condition that $\dim(\gr(\O_{X_\infty,\a})) \le \codim(\a,\a') + \dim(\gr(\O_{X_\infty,\a'}))$
imposed in part \eqref{i2:semicont} of \cref{t:semicont} can be viewed as a generalization of
the condition that $\dim(A) = \dim(A/\fp) + \dim(A_\fp)$ appearing in Lech's theorem, 
which guarantees that $\fp$ belongs to a chain of primes in $A$ computing the dimension;
they are clearly the same condition if $\O_{X_\infty,\a}$ is Noetherian. 
It is not clear how the condition should be formulated if $\embdim(\O_{X_\infty,\a}) = \infty$, 
which prompts the question whether anything can be said about semicontinuity of 
the embedding codimension at arcs of infinite embedding dimension. 

We want to present here a complementary result in this direction which sheds some light on
the behavior of the embedding codimension at the $k$-rational points of $X_\infty$.
The proof uses the existence of finite-dimensional formal models as established in the Drinfeld--Grinberg--Kazhdan theorem \cite{GK00,Dri02} in combination with an explicit formula for the embedding dimension of Drinfeld models obtained in \cite{CdFD}. We first recall the following definition \cite[D\'efinition~(9.3.1)]{ega-iii-pt1}.

\begin{definition}
A function $h$ from a topological space $X$ to a set $T$ is \emph{constructible}
if it has finite image and $h^{-1}(t)$ is constructible for every $t \in T$. 
The function $h$ is \emph{locally constructible} if every point $x \in X$ admits
an open neighborhood $V \subset X$ such that $h|_V$ is constructible. 
\end{definition}

%

\begin{theorem}
\label{t:embcodim-constr}
Let $X$ be a scheme of finite over a perfect field $k$, and let
$X_\infty^\o := X_\infty \setminus (\Sing X)_\infty$.
For every $\a \in X_\infty^\o$ denote by $X^\a$ the unique irreducible 
component of $X$ containing $\a(\e)$, and for every integer $e \ge 0$ define
\[
X_\infty^e := \{ \a \in X_\infty^\o \mid \ord_\a(\Jac_{X^\a}) = e\}.
\]
Then the function
\[
X_\infty^\o(k) \to \Z, 
\quad \a \mapsto \embcodim(\O_{X_\infty,\a})
\]
is locally constructible and restricts to a lower-semicontinuous function 
on $X_\infty^e(k)$ for every $e$.
\end{theorem}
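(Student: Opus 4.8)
The plan is to remove the non-Noetherian nature of the local rings $\O_{X_\infty,\a}$ by invoking the Drinfeld--Grinberg--Kazhdan theorem, so that the embedding codimension becomes a fiberwise local invariant of a finite-type family over a jet scheme, and then to deduce both assertions from classical constructibility together with the semicontinuity already available in the Noetherian case (that is, \cref{t:semicont}, equivalently Lech's theorem \cite{Lec64}). Two facts are used repeatedly: the embedding codimension depends only on $\gr(\O_{X_\infty,\a})$, hence only on the completion $\^{\O_{X_\infty,\a}}$, and it is unchanged upon tensoring that completion with a power series ring in countably many variables (adjoining polynomial variables preserves heights of ideals); see \cite[Section~6]{CdFD}.

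\emph{Reductions.} Since a point $\a\in X_\infty^\o$ has $\a(\e)$ in the smooth locus of a unique component $X^\a$, and $X_\infty^\o$ is the disjoint union, over the finitely many components $X^j$ of $X$, of the open sets $\{\a:\a(\e)\in X^j\setminus\Sing X\}$, I would work on one component at a time and, testing local constructibility and semicontinuity locally, assume that $X\subset\A^n_k$ is an integral affine variety of dimension $d$; then $\Jac_{X^\a}=\Jac_X$ and $X_\infty^e=\Cont^{=e}(\Jac_X)$ is locally closed in $X_\infty$, being $\Cont^{\ge e}(\Jac_X)\setminus\Cont^{\ge e+1}(\Jac_X)$. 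As $X_\infty^\o$ is the increasing union of the open sets $\Cont^{\le e}(\Jac_X)\cap X_\infty^\o=\bigsqcup_{e'\le e}X_\infty^{e'}$, on each of which $\embcodim\le e$ by \cref{t:embcodim-bound}, it suffices to show that $\a\mapsto\embcodim(\O_{X_\infty,\a})$ is constructible on each $X_\infty^e$ and lower-semicontinuous on each $X_\infty^e(k)$.

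\emph{Uniform formal models.} Fix $e$. By the Drinfeld--Grinberg--Kazhdan theorem \cite{GK00,Dri02} and the analysis of formal models in \cite{CdFD}, for $\a\in X_\infty^e(k)$ one has $\^{\O_{X_\infty,\a}}\cong\^{\O_{Z,z}}\cotimes_k k[[u_i:i\in\N]]$ with $(Z,z)$ a pointed finite-type affine $k$-scheme, whence $\embcodim(\O_{X_\infty,\a})=\embcodim(\O_{Z,z})$ by the remarks above; the point I would need is that $(Z,z)$ is obtained algebraically from the truncation $\a_N\in X_N$ for some $N=N(e)$ depending only on $e$. This should follow from Drinfeld's construction and, in the background, from the fact that the truncation maps $X_{m+1}\to X_m$ restrict to Zariski-locally trivial $\A^d$-bundles over $\Cont^{=e}(\Jac_X)$ for $m$ large in terms of $e$; one expects $N=2e$ to work. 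Granting this, after cutting the image of $X_\infty^e$ in $X_N$ into finitely many locally closed pieces $S$ one obtains over each $S$ a finite-type affine family $q\colon\mathcal Z\to S$ with a section $\sigma$ such that $\embcodim(\O_{X_\infty,\a})=\embcodim(\O_{\mathcal Z_{\a_N},\sigma(\a_N)})$ for all $\a$ with $\a_N\in S$. An alternative I would consider, bypassing Drinfeld--Grinberg--Kazhdan, is to prove directly the stabilization $\embcodim(\O_{X_\infty,\a})=\embcodim(\O_{X_m,\a_m})$ for $m\ge 2e$ on $\Cont^{=e}(\Jac_X)$ and take $\mathcal Z$ to be an open subscheme of $X_m$; this again hinges on the structure of the truncation maps.

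\emph{Conclusion, and main obstacle.} The function $z'\mapsto\embcodim(\O_{\mathcal Z_{q(z')},z'})$ is constructible on $\mathcal Z$: fiberwise embedding dimension along a section is the rank of a coherent sheaf (hence constructible and upper-semicontinuous), fiberwise local dimension at a section is constructible over the base, and embedding codimension is their difference on Noetherian local rings. Restricting along $\sigma$ and pulling back along the continuous truncation map $X_\infty^e\to S$—under which preimages of constructible sets are constructible—yields constructibility of $\a\mapsto\embcodim(\O_{X_\infty,\a})$ on $X_\infty^e$, hence local constructibility on $X_\infty^\o(k)$. For semicontinuity I would refine $S$ so that $q$ is flat with equidimensional fibers and $\sigma$ meets a fixed component of each fiber; then along specializations within $S$ the hypothesis on $\dim\gr$ in part~\eqref{i2:semicont} of \cref{t:semicont} is satisfied on the fibers (the fiber dimension, hence the relevant $\dim\gr$, being constant), so the fiberwise embedding codimension is lower-semicontinuous along $\sigma(S)$; pulling back along $X_\infty^e(k)\to S(k)$ (preimages of closed sublevel sets are closed) gives lower-semicontinuity on $X_\infty^e(k)$. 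The main obstacle is precisely the uniformity in the middle step: bounding, in terms of $e=\ord_\a(\Jac_X)$ alone, the level at which the Drinfeld model (equivalently, the jet-scheme embedding codimension) can be read off, and organizing it into a finite-type family over a constructible stratification of $X_N$. Everything else reduces to standard constructibility and to the finite-type case of \cref{t:semicont}.
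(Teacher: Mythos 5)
Your overall strategy (stratify by $e$, pass to a finite\hyp{}type Drinfeld model read off from a bounded truncation, then conclude by finite\hyp{}dimensional constructibility) matches the paper's outline, and your guess that the model is determined by the truncation at level $2e$ is correct: the paper covers $X_\infty^e$ by open sets $X_\infty^{\Delta,e}$ attached to a choice of general coordinates and a general complete intersection $X'\supset X^\a$, and on each such open set defines a single morphism $w\colon X_\infty^{\Delta,e}\to Z$ into the explicit Drinfeld model $Z\subset\A^{e(1+2d+c)}_k$ of \cite[Section~10]{CdFD} by sending $(x(t),y(t))$ to $(t^e,\,x(t)\bmod t^{2e},\,y(t)\bmod t^e)$, with $\embcodim(\O_{X_\infty,\a})=\embcodim(\O_{Z,w(\a)})$ by \cite[Proposition~7.6]{CdFD}. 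So your acknowledged ``uniformity'' gap is real but fillable by quoting that construction.

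The genuine gap is in your semicontinuity step, and it is not just a missing detail: the mechanism you propose points in the wrong direction. First, \cref{t:semicont} (Lech) compares a point with its \emph{generizations inside one scheme}; the values $\sigma(s)$ for distinct $k$-points $s\in S(k)$ are closed points of distinct fibers with no specializations between them, so a ``fiberwise'' application of Lech compares nothing relevant. Moreover, even if it applied, Lech gives $\embcodim$ \emph{larger} at the more special point, i.e.\ upper semicontinuity, not the lower semicontinuity you need. Second, after your refinement to flat equidimensional fibers, your formula becomes $\embcodim=\embdim-\text{const}$ with $\embdim$ along the section only known to be \emph{upper} semicontinuous (rank of a coherent sheaf), which again yields upper, not lower, semicontinuity. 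What actually makes the theorem true is the opposite configuration: by \cite[Theorem~10.2(2)]{CdFD} the Drinfeld model admits an \emph{efficient} formal embedding into $\^{\O_{Y_{2e-1},\b_{2e-1}}}$ for the projection $f\colon X\to Y=\A^d_k$, so $\embdim(\O_{Z,w(\a)})=\embdim(\O_{Y_{2e-1},\b_{2e-1}})$ is \emph{constant} on $X_\infty^{\Delta,e}(k)$ (since $Y_{2e-1}$ is smooth and $\b_{2e-1}$ is $k$-rational), while $\a\mapsto\dim_{w(\a)}Z$ is upper semicontinuous (continuity of $w$ plus Chevalley); hence $\embcodim=\text{const}-\dim_{w(\a)}Z$ is lower semicontinuous, the jump coming from the local dimension of $Z$ going up at special points, not from any Lech-type inequality. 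Without the constancy of the model's embedding dimension your plan cannot produce the stated direction of semicontinuity. (Your constructibility argument is essentially fine once the uniform model is in place; note the paper even derives constructibility directly from the stratumwise lower semicontinuity together with the bound $\embcodim\le e$ on $X_\infty^e$ from \cref{t:embcodim-bound}.)
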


\begin{proof}
First note that
\[
X_\infty^\o = \bigsqcup_{e=0}^\infty X_\infty^e = \{\a \in X_\infty \mid \embcodim(\O_{X_\infty,\a}) < \infty\},
\]
the second equality following by \cref{t:embcodim-bound};
in particular, the embedding codimension defines a function on this set with values in $\Z$.
Note that each set $X_\infty^e$ is constructible in $X_\infty^\o$. 
Therefore it suffices to prove that the function is lower-semicontinuous on 
$X_\infty^e(k)$ for every $e$, as this implies that the function is constructible on the open sets
$\bigsqcup_{e = 0}^r X_\infty^e(k) \subset X_\infty^\o(k)$ and hence locally constructible on $X_\infty^\o(k)$. 

We may assume that $X \subset \A^n_k$ is affine.
Fix $e \ge 0$, let $\a \in X_\infty^e(k)$ be any point, and set $d = \dim X^\a$ and $c = n-d$.
Proceeding as in \cite[Section~10]{CdFD}, 
let $X' \supset X^\a$ be the complete intersection scheme 
defined by the vanishing of $c$ general linear combinations $p_1,\dots,p_c$ of 
a set of generators of the ideal of $X^\a$ in $\A^n$. 
As discussed in \cite{CdFD}, we have
$\widehat{\O_{X_\infty,\a}} \simeq \widehat{\O_{X^\a_\infty,\a}} \simeq \widehat{\O_{X'_\infty,\a}}$.
Pick general affine coordinates $x_1,\dots,x_d,y_1,\dots,y_c$ in $\A^n_k$ and
let $\Delta = \det \left(\frac{\partial p_i}{\partial y_j}\right)_{i,j\leq m}$.

With the above choices, we have
\[
\ord_\a(\D) = \ord_\a(\Jac_{X'}) = \ord_\a(\Jac_{X^\a}) = e.
\]
In fact, the same equations hold if we replace $\a$ by any other $k$-rational point 
in some open neighborhood $X_\infty^{\D,e}$ of $\a$ in $X_\infty^e$, without changing the
choice of coordinates and of the linear combinations of 
the set of generators of the ideal of $X^\a$ made above. We can therefore assume without loss of 
generality that $\a$ is an arbitrary $k$-rational point of $X_\infty^{\D,e}$.
Since the open sets $X_\infty^{\D,e}$ constructed in this way cover $X_\infty^e$, 
it suffices to prove that the embedding codimension function 
is lower-semicontinuous on $X_\infty^{\D,e}(k)$. 

Following the construction outlined in \cite[Section~10]{CdFD}, let $Z \subset \A^{e(1 + 2d + c)}_k$
be the scheme defined by the equations
listed in \cite[Equation~(10a)]{CdFD}. 
Consider the morphism of schemes $w \colon X_\infty^{\Delta,e} \to Z$ defined as follows: 
if $\g$ is an $R$-valued point of $X_\infty^{\Delta,e}$ given by $(x(t),y(t)) \in R[[t]]^d \times R[[t]]^c$, 
then $w(\g)$ is given by the tuple $(t^e,\bar{x}(t),\bar{y}(t))$ where 
$\bar{x}(t) \equiv x(t) \mod t^{2e}$ and $\bar{y}(t) \equiv y(t) \mod t^e$.
By the theorem of Drinfeld, Grinberg and Kazhdan \cite{GK00,Dri02}, for each $\a \in X_\infty^{\Delta,e}(k)$ 
we have an isomorphism of completions
\[
   \^{\cO_{X_\infty,\a}} \isom \^{\cO_{Z,w(\a)}} \cotimes k[[t_i \mid i\in \N]],
\]
and by \cite[Proposition~7.6]{CdFD} we have that $\embcodim(\cO_{X_\infty,\a}) = \embcodim(\cO_{Z,w(\a)})$. 

Let $f \colon X \to Y := \A^d$ be the morphism induced by the projection $(x,y) \mapsto x$.
Let $\b = f_\infty(\a)$ and write $\b_n \in Y_n$ for the image of $\b$ in $Y_n$. 
By \cite[Theorem~10.2(2)]{CdFD}, for each $\a \in X_\infty^{\Delta,e}(k)$ the composition
\[
\^{\cO_{Z,w(\a)}} \hookrightarrow \^{\cO_{X_\infty,\a}} \to \^{\cO_{Y_\infty,\b}} \to \^{\cO_{Y_{2e-1},\b_{2e-1}}}
\]
gives an efficient formal embedding, that is, a closed embedding whose induced map on cotangent spaces is an isomorphism. 
In particular, $\embdim(\cO_{Z,w(\a)}) = \embdim (\cO_{Y_{2e-1},\b_{2e-1}})$. 
As $Y$ is smooth and $\b_{2e-1}$ is a $k$-rational point, it follows that the function 
\[
X_\infty^{\Delta,e}(k) \to \Z, 
\quad
\a \mapsto \embdim(\cO_{Z,w(\a)})
\]
is constant. On the other hand, since $Z$ is of finite type over $k$, the function 
\[
X_\infty^{\Delta,e}(k) \to \Z, 
\quad
\a \mapsto \dim_{w(\a)} Z
\]
is upper-semicontinuous, as it is the composition of a continuous map and an upper-semicontinuous map. Since $\embcodim(\cO_{Z,z}) = \embdim(\cO_{Z,z}) - \dim_z Z$ for all $z\in Z$, we get the assertion.
\end{proof}

\section{Divisorial valuations and log discrepancies}

\label{s:div-val}

Let $X$ be a variety over a perfect field $k$. 
In this paper, we adopt the following definition of divisorial valuation.

\begin{definition}
\label{d:div-val}
A \emph{divisorial valuation} on $X$ is a valuation $v$ of the function field
of $X$ with values in $\Z$ and center in $X$, whose residue field $k_v$
has transcendence degree over $k$ equal to $d-1$ where $d = \dim X$. 
\end{definition}

We stress that we are not requiring that $\Z$ is the value group of $v$; for instance, 
we distinguish a valuation $v$ from $2v$. 
According to the definition, a divisorial valuation is a valuation of the form
$v = q\ord_E$ where $q \in \N$ and $E$ is a prime divisor
on a normal variety $Y$ equipped with a proper birational morphism $f\colon Y \to X$
(cf.\ \cite[Lemma~2.45]{KM98}).

\begin{definition}
For any divisorial valuation $v=q\ord_E$ as above, we define the \emph{Mather log discrepancy}
\[
\^a_v(X) := q(\ord_E(\Jac_f) + 1)
\]
and the \emph{Mather--Jacobian log discrepancy}
\[
a_v^{\MJ}(X) := q(\ord_E(\Jac_f) - \ord_E(\Jac_X) + 1).
\]
\end{definition}

The definitions of Mather log discrepancy and Mather--Jacobian log discrepancy are clearly independent of the choice of the model $f \colon Y \to X$. 
These invariants of singularities where studied for instance in 
\cite{dFEI08,Ish13,dFD14,EI15}.

Next, we recall the definitions of divisorial arc and maximal divisorial
arc, following \cite{dFD20}. These notions trace back at least to \cite{ELM04}. 

\begin{definition}
A point $\a \in X_\infty$ is a \emph{divisorial arc}
if $\ord_\a$ extends to a divisorial valuation $v_\a$ of the function field of $X$.
A divisorial arc $\a$ is a \emph{maximal divisorial arc} if
$\a$ is maximal (with respect to specialization)
among all divisorial arcs $\g \in X_\infty$ with $v_\g = v_\a$.
\end{definition}

We recall the following property about maximal divisorial arcs. 

\begin{theorem}[{\cite[Theorem~11.4]{dFD20}}]
\label{t:edim=Mather}
Let $X$ be a variety over a perfect field $k$.
For every divisorial valuation $v$ on $X$ there exists a unique 
maximal divisorial arc $\a_v \in X_\infty$ whose associated valuation is equal to $v$, and
\[
\embdim(\O_{X_\infty,\a_v}) = \^a_v(X).
\]
\end{theorem}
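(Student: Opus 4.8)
The strategy is to reduce the computation of $\embdim(\O_{X_\infty,\a_v})$ to a dimension count on jet schemes, by means of the identity $\embdim(\O_{X_\infty,\a})=\jetcodim(\a)$ proved in \cite{dFD20}. Concretely the plan is: first construct $\a_v$ and prove its maximality and uniqueness; then identify the Zariski closures of the truncations $\pi_m(\a_v)$ in the jet schemes $X_m$; and finally compute the codimension of those closures.

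For the construction, fix a model of $v$: a proper birational morphism $f\colon Y\to X$ from a normal variety together with a prime divisor $E\subset Y$ such that $v=q\,\ord_E$. Since the Mather log discrepancy $\^a_v(X)=q(\ord_E(\Jac_f)+1)$ is independent of the model (\cite{dFEI08}; this follows from the chain rule for Jacobian ideals), I may assume after further blowing up that $Y$ is smooth. As $Y$ is regular in codimension one, $\O_{Y,\eta_E}$ is a discrete valuation ring with uniformizer the local equation $u$ of $E$; let $\delta_v\in Y_\infty$ be the generic point of the locus $Z$ of arcs on $Y$ centered at $\eta_E$ with order of contact exactly $q$ with $E$, which is irreducible since over $\eta_E$ it is cut out by the closed condition $\ord_\delta(u)\ge q$ together with the complementary open condition $\ord_\delta(u)\le q$. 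One checks that $\delta_v(\eta)$ is the generic point of $Y$ and $v_{\delta_v}=q\,\ord_E$, so $\a_v:=f_\infty(\delta_v)$ satisfies $v_{\a_v}=v$ and has $\a_v(\eta)$ equal to the generic point of $X$; together with constructibility of $\delta_v$ and properness of $f$ this shows $\a_v$ is a stable point. For maximality, let $\g\in X_\infty$ be any divisorial arc with $v_\g=v$; then $\g(\eta)$ is the generic point of $X$, so by the valuative criterion $\g$ lifts uniquely to $\tilde\g\in Y_\infty$, and $\tilde\g(0)$ is the center of $v$ on $Y$, which is $\eta_E$, while $\ord_{\tilde\g}(\I_E)=q$. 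Hence $\tilde\g\in Z$, so $\tilde\g$ lies in the closure of $\delta_v$ and, by continuity of $f_\infty$, $\g=f_\infty(\tilde\g)$ lies in the closure of $\a_v$. This gives both that $\a_v$ is maximal among divisorial arcs realizing $v$ and that any such maximal arc equals $\a_v$.

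For the formula, since $\a_v$ is stable, \cref{t:stable-points-finite-embdim} and \cite{dFD20} give $\embdim(\O_{X_\infty,\a_v})=\jetcodim(\a_v)<\infty$, and because $\a_v(\eta)$ is the generic point of $X$, for $m\gg 0$ this jet codimension equals $(m+1)d-\dim\overline{\{\pi_m(\a_v)\}}$, where $d=\dim X$. Now on $Y$ one has $\dim Y_m=(m+1)d$, and in coordinates near $\eta_E$ with $E=\{u=0\}$ the locus $\{\ord_\delta(u)\ge q\}$ is cut out in $Y_m$ by $q$ linear conditions on the lowest coefficients of $u\circ\delta$, so $D:=\overline{\{\pi_m(\delta_v)\}}$ has $\dim D=(m+1)d-q$. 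The truncated morphism $f_m\colon Y_m\to X_m$ maps $D$ onto $\overline{\{\pi_m(\a_v)\}}$, and here the argument invokes the change-of-variables (birational transformation) rule for jet schemes \cite{DL99,EMY03}: for $m$ sufficiently large the fiber of $f_m$ through a general point of $D$ has dimension $\ord_{\delta_v}(\Jac_f)$. Since $\Jac_f$ localizes at $\eta_E$ to $(u^{\ord_E(\Jac_f)})$, this fiber dimension is $q\,\ord_E(\Jac_f)$, whence
\[
\dim\overline{\{\pi_m(\a_v)\}}=\big((m+1)d-q\big)-q\,\ord_E(\Jac_f)=(m+1)d-q\big(\ord_E(\Jac_f)+1\big),
\]
so $\jetcodim(\a_v)=q(\ord_E(\Jac_f)+1)=\^a_v(X)$, which is the desired identity.

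The main obstacle is the last step: making precise and justifying that the general fiber of $f_m|_D$ has dimension exactly $\ord_{\delta_v}(\Jac_f)$ for $m\gg 0$ and that $\jetcodim(\a_v)$ is correctly read off as the codimension of $\overline{\{\pi_m(\a_v)\}}$ in the main component of $X_m$. This is the content of the birational transformation rule for arc and jet spaces and requires careful control of the jet schemes of $Y$ near $E$ together with the behaviour of $f$ over the singular locus of $X$; the remaining ingredients — model-independence of $\^a_v$, irreducibility of the contact locus on $Y$, stability of $\a_v$, and the valuative-criterion argument for maximality — are routine.
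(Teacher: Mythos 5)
First, a point of orientation: the paper does not actually prove this statement --- it is imported verbatim from \cite[Theorem~11.4]{dFD20} --- so the relevant comparison is with the argument there, which is also the one echoed in the present paper through part~(2) of \cref{c:embdim-smooth-unramified} and its use in the proof of \cref{t:stable=divisorial+bounds}: one works on an \emph{arbitrary normal} model $f\colon Y\to X$ realizing $v=q\ord_E$, notes that $\eta_E$ automatically lies in the smooth locus of $Y$ (normal over a perfect field), so that the generic contact arc $\delta_v$ has $\O_{Y_\infty,\delta_v}$ regular of dimension $q$, and then the cotangent-map/differentials formula gives $\embdim(\O_{X_\infty,\a_v})=q+\ord_{\delta_v}(\Jac_f)=\^a_v(X)$ directly --- no jet schemes, no resolution of singularities, no restriction on the characteristic. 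Your construction of $\a_v$ and the maximality/uniqueness argument via the valuative criterion are essentially correct (modulo a more careful irreducibility argument for the contact locus than the one you sketch), and they match the concrete description given right after the theorem.

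The computational half of your proposal, however, has a genuine gap, and it sits exactly where the content of the theorem lies. You reduce to a smooth model ``after further blowing up'', which uses resolution of singularities and is therefore unavailable over a perfect field of positive characteristic --- the generality in which the theorem is stated; smoothness of $Y$ is needed not only near $\eta_E$ (where normality already provides it) but wherever you invoke the birational transformation rule. And the decisive step --- that for $m\gg0$ the general fiber of $f_m$ restricted to $D=\overline{\{\pi_m(\delta_v)\}}$ has dimension exactly $\ord_{\delta_v}(\Jac_f)$, and that $\jetcodim(\a_v)$ is correctly read off as $(m+1)d-\dim\overline{\{\pi_m(\a_v)\}}$ for such $m$ --- is precisely the Denef--Loeser/Ein--Musta\c{t}\u{a}--Yasuda fibration machinery of \cite{DL99,EMY03}, which you cite but do not prove and which is developed there in characteristic zero; you flag this yourself as ``the main obstacle''. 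Since the equality $\embdim(\O_{X_\infty,\a_v})=\^a_v(X)$ \emph{is} this dimension count, the proposal as written yields the theorem only in characteristic zero and only conditionally on that imported machinery (this is in substance the route of \cite{dFEI08} combined with the equality of embedding dimension and jet codimension from \cite{dFD20}), whereas the differentials argument behind the cited result is characteristic-free.
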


In concrete terms, if $v = q\ord_E$ and $f \colon Y \to X$ is as in \cref{d:div-val}, then
the associated maximal divisorial arc 
$\a_v$ is the image of the generic arc on $Y$ with order of contact $q$ with $E$. 

By \cref{t:stable-points-finite-embdim,t:edim=Mather}
every maximal divisorial arc is a stable point of $X_\infty$. 
Conversely, the next theorem proves that every stable point whose special point is not the generic point of $X$
is a divisorial arc. 
When $\charK k = 0$, this property follows from the results of \cite{ELM04} (when $X$ is smooth)
and \cite{dFEI08} (in general). 
We give here a new, characteristic-free proof and use the property to 
establish lower-bounds on the embedding dimension at a stable point
and the dimension of the completed local ring.

\begin{theorem}
\label{t:stable=divisorial+bounds}
Let $X$ be a variety over a perfect field $k$ and $\a \in X_\infty$ a stable point. 
Assume that $\a(0)$ is not the generic point of $X$. 
\begin{enumerate}
\item
\label{i1:stable=divisorial+bounds}
The valuation $v = v_\a$ defined by $\a$ is divisorial.
\item
\label{i2:stable=divisorial+bounds}
If $\a_{v} \in X_\infty$ is the maximal divisorial arc associated to $v$
and $c = \codim(\a,\a_v)$, then
\begin{equation}
\label{eq:bound-Mather}
\embdim(\O_{X_\infty,\a}) = c + \embdim(\O_{X_\infty,\a_v}) = c + \^a_{v}(X)
\end{equation}
and
\begin{equation}
\label{eq:bound-MJ}
\dim (\^{\cO_{X_\infty,\a}}) \geq c + a_{v}^{\MJ}(X).
\end{equation}
Moreover, $\codim(\a,\a_v)$ is birationally invariant, in the sense that
for every proper birational morphism $X' \to X$, if
$\a'$ and $\a_v'$ are the lifts of $\a$ and $\a_v$ to $X'_\infty$ then
$\codim(\a',\a_v') = \codim(\a,\a_v)$. 
\end{enumerate}
\end{theorem}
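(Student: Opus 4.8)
The plan is to prove part~\eqref{i1:stable=divisorial+bounds} first, then the equality~\eqref{eq:bound-Mather}, obtaining the birational invariance of $\codim(\a,\a_v)$ along the way, and to deduce the bound~\eqref{eq:bound-MJ} formally at the end. After shrinking $X$ about $\a(0)$ we may assume $X=\Spec A\subset\A^n_k$ is affine, and---replacing $X$ by the closure of $\a(\e)$ if necessary---that $\a(\e)$ is the generic point of $X$, so that $\ord_\a$ extends to the $\Z$-valued valuation $v_\a$ of $k(X)$ and (as $k$ is perfect) $\a(\e)\in X_\sm$. The tool used throughout is a general linear projection $f\colon X\to\A^d_k$, $d=\dim X$: it is finite, it is \'etale at $\a(\e)$ because $X$ is generically smooth, and by \cref{c:cotangent-map-isom} and \cref{r:cotangent-map-isom} we may choose it so that the cotangent maps $T^*_\a f_\infty$ and $T^*_{\a_v}f_\infty$ are both isomorphisms.

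For part~\eqref{i1:stable=divisorial+bounds}, set $\b=f_\infty(\a)$. By part~\eqref{i1:reguera-4-8} of \cref{c:reguera-4-8}, $\b$ is a stable point of $(\A^d_k)_\infty$, and $\b(0)$ is not the generic point of $\A^d_k$ since $f$ is finite. As $k(X)/k(\A^d_k)$ is finite and $v_\a$ restricts to $v_\b$, standard valuation theory (Abhyankar's inequality, together with the fact that a finite extension leaves the transcendence degree of the residue field unchanged) shows that $v_\a$ is divisorial on $X$ if and only if $v_\b$ is divisorial on $\A^d_k$. This reduces the statement to the case $X=\A^d_k$, i.e.\ to the correspondence between stable points---generic points of irreducible cylinders not contained in a proper closed subscheme---and divisorial valuations on a smooth variety; the relevant arguments of \cite{ELM04} use only blow-ups of smooth centers (equivalently, principalization of ideals in a smooth ambient scheme) and therefore remain valid over an arbitrary perfect field.

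For the equality~\eqref{eq:bound-Mather}, granting part~\eqref{i1:stable=divisorial+bounds} the arc $\a_v$ specializes to $\a$, so part~\eqref{i1:semicont} of \cref{t:semicont} together with \cref{t:edim=Mather} gives $\embdim(\O_{X_\infty,\a})\ge c+\embdim(\O_{X_\infty,\a_v})=c+\^a_v(X)$. For the reverse inequality, put $Y=\A^d_k$, $\b=f_\infty(\a)$, $\b_v=f_\infty(\a_v)$; the choice of $f$ gives $\embdim(\O_{X_\infty,\a})=\embdim(\O_{Y_\infty,\b})$ and $\embdim(\O_{X_\infty,\a_v})=\embdim(\O_{Y_\infty,\b_v})$, and $\b_v$ specializes to $\b$. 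Since $\b$ and $\b_v$ are stable points of the arc space of a smooth variety, $\O_{Y_\infty,\b}$ and $\O_{Y_\infty,\b_v}$ are regular and hence catenary, so $\embdim(\O_{Y_\infty,\b})=\codim(\b,\b_v)+\embdim(\O_{Y_\infty,\b_v})$, and everything comes down to $\codim(\b,\b_v)=c$. The inequality $\codim(\b,\b_v)\ge c$ is clear: $f$ is finite and $X$ is affine, so by part~\eqref{i1:finite-fiber} of \cref{t:finite-fiber} the fibers of $f_\infty$ are topologically finite, whence $\Spec\O_{X_\infty,\a}\to\Spec\O_{Y_\infty,\b}$ carries strict chains of specializations to strict chains of specializations. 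The reverse inequality $\codim(\b,\b_v)\le c$, equivalently a going-up property for this morphism, is the crux of the whole argument; I would establish it using the identification of each arc fiber $f^{-1}_\infty(\b')$ with the Greenberg scheme of $X\times_{\A^d_k}\Spec k_{\b'}[[t]]$ over $\Spec k_{\b'}[[t]]$ (\cref{r:arc-fibers-vs-greeenberg-schemes}), which is finite over the discrete valuation ring $k_{\b'}[[t]]$ because $f$ is finite, so that chains of arcs can be lifted.

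Finally, $\codim(\a,\a_v)$ is birationally invariant because $\a$, $\a_v$ and all their common generizations are arcs whose generic point is the generic point of $X$; over this locus, for a proper birational $p\colon X'\to X$ the valuative criterion of properness makes $p_\infty$ induce an isomorphism $\Spec\O_{X'_\infty,\a'}\xrightarrow{\sim}\Spec\O_{X_\infty,\a}$ sending the prime of $\a_v'$ to that of $\a_v$, so $\codim(\a',\a_v')=\codim(\a,\a_v)$. For the bound~\eqref{eq:bound-MJ}: $\a$ being stable, $\^{\O_{X_\infty,\a}}$ is Noetherian by \cite[Corollary~10.13]{dFD20}, hence $\dim(\^{\O_{X_\infty,\a}})=\dim(\gr(\O_{X_\infty,\a}))=\embdim(\O_{X_\infty,\a})-\embcodim(\O_{X_\infty,\a})$, the last equality by \cite[Proposition~6.2]{CdFD}. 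Now $\embdim(\O_{X_\infty,\a})=c+\^a_v(X)$ by~\eqref{eq:bound-Mather}, while \cref{t:embcodim-bound} gives $\embcodim(\O_{X_\infty,\a})\le\ord_\a(\Jac_X)$ ($X$ being irreducible); and since $\ord_\a$ agrees with $v=q\,\ord_E$ on ideal sheaves, $\ord_\a(\Jac_X)=q\,\ord_E(\Jac_X)=\^a_v(X)-a^{\MJ}_v(X)$. Combining the three displays yields $\dim(\^{\O_{X_\infty,\a}})\ge c+a^{\MJ}_v(X)$, which completes the proof modulo the going-up step flagged above.
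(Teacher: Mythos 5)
Your reduction to a general linear projection, the lower bound in \eqref{eq:bound-Mather} via \cref{t:semicont} and \cref{t:edim=Mather}, and the formal derivation of \eqref{eq:bound-MJ} at the end all match the paper's strategy, but the two steps you lean on at the critical points do not hold up. First, for part \eqref{i1:stable=divisorial+bounds}: after reducing to $Y=\A^d_k$ you invoke \cite{ELM04}, claiming its arguments ``use only blow-ups of smooth centers (equivalently, principalization of ideals in a smooth ambient scheme) and therefore remain valid over an arbitrary perfect field.'' Principalization of ideals on a smooth variety is precisely a resolution-type statement that is \emph{not} available in positive characteristic, and the cylinder-to-divisor correspondence in \cite{ELM04} is proved via log resolutions; this is exactly why the paper does not cite it and instead gives a characteristic-free induction on $\embdim(\O_{Y_\infty,\b})$, blowing up the closure of $\b(0)$ and using the strict drop of embedding dimension from part \eqref{i2:embdim-smooth-unramified} of \cref{c:embdim-smooth-unramified} (equivalently \cite[Theorem~9.2]{dFD20}). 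As written, the smooth case of part (1) is unproven in your argument.

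Second, and this is the gap you yourself flag, your route to the reverse inequality in \eqref{eq:bound-Mather} requires $\codim(\b,\b_v)\le\codim(\a,\a_v)$ for the projection $f\colon X\to\A^d_k$, i.e.\ a going-down/chain-lifting property for $\Spec\O_{X_\infty,\a}\to\Spec\O_{Y_\infty,\b}$. The justification you sketch — each arc fiber is the Greenberg scheme of a scheme finite over the DVR $k_{\b'}[[t]]$, ``so that chains of arcs can be lifted'' — does not give this: finiteness of individual fibers says nothing about lifting generizations across fibers, and \cref{ex:not-quasi-finite} in the paper shows that already for the double cover $y=x^2$ the map $\Spec\O_{X_\infty,\a}\to\Spec\O_{Y_\infty,\b}$ fails to be surjective (the generic order-$1$ arc on $Y$ has empty preimage), so going-down for $f_\infty$ is simply false in general. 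The paper avoids this entirely: it passes to a proper birational $h\colon X'\to X$ with $X'$ normal and $v=q\ord_E$, where $\a'(0)=\a'_v(0)$ is the generic point of $E$ and lies in $X'_\sm$, so $\O_{X'_\infty,\a'}$ is regular of dimension $\codim(\a',\a'_v)+q$; combining $\embdim(\O_{X_\infty,\a})=\dim(\O_{X'_\infty,\a'})+v(\Jac_h)$ (the equality case of \cref{c:embdim-smooth-unramified}\eqref{i2:embdim-smooth-unramified}, cf.\ \cite[Remark~3.5]{MR18}) with $v(\Jac_h)=\^a_v(X)-q$ and the injectivity of $h_\infty$ on $\Spec\O_{X'_\infty,\a'}$ gives the upper bound, and the squeeze against \cref{t:semicont} yields both \eqref{eq:bound-Mather} and $\codim(\a',\a'_v)=\codim(\a,\a_v)$. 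Note in particular that the birational invariance comes out of this squeeze; your alternative claim that the valuative criterion of properness makes $p_\infty$ an isomorphism $\Spec\O_{X'_\infty,\a'}\simeq\Spec\O_{X_\infty,\a}$ is also unjustified as stated — a priori you only get a continuous injection, since it is not clear that the lift of a generization of $\a$ is a generization of $\a'$. So the proposal is incomplete at the crux of both parts, and the mechanism offered for the key inequality is contradicted by the paper's own example.
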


\begin{proof}
We may assume that $X \subset \A^n_k$ is affine. Let 
$f \colon X \to Y = \A^d_k$ be a general linear projection, where $d = \dim X$.
Every stable point $\a \in X_\infty$ defines a valuation $v_\a$
of the function field of $X$, and its image
$\b = f_\infty(\a) \in Y_\infty$ is stable by \cref{c:reguera-4-8}.
Note that since $\a(0)$ is not the generic point of $X$, the valuation $v_\a$ is 
nontrivial; similarly, $\b(0)$ is not the generic point of $Y$, hence 
the valuation $v_\b$ defined by $\b$ is nontrivial. 

We claim that $\b$ is a divisorial arc. 
Note that this implies that $\a$ is also divisorial. Indeed, 
by construction $v_\b$ is the restriction of $v_\a$ to the function field of $Y$, hence
$k_{v_\b} \subset k_{v_\a}$. 
Since divisorial valuations over a $d$-dimensional variety are characterized among nontrivial valuations 
by having residue fields of
maximal transcendence degree $d-1$, this implies that $\trdeg_k(k_{v_\a}) \ge \trdeg_k(k_{v_\b}) = d-1$ and
thus that $v_\a$ is divisorial. 

We prove the claim by induction on the embedding dimension of $\O_{Y_\infty,\b}$, 
which is finite by \cref{t:stable-points-finite-embdim}. For this,
we rely on the fact that $Y$ is smooth at $\b(0)$. 
First note that the claim is clear if $\b(0)$ has codimension 1 in $Y$, 
as this is the center of $v_\b$ in $Y$.
Assume therefore that $\b(0)$ has codimension $\ge 2$ in $Y$. 
Let $g \colon Y' \to Y$ be the blow-up of the closure $B$ of $\b(0)$,
and let $\b' \in Y'_\infty$ denote the lift of $\b$. 
Since $B$ is smooth at $\b(0)$, there is an open neighborhood $U \subset Y$ of $\b(0)$
such that $B \cap U$ is smooth. 
Then $g^{-1}(U) \subset Y'$ is the blow-up of a smooth subscheme of $U$, hence it is smooth, 
and since $g(\b'(0)) = \b(0) \in U$, we have $\b'(0) \in g^{-1}(U)$
and $Y'$ is smooth at $\b'(0)$. 
Note that $g$ is ramified at $\b'(0)$, since the fiber of $g$
over $\b(0)$ is positive dimensional. 
Then it follows by \cite[Theorem~9.2]{dFD20} (equivalently, one can use 
part~(2) of \cref{c:embdim-smooth-unramified}) 
that $\embdim(\O_{Y'_\infty,\b'}) < \embdim(\O_{Y_\infty,\b})$, and induction applies. 
This settles assertion \eqref{i1:stable=divisorial+bounds}.

We now address \eqref{i2:stable=divisorial+bounds}.
Let $h \colon X' \to X$ be a proper birational morphism where $X'$ is a normal variety
with a prime divisor $E$ such that $v = q\ord_E$ for some $q \ge 1$. 
We denote by $\a'$ and $\a_v'$ the lifts of $\a$ and $\a_v$ to $X'_\infty$.
Note that $\a'(0) = \a'_v(0)$ is the generic point of $E$, which is contained in the smooth
locus of $X'$. In particular, the local rings $\O_{X'_\infty,\a'}$ and $\O_{X'_\infty,\a'_v}$ are regular, 
with the latter of dimension $q$.

It was observed in \cite[Remark~3.5]{MR18} that, in this setting,
\begin{equation}
\label{eq:MR-3.5}
\embdim(\O_{X_\infty,\a}) = \dim(\O_{X'_\infty,\a'}) + v(\Jac_h).
\end{equation}
In \cite{MR18} the characteristic is assumed to be zero, but this is not necessary;
in fact, since $\dim(\O_{X'_\infty,\a'}) = \embdim(\O_{X'_\infty,\a'})$, 
the formula also follows directly 
from part \eqref{i2:embdim-smooth-unramified} of \cref{c:embdim-smooth-unramified}, 
which does not require any assumption on characteristic. 
Now, using that
$\dim(\O_{X'_\infty,\a'}) = \codim(\a',\a'_v) + q$ (which holds because $q = \dim(\O_{X'_\infty,\a'_v})$)
and
$v(\Jac_h) = \embdim(\O_{X_\infty,\a_v}) - q$ (which follows by \cref{t:edim=Mather}),
we deduced from \eqref{eq:MR-3.5} that
\[
\embdim(\O_{X_\infty,\a}) = \codim(\a',\a'_v) + \embdim(\O_{X_\infty,\a_v}).
\]
As $h_\infty$ is injective on $\Spec \O_{X'_\infty,\a'}$, we have that
$\codim(\a',\a'_v) \le \codim(\a,\a_v)$, hence we end up with the inequality
\[
\embdim(\O_{X_\infty,\a}) \le \codim(\a,\a_v) + \embdim(\O_{X_\infty,\a_v}).
\]
On the other hand, \cref{t:semicont} (along with \cref{t:edim=Mather})
yields the opposite inequality
\[
\embdim(\O_{X_\infty,\a}) \ge \codim(\a,\a_v) + \embdim(\O_{X_\infty,\a_v}),
\]
hence the first equality stated in \eqref{eq:bound-Mather} holds, and
with \cref{t:edim=Mather} already giving $\embdim(\O_{X_\infty,\a_v}) = \^a_{v}(X)$, 
this settles \eqref{eq:bound-Mather}.
Note that the argument also shows that $\codim(\a',\a'_v) = \codim(\a,\a_v)$, 
and since any proper birational model $X' \to X$ can be dominated by
a model as the one used above, this implies 
the last assertion of the theorem.

Finally, the formula in \eqref{eq:bound-MJ} is proved using the same argument as in \cite[Theorem~11.1]{CdFD}, 
by combining \eqref{eq:bound-Mather} and \cref{t:embcodim-bound} with \cite[Proposition~7.4]{CdFD}.
\end{proof}

\begin{remark}
As discussed in the proof, one inequality in the formula stated in \eqref{eq:bound-Mather} 
already follows from \cite[Remark~3.5]{MR18} (at least when $\charK k = 0$), 
and our main contribution is to establish the opposite inequality. 
The inequality stated in \eqref{eq:bound-MJ} was proved in \cite[Theorem 4.1]{MR18} for maximal divisorial
arcs (i.e., when $\a = \a_v$) assuming $\charK k = 0$, 
and our result extends Mourtada and Reguera's theorem in two way, by removing the assumption on the characteristic
and by including all stable points in the statement. 
\end{remark}

We close this section by mentioning the follow semicontinuity property of Mather log discrepancies. 
The property already follows from \cite[Theorem~3.8]{dFEI08} (if $\charK k = 0$)
and \cite[Corollary~11.6]{dFD20} (in general), and is probably known to experts;
a more precise formulation when $X = \A^2_k$ appears for instance in \cite[Corollary 4.19]{FdBPPPP17}. 
We give here a different proof
based on the semicontinuity property of embedding dimension discussed earlier. 

\begin{corollary}
\label{t:semicont-^A-arc-top}
Let $X$ be a variety over a perfect field $k$.
Let $v$ and $v'$ be two divisorial valuations on $X$ and
$\a_v,\a_{v'} \in X_\infty$ be the associated maximal divisorial arcs. 
If $\a_v$ is a specialization of $\a_{v'}$ then $\^a_v(X) \ge \^a_{v'}(X)$, 
and the inequality is strict if $v \ne v'$.
\end{corollary}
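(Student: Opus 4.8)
The plan is to deduce the statement directly from two results already established: the identification of the embedding dimension of the local ring at a maximal divisorial arc with the Mather log discrepancy (\cref{t:edim=Mather}), and the semicontinuity of the embedding dimension (\cref{t:semicont}). First I would note that both $\a_v$ and $\a_{v'}$ are stable points of $X_\infty$ --- this is immediate from \cref{t:stable-points-finite-embdim,t:edim=Mather} --- so in particular the local rings $\O_{X_\infty,\a_v}$ and $\O_{X_\infty,\a_{v'}}$ have finite embedding dimension; this is recorded only for orientation, since part \eqref{i1:semicont} of \cref{t:semicont} holds unconditionally.

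Next, set $c = \codim(\a_v,\a_{v'})$. Since $\a_v$ is by hypothesis a specialization of $\a_{v'}$, part \eqref{i1:semicont} of \cref{t:semicont} applies with $\a = \a_v$ and $\a' = \a_{v'}$ and gives
\[
\embdim(\O_{X_\infty,\a_v}) \ge c + \embdim(\O_{X_\infty,\a_{v'}}).
\]
Rewriting both sides via \cref{t:edim=Mather} yields $\^a_v(X) \ge c + \^a_{v'}(X) \ge \^a_{v'}(X)$, which is the asserted inequality.

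It remains to address strictness. The key point is that a maximal divisorial arc determines its valuation: by \cref{t:edim=Mather} we have $v_{\a_v} = v$ and $v_{\a_{v'}} = v'$, so $v \ne v'$ forces $\a_v \ne \a_{v'}$. Combined with the hypothesis that $\a_v$ is a specialization of $\a_{v'}$, this makes $\a_{v'}$ a proper generization of $\a_v$, whence $c = \codim(\a_v,\a_{v'}) \ge 1$; substituting this into the displayed inequality gives $\^a_v(X) > \^a_{v'}(X)$. There is no genuinely hard step here; the only point that deserves a moment's care is this last one, namely that $v \ne v'$ really does produce distinct arcs and hence a nontrivial specialization, which is exactly what upgrades the semicontinuity inequality to a strict one.
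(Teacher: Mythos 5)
Your argument is correct and is essentially the paper's own proof: both combine \cref{t:edim=Mather} with part \eqref{i1:semicont} of \cref{t:semicont}, and both obtain strictness from the fact that $v \ne v'$ forces $\a_v \ne \a_{v'}$ (by uniqueness of maximal divisorial arcs), so that $\codim(\a_v,\a_{v'}) \ge 1$. You merely spell out explicitly the step the paper leaves implicit, namely passing from $v \ne v'$ to a proper specialization.
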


\begin{proof}
The assertion follows from \cref{t:edim=Mather} and the fact that, by 
\cref{t:semicont}, if $\a_{v}$ is a specialization of $\a_{v'}$ then 
$\embdim(\O_{X_\infty,\a_v}) \ge \embdim(\O_{X_\infty,\a_{v'}})$
and the inequality is strict if $\a_v \ne \a_{v'}$. 
\end{proof}

\begin{remark}
The condition in \cref{t:semicont-^A-arc-top} that $\a_v$ is a specialization of $\a_{v'}$
can be seen as defining a notion of specialization among divisorial valuations.
The upper topology this defines on the set $\DivVal_X^\Z$ of divisorial valuations on $X$
agrees with the topology induced from the Zariski topology of $X_\infty$ via the inclusion map
$\DivVal_X^\Z \inj X_\infty$ defined by mapping a divisorial valuation $v$
to the associated maximal divisorial arc $a_v$.
There are of course other natural ways to equip $\DivVal_X^\Z$ with a topology.
For instance, $\DivVal_X^\Z$ is a subset
of the valuation space $\Val_X$ parameterizing
real valuations of the function field of $X$, 
and hence of the Berkovich analytification $X^\an$ of $X$
over $k$ regarded as a trivially valued field. 
These spaces have interesting topologies, 
but the induced topology on $\DivVal_X^\Z$ is discrete, 
which is not particularly interesting for our purpose. 
Similarly one can look at the 
map from $\DivVal_X^\Z$ to the Riemann--Zariski space $\RZ(X)$ 
sending a divisorial valuation $v$ to the corresponding
valuation ring $\O_v$. The fibers of this map are given by rescaling valuations, 
and pulling back the Zariski topology of $\RZ(X)$ again
does not produce any interesting notion of specialization. 
\end{remark}

Questions about semicontinuity properties of log discrepancies are certainly not new. 
Shokurov and Ambro's conjectures on the semicontinuity 
of minimal log discrepancies \cite{Sho88,Amb99} and their solutions in the
locally complete intersection case \cite{EMY03,EM04} immediately come to mind, 
and so does the semicontinuity property of minimal 
Mather--Jacobian log discrepancies on arbitrary varieties \cite{Ish13,dFD14}.
It should be stressed, however, that while these properties are
about lower-semicontinuity,
\cref{t:semicont-^A-arc-top} gives an upper-semicontinuity
property of Mather log discrepancies, 
which is more in line with Ishii's lower-bound on this invariant \cite[Theorem~1.1]{Ish13}. 
One should also be aware that \cref{t:semicont-^A-arc-top}
does not imply -- at least in an obvious way -- that minimal Mather log discrepancies
are upper-semicontinuous on $X$.


\begin{bibdiv}
\begin{biblist}


\let\^\circum

\bib{Amb99}{article}{
   author={Ambro, Florin},
   title={On minimal log discrepancies},
   journal={Math. Res. Lett.},
   volume={6},
   date={1999},
   number={5-6},
   pages={573--580},
   issn={1073-2780},
   review={\MR{1739216 (2001c:14031)}},
}



\bib{BPR18}{article}{,
   author={Benito, Ang\'{e}lica},
   author={Piltant, Olivier},
   author={Reguera, Ana J.},
   title={Small irreducible components of arc spaces in positive
   characteristic},
   journal={J. Pure Appl. Algebra},
   volume={226},
   date={2022},
   number={11},
   pages={Paper No. 107110},
   issn={0022-4049},
   review={\MR{4414842}},
   doi={10.1016/j.jpaa.2022.107110},
}

\bib{Bh16}{article}{
   author  = {Bhatt, Bhargav},
   title   = {Algebraization and Tannaka duality},
   journal = {Camb. J. Math.},
   volume  = {4},
   date    = {2016},
   number  = {4},
   pages   = {403--461},
   issn    = {2168-0930},
   review  = {\MR{3572635}},
   doi     = {10.4310/CJM.2016.v4.n4.a1},
}

\bib{BH21}{article}{
   author={Bourqui, David},
   author={Haiech, Mercedes},
   title={On the nilpotent functions at a non-degenerate arc},
   journal={Manuscripta Math.},
   volume={165},
   date={2021},
   number={1-2},
   pages={227--238},
   issn={0025-2611},
   review={\MR{4242568}},
   doi={10.1007/s00229-020-01209-y},
}

\bib{BLM22}{article}{,
    author={Bourqui, David},
    author={Langlois, Kevin},
    author={Mourtada, Hussein},
    title={The Nash problem for torus actions of complexity one},
    date={2022},
    eprint={2203.13109},
}

\bib{BMCS22}{article}{,
    author={Bourqui, David},
    author={Mor\'an Ca\~n\'on, Mario},
    author={Sebag, Julien},
    title={On the behavior of formal neighborhoods in the Nash sets associated with toric valuations: a comparison theorem},
    date={2022},
    eprint={2202.11681},
}

\bib{BS17}{article}{
   author={Bourqui, David},
   author={Sebag, Julien},
   title={Smooth arcs on algebraic varieties},
   journal={J. Singul.},
   volume={16},
   date={2017},
   pages={130--140},
   doi={10.5427/jsing.2017.16f},
}


\bib{Bou20}{article}{
    author={Bouthier, Alexis},
    title={Cohomologie \'etale des espaces d'arcs},
    year={2020},
    eprint={1509.02203v6},
}

\bib{BNS16}{article}{
   author={Bouthier, Alexis},
   author={Ng\^{o}, B\'ao Ch\^{o}u},
   author={Sakellaridis, Yiannis},
   title={On the formal arc space of a reductive monoid},
   journal={Amer. J. Math.},
   volume={138},
   date={2016},
   number={1},
   pages={81--108},
   doi={10.1353/ajm.2016.0004},
}

\bib{BdlBdLFdBP22}{article}{
    author={Budur, Nero},
    author={de la Bodega, Javier},
    author={de Lorenzo Poza, Eduardo},
    author={Fern\'andez de Bobadilla, Javier},
    author={Pe\l ka, Tomasz},
    title={On the embedded Nash problem},
    year={2022},
    eprint={2208.08546},
}

\bib{CdFD}{article}{
   author={Chiu, Christopher},
   author={de Fernex, Tommaso},
   author={Docampo, Roi},
   title={Embedding codimension of the space of arcs},
   journal={Forum Math. Pi},
   volume={10},
   date={2022},
   pages={Paper No. e4, 37},
   review={\MR{4386350}},
   doi={10.1017/fmp.2021.19},
}

\bib{dFD14}{article}{
   author={de Fernex, Tommaso},
   author={Docampo, Roi},
   title={Jacobian discrepancies and rational singularities},
   journal={J. Eur. Math. Soc. (JEMS)},
   volume={16},
   date={2014},
   number={1},
   pages={165--199},
   issn={1435-9855},
   review={\MR{3141731}},
   doi={10.4171/JEMS/430},
}

\bib{dFD16}{article}{
author={de Fernex, Tommaso},
author={Docampo, Roi},
title={Terminal valuations and the Nash problem},
journal={Invent. Math.},
volume={203},
date={2016},
number={1},
pages={303--331},
issn={0012-7094},
   review={\MR{4057146}},
   doi={10.1215/00127094-2019-0043},
}

\bib{dFD20}{article}{
author={de Fernex, Tommaso},
author={Docampo, Roi},
title={Differentials on the arc space},
journal={Duke Math. J.},
volume={169},
date={2020},
number={2},
pages={353--396},
   issn={0012-7094},
   review={\MR{4057146}},
   doi={10.1215/00127094-2019-0043},
}

\bib{dFEI08}{article}{
   author={de Fernex, Tommaso},
   author={Ein, Lawrence},
   author={Ishii, Shihoko},
   title={Divisorial valuations via arcs},
   journal={Publ. Res. Inst. Math. Sci.},
   volume={44},
   date={2008},
   number={2},
   pages={425--448},
   issn={0034-5318},
   review={\MR{2426354 (2010d:14055)}},
   doi={10.2977/prims/1210167333},
}



\bib{DL99}{article}{
author={Denef, Jan},
author={Loeser, Fran{\c{c}}ois},
title={Germs of arcs on singular algebraic varieties and motivic
integration},
journal={Invent. Math.},
volume={135},
date={1999},
number={1},
pages={201--232},
   issn={0020-9910},
   review={\MR{1664700 (99k:14002)}},
   doi={10.1007/s002220050284},
}

\bib{Dri02}{article}{
    author={Drinfeld, Vladimir},
    title={On the Grinberg--Kazhdan formal arc theorem},
    year={2002},
    eprint={math.AG/0203263},
}

\bib{EI15}{article}{
   author={Ein, Lawrence},
   author={Ishii, Shihoko},
   title={Singularities with respect to Mather-Jacobian discrepancies},
   conference={
      title={Commutative algebra and noncommutative algebraic geometry. Vol.
      II},
   },
   book={
      series={Math. Sci. Res. Inst. Publ.},
      volume={68},
      publisher={Cambridge Univ. Press, New York},
   },
   date={2015},
   pages={125--168},
   review={\MR{3496863}},
}

\bib{ELM04}{article}{
author={Ein, Lawrence},
author={Lazarsfeld, Robert},
author={Musta{\c{t}}{\u{a}}, Mircea},
title={Contact loci in arc spaces},
journal={Compos. Math.},
volume={140},
date={2004},
number={5},
pages={1229--1244},
   issn={0010-437X},
   review={\MR{2081163 (2005f:14006)}},
   doi={10.1112/S0010437X04000429},
}

\bib{EM04}{article}{
   author={Ein, Lawrence},
   author={Musta{\c{t}}{\u{a}}, Mircea},
   title={Inversion of adjunction for local complete intersection varieties},
   journal={Amer. J. Math.},
   volume={126},
   date={2004},
   number={6},
   pages={1355--1365},
   issn={0002-9327},
   review={\MR{2102399 (2005j:14020)}},
}

\bib{EM09a}{article}{
   author={Ein, Lawrence},
   author={Musta{\c{t}}{\u{a}}, Mircea},
   title={Jet schemes and singularities},
   conference={
      title={Algebraic geometry---Seattle 2005. Part 2},
   },
   book={
      series={Proc. Sympos. Pure Math.},
      volume={80},
      publisher={Amer. Math. Soc.},
      place={Providence, RI},
   },
   date={2009},
   pages={505--546},
}

\bib{EM09}{article}{
   author  = {Ein, Lawrence},
   author  = {Musta{\c{t}}{\u{a}}, Mircea},
   title   = {Generically finite morphisms and formal neighborhoods of arcs},
   journal = {Geom. Dedicata},
   volume  = {139},
   date    = {2009},
   pages   = {331--335},
   issn    = {0046-5755},
   review  = {\MR{2481855}},
   doi     = {10.1007/s10711-008-9320-7},
}

\bib{EMY03}{article}{
   author={Ein, Lawrence},
   author={Musta{\c{t}}{\u{a}}, Mircea},
   author={Yasuda, Takehiko},
   title={Jet schemes, log discrepancies and inversion of adjunction},
   journal={Invent. Math.},
   volume={153},
   date={2003},
   number={3},
   pages={519--535},
   issn={0020-9910},
   review={\MR{2000468 (2004f:14028)}},
   doi={10.1007/s00222-003-0298-3},
}

\bib{FdBPPPP17}{article}{
   author={Fern\'{a}ndez de Bobadilla, Javier},
   author={Pe Pereira, Mar\'{\i}a},
   author={Popescu-Pampu, Patrick},
   title={On the generalized Nash problem for smooth germs and adjacencies
   of curve singularities},
   journal={Adv. Math.},
   volume={320},
   date={2017},
   pages={1269--1317},
   issn={0001-8708},
   review={\MR{3709136}},
   doi={10.1016/j.aim.2017.09.035},
}

\bib{FdBPP12}{article}{
author={Fern{\'a}ndez de Bobadilla, Javier},
author={Pe Pereira, Mar{\'{\i}}a},
title={The Nash problem for surfaces},
journal={Ann. of Math. (2)},
volume={176},
date={2012},
number={3},
pages={2003--2029},
   issn={0003-486X},
   review={\MR{2979864}},
   doi={10.4007/annals.2012.176.3.11},
}

\bib{Gre66}{article}{
author={Greenberg, Marvin J.},
title={Rational points in Henselian discrete valuation rings},
journal={Inst. Hautes \'Etudes Sci. Publ. Math.},
number={31},
date={1966},
pages={59--64},
   issn={0073-8301},
   review={\MR{0207700 (34 \#7515)}},
}

\bib{GK00}{article}{
author={Grinberg, Mikhail},
author={Kazhdan, David},
title={Versal deformations of formal arcs},
journal={Geom. Funct. Anal.},
volume={10},
date={2000},
number={3},
pages={543--555},
   issn={1016-443X},
   review={\MR{1779611}},
   doi={10.1007/PL00001628},
}

\bib{ega-iii-pt1}{article}{
   author={Grothendieck, Alexander},
   secondauthor={Dieudonn{\'e}, Jean A.},
   title={\'El\'ements de g\'eom\'etrie alg\'ebrique. III. \'Etude
   cohomologique des faisceaux coh\'erents. I},
   language={French},
   journal={Inst. Hautes \'Etudes Sci. Publ. Math.},
   number={11},
   date={1961},
   pages={167},
   issn={0073-8301},
   review={\MR{0163910 (29 \#1209)}},
}

\bib{ega-iv-pt1}{article}{
   author={Grothendieck, Alexander},
   secondauthor={Dieudonn{\'e}, Jean A.},
   title={\'{E}l\'{e}ments de g\'{e}om\'{e}trie alg\'{e}brique. IV. \'{E}tude locale des sch\'{e}mas et
       des morphismes de sch\'{e}mas. I},
   language={French},
   journal={Inst. Hautes \'{E}tudes Sci. Publ. Math.},
   number={20},
   date={1964},
   pages={259},
   issn={0073-8301},
   review={\MR{173675}},
}

\bib{ega-iv-pt4}{article}{,
   author={Grothendieck, Alexander},
   secondauthor={Dieudonn{\'e}, Jean A.},
   title={\'{E}l\'{e}ments de g\'{e}om\'{e}trie alg\'{e}brique. IV. \'{E}tude locale des sch\'{e}mas et
   des morphismes de sch\'{e}mas IV},
   language={French},
   journal={Inst. Hautes \'{E}tudes Sci. Publ. Math.},
   number={32},
   date={1967},
   pages={361},
   issn={0073-8301},
   review={\MR{238860}},
}

\bib{Ish08}{article}{
   author={Ishii, Shihoko},
   title={Maximal divisorial sets in arc spaces},
   conference={
      title={Algebraic geometry in East Asia---Hanoi 2005},
   },
   book={
      series={Adv. Stud. Pure Math.},
      volume={50},
      publisher={Math. Soc. Japan, Tokyo},
   },
   date={2008},
   pages={237--249},
   review={\MR{2409559}},
   doi={10.2969/aspm/05010237},
}

\bib{Ish13}{article}{
   author={Ishii, Shihoko},
   title={Mather discrepancy and the arc spaces},
   language={English, with English and French summaries},
   journal={Ann. Inst. Fourier (Grenoble)},
   volume={63},
   date={2013},
   number={1},
   pages={89--111},
   issn={0373-0956},
   review={\MR{3089196}},
   doi={10.5802/aif.2756},
}


\bib{KV04}{article}{,
 Author = {Kapranov, Mikhail},
 author = {Vasserot, Eric},
 Title = {Vertex algebras and the formal loop space},
 Journal = {Publ. Math., Inst. Hautes {\'E}tud. Sci.},
 Volume = {100},
 Pages = {209--269},
 Year = {2004},
 DOI = {10.1007/s10240-004-0023-9},
}

\bib{Kol73}{book}{
   author={Kolchin, Ellis R.},
   title={Differential algebra and algebraic groups},
   series={Pure and Applied Mathematics, Vol. 54},
   publisher={Academic Press, New York-London},
   date={1973},
   pages={xviii+446},
   review={\MR{0568864}},
}

\bib{KM98}{book}{
   author={Koll{\'a}r, J{\'a}nos},
   author={Mori, Shigefumi},
   title={Birational geometry of algebraic varieties},
   series={Cambridge Tracts in Mathematics},
   volume={134},
   note={With the collaboration of C. H. Clemens and A. Corti;
   Translated from the 1998 Japanese original},
   publisher={Cambridge University Press},
   place={Cambridge},
   date={1998},
   pages={viii+254},
   isbn={0-521-63277-3},
   review={\MR{1658959 (2000b:14018)}},
   doi={10.1017/CBO9780511662560},
}



\bib{Lec64}{article}{
author={Lech, Christer},
title={Inequalities related to certain couples of local rings},
journal={Acta Math.},
volume={112},
date={1964},
pages={69--89},
   issn={0001-5962},
   review={\MR{161876}},
   doi={10.1007/BF02391765},
}

\bib{LJR12}{article}{
author={Lejeune-Jalabert, Monique},
author={Reguera, Ana J.},
title={Exceptional divisors that are not uniruled belong to the image of
the Nash map},
journal={J. Inst. Math. Jussieu},
volume={11},
date={2012},
number={2},
pages={273--287},
   issn={1474-7480},
   review={\MR{2905305}},
   doi={10.1017/S1474748011000156},
}

\bib{Mat89}{book}{
author={Matsumura, Hideyuki},
title={Commutative ring theory},
series={Cambridge Studies in Advanced Mathematics},
volume={8},
edition={2},
note={Translated from the Japanese by M. Reid},
publisher={Cambridge University Press, Cambridge},
date={1989},
pages={xiv+320},
isbn={0-521-36764-6},
review={\MR{1011461}},
}

\bib{MR18}{article}{
   author={Mourtada, Hussein},
   author={Reguera, Ana J.},
   title={Mather discrepancy as an embedding dimension in the space of arcs},
   journal={Publ. Res. Inst. Math. Sci.},
   volume={54},
   date={2018},
   number={1},
   pages={105--139},
   issn={0034-5318},
   review={\MR{3749346}},
   doi={10.4171/PRIMS/54-1-4},
}



		
\bib{Nas95}{article}{
author={Nash, John F., Jr.},
title={Arc structure of singularities},
note={A celebration of John F. Nash, Jr.},
journal={Duke Math. J.},
volume={81},
date={1995},
number={1},
pages={31--38 (1996)},
issn={0012-7094},
review={\MR{1381967 (98f:14011)}},
doi={10.1215/S0012-7094-95-08103-4},
}

\bib{Ngo17}{article}{
    label={Ngo17},
    author={Ng\^{o}, B\'ao Ch\^{o}u},
    title={Weierstrass preparation theorem and singularities in the space of non-degenerate arcs},
    year={2017},
    eprint={1706.05926},
}

\bib{NS10}{article}{
   author={Nicaise, Johannes},
   author={Sebag, Julien},
   title={Greenberg approximation and the geometry of arc spaces},
   journal={Comm. Algebra},
   volume={38},
   date={2010},
   number={11},
   pages={4077--4096},
   issn={0092-7872},
   review={\MR{2764852}},
   doi={10.1080/00927870903295398},
}

\bib{QL02}{book}{
   author={Liu, Qing},
   title={Algebraic geometry and arithmetic curves},
   series={Oxford Graduate Texts in Mathematics},
   volume={6},
   note={Translated from the French by Reinie Ern\'{e};
   Oxford Science Publications},
   publisher={Oxford University Press, Oxford},
   date={2002},
   pages={xvi+576},
   isbn={0-19-850284-2},
   review={\MR{1917232}},
}

\bib{Reg06}{article}{
author={Reguera, Ana J.},
title={A curve selection lemma in spaces of arcs and the image of the
Nash map},
journal={Compos. Math.},
volume={142},
date={2006},
number={1},
pages={119--130},
issn={0010-437X},
review={\MR{2197405 (2008a:14007)}},
doi={10.1112/S0010437X05001582},
}

\bib{Reg09}{article}{,
author={Reguera, Ana J.},
title={Towards the singular locus of the space of arcs},
journal={Amer. J. Math.},
volume={131},
date={2009},
number={2},
pages={313--350},
   issn={0002-9327},
   review={\MR{2503985}},
   doi={10.1353/ajm.0.0046},
}

\bib{Reg18}{article}{
   author  = {Reguera, Ana J.},
   title   = {Coordinates at stable points of the space of arcs},
   journal = {J. Algebra},
   volume  = {494},
   date    = {2018},
   pages   = {40--76},
   issn    = {0021-8693},
   review  = {\MR{3723170}},
   doi     = {10.1016/j.jalgebra.2017.09.031},
}

\bib{Reg06c}{article}{,
author={Reguera, Ana J.},
   title={Corrigendum: A curve selection lemma in spaces of arcs and the
   image of the Nash map},
   journal={Compos. Math.},
   volume={157},
   date={2021},
   number={3},
   pages={641--648},
   issn={0010-437X},
   review={\MR{4236197}},
   doi={10.1112/s0010437x20007733},
}

\bib{Reg21}{article}{,
author={Reguera, Ana J.},
title={Arc spaces and wedge spaces for toric varieties},
date={2021},
eprinttype={hal},
eprint={hal-03031886},
}

\bib{Seb11}{article}{,
    AUTHOR = {Sebag, Julien},
     TITLE = {Arcs schemes, derivations and {L}ipman's theorem},
   JOURNAL = {J. Algebra},
    VOLUME = {347},
      YEAR = {2011},
     PAGES = {173--183},
       DOI = {10.1016/j.jalgebra.2011.08.018},
}

\bib{Sha}{book}{
author={Shafarevich, Igor R.},
title={Basic algebraic geometry. 1},
edition={3},
edition={Translated from the 2007 third Russian edition},
note={Varieties in projective space},
publisher={Springer, Heidelberg},
date={2013},
   pages={xviii+310},
   isbn={978-3-642-37955-0},
   isbn={978-3-642-37956-7},
   review={\MR{3100243}},
}

\bib{Sho88}{article}{
   author={Shokurov, Vyacheslav V.},
   title={Problems about Fano varieties},
   conference={
      title={Birational Geometry of Algebraic Varieties, Open Problems},
      note={Open Problems},
   },
   book={
      place={Katata},
   },
   date={1988},
   pages={30--32},
}

\bib{stacks-project}{webpage}{
    author = {{{Stac}ks Project Authors}, The},
    title  = {The Stacks Project},
    url    = {http://stacks.math.columbia.edu},
}

\bib{Voj07}{article}{
  author={Vojta, Paul},
   title={Jets via Hasse-Schmidt derivations},
   conference={
      title={Diophantine geometry},
   },
   book={
      series={CRM Series},
      volume={4},
      publisher={Ed. Norm., Pisa},
   },
   date={2007},
   pages={335--361},
   review={\MR{2349665}},
}

\bib{ZS75}{book}{
author={Zariski, Oscar},
author={Samuel, Pierre},
title={Commutative algebra. Vol. II},
series={Graduate Texts in Mathematics, Vol. 29},
note={Reprint of the 1960 edition},
publisher={Springer-Verlag, New York-Heidelberg},
date={1975},
pages={x+414},
review={\MR{0389876}},
}

\end{biblist}
\end{bibdiv}

\end{document}